\begin{document}
\input xy
\xyoption{all}

\newtheorem{innercustomthm}{{\bf Theorem}}
\newenvironment{customthm}[1]
  {\renewcommand\theinnercustomthm{#1}\innercustomthm}
  {\endinnercustomthm}

  \newtheorem{innercustomcor}{{\bf Corollary}}
\newenvironment{customcor}[1]
  {\renewcommand\theinnercustomcor{#1}\innercustomcor}
  {\endinnercustomthm}

  \newtheorem{innercustomprop}{{\bf Proposition}}
\newenvironment{customprop}[1]
  {\renewcommand\theinnercustomprop{#1}\innercustomprop}
  {\endinnercustomthm}

\newcommand{\iadd}{\operatorname{iadd}\nolimits}

\renewcommand{\mod}{\operatorname{mod}\nolimits}
\newcommand{\proj}{\operatorname{proj}\nolimits}
\newcommand{\inj}{\operatorname{inj}\nolimits}
\newcommand{\rad}{\operatorname{rad}\nolimits}
\newcommand{\Span}{\operatorname{Span}\nolimits}
\newcommand{\soc}{\operatorname{soc}\nolimits}
\newcommand{\ind}{\operatorname{inj.dim}\nolimits}
\newcommand{\Ginj}{\operatorname{Ginj}\nolimits}
\newcommand{\res}{\operatorname{res}\nolimits}
\newcommand{\np}{\operatorname{np}\nolimits}
\newcommand{\Fac}{\operatorname{Fac}\nolimits}
\newcommand{\Aut}{\operatorname{Aut}\nolimits}
\def\DTr{D{\rm Tr}}
\def\TrD{{\rm Tr}D}
\newcommand{\Gr}{\operatorname{Gr}\nolimits}

\newcommand{\Mod}{\operatorname{Mod}\nolimits}
\newcommand{\R}{\operatorname{R}\nolimits}
\newcommand{\End}{\operatorname{End}\nolimits}
\newcommand{\lf}{\operatorname{l.f.}\nolimits}
\newcommand{\Iso}{\operatorname{Iso}\nolimits}
\newcommand{\aut}{\operatorname{Aut}\nolimits}
\newcommand{\Ui}{{\mathbf U}^\imath}
\newcommand{\UU}{{\mathbf U}\otimes {\mathbf U}}
\newcommand{\UUi}{(\UU)^\imath}
\newcommand{\tUU}{{\tU}\otimes {\tU}}
\newcommand{\tUUi}{(\tUU)^\imath}
\newcommand{\tUi}{\widetilde{{\mathbf U}}^\imath}
\newcommand{\sqq}{{\bf v}}
\newcommand{\sqvs}{\sqrt{\vs}}
\newcommand{\dbl}{\operatorname{dbl}\nolimits}
\newcommand{\swa}{\operatorname{swap}\nolimits}
\newcommand{\Gp}{\operatorname{Gp}\nolimits}

\newcommand{\iLa}{\Lambda^{\imath}}
\newcommand{\U}{{\mathbf U}}
\newcommand{\tU}{\widetilde{\mathbf U}}
\newcommand{\bvs}{\mathbf{\varsigma}}

\newcommand{\vs}{\varsigma}
\newcommand{\ov}{\overline}
\newcommand{\tk}{\widetilde{k}}
\newcommand{\tK}{\widetilde{K}}

\newcommand{\tH}{\widetilde{\ch}}

\newcommand{\utM}{\operatorname{\cs\cd\ch}\nolimits}
\newcommand{\tM}{\operatorname{\cs\cd\widetilde{\ch}}\nolimits}
\newcommand{\rM}{\operatorname{\cs\cd\ch_{\rm{red}}}\nolimits}
\newcommand{\utMH}{\operatorname{\cs\cd\ch(\iLa)}\nolimits}
\newcommand{\tMH}{\operatorname{\cs\cd\widetilde{\ch}(\iLa)}\nolimits}
\newcommand{\rMH}{\operatorname{\cs\cd\ch_{\rm{red}}(\iLa)}\nolimits}
\newcommand{\utMHg}{\operatorname{\ch(Q,\btau)}\nolimits}
\newcommand{\tMHg}{\operatorname{\widetilde{\ch}(Q,\btau)}\nolimits}
\newcommand{\rMHg}{\operatorname{\ch_{\rm{red}}(Q,\btau)}\nolimits}

\newcommand{\rMHd}{\operatorname{\cs\cd\ch_{\rm{red}}(\iLa)_{\bvsd}}\nolimits}
\newcommand{\tMHd}{\operatorname{\cs\cd\widetilde{\ch}(\iLa)_{\bvsd}}\nolimits}

\newcommand{\tMHl}{\cs\cd\widetilde{\ch}({\bs}_\ell\iLa)}
\newcommand{\rMHl}{\cs\cd\ch_{\rm{red}}({\bs}_\ell\iLa)_{\bvsd}}
\newcommand{\tMHi}{\cs\cd\widetilde{\ch}({\bs}_i\iLa)}
\newcommand{\rMHi}{\cs\cd\ch_{\rm{red}}({\bs}_i\iLa)_{\bvsd}}
\newcommand{\tMHgl}{\widetilde{\ch}({\bs}_\ell Q,\btau)}
\newcommand{\tMHgi}{\widetilde{\ch}({\bs}_i Q,\btau)}

\newcommand{\utGpg}{\operatorname{\ch^{\rm Gp}(Q,\btau)}\nolimits}
\newcommand{\tGpg}{\operatorname{\widetilde{\ch}^{\rm Gp}(Q,\btau)}\nolimits}
\newcommand{\rGpg}{\operatorname{\ch_{red}^{\rm Gp}(Q,\btau)}\nolimits}

\newcommand{\colim}{\operatorname{colim}\nolimits}
\newcommand{\gldim}{\operatorname{gl.dim}\nolimits}
\newcommand{\cone}{\operatorname{cone}\nolimits}
\newcommand{\rep}{\operatorname{rep}\nolimits}
\newcommand{\Ext}{\operatorname{Ext}\nolimits}
\newcommand{\Tor}{\operatorname{Tor}\nolimits}
\newcommand{\Hom}{\operatorname{Hom}\nolimits}
\newcommand{\Top}{\operatorname{top}\nolimits}
\newcommand{\Coker}{\operatorname{Coker}\nolimits}
\newcommand{\thick}{\operatorname{thick}\nolimits}
\newcommand{\rank}{\operatorname{rank}\nolimits}
\newcommand{\Gproj}{\operatorname{Gproj}\nolimits}
\newcommand{\Len}{\operatorname{Length}\nolimits}
\newcommand{\RHom}{\operatorname{RHom}\nolimits}
\renewcommand{\deg}{\operatorname{deg}\nolimits}
\renewcommand{\Im}{\operatorname{Im}\nolimits}
\newcommand{\Ker}{\operatorname{Ker}\nolimits}
\newcommand{\Coh}{\operatorname{Coh}\nolimits}
\newcommand{\Id}{\operatorname{Id}\nolimits}
\newcommand{\Qcoh}{\operatorname{Qch}\nolimits}
\newcommand{\CM}{\operatorname{CM}\nolimits}
\newcommand{\sgn}{\operatorname{sgn}\nolimits}
\newcommand{\Gdim}{\operatorname{G.dim}\nolimits}
\newcommand{\fpr}{\operatorname{\mathcal{P}^{\leq1}}\nolimits}

\newcommand{\For}{\operatorname{{\bf F}or}\nolimits}
\newcommand{\coker}{\operatorname{Coker}\nolimits}
\renewcommand{\dim}{\operatorname{dim}\nolimits}
\newcommand{\rankv}{\operatorname{\underline{rank}}\nolimits}
\newcommand{\dimv}{{\operatorname{\underline{dim}}\nolimits}}
\newcommand{\diag}{{\operatorname{diag}\nolimits}}
\newcommand{\qbinom}[2]{\begin{bmatrix} #1\\#2 \end{bmatrix} }

\renewcommand{\Vec}{{\operatorname{Vec}\nolimits}}
\newcommand{\pd}{\operatorname{proj.dim}\nolimits}
\newcommand{\gr}{\operatorname{gr}\nolimits}
\newcommand{\id}{\operatorname{Id}\nolimits}
\newcommand{\Res}{\operatorname{Res}\nolimits}

\newcommand{\pdim}{\operatorname{proj.dim}\nolimits}
\newcommand{\idim}{\operatorname{inj.dim}\nolimits}
\newcommand{\Gd}{\operatorname{G.dim}\nolimits}
\newcommand{\Ind}{\operatorname{Ind}\nolimits}
\newcommand{\add}{\operatorname{add}\nolimits}
\newcommand{\pr}{\operatorname{pr}\nolimits}
\newcommand{\oR}{\operatorname{R}\nolimits}
\newcommand{\oL}{\operatorname{L}\nolimits}
\newcommand{\ext}{{ \mathfrak{Ext}}}
\newcommand{\Perf}{{\mathfrak Perf}}
\def\scrP{\mathscr{P}}
\newcommand{\bk}{{\mathbb K}}
\newcommand{\cc}{{\mathcal C}}
\newcommand{\gc}{{\mathcal GC}}
\newcommand{\dg}{{\rm dg}}
\newcommand{\ce}{{\mathcal E}}
\newcommand{\cs}{{\mathcal S}}
\newcommand{\cl}{{\mathcal L}}
\newcommand{\cf}{{\mathcal F}}
\newcommand{\cx}{{\mathcal X}}
\newcommand{\cy}{{\mathcal Y}}
\newcommand{\ct}{{\mathcal T}}
\newcommand{\cu}{{\mathcal U}}
\newcommand{\cv}{{\mathcal V}}
\newcommand{\cn}{{\mathcal N}}
\newcommand{\mcr}{{\mathcal R}}
\newcommand{\ch}{{\mathcal H}}
\newcommand{\ca}{{\mathcal A}}
\newcommand{\cb}{{\mathcal B}}
\newcommand{\ci}{{\I}_{\btau}}
\newcommand{\cj}{{\mathcal J}}
\newcommand{\cm}{{\mathcal M}}
\newcommand{\cp}{{\mathcal P}}
\newcommand{\cg}{{\mathcal G}}
\newcommand{\cw}{{\mathcal W}}
\newcommand{\co}{{\mathcal O}}
\newcommand{\cq}{{Q^{\rm dbl}}}
\newcommand{\cd}{{\mathcal D}}
\newcommand{\cz}{{\mathcal Z}}
\newcommand{\ck}{\widetilde{\mathcal K}}
\newcommand{\calr}{{\mathcal R}}
\newcommand{\La}{\Lambda}
\newcommand{\LL}{\texttt{L}}
\newcommand{\RR}{\texttt{R}}
\newcommand{\ol}{\overline}
\newcommand{\st}{[1]}
\newcommand{\ow}{\widetilde}
\renewcommand{\P}{\mathbf{P}}
\newcommand{\pic}{\operatorname{Pic}\nolimits}
\newcommand{\Spec}{\operatorname{Spec}\nolimits}

\newtheorem{theorem}{Theorem}[section]
\newtheorem{acknowledgement}[theorem]{Acknowledgement}
\newtheorem{algorithm}[theorem]{Algorithm}
\newtheorem{axiom}[theorem]{Axiom}
\newtheorem{case}[theorem]{Case}
\newtheorem{claim}[theorem]{Claim}
\newtheorem{conclusion}[theorem]{Conclusion}
\newtheorem{condition}[theorem]{Condition}
\newtheorem{conjecture}[theorem]{Conjecture}
\newtheorem{construction}[theorem]{Construction}
\newtheorem{corollary}[theorem]{Corollary}
\newtheorem{criterion}[theorem]{Criterion}
\newtheorem{definition}[theorem]{Definition}
\newtheorem{example}[theorem]{Example}
\newtheorem{exercise}[theorem]{Exercise}
\newtheorem{lemma}[theorem]{Lemma}
\newtheorem{notation}[theorem]{Notation}
\newtheorem{problem}[theorem]{Problem}
\newtheorem{proposition}[theorem]{Proposition}
\newtheorem{solution}[theorem]{Solution}
\newtheorem{summary}[theorem]{Summary}
\numberwithin{equation}{section}
\allowdisplaybreaks

\theoremstyle{remark}
\newtheorem{remark}[theorem]{Remark}
\newcommand{\Pd}{\pi_*}
\def \bvs{{\boldsymbol{\varsigma}}}
\def \bvsd{{\boldsymbol{\varsigma}_{\diamond}}}
\def \btau{{{\uptau}}}

\def \bp{{\mathbf p}}
\def \bq{{\bm q}}
\def \bv{{v}}
\def \bs{{\bm s}}
\newcommand{\bsigma}{\bm{\sigma}}

\newcommand{\bfv}{\mathbf{v}}
\def \brW{{\rm Br}(W_{\btau})}

\def \bA{{\mathbf A}}
\def \ba{{\mathbf a}}
\def \bb{{\mathbf b}}
\def \bL{{\mathbf L}}
\def \bF{{\mathbf F}}
\def \bS{{\mathbf S}}
\def \bC{{\mathbf C}}
\def \bU{{\mathbf U}}
\def \bc{{\mathbf c}}
\def \fpi{\mathfrak{P}^\imath}
\def \Ni{N_\imath}
\def \fp{\mathfrak{P}}
\def \fg{\mathfrak{g}}
\def \fb{\mathfrak{b}}
\def \fk{\fg^\theta}  
\def \p{p}
\def \fn{\mathfrak{n}}
\def \fh{\mathfrak{h}}
\def \fu{\mathfrak{u}}
\def \fv{\mathfrak{v}}
\def \fa{\mathfrak{a}}
\def \Z{{\Bbb Z}}
\def \F{{\Bbb F}}
\def \D{{\Bbb D}}
\def \C{{\Bbb C}}
\def \N{{\Bbb N}}
\def \Q{{\Bbb Q}}
\def \G{{\Bbb G}}
\def \P{{\Bbb P}}
\def \K{{k}}
\def \E{{\Bbb E}}
\def \A{{\Bbb A}}
\def \L{{\Bbb L}}
\def \I{{\Bbb I}}
\def \BH{{\Bbb H}}
\def \T{{\mathcal T}}
\def \tT{\widetilde{\mathcal T}}
\def \tTL{\tT(\iLa)}
\newcommand{\TT}{\operatorname{\texttt{\rm T}}\nolimits}
\newcommand{\tTT}{\operatorname{\texttt{\rm T}}\nolimits}

\newcommand {\lu}[1]{\textcolor{red}{$\clubsuit$: #1}}

\newcommand{\browntext}[1]{\textcolor{brown}{#1}}
\newcommand{\greentext}[1]{\textcolor{green}{#1}}
\newcommand{\redtext}[1]{\textcolor{red}{#1}}
\newcommand{\bluetext}[1]{\textcolor{blue}{#1}}
\newcommand{\brown}[1]{\browntext{ #1}}
\newcommand{\green}[1]{\greentext{ #1}}
\newcommand{\red}[1]{\redtext{ #1}}
\newcommand{\blue}[1]{\bluetext{ #1}}

\newcommand{\wtodo}{\todo[inline,color=orange!20, caption={}]}
\newcommand{\lutodo}{\todo[inline,color=green!20, caption={}]}

\title[Hall algebras and quantum symmetric pairs II]{Hall algebras and quantum symmetric pairs II: reflection functors}

\author[Ming Lu]{Ming Lu}
\address{Department of Mathematics, Sichuan University, Chengdu 610064, P.R.China}
\email{luming@scu.edu.cn}

\author[Weiqiang Wang]{Weiqiang Wang}
\address{Department of Mathematics, University of Virginia, Charlottesville, VA 22904, USA}
\email{ww9c@virginia.edu}

\subjclass[2010]{Primary 17B37,  16E60, 18E30.}
\keywords{Hall algebras, $\imath$Quantum groups, Quantum symmetric pairs, PBW bases, braid group actions, reflection functors}

\begin{abstract}
Recently the authors initiated an $\imath$Hall algebra approach to (universal) $\imath$quantum groups arising from quantum symmetric pairs. In this paper we construct and study BGP type reflection functors which lead to isomorphisms of the $\imath$Hall algebras associated to acyclic $\imath$quivers.
For Dynkin quivers, these symmetries on $\imath$Hall algebras induce automorphisms of universal $\imath$quantum groups, which are shown to satisfy the braid group relations associated to the restricted Weyl group of a symmetric pair. This leads to a conceptual construction of $q$-root vectors and PBW bases for (universal) quasi-split $\imath$quantum groups of ADE type.
\end{abstract}

\maketitle
 \tableofcontents

\section{Introduction}

\subsection{Background}
\subsubsection{}

Lusztig \cite{Lus90a} \cite[Part VI]{Lus93} constructed various automorphisms on quantum groups $\U$ which satisfy the braid group relations, and used them to define PBW and canonical bases for quantum groups of finite type. The braid group actions have played fundamental roles in diverse areas such as geometric representation theory, categorification, and quantum topology.

Ringel's Hall algebra realization of halves of quantum groups $\U^+$ \cite{Rin90} (cf. Green \cite{Gr95} for extension to acyclic quivers) has led to fruitful ways of understanding the quantum groups. The automorphisms of $\U$ by Lusztig can also be understood from a categorical viewpoint via the BGP reflection functors (which were originally introduced in \cite{BGP73} to provide a conceptual explanation of Gabriel's theorem). The reflection functors were adapted for the Hall algebra setting by Ringel  \cite{Rin96} to provide isomorphisms of subalgebras of $\U^+$. Later the reflection functors were extended to automorphisms of the reduced Drinfeld doubles of Hall algebras and quantum groups $\U$, and they were identified with Lusztig's braid group actions on $\U$; see \cite{SV99, XY01}.

A conceptual approach of reinterpreting the BGP reflection functors in terms of tilting modules was developed by \cite{APR79}.
%
%
%
\subsubsection{}

Generalizing the Ringel-Green construction, Bridgeland \cite{Br13} realized  quantum groups $\U$ via Hall algebras of $\Z/2\Z$-graded complexes. His construction has led to several ramifications, such as Gorsky's semi-derived Hall algebras \cite{Gor13} and a more general version of semi-derived (Ringel-)Hall algebras by the first author and Peng \cite{LP}; the construction of semi-derived Hall algebras was further extended to 1-Gorenstein algebras by the first author in \cite{Lu19}.

Recently the authors \cite{LW19} studied in depth the $\imath$Hall algebras associated to $\imath$quivers (aka quivers with involutions) in the framework of semi-derived Hall algebras of 1-Gorenstein algebras. Starting from an acyclic $\imath$quiver $(Q,\btau)$, i.e., a quiver $Q$ with an involution $\btau$ (which is allowed to be trivial), we construct a family of finite-dimensional 1-Gorenstein algebras called {\em $\imath$quiver algebras}. In case of split $\imath$quivers (i.e., $\btau =\Id$), the $\imath$quiver algebras can be identified with $kQ \otimes k [\varepsilon]/ (\varepsilon^2)$, which were studied in \cite{RZ17, GLS17} from very different perspectives. The $\imath$Hall algebras are by definition the twisted semi-derived Hall algebras for the $\imath$quiver algebras.

The notion of $\imath$quantum groups $\Ui$ plays a key role in the definition of quantum symmetric pairs $(\U, \Ui)$ associated to Satake diagrams \cite{Let99, Let02} (also see \cite{Ko14} for extension to the Kac-Moody setting). We view $\imath$quantum groups as a natural generalization of quantum groups. The $\imath$-program (as proposed in \cite{BW18}) aims at providing natural generalizations of various fundamental (algebraic, geometric, and categorical) constructions for quantum groups to $\imath$quantum groups.

The $\imath$Hall algebras associated to  Dynkin $\imath$quivers $(Q,\btau)$ were shown by the authors in \cite{LW19} to provide a realization of the quasi-split $\imath$quantum groups, where the $\imath$quiver with orientation forgotten can be viewed as a Satake diagram with no black nodes; this is a generalization of Bridgeland's construction in \cite{Br13}. Actually the $\imath$Hall algebras are naturally isomorphic to the universal $\imath$quantum groups $\tUi$ (introduced in \cite{LW19}), which contain various central elements. By definition, the $\imath$quantum groups $\Ui=\Ui_{\bvs}$ depend on parameters $\bvs =(\vs_i)_{i\in \I}$, and they can be obtained from $\tUi$ by central reductions.

\subsubsection{}

Kolb-Pellegrini \cite{KP11} constructed (via computer computations) automorphisms of $\Ui$ (over the extension field $\Q(\sqrt{v})$ of $\Q(v)$ for some distinguished parameter $\bvsd$) which satisfy braid group relations. For split type A $\imath$quantum groups, the braid group actions were found earlier in \cite{MR08} and \cite{Ch07}. It was already known by then that Lusztig's symmetries on $\U$ do not preserve $\Ui$ in general. We refer to \S\ref{subsec:KP} below for a more detailed discussion.

Like quantum groups of affine type, the $\imath$quantum groups of affine type are  of considerable interest in quantum integrable models. The $\imath$quantum group of split affine $\mathfrak{sl}_2$ type (i.e., the simplest $\imath$quantum group of affine type) is known as the $q$-Onsager algebra and has been extensively studied in physics, cf. \cite{BB12} and references therein. Lusztig type automorphisms for the $q$-Onsager algebra have recently been constructed and verified via computer computations in \cite{BK17} (also see \cite{Ter17}).

\subsection{The goal}

In this sequel to \cite{LW19}, we construct and study in depth BGP type reflection functors associated to the acyclic $\imath$quivers $(Q, \btau)$, which lead to isomorphisms $\Gamma_i$ (for a sink/source $i$ of $Q$) between $\imath$Hall algebras.

In the case of Dynkin $\imath$quivers, we obtain automorphisms $\tTT_i$ on the universal $\imath$quantum group $\tUi$ via the main isomorphism theorem in \cite{LW19} between $\imath$Hall algebras and $\imath$quantum groups. We show that the automorphisms $\tTT_i$ satisfy the braid group relations associated to the restricted Weyl group $W_\btau$ of the symmetric pair. For the distinguished parameter $\bvsd$ in \eqref{eq:bvsd}, the automorphisms $\tTT_i$ descend to automorphisms of $\Ui_{\bvsd}$, which are then identified with a variant of braid group symmetries given in \cite{KP11}.

For Dynkin $\imath$quivers, we then construct the braid group symmetries on $\Ui$ with arbitrary parameters $\bvs$.  The reflection functors allow us to define the $q$-root vectors in $\tUi$ (and $\Ui$). This leads to natural PBW bases for $\tUi$ (and for $\Ui$).

\subsection{An overview}

Geiss-Leclerc-Schr\"oer \cite{GLS17} generalized Gabriel's classic theorem to non-simply-laced types BCFG using certain quivers with relations.  The construction of BGP-type reflection functors in this paper bears similarities with the corresponding construction in \cite{GLS17}, as $\imath$quiver algebras can also be realized as modulated graphs; see \S\ref{subsec:modulated}. We also realize reflection functors in terms of tilting modules.

For a sink $\ell$ of $Q_0$, let $\bs_\ell Q$ be the quiver obtained from $Q$ by reversing the arrows ending at $\ell$ and $\btau \ell$. Denote by $\iLa$ (respectively, $\bs_\ell \iLa$)  the $\imath$quiver algebra for the $\imath$quiver $(Q,\btau)$ (respectively, $(\bs_\ell Q,\btau)$). A reflection functor $F_\ell ^+: \mod(\Lambda^{\imath})\rightarrow \mod(\bs_\ell \iLa )$ is defined in \eqref{def:RefF}.

\begin{customthm}{\bf A}  [Theorem~\ref{thm:tilting module}, \eqref{cor:Tiso}]
Let $\ell$ be a sink of $Q_0$.

(1) The module $T$ in \eqref{eqn:def of T} is a tilting module of $\Lambda^{\imath}$.

(2) The functors
$F_\ell ^+: \mod(\Lambda^{\imath})\rightarrow \mod(\bs_\ell \iLa )$ and $\Hom_{\Lambda^{\imath}}(T, -): \mod(\Lambda^{\imath})\rightarrow \mod(B)$
are equivalent, where $B=\End_{\Lambda^{\imath}}(T)^{op}$.

(3) We have an algebra isomorphism $\bs_\ell \iLa \cong \End_{\Lambda^{\imath}}(T)^{op}$.
\end{customthm}

The construction of reflection functors provides algebra isomorphisms between $\imath$Hall algebras $\tMH$ thanks to the tilting invariance property  (cf. Theorem ~{\bf A} and \cite[Theorem~A.22]{LW19}).

\begin{customprop}{\bf B}  [Theorem~\ref{thm:Gamma} and Proposition~\ref{prop:reflection}]
Let $\ell$ be a sink of $Q_0$. The functor
$F_\ell ^+$ induces an isomorphism of $\imath$Hall algebras,
$\Gamma_{\ell}:\tMH  \stackrel{\sim}{\longrightarrow}  \tMHl$, with explicit formulas for the action on the generators given by \eqref{eqn:reflection 1}--\eqref{eqn:reflection 4}.
\end{customprop}
The isomorphisms lift to generic $\imath$Hall algebras $\tMHg$ associated to acyclic $\imath$quivers. When composed with Fourier transforms, these isomorphisms become automorphisms of $\tMHg$.

The isomorphism $\Gamma_i$ is fairly explicit and computable thanks to Theorem~{\bf A}. For Dynkin $\imath$quivers, the actions of $\Gamma_i$ on the generators of $\tMH$, which consist of the classes of (generalized) simple modules, are computed explicitly; see Lemmas~\ref{lemma:reflection for split involution}--\ref{lem:RefFunctorE}. The formulas for $\Gamma_i$ can be lifted to generic $\imath$Hall algebras.

While our framework is valid for acyclic $\imath$quivers, our further results are most complete for {\em Dynkin} $\imath$quivers $(Q, \btau)$; we shall assume that $(Q, \btau)$ are Dynkin $\imath$quivers in the remainder of the introduction.

Thanks to the isomorphisms between $\imath$Hall algebras and $\tUi$ (proved in \cite{LW19}), we convert in \eqref{eq:defT} the isomorphisms $\Gamma_i$ to automorphisms $\tTT_i$ on  (quasi-split) universal $\imath$quantum groups $\tUi$ of finite type. In particular, we convert the formulas for $\Gamma_i$ on the $\imath$Hall algebras to formulas for $\tTT_i$ on the generators of $\tUi$; see Lemmas \ref{lemma:braid group of split involution}--\ref{lem:braid group of E}. Denote by $\brW =\langle t_i \mid i\in \ci \rangle$ the braid group associated to the restricted Weyl group $W_\btau$ of the symmetric pair underlying $(\U, \Ui)$; see \eqref{eq:Wtau} and \eqref{eq:braidCox}--\eqref{eq:BrBr}. Let $\F$ denote a field extension of $\Q(v)$ given in \eqref{def:ai}.

\begin{customthm}{\bf C} [Theorem~\ref{thm:bgtui}, Theorem~\ref{thm:braidUigeneral}]
Let $(Q,\btau)$ be a Dynkin $\imath$quiver. Then there exists a homomorphism
$\brW \rightarrow \aut( \tUi)$ (respectively, $\brW \rightarrow \aut({}_\F\Ui)$), $t_i\mapsto \tTT_i$, for $i\in \ci $.
\end{customthm}

A direct verification using a presentation of $\tUi$ would be extremely tedious and long, as demonstrated in the case of quantum groups \cite[Part VI]{Lus93}. We verify the braid group relations for $\imath$quantum groups in Theorem~{\bf C} via computations in $\imath$Hall algebras. Note that a verification of the braid group relations for a quantum group $\U$ in the setting of {\em reduced} Drinfeld doubles of Ringel-Hall algebras was given in \cite{SV99, XY01}. Based on these works, the braid group actions on $\U$ were also recovered from reflection functors in the setting of semi-derived Hall algebras  \cite{Gor13}.  The braid group actions in the modified quantum group are realized via BGP reflection functors in the setting of Drinfeld doubles of Ringel-Hall algebras in \cite{XZ13}.

The reduced $\imath$Hall algebra $\rMH =\rMH_\bvs$ is by definition the quotient of $\tMH$ by the ideal generated by \eqref{eqn: reduce1} associated to a parameter $\bvs$. Let $\bvsd$ denote the distinguished parameter given by formula \eqref{eq:bvsd}. For any sink $i\in Q_0$, $\Gamma_i$ in Theorem~{\bf B} descends to an algebra isomorphism {\em naturally} (for this distinguished parameter $\bvsd$ {\em only}) $\bar{\Gamma}_i: \rMHd \xrightarrow{\sim} \rMHi$ (see Proposition~\ref{prop:derived invariant of QSP}). This leads to an automorphism $\tTT_i$ on $\Ui_{\bvsd}$; cf. Corollary~\ref{cor:braidUi1}.

Up to a base change \eqref{def:ai}--\eqref{def:basechange}, the algebras ${}_\F\Ui ={}_\F\Ui_\bvs$ for different $\bvs$ are all isomorphic via conjugations by some rescaling isomorphisms. Then we can convert the automorphism $\TT_i$ of $\Ui_{\bvsd}$ to an automorphism, again denoted by $\TT_i$, of ${}_\F\Ui$ (for a general parameter $\bvs$).

We emphasize that Theorem~{\bf C} on braid group actions is expected to hold for $\tUi$ and $\Ui$ of Kac-Moody type associated to acyclic $\imath$quivers (see Conjecture~\ref{conj:braidKM}). Indeed, the same arguments in this paper already work for various simply-laced acyclic $\imath$quivers (including most affine ADE types).

We define (see Definition~\ref{def:admissible}) a notion of $\imath$-admissible sequences associated to Dynkin $\imath$quiver $(Q, \btau)$, generalizing the $(+)$-admissible sequences associated to reduced expressions of elements in a Weyl group $W$; cf., e.g., \cite{DDPW}. In case when $\btau=\Id$, $\imath$-admissible sequences are just $(+)$-admissible sequences. Note that the Weyl groups $W$ and $W_\btau$ share the same longest element $w_0$, and we denote by $N_{\imath}$ the length of $w_0$ in $W_\btau$.

\begin{customthm}{\bf D}  [Theorem~\ref{thm:i-seq}]
Let $(Q,\btau)$ be a Dynkin $\imath$quiver. Then there is an $\imath$-admissible sequence $i_1,\dots,i_{\Ni}$ such that
\[
\beta_{1}, \btau(\beta_1), \beta_{2}, \btau(\beta_2), \dots,\beta_{\Ni },\btau(\beta_{\Ni})
\]
is a $Q$-admissible sequence of roots in $\Phi^+$, where $\beta_j$ is defined in \eqref{def:betaj}. (By convention the redundant $\btau(\beta_j)$ is omitted here whenever $\btau(\beta_j)=\beta_j$.)
\end{customthm}

The braid group action allows us to study $q$-root vectors $B_\beta$, for $\beta\in \Phi^+$, in \eqref{def: B beta} for $\tUi$ (or $\Ui$) systematically.

\begin{customthm}{\bf E} [PBW basis Theorem~\ref{thm:PBWUi}]
For any ordering $\gamma_1,\ldots,\gamma_N$ of the roots in $\Phi^+$, the set
\[
\{ B_{\gamma_1}^{\lambda_1}\cdots B_{\gamma_N}^{\lambda_N}\mid \lambda\in \fp \}
\]
provides a basis of $\Ui$ as a right $\U^{\imath 0}$-module, where $\lambda_i=\lambda(\gamma_i)$ for $1\le i\le N$.
\end{customthm}

In analogy with quantum groups, one expects that the PBW bases associated to the $\imath$-admissible sequences may have some additional favorable property.

\subsection{Comparison with earlier work}
  \label{subsec:KP}

There has been a construction by Kolb-Pellegrini \cite{KP11} of braid group actions $\brW$ on $\Ui_{\bvsd}$ over the extension field $\Q(\sqrt{v})$ of $\Q(v)$; it is remarkable that they chose to work with the same parameter $\bvsd$ for perhaps computational reason while $\bvsd$ is distinguished from categorical viewpoint in this paper. Their construction covers all quasi-split finite type and also $\imath$quantum groups of type AII. 
Let us denote by $\tTT_i'$ the automorphism on $\Q(\sqrt{v}) \otimes_{\Q(v)} \Ui_{\bvsd}$ defined in \cite{KP11}.

We show that a conjugation by a nontrivial rescaling automorphism  of $\Q(\sqrt{v}) \otimes_{\Q(v)} \Ui_{\bvsd}$ changes $\tTT_i'$ to $\tTT_i$, see Remark~\ref{rem:KP11}; in this way we recover the braid group actions in \cite{KP11} for $\Ui$ arising from Dynkin $\imath$quivers.  The formulas for $\TT_i'$ and the claim that $\TT_i'$ are automorphisms and they satisfy the braid relations were discovered and verified in {\em loc. cit.} by computer computations. By contrast, in our approach there is no need to verify that $\TT_i$ is an automorphism using a presentation of $\tUi$ or $\Ui$, as it follows conceptually from its categorical origin. The question of developing a computer-free approach was raised in \cite{KP11}. We emphasize that our approach makes sense in Kac-Moody setting.

While the braid group relations at the level of $\tUi$ imply the braid relations at the level of $\Ui_{\bvsd}$, the converse is not true. The action of $\tTT_i$ on $\tUi$ moves its central elements around, while fixing the scalars (and hence the chosen parameters $\vs_i$).

\subsection{Further directions}

The approach developed in \cite{LW19} and this paper opens up new venues for research. Besides those alluded {\em loc. cit.}, let us list additional topics which are more in line with the present work. We plan to take up some of the problems in the sequels.

The construction of reflection functors in this paper is valid for all acyclic $\imath$quivers,  leading to automorphisms of $\imath$Hall algebras.
One natural question is to show that they give rise to braid group actions for $\imath$quantum groups of Kac-Moody type (cf. Conjecture~\ref{conj:braidKM}); the conjecture can be verified already in various cases beyond finite type. A further development of our previous work \cite{CLW18} with Chen will also provide another complementary conceptual approach toward Lusztig type automorphisms for $\Ui$ of Kac-Moody type.
In particular, the construction of automorphisms of the $q$-Onsager algebra (cf. \cite{BK17, Ter17}) will be recovered as the simplest case in such generalizations.

It should be possible (though highly nontrivial) to apply braid group operators to construct PBW bases for $\imath$quantum groups of affine type; cf. \cite{BK17} for the split affine $\mathfrak{sl}_2$ case.

It will be very useful to construct braid group actions of $\brW$ on the module level. This was done for the usual quantum groups by Lusztig \cite{Lus93}.

There is an inner product on the modified $\imath$quantum group $\dot \U^\imath$ defined in \cite{BW18b}. We may ask for the compatibility between the braid group action and the inner product. It is well known that similar compatibility holds for quantum groups \cite{Lus93}.

There are integral forms of modified $\imath$quantum groups $\dot \U^\imath$ \cite{BW18b}. It will be interesting to understand the braid group actions on the integral forms and more explicitly on $\imath$divided powers; compare  with \cite[Part~VI]{Lus93}. Furthermore, it will be important if a PBW approach toward the $\imath$canonical basis for $\dot\U^\imath$ (established first in \cite{BW18b}) can be developed.

Our $\imath$quiver approach excludes $\Ui$ of type $A_{2r}$ with nontrivial $\btau$ (since in this case $\btau$ does not preserve any orientation on $A_{2r}$). It will be of great interest to develop a more flexible and general framework for $\imath$Hall algebras to cover this missing case.

In \cite{LW19} and this paper, we have set up a framework to study the $\imath$quantum groups $\Ui$ from a Hall algebra approach. It will be of great interest to develop this approach fully to cover the $\imath$quantum groups associated to Satake diagrams (with black nodes). Then one should be able to extend the construction of reflection functors and braid group actions in this paper to the general setting as well. Recently Dobson \cite{D19} developed an algebraic approach to the braid group action for $\imath$quantum groups of type AIII/AIV (whose Satake diagram contains black nodes).


%
%
\subsection{Organization}

This paper is organized as follows.
In Section~\ref{sec:prelim}, we recall the basics on quantum groups and (universal) quantum symmetric pairs. We review the $\imath$Hall algebra approach to the $\imath$quantum groups from \cite{LW19}.

In Section~\ref{sec:RF}, we introduce the BGP type reflection functors and APR-tilting for $\imath$quivers. The Auslander-Reiten quivers for the rank 2 $\imath$quivers are described explicitly. These reflection functors are shown in Section~\ref{sec:symmetryHall} to give rise to isomorphisms $\Gamma_i$ between $\imath$Hall algebras, using the tilting invariance property. The formulas for the isomorphisms $\Gamma_i$ on (generalized) simple modules are worked out explicitly for Dynkin $\imath$quivers.

In Section~\ref{sec:symmetryUi}, for Dynkin $\imath$quivers, we convert the isomorphism $\Gamma_i$ on $\imath$Hall algebras (with formulas of $\Gamma_i$ on simple modules) to an automorphism $\TT_i$ of $\tUi$ (with formulas on generators).
Then in Section~\ref{sec:braid}, we verify the braid group relations between $\TT_i$ via $\imath$Hall algebra computations.

In Section~\ref{sec:distinguished}, at the distinguished parameter $\bvsd$,  the isomorphisms $\Gamma_i$ are shown to factor through reduced $\imath$Hall algebras. Accordingly, the automorphisms $\TT_i$ factor through $\imath$quantum group $\Ui_{\bvsd}$ and they satisfy the braid group relations. We explain the precise relation between our construction and the earlier construction of braid group actions in \cite{KP11}.

In Section~\ref{sec:PBW}, we construct the braid group actions on $\Ui$ with general parameter. Then we define the $q$-root vectors and use them to construct PBW bases for $\tUi$ and $\Ui$.

\subsection*{Acknowledgments}
We thank Institute of Mathematics at Academia Sinica (Taipei) and East China Normal University (Shanghai) for hospitality and support where part of this work was carried out. WW is partially supported by NSF grant DMS-1702254 and DMS-2001351. We thank 2 anonymous referees for their careful readings and suggestions which help make this paper more readable.

%

%

\section{The preliminaries}
\label{sec:prelim}

We review and set up notations for quantum groups, $\imath$quantum groups, and $\imath$Hall algebras.

\subsection{Notations}
\label{subsec: notat}
For an additive category $\ca$ and $M\in \ca$, we denote

$\triangleright$ $\add M$ -- the full subcategory of $\ca$ whose objects are the direct summands of finite direct sums of copies of $M$,

$\triangleright$ $\Fac M$-- the full subcategory of $\ca$ of epimorphic images of objects in $\add M$,

$\triangleright$ $\Ind (\ca)$ --  set of the isoclasses of indecomposable objects in $\ca$,

$\triangleright$ $\Iso(\ca)$ -- set of the isoclasses of objects in $\ca$,

$\triangleright$ $[M]$ -- the isoclass of $M$.
\medskip

For an exact category $\ca$ and $M\in\ca$, we denote

$\triangleright$ $K_0(\ca)$ -- the Grothendieck group of $\ca$,

$\triangleright$ $\widehat{M}$ -- the class of $M$ in $K_0(\ca)$.

\medskip

Let $\K$ be a field.
For a finite-dimensional $k$-algebra $A$, we denote

$\triangleright$ $\mod(A)$ -- category of finite-dimensional left $A$-modules,

$\triangleright$ $\proj(A)$ -- category of finite-dimensional left projective $A$-modules,

$\triangleright$ $\inj(A)$ -- category of finite-dimensional left injective $A$-modules,


$\triangleright$ $D=\Hom_\K(-,\K)$ -- the standard duality,

$\triangleright$ $D^b(A)$ -- bounded derived category of finite-dimensional $A$-modules.

\medskip

Let $Q=(Q_0, Q_1)$ be a quiver. We denote (cf. \cite{ASS04})

$\triangleright$ $e_i$ -- the trivial path at $i\in Q_0$,

$\triangleright$ $(Q,I)$  -- a bound quiver, where $Q$ is a quiver and $I$ is an admissible ideal of $kQ$,

$\triangleright$ $kQ/I$ -- the path algebra of the bound quiver $(Q,I)$,

$\triangleright$ $\rep(Q,I)$ -- category of finite-dimensional representations of $(Q,I)$.

It is well known that $\rep(Q,I) \simeq \mod(kQ/I)$, and we shall always identify them. 
\subsection{The $\imath$quivers and doubles}
Let $Q=(Q_0,Q_1)$ be an acyclic quiver (i.e., a quiver without oriented cycles), and we sometimes write $\I =Q_0$. An {\em involution} of $Q$ is defined to be an automorphism $\btau$ of the quiver $Q$ such that $\btau^2=\Id$. In particular, we allow the {\em trivial} involution $\Id:Q\rightarrow Q$. An involution $\btau$ of $Q$ induces an involution of the path algebra $\K Q$, again denoted by $\btau$.
A quiver together with an involution $\btau$, $(Q, \btau)$, will be called an {\em $\imath$quiver}.

Let $R_1$ denote the truncated polynomial algebra $\K[\varepsilon]/(\varepsilon^2)$.
Let $R_2$ denote the radical square zero of the path algebra of $\xymatrix{1 \ar@<0.5ex>[r]^{\varepsilon} & 1' \ar@<0.5ex>[l]^{\varepsilon'}}$, i.e., $\varepsilon' \varepsilon =0 =\varepsilon\varepsilon '$. Define a $\K$-algebra
\begin{equation}
  \label{eq:La}
\Lambda=\K Q\otimes_\K R_2.
\end{equation}

Associated to the quiver $Q$, the {\em double framed quiver} $Q^\sharp$
is the quiver such that
\begin{itemize}
\item the vertex set of $Q^{\sharp}$ consists of 2 copies of the vertex set $Q_0$, $\{i,i'\mid i\in Q_0\}$;
\item the arrow set of $Q^{\sharp}$ is
\[
\{\alpha: i\rightarrow j,\alpha': i'\rightarrow j'\mid(\alpha:i\rightarrow j)\in Q_1\}\cup\{ \varepsilon_i: i\rightarrow i' ,\varepsilon'_i: i'\rightarrow i\mid i\in Q_0 \}.
\]
\end{itemize}
Let $I^{\sharp}$ be the admissible ideal of $\K Q^{\sharp}$ generated by
\begin{itemize}
\item
(Nilpotent relations) $\varepsilon_i \varepsilon_i'$, $\varepsilon_i'\varepsilon_i$ for any $i\in Q_0$;
\item
(Commutative relations) $\varepsilon_j' \alpha' -\alpha\varepsilon_i'$, $\varepsilon_j \alpha -\alpha'\varepsilon_i$ for any $(\alpha:i\rightarrow j)\in Q_1$.
\end{itemize}
Then the algebra $\La$ can be realized as $\Lambda\cong \K Q^{\sharp} \big/ I^{\sharp}$.

Note $Q^\sharp$ admits a natural involution, $\swa$.
The involution $\btau$ of a quiver $Q$ induces an involution ${\btau}^{\sharp}$ of $Q^{\sharp}$ which is the composition of $\swa$ and $\btau$ (on the two copies of subquivers $Q$ and $Q'$ of $Q^\sharp$).

%

By \cite[Lemma~2.4]{LW19}, ${\btau}^{\sharp}$ on $Q^\sharp$ preserves $I^\sharp$ and hence induces an involution ${\btau}^{\sharp}$ on the algebra $\Lambda$. By \cite[Definition 2.5]{LW19}, the {\rm $\imath$quiver algebra} of $(Q, \btau)$ is the fixed point subalgebra of $\Lambda$ under ${\btau}^{\sharp}$,
\begin{equation}
   \label{eq:iLa}
\iLa
= \{x\in \Lambda\mid {\btau}^{\sharp}(x) =x\}.
\end{equation}
The algebra $\iLa$ can be described in terms of a certain quiver $\ov Q$ and its admissible ideal $\ov{I}$ so that $\iLa \cong \K \ov{Q} / \ov{I}$; see \cite[Proposition 2.6]{LW19}.
We recall $\ov{Q}$ and $\ov{I}$ as follows:
\begin{itemize}
\item[(i)] $\ov{Q}$ is constructed from $Q$ by adding a loop $\varepsilon_i$ at the vertex $i\in Q_0$ if $\btau i=i$, and adding an arrow $\varepsilon_i: i\rightarrow \btau i$ for each $i\in Q_0$ if $\btau i\neq i$;
\item[(ii)] $\ov{I}$ is generated by
\begin{itemize}
\item[(1)] (Nilpotent relations) $\varepsilon_{i}\varepsilon_{\btau i}$ for any $i\in\I$;
\item[(2)] (Commutative relations) $\varepsilon_i\alpha-\btau(\alpha)\varepsilon_j$ for any arrow $\alpha:j\rightarrow i$ in $Q_1$.
\end{itemize}
\end{itemize}
Moreover, it follows by \cite[Proposition 3.5]{LW19} that $\Lambda^{\imath}$ is a $1$-Gorenstein algebra.


By \cite[Corollary 2.12]{LW19}, $\K Q$ is naturally a subalgebra and also a quotient algebra of $\Lambda^\imath$.
Viewing $\K Q$ as a subalgebra of $\Lambda^{\imath}$, we have a restriction functor
\[
\res: \mod (\Lambda^{\imath})\longrightarrow \mod (\K Q).
\]
Viewing $\K Q$ as a quotient algebra of $\Lambda^{\imath}$, we obtain a pullback functor
\begin{equation}\label{eqn:rigt adjoint}
\iota:\mod(\K Q)\longrightarrow\mod(\Lambda^{\imath}).
\end{equation}
Hence a simple module $S_i (i\in Q_0)$ of $k Q$ is naturally a simple $\iLa$-module.

For each $i\in Q_0$, define a $k$-algebra (which can be viewed as a subalgebra of $\iLa$)
\begin{align}\label{dfn:Hi}
\BH _i:=\left\{ \begin{array}{cc}  \K[\varepsilon_i]/(\varepsilon_i^2) & \text{ if }\btau i=i,
 \\
\K(\xymatrix{i \ar@<0.5ex>[r]^{\varepsilon_i} & \btau i \ar@<0.5ex>[l]^{\varepsilon_{\btau i}}})/( \varepsilon_i\varepsilon_{\btau i},\varepsilon_{\btau i}\varepsilon_i)  &\text{ if } \btau i \neq i .\end{array}\right.
\end{align}
Note that $\BH _i=\BH _{\btau i}$ for any $i\in Q_0$. 
Choose one representative for each $\btau$-orbit on $\I=Q_0$, and let
\begin{align}   \label{eq:ci}
\ci = \{ \text{the chosen representatives of $\btau$-orbits in $\I$} \}.
\end{align}

Define the following subalgebra of $\Lambda^{\imath}$:
\begin{equation}  \label{eq:H}
\BH =\bigoplus_{i\in \ci }\BH _i.
\end{equation}
Note that $\BH $ is a radical square zero selfinjective algebra. Denote by
\begin{align}
\res_\BH :\mod(\iLa)\longrightarrow \mod(\BH )
\end{align}
the natural restriction functor.
On the other hand, as $\BH $ is a quotient algebra of $\iLa$ (cf. \cite[proof of Proposition~ 2.15]{LW19}), every $\BH $-module can be viewed as a $\iLa$-module.

For $i\in \I$, define the indecomposable module over $\BH _i$ (if $i\in \ci$) or over $\BH_{\btau i}$ (if $i\not \in \ci$)
\begin{align}
  \label{eq:E}
\E_i =\begin{cases}
k[\varepsilon_i]/(\varepsilon_i^2), & \text{ if }\btau i=i;
\\
\xymatrix{\K\ar@<0.5ex>[r]^1 & \K\ar@<0.5ex>[l]^0} \text{ on the quiver } \xymatrix{i\ar@<0.5ex>[r]^{\varepsilon_i} & \btau i\ar@<0.5ex>[l]^{\varepsilon_{\btau i}} }, & \text{ if } \btau i\neq i.
\end{cases}
\end{align}
Then $\E_i$, for $i\in Q_0$, can be viewed as a $\iLa$-module and will be called a {\em generalized simple} $\iLa$-module.

We have the following projective resolution of $\E_i$:
\begin{equation}\label{eqn:projective resolution of E}
0\longrightarrow \bigoplus_{(\alpha: i\rightarrow j)\in Q_1} \Lambda^{\imath}\, e_j\longrightarrow \Lambda^{\imath}\,e_i\longrightarrow \E_i\longrightarrow0.
\end{equation}
We also have the following injective resolution of $\E_i$:
\begin{equation}\label{eqn:injective resolution of E}
0\longrightarrow \E_i \longrightarrow D(e_{\btau i} \Lambda^{\imath})\longrightarrow \bigoplus_{(\alpha:j\rightarrow \btau i)\in Q_1}D(e_{j}  \Lambda^{\imath})\longrightarrow0.
\end{equation}

\subsection{$\imath$Hall algebras}

In this subsection we consider $k=\F_q$, and
\[
\sqq=\sqrt{q}.
\]

Generalizing \cite{LP}, the first author defined a (twisted) semi-derived  Hall algebra of a 1-Gorenstein algebra; see \cite{Lu19}. The $\imath$Hall algebra $\tMH$ for $\imath$quiver $(Q,\btau)$ is by definition the twisted semi-derived Hall algebra for the module category of the $\imath$quiver algebra $\iLa$ \cite{LW19}, and its generic version is denoted by $\tMHg$.
For convenience, we recall it here briefly.

Let $\ch(\Lambda^\imath)$ be the Ringel-Hall algebra of $\Lambda^\imath$, i.e.,
$$\ch(\Lambda^\imath)=\bigoplus_{[M]\in\Iso(\mod(\Lambda^\imath))}\Q(\sqq)[M],$$
with the multiplication defined by (see \cite{Br13})
\[
[M]\diamond [N]=\sum_{[M]\in\Iso(\mod(\Lambda^\imath))}\frac{|\Ext^1(M,N)_L|}{|\Hom(M,N)|}[L].
\]
Note that this Hall product here is different from the one defined by Ringel \cite{Rin90}.
Let $I$ be the two-sided ideal of $\ch(\Lambda^\imath)$ generated by all differences $[L]-[K\oplus M]$ if there is a short exact sequence
\begin{equation}
  \label{eq:ideal}
 0 \longrightarrow K \longrightarrow L \longrightarrow M \longrightarrow 0
\end{equation}
in $\mod(\Lambda^\imath)$ with $K\in\cp^{<\infty}(\Lambda^\imath)$. Here $\cp^{<\infty}(\Lambda^\imath)$ is the subcategory of $\mod(\Lambda^\imath)$ formed by modules of finite projective dimensions.

Consider the following multiplicatively closed subset $\cs$ of $\ch(\Lambda^\imath)/I$:
\begin{equation}
  \label{eq:Sca}
\cs = \{ a[K] \in \ch(\Lambda^\imath)/I \mid a\in \Q(\sqq)^\times, K\in \cp^{<\infty}(\Lambda^\imath)\}.
\end{equation}

The semi-derived (Ringel-)Hall algebra of $\Lambda^\imath$ \cite{Lu19} is defined to be the localization
$$\utMH:= (\ch(\Lambda^\imath)/I)[\cs^{-1}].$$

Let $\langle\cdot,\cdot \rangle_Q$ be the Euler form of $Q$,  i.e.,
\[
\langle M,N\rangle_Q=\dim_k\Hom_{kQ}(M,N)-\dim_k\Ext^1_{kQ}(M,N)
\]
 for $M,N\in\mod(kQ)$.
We define the {\em $\imath$Hall algebra} $\tMH$ as a twisted semi-derived Hall algebra \cite[\S4.4]{LW19}, that is, the $\Q(\sqq)$-algebra on the same vector space as $\utMH$ but with twisted multiplication given by
\begin{align}
   \label{eqn:twsited multiplication}
[M]* [N] =\sqq^{\langle \res(M),\res(N)\rangle_Q} [M]\diamond[N].
\end{align}

Let $\tTL$ be the subalgebra of $\tMH$ generated by $\E_\alpha$, $\alpha\in K_0(\mod(\K Q))$, which is a Laurent polynomial algebra in $[\E_i]$, for $i\in \I$.


Let $\bvs=(\vs_i)\in   (\Q(\sqq)^\times)^{\I}$ be such that $\vs_i=\vs_{\btau i}$ for each $i\in \I$. The \emph{reduced $\imath$Hall algebra associated to $(Q,\btau)$} \cite[Definition 4.11]{LW19}, denoted by $\rMH$, is defined to be the quotient $\Q(\sqq)$-algebra of $\tMH$ by the ideal generated by the central elements
\begin{align}
\label{eqn: reduce1}
[\E_i] +q \vs_i \; (\forall i\in \I \text{ with } \btau i=i), \text{ and }\; [\E_i]*[\E_{\btau i}] -\vs_i^2\; (\forall i\in \I \text{ with }\btau i\neq i).
\end{align}

\subsection{Generic $\imath$Hall algebras}
\label{sub:generic}

For a Dynkin $\imath$quiver $(Q,\btau)$,
we recall the generic $\imath$Hall algebras defined in \cite[\S9.3]{LW19}.
Let $\Phi^+$ be the set of positive roots of $Q$ with simple roots $\alpha_i$, $i\in Q_0$.
For any $\alpha\in\Phi^+$, denote by $M_q(\alpha)$ its corresponding indecomposable $\K Q$-module, i.e., $\dimv M_q(\alpha)=\alpha$.
Let $\mathfrak{P}:=\mathfrak{P}(Q)$ be the set of functions $\lambda: \Phi^+\rightarrow \N$.
Then the modules
\begin{align}
\label{def:Mlambda}
M_q(\lambda):= \bigoplus_{\alpha\in\Phi^+}\lambda(\alpha) M_q(\alpha),\quad \text{ for } \lambda\in\mathfrak{P},
\end{align}
provide a complete set of isoclasses of $\K Q$-modules.

Let $\Phi^0$ denote a set of symbols $\upgamma_i$, i.e., $\Phi^0=\{\upgamma_{i}\mid  i\in \I\}$, and let
\begin{align}
  \label{eq:Phi2}
  \Phi^\imath=&\Phi^+\cup \Phi^{0},
  \\
  \label{eq:tPi}
\tilde{\fp}^\imath=&\{\lambda: \Phi^\imath \rightarrow \Z \mid \lambda(\alpha)\in \N\,\, \forall \alpha\in \Phi^+,  \lambda(\upgamma_i)\in\Z\,\, \forall \upgamma_i\in\Phi^0\}.
\end{align}
Then for $\lambda\in \tilde{\fp}^\imath$, define
\[
[M_q(\lambda)]:=[ \oplus_{\alpha\in\Phi^+} {\lambda(\alpha)}M_q(\alpha)]*[\E_{\sum_{i\in \I}\lambda(\upgamma_i)\alpha_i}].
\]

It is shown in \cite[Lemma 9.6]{LW19} that for every $\lambda,\mu,\nu \in \tilde{\fp}^\imath$ there exists a polynomial $\boldsymbol{\varphi}^\lambda_{\mu,\nu}(\bv)\in\Z[\bv,\bv^{-1}]$ such that
\[
[M_q(\mu)]*[M_q(\nu)]=\sum_{\lambda\in\tilde{\fp}^\imath}\boldsymbol{\varphi}^\lambda_{\mu,\nu}({\sqq})[M_q(\lambda)].
\]

The algebra $\tMHg$ is defined to be the free $\Q(\bv)$-module with a basis $\{\fu_\lambda\mid \lambda\in\tilde{\fp}^\imath\}$ with multiplication
\begin{align}
\fu_\mu*\fu_\nu=\sum_{\lambda\in\tilde{\fp}^\imath}\boldsymbol{\varphi}_{\mu,\nu}^\lambda(\bv)\fu_\lambda.
\end{align}
{Let $\bvs=(\vs_i)\in   (\Q(v)^\times)^{\I}$ be such that $\vs_i=\vs_{\btau i}$ for each $i\in \I$. Its reduced generic version, denoted by $\rMHg$, is defined to be the quotient of $\tMHg$ by the ideal generated by
\begin{align}
   \label{eqn:specializing3}
\fu_{\upgamma_i} +v^2\vs_i \; (\text{for }\btau i=i),\qquad
\fu_{\upgamma_i}*\fu_{\btau \upgamma_i} -\vs_i^2 \; (\text{for }\btau i\neq i).
\end{align}


%

%
%
\subsection{Quantum groups}
  \label{subsec:QG}

For a  (generalized) Cartan matrix $C=(c_{ij})$, let $\Aut(C)$ be the group of all permutations $\btau$ of the set $\I$ such that $c_{ij}=c_{\btau i,\btau j}$. An element $\btau\in\Aut(C)$ is called an \emph{involution} if $\btau^2=\Id$.

Let $Q$ be an acyclic quiver with vertex set $Q_0= \I$.
Let $n_{ij}$ be the number of edges connecting vertex $i$ and $j$. Let $C=(c_{ij})_{i,j \in \I}$ be the symmetric generalized Cartan matrix of the underlying graph of $Q$, defined by $c_{ij}=2\delta_{ij}-n_{ij}.$ Let $\fg$ be the corresponding Kac-Moody Lie algebra. Let $\alpha_i$ ($i\in\I $) be the simple roots of $\fg$, and denote the root lattice by $\Z^{\I}:=\Z\alpha_1\oplus\cdots\oplus\Z\alpha_n$. The {\em simple reflection} $s_i:\Z^{\I}\rightarrow\Z^{\I}$ is defined to be $s_i(\alpha_j)=\alpha_j-c_{ij}\alpha_i$, for $i,j\in \I$.
Denote the Weyl group by $W =\langle s_i\mid i\in \I\rangle$.

Let $\btau$ be an involution of $Q$. We assume that $c_{i,\btau i}=0$ for all $i\neq \btau i$, which always holds for the {\em Dynkin} $\imath$quivers. We denote by $\bs_{i}$ the following element of order 2 in the Weyl group $W$, i.e.,
\begin{align}
\label{def:simple reflection}
\bs_i= \left\{
\begin{array}{ll}
s_{i}, & \text{ if } \btau i=i;
\\
s_is_{\btau i}, & \text{ if } \btau i\neq i.
\end{array}
\right.
\end{align}
It is well known (cf., e.g., \cite{KP11}) that the {\rm restricted Weyl group} associated to the quasi-split symmetric pair $(\fg, \fg^\theta)$ can be identified with the following subgroup $W_\btau$ of $W$:
\begin{align}
  \label{eq:Wtau}
W_{\btau} =\{w\in W\mid \btau w =w \btau\}
\end{align}
where $\btau$ is regarded as an automorphism of $\Aut(C)$. Moreover, it admits the following property  (cf., e.g., \cite{KP11}), which can also be checked directly using \eqref{def:simple reflection}.

\begin{lemma}
  \label{lem:iWeyl}
The restricted Weyl group $W_{\btau}$ can be identified with a Weyl group with $\bs_i$ ($i\in \I_\btau$) as its simple reflections. More precisely, associated to the Dynkin $\imath$quivers $(Q,\btau)$, we have
\begin{align*}
 W_{\btau} =
 \begin{cases}
 W, & \text{ if } \btau =\id,
 \\
  W(B_{r+1}), & \text{ if } \btau \neq \id, \text{ for }Q \text{ of type } A_{2r+1} \text{ or }D_{2r},
  \\
  W(F_4), & \text{ if } \btau \neq \id, \text{ for } Q \text{ of type } E_6.
 \end{cases}
\end{align*}
\end{lemma}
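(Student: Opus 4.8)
The plan is to realize $W_\btau$ as the fixed-point subgroup $W^{\btau}$ of $W$ under the automorphism $s_i\mapsto s_{\btau i}$ induced by the diagram involution $\btau$, and then to read off its structure from the classical \emph{folding} of Coxeter systems (this refines the identification of $W_\btau$ with the restricted Weyl group recalled above from \cite{KP11}). The easy inclusion $\langle\bs_i\mid i\in\ci\rangle\subseteq W_\btau$ comes for free: since $\btau s_j\btau^{-1}=s_{\btau j}$, and since the standing hypothesis $c_{i,\btau i}=0$ forces $s_i$ and $s_{\btau i}$ to commute whenever $\btau i\neq i$, one gets $\btau\bs_i\btau^{-1}=\bs_i$ in both cases of \eqref{def:simple reflection}, so every $\bs_i$ is $\btau$-fixed.

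For the reverse inclusion I would invoke the structure theorem for fixed subgroups of Coxeter groups under a finite-order diagram automorphism (applicable here since $Q$ is Dynkin, so that $W$, hence every parabolic $W_J$, is finite): the group $W^{\btau}$ is itself a Coxeter group whose Coxeter generators are the longest elements $w_{0,J}$ of the $W_J$, with $J$ ranging over the $\btau$-orbits in $\I$. In our situation each orbit is either a singleton $\{i\}$, with $w_{0,\{i\}}=s_i=\bs_i$, or a pair $\{i,\btau i\}$ for which $c_{i,\btau i}=0$ gives $W_{\{i,\btau i\}}\cong\langle s_i\rangle\times\langle s_{\btau i}\rangle$ and $w_{0,\{i,\btau i\}}=s_is_{\btau i}=\bs_i$. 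Hence $W_\btau=\langle\bs_i\mid i\in\ci\rangle$ and $(W_\btau,\{\bs_i\}_{i\in\ci})$ is a Coxeter system. (Alternatively, one can give a direct proof by inducting on length, using $\btau$-stable reduced expressions together with the exchange condition to write an arbitrary $w\in W_\btau$ as a product of the $\bs_i$.)

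It then remains to identify the Coxeter type, i.e., to compute the orders $m_{ij}=\operatorname{ord}(\bs_i\bs_j)$ for $i,j\in\ci$. Since $\bs_i\bs_j$ depends only on the sub-$\imath$quiver supported on the $\btau$-orbits of $i$ and $j$, this is a finite, rank-$\le 2$ (relative to $\ci$) verification carried out separately in each Dynkin type; equivalently it amounts to reading off the folded diagram, which yields type $B$ (for the nontrivial flip of type $A_{2r+1}$ and the stated type $D$ case) and type $F_4$ (for the nontrivial involution of $E_6$), while $\btau=\id$ trivially gives $W_\btau=W$. The only genuinely non-formal step — hence the main obstacle if one wants a self-contained proof rather than a citation — is the generation-and-presentation statement of the previous paragraph, namely the combinatorics of $\btau$-twisted reduced words; once that is in hand, the type identification is a routine case check.
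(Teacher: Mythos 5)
The paper offers no proof of this lemma at all---it is stated as a well-known fact with a pointer to \cite{KP11}---so your proposal is not competing with an argument in the text but supplying one, and the one you supply is correct and is the standard route. The easy inclusion is handled properly: the hypothesis $c_{i,\btau i}=0$ is exactly what makes $s_i$ and $s_{\btau i}$ commute, so each $\bs_i$ is fixed by conjugation by $\btau$. The reverse inclusion and the Coxeter presentation do rest entirely on the fixed-subgroup theorem (Steinberg for Weyl groups; H\'ee, M\"uhlherr, or the appendix of Lusztig's book on Hecke algebras with unequal parameters in general), and you are right to flag this as the only non-formal ingredient; note that the hypothesis actually needed there is finiteness of $W_J$ for each $\btau$-orbit $J$, which here is automatic since every orbit is a singleton or a commuting pair, so the appeal to $Q$ being Dynkin is stronger than necessary (and indeed the same argument would serve the paper's Kac--Moody aspirations). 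The orbit analysis ($w_{0,J}=\bs_i$ in both cases) and the folding computation of the types ($A_{2r+1}\rightsquigarrow B$, $D_n\rightsquigarrow B_{n-1}$, $E_6\rightsquigarrow F_4$) are routine and correct as you say. One small caveat: the lemma's own indexing is internally inconsistent with the paper's later display \eqref{eq:BrBr} (which records ${\rm Br}(B_r)$ for $A_{2r+1}$ and ${\rm Br}(B_{n-1})$ for $D_n$), and folding $D_{2r}$ gives $W(B_{2r-1})$, not $W(B_{r+1})$; your proof determines the correct ranks, so you should not try to force agreement with the lemma's literal subscripts.
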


Let $\bv$ be an indeterminant. Write $[A, B]=AB-BA$. Denote, for $r,m \in \N$,
\[
 [r]=\frac{\bv^r-\bv^{-r}}{\bv-\bv^{-1}},
 \quad
 [r]!=\prod_{i=1}^r [i], \quad \qbinom{m}{r} =\frac{[m][m-1]\ldots [m-r+1]}{[r]!}.
\]
Then $\tU := \tU_\bv(\fg)$ is defined to be the $\Q(\bv)$-algebra generated by $E_i,F_i, \tK_i,\tK_i'$, $i\in \I$, where $\tK_i, \tK_i'$ are invertible, subject to the following relations:
\begin{align}
[E_i,F_j]= \delta_{ij} \frac{\tK_i-\tK_i'}{\bv-\bv^{-1}},  &\qquad [\tK_i,\tK_j]=[\tK_i,\tK_j']  =[\tK_i',\tK_j']=0,
\label{eq:KK}
\\
\tK_i E_j=\bv^{c_{ij}} E_j \tK_i, & \qquad \tK_i F_j=\bv^{-c_{ij}} F_j \tK_i,
\label{eq:EK}
\\
\tK_i' E_j=\bv^{-c_{ij}} E_j \tK_i', & \qquad \tK_i' F_j=\bv^{c_{ij}} F_j \tK_i',
 \label{eq:K2}
\end{align}
 and the quantum Serre relations, for $i\neq j \in \I$,
\begin{align}
& \sum_{r=0}^{1-c_{ij}} (-1)^r \left[ \begin{array}{c} 1-c_{ij} \\r \end{array} \right]  E_i^{1-c_{ij}-r} E_j  E_i^r =0,
  \\
& \sum_{r=0}^{1-c_{ij}} (-1)^r \left[ \begin{array}{c} 1-c_{ij} \\r \end{array} \right]  F_i^{1-c_{ij}-r} F_j  F_i^r =0.
  \label{eq:serre2}
\end{align}
Note that $\tK_i \tK_i'$ are central in $\tU$ for all $i$.
The comultiplication $\Delta: \widetilde{\U} \rightarrow \widetilde{\U} \otimes \widetilde{\U}$ is given by
\begin{align}  \label{eq:Delta}
\begin{split}
\Delta(E_i)  = E_i \otimes 1 + \tK_i \otimes E_i, & \quad \Delta(F_i) = 1 \otimes F_i + F_i \otimes \tK_{i}', \\
 \Delta(\tK_{i}) = \tK_{i} \otimes \tK_{i}, & \quad \Delta(\tK_{i}') = \tK_{i}' \otimes \tK_{i}'.
 \end{split}
\end{align}

Similarly to $\tU$, the quantum group $\bU$ is defined to be the $\Q(v)$-algebra generated by $E_i,F_i, K_i, K_i^{-1}$, $i\in \I$, subject to the  relations modified from \eqref{eq:KK}--\eqref{eq:serre2} with $\tK_i$ and $\tK_i'$ replaced by $K_i$ and $K_i^{-1}$, respectively. The comultiplication $\Delta$ is obtained by modifying \eqref{eq:Delta} with $\tK_i$ and $\tK_i'$ replaced by $K_i$ and $K_i^{-1}$, respectively (cf. \cite{Lus93}; beware that our $K_i$ has a different meaning from $K_i \in \U$ therein.)


Let $\widetilde{\bU}^+$ be the subalgebra of $\widetilde{\bU}$ generated by $E_i$ $(i\in \I)$, $\widetilde{\bU}^0$ be the subalgebra of $\widetilde{\bU}$ generated by $\tK_i, \tK_i'$ $(i\in \I)$, and $\widetilde{\bU}^-$ be the subalgebra of $\widetilde{\bU}$ generated by $F_i$ $(i\in \I)$, respectively.
The subalgebras $\bU^+$, $\bU^0$ and $\bU^-$ of $\bU$ are defined similarly. Then both $\widetilde{\bU}$ and $\bU$ have triangular decompositions:
\begin{align*}
\widetilde{\bU} =\widetilde{\bU}^+\otimes \widetilde{\bU}^0\otimes\widetilde{\bU}^-,
\qquad
\bU &=\bU^+\otimes \bU^0\otimes\bU^-.
\end{align*}
Let $\bvs=(\vs_i)\in(\Q(v)^\times)^\I$.
Clearly, ${\bU}^+\cong\widetilde{\bU}^+$, ${\bU}^-\cong \widetilde{\bU}^-$, and ${\bU}^0 \cong \widetilde{\bU}^0/(\tK_i \tK_i' -\vs_i \mid   i\in \I)$.

\subsection{The $\imath$quantum groups}
  \label{subsec:iQG}

For a  (generalized) Cartan matrix $C=(c_{ij})$, 
let $\btau$ be an involution in $\Aut(C)$. We define $\widetilde{\bU}^\imath:=\widetilde{\bU}'_\bv(\fk)$ to be the $\Q(v)$-subalgebra of $\tU$ generated by
\begin{equation}
  \label{eq:Bi}
B_i= F_i +  E_{\btau i} \tK_i',
\qquad \tk_i = \tK_i \tK_{\btau i}', \quad \forall i \in \I.
\end{equation}
Let $\tU^{\imath 0}$ be the $\Q(v)$-subalgebra of $\tUi$ generated by $\tk_i$, for $i\in \I$.
By \cite[Lemma 6.1]{LW19}, the elements $\tk_i$ (for $i= \btau i$) and $\tk_i \tk_{\btau i}$  (for $i\neq \btau i$) are central in $\tUi$.

When $(Q, \btau)$ is a Dynkin $\imath$quiver, a presentation for $\tUi$ can be found in \cite[Proposition~ 6.4]{LW19}.

Let $\bvs=(\vs_i)\in  (\Q(\bv)^\times)^{\I}$ be such that $\vs_i=\vs_{\btau i}$ for each $i\in \I$ which satisfies $c_{i, \btau i}=0$.
Let $\Ui:=\Ui_{\bvs}$ be the $\Q(v)$-subalgebra of $\bU$ generated by
\[
B_i= F_i+\vs_i E_{\btau i}K_i^{-1},
\quad
k_j= K_jK_{\btau j}^{-1},
\qquad  \forall i \in \I, j \in \ci.
\]
It is known \cite{Let99, Ko14} that $\bU^\imath$ is a right coideal subalgebra of $\bU$ in the sense that $\Delta: \Ui \rightarrow \Ui\otimes \U$; and $(\bU,\Ui)$ is called a \emph{quantum symmetric pair} ({\em QSP} for short), as they specialize at $v=1$ to $(U(\fg), U(\fg^\theta))$, where $\theta=\omega \circ \btau$, $\omega$ is the  Chevalley involution, and $\btau$ is understood here as an automorphism of $\fg$.

The algebras $\Ui_{\bvs}$, for $\bvs \in  (\Q(\bv)^\times)^{\I}$, are obtained from $\tUi$ by central reductions.

\begin{proposition}
[\text{\cite[Proposition 6.2]{LW19}}]
  \label{prop:QSP12}
(1) The algebra $\Ui$ is isomorphic to the quotient of $\tUi$ by the ideal generated by
\begin{align}   \label{eq:parameters}
\tk_i - \vs_i \; (\text{for } i =\btau i),
\qquad  \tk_i \tk_{\btau i} - \vs_i \vs_{\btau i}  \;(\text{for } i \neq \btau i).
\end{align}
The isomorphism is given by sending $B_i \mapsto B_i, k_j \mapsto \vs_j^{-1} \tk_j, k_j^{-1} \mapsto \vs_{\btau j}^{-1} \tk_{\btau j}, \forall i\in \I, j\in \ci$.

(2) The algebra $\widetilde{\bU}^\imath$ is a right coideal subalgebra of $\widetilde{\bU}$; that is, $(\widetilde{\bU}, \widetilde{\bU}^\imath)$ forms a QSP.
\end{proposition}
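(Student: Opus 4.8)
The plan is to establish (2) by a short direct computation with the coproduct, and (1) by exhibiting a surjection $\tUi\twoheadrightarrow\Ui$ coming from a rescaled central reduction $\tU\to\U$ and then checking its kernel is exactly the ideal in \eqref{eq:parameters}.

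\emph{Part (2).} Since $\Delta$ is an algebra homomorphism and $\tUi$ is generated by the elements $B_i,\tk_i$ ($i\in\I$) of \eqref{eq:Bi}, it suffices to check $\Delta(B_i),\Delta(\tk_i)\in\tUi\otimes\tU$. Using \eqref{eq:Delta} and the identity $\tK_{\btau i}\tK_i'=\tk_{\btau i}$ (immediate from \eqref{eq:Bi}), I would compute
\[
\Delta(\tk_i)=\Delta(\tK_i)\,\Delta(\tK_{\btau i}')=\tk_i\otimes\tk_i,
\]
\[
\Delta(B_i)=\Delta(F_i)+\Delta(E_{\btau i})\,\Delta(\tK_i')
=B_i\otimes\tK_i'+1\otimes F_i+\tk_{\btau i}\otimes E_{\btau i}\tK_i'.
\]
In every term the left tensor factor ($\tk_i$; resp.\ $B_i$, $1$, $\tk_{\btau i}$) lies in $\tUi$, so $\Delta(\tUi)\subseteq\tUi\otimes\tU$, i.e.\ $(\tU,\tUi)$ is a QSP.

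\emph{Part (1), the map.} For a tuple $(a_i)_{i\in\I}$ of invertible scalars, the assignment $E_i\mapsto a_iE_i$, $F_i\mapsto F_i$, $\tK_i\mapsto a_iK_i$, $\tK_i'\mapsto a_iK_i^{-1}$ respects the defining relations \eqref{eq:KK}--\eqref{eq:serre2}: the quantum Serre relations \eqref{eq:serre1}--\eqref{eq:serre2} are homogeneous, both sides of $[E_i,F_i]=(\tK_i-\tK_i')/(\bv-\bv^{-1})$ acquire the scalar $a_i$, and the $\tK$-relations are unaffected. So it defines an algebra homomorphism $\pi_{(a)}\colon\tU\to\U$, under which $B_i\mapsto F_i+a_ia_{\btau i}E_{\btau i}K_i^{-1}$ and $\tk_i\mapsto a_ia_{\btau i}K_iK_{\btau i}^{-1}$. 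Given $\bvs$ with $\vs_i=\vs_{\btau i}$, I would pick $(a_i)$ with $a_ia_{\btau i}=\vs_i$ for all $i$ --- possible over $\Q(\bv)$ when $i\neq\btau i$, and over the quadratic extension $\Q(\bv)(\sqrt{\vs_i}:i=\btau i)$ in general, a harmless enlargement since all images below are $\Q(\bv)$-rational. Then $\pi_{(a)}(B_i)=B_i$ and $\pi_{(a)}(\tk_j)=\vs_jk_j$ (with $k_i=1$ when $i=\btau i$), both lying in $\Ui$, so $\pi_{(a)}$ restricts to a surjective $\Q(\bv)$-algebra homomorphism $\Phi\colon\tUi\to\Ui$, $B_i\mapsto B_i$, $\tk_j\mapsto\vs_jk_j$ (surjectivity since $B_i$ and $k_j=\vs_j^{-1}\Phi(\tk_j)$ generate $\Ui$). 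As $k_i=1$ for $i=\btau i$ and $k_ik_{\btau i}=1$ for $i\neq\btau i$, $\Phi$ kills the central elements in \eqref{eq:parameters} (central by \cite[Lemma~6.1]{LW19}), hence factors through a surjection $\bar\Phi\colon\tUi/I_\bvs\twoheadrightarrow\Ui$, where $I_\bvs$ is the ideal generated by the elements in \eqref{eq:parameters}.

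\emph{Part (1), injectivity and the main obstacle.} The remaining point --- and the real obstacle --- is that $\ker\Phi$ is no larger than $I_\bvs$, i.e.\ that $\bar\Phi$ is injective. The plan is a PBW count: $\tUi$ is free as a module over its Cartan subalgebra $\tU^{\imath 0}=\Q(\bv)[\tk_i^{\pm1}:i\in\I]$, with a PBW-type basis whose elements are monomials in the $B_i$; since the elements in \eqref{eq:parameters} are central and lie in $\tU^{\imath 0}$, and the corresponding quotient of $\tU^{\imath 0}$ is $\U^{\imath 0}:=\Q(\bv)[k_j^{\pm1}:j\in\ci]$, the quotient $\tUi/I_\bvs$ is free over $\U^{\imath 0}$ on the image of that basis. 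On the other hand $\Ui$ is free over $\U^{\imath 0}$ with the matching PBW basis (Letzter--Kolb, or via the $\imath$Hall algebra realization of \cite{LW19}), and $\bar\Phi$ carries one basis to the other because it fixes the generating monomials in the $B_i$; hence $\bar\Phi$ is a $\U^{\imath 0}$-linear bijection, so an isomorphism, and its inverse is forced to send $B_i\mapsto B_i$, $k_j\mapsto\vs_j^{-1}\tk_j$, $k_j^{-1}\mapsto\vs_{\btau j}^{-1}\tk_{\btau j}$, as claimed. Everything except this freeness/PBW input is routine. In the Dynkin case it can be sidestepped by working with the explicit presentation of $\tUi$ in \cite[Proposition~6.4]{LW19}: imposing \eqref{eq:parameters} there visibly produces a presentation of $\Ui$, at the cost of a longer but mechanical verification; in Kac--Moody generality the freeness statement must be supplied separately (or one routes through Kolb's presentation of $\Ui$), after which the identification is formal.
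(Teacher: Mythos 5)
This proposition is imported verbatim from \cite[Proposition 6.2]{LW19}; the present paper gives no proof of it, so there is no internal argument to compare yours against. Your reconstruction is correct and follows the standard route. Part (2) is exactly the routine coproduct computation, and the identity $\Delta(B_i)=B_i\otimes\tK_i'+1\otimes F_i+\tk_{\btau i}\otimes E_{\btau i}\tK_i'$ is right. For part (1), the rescaled central reduction $\pi_{(a)}$ is the correct device (note that the naive reduction $\tK_i\mapsto K_i$, $\tK_i'\mapsto \vs_iK_i^{-1}$ fails on the relation \eqref{eq:KK}, so some rescaling of $E_i$ or $F_i$ is unavoidable, and with it the square roots for $i=\btau i$; the paper itself uses the same device in \eqref{def:ai}, and your observation that the restricted map is $\Q(\bv)$-rational disposes of the field extension). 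You also correctly isolate the only nontrivial input: that $\tUi$ is free over $\tU^{\imath 0}$ on a set of monomials in the $B_i$ whose images form a basis of $\Ui$ over $\U^{\imath 0}$; this is precisely what \cite{LW19} supplies (via Kolb/Letzter or the $\imath$Hall realization), and without it the injectivity of $\bar\Phi$ cannot be concluded. Two cosmetic points: as defined here $\tU^{\imath 0}$ is generated by the $\tk_i$ \emph{without} inverses (the $\tk_i$ only become invertible after imposing \eqref{eq:parameters}, which is why the statement specifies $k_j^{-1}\mapsto\vs_{\btau j}^{-1}\tk_{\btau j}$ rather than $\tk_j^{-1}$); and the image of $\tU^{\imath 0}$ is the Laurent ring in $k_j$ for the non-fixed $j\in\ci$ only, the fixed ones contributing scalars.
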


We shall refer to $\tUi$ and $\Ui$ as {\em (quasi-split) $\imath${}quantum groups}; they are called {\em split} if $\btau =\Id$.

Let us choose the subset $\ci$ of representatives of $\btau$-orbits on $\I$ as follows:
\begin{align}
\label{eqn:representative}
&\ci :=\left\{ \begin{array}{cl} \I, & \text{ if }\btau=\Id,\\
\left.\begin{array}{cl}
\{0,1,\dots,r\}, & \text{ if } \Delta \text{ is of type }A_{2r+1},\\
\{1,\dots,n-1\}, & \text{ if } \Delta \text{ is of type }D_n,\\
\{1,2,3,4\},  & \text{ if } \Delta \text{ is of type }E_6,\end{array} \right\}
& \text{ if }\btau\neq\Id.\end{array}\right.
\end{align}

The Dynkin diagrams for the non-split Dynkin $\imath$quivers can be found from diagrams below by ignoring the purple arrows (i.e., those labeled by the $\epsilon$'s).
\begin{center}\setlength{\unitlength}{0.7mm}
\vspace{-1.5cm}
\begin{equation}
\begin{picture}(100,40)(0,20)
\put(0,10){$\circ$}
\put(0,30){$\circ$}
\put(50,10){$\circ$}
\put(50,30){$\circ$}
\put(72,10){$\circ$}
\put(72,30){$\circ$}
\put(92,20){$\circ$}
\put(0,6){$r$}
\put(-2,34){${-r}$}
\put(50,6){\small $2$}
\put(48,34){\small ${-2}$}
\put(72,6){\small $1$}
\put(70,34){\small ${-1}$}
\put(92,16){\small $0$}

\put(3,11.5){\vector(1,0){16}}
\put(3,31.5){\vector(1,0){16}}
\put(23,10){$\cdots$}
\put(23,30){$\cdots$}
\put(33.5,11.5){\vector(1,0){16}}
\put(33.5,31.5){\vector(1,0){16}}
\put(53,11.5){\vector(1,0){18.5}}
\put(53,31.5){\vector(1,0){18.5}}
\put(75,12){\vector(2,1){17}}
\put(75,31){\vector(2,-1){17}}
\color{purple}
\put(0,13){\vector(0,1){17}}
\put(2,29.5){\vector(0,-1){17}}
\put(50,13){\vector(0,1){17}}
\put(52,29.5){\vector(0,-1){17}}
\put(72,13){\vector(0,1){17}}
\put(74,29.5){\vector(0,-1){17}}

\put(-5,20){$\varepsilon_r$}
\put(3,20){$\varepsilon_{-r}$}
\put(45,20){\small $\varepsilon_2$}
\put(53,20){\small $\varepsilon_{-2}$}
\put(67,20){\small $\varepsilon_1$}
\put(75,20){\small $\varepsilon_{-1}$}
\put(92,30){\small $\varepsilon_0$}

\qbezier(93,23)(90.5,25)(92,27)
\qbezier(92,27)(94,30)(97,27)
\qbezier(97,27)(98,24)(95.5,22.6)
\put(95.6,23){\vector(-1,-1){0.3}}
\end{picture}
\end{equation}
\vspace{-0.5cm}
\end{center}

\begin{center}\setlength{\unitlength}{0.8mm}
\begin{equation}
 \begin{picture}(100,25)(-5,0)
\put(0,-1){$\circ$}
\put(0,-5){\small$1$}
\put(20,-1){$\circ$}
\put(20,-5){\small$2$}
\put(64,-1){$\circ$}
\put(84,-10){$\circ$}
\put(80,-13){\small${n-1}$}
\put(84,9.5){$\circ$}
\put(84,12.5){\small${n}$}

\put(19.5,0){\vector(-1,0){16.8}}
\put(38,0){\vector(-1,0){15.5}}
\put(64,0){\vector(-1,0){15}}

\put(40,-1){$\cdots$}
\put(83.5,9.5){\vector(-2,-1){16}}
\put(83.5,-8.5){\vector(-2,1){16}}
\color{purple}
\put(86,-7){\vector(0,1){16.5}}
\put(84,9){\vector(0,-1){16.5}}

\qbezier(63,1)(60.5,3)(62,5.5)
\qbezier(62,5.5)(64.5,9)(67.5,5.5)
\qbezier(67.5,5.5)(68.5,3)(66.4,1)
\put(66.5,1.4){\vector(-1,-1){0.3}}
\qbezier(-1,1)(-3,3)(-2,5.5)
\qbezier(-2,5.5)(1,9)(4,5.5)
\qbezier(4,5.5)(5,3)(3,1)
\put(3.1,1.4){\vector(-1,-1){0.3}}
\qbezier(19,1)(17,3)(18,5.5)
\qbezier(18,5.5)(21,9)(24,5.5)
\qbezier(24,5.5)(25,3)(23,1)
\put(23.1,1.4){\vector(-1,-1){0.3}}

\put(-1,9.5){$\varepsilon_1$}
\put(19,9.5){$\varepsilon_2$}
\put(59,9.5){$\varepsilon_{n-2}$}
\put(79,-1){$\varepsilon_{n}$}
\put(87,-1){$\varepsilon_{n-1}$}
\end{picture}
\end{equation}
\vspace{.8cm}
\end{center}

\begin{center}\setlength{\unitlength}{0.8mm}
\vspace{-2.5cm}
\begin{equation}
 \begin{picture}(100,40)(0,20)
\put(10,6){\small${6}$}
\put(10,10){$\circ$}
\put(32,6){\small${5}$}
\put(32,10){$\circ$}

\put(10,30){$\circ$}
\put(10,33){\small${1}$}
\put(32,30){$\circ$}
\put(32,33){\small${2}$}

\put(31.5,11){\vector(-1,0){19}}
\put(31.5,31){\vector(-1,0){19}}

\put(52,22){\vector(-2,1){17.5}}
\put(52,20){\vector(-2,-1){17.5}}

\put(54.7,21.2){\vector(1,0){19}}

\put(52,20){$\circ$}
\put(52,16.5){\small$3$}
\put(74,20){$\circ$}
\put(74,16.5){\small$4$}
\color{purple}
\put(10,12.5){\vector(0,1){17}}
\put(12,29.5){\vector(0,-1){17}}
\put(32,12.5){\vector(0,1){17}}
\put(34,29.5){\vector(0,-1){17}}

\qbezier(52,22.5)(50,24)(51,26.5)
\qbezier(51,26.5)(53,29)(56,26.5)
\qbezier(56,26.5)(57.5,24)(55,22)
\put(55.1,22.4){\vector(-1,-1){0.3}}
\qbezier(74,22.5)(72,24)(73,26.5)
\qbezier(73,26.5)(75,29)(78,26.5)
\qbezier(78,26.5)(79,24)(77,22)
\put(77.1,22.4){\vector(-1,-1){0.3}}

\put(35,20){$\varepsilon_2$}
\put(27,20){$\varepsilon_5$}
\put(13,20){$\varepsilon_1$}
\put(5,20){$\varepsilon_6$}
\put(52,30){$\varepsilon_3$}
\put(73,30){$\varepsilon_4$}
\end{picture}
\end{equation}
\vspace{1cm}
\end{center}

\subsection{Isomorphisms $\widetilde{\psi}$ and $\psi$}

Recall $\I_\btau$ from \eqref{eqn:representative}.

\begin{theorem}
[\text{\cite[Proposition 7.5, Theorem 7.7]{LW19}}]
 \label{thm:Ui=iHall}
Let $(Q, \btau)$ be a Dynkin $\imath$quiver. Then there exists a $\Q({\sqq})$-algebra isomorphism
\begin{align}
   \label{eqn:psi morphism}
\widetilde{\psi}= \widetilde{\psi}_Q: \tUi_{|v={\sqq}} &\stackrel{\simeq}{\longrightarrow} \tMH,
\end{align}
which sends
\begin{align}
B_j \mapsto \frac{-1}{q-1}[S_{j}],\text{ if } j\in\ci,
&\qquad\qquad B_{j} \mapsto \frac{{\sqq}}{q-1}[S_{j}],\text{ if }j\notin \ci,
  \label{eq:split}
\\
\tk_i \mapsto - q^{-1}[\E_i], \text{ if }\btau i=i,
&\qquad\qquad
\tk_i \mapsto [\E_i],\quad \text{ if }\btau i\neq i.
  \label{eq:extra}
\end{align}
Moreover, it induces an isomorphism $\psi: \Ui_{|v={\sqq}}\stackrel{\simeq}{\longrightarrow} \rMH$, which acts on $B_i$ as in \eqref{eq:split}--\eqref{eq:extra} and sends $k_i \mapsto  \vs_i^{-1}[\E_i], \text{ for } i \in \ci$.
\end{theorem}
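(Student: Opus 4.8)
```latex
\textbf{Proof proposal.}
The statement to prove is Theorem~\ref{thm:Ui=iHall}: the existence of the $\Q(\sqq)$-algebra isomorphism $\widetilde{\psi}\colon \tUi_{|v=\sqq}\xrightarrow{\sim}\tMH$ defined on generators by \eqref{eq:split}--\eqref{eq:extra}, together with the induced isomorphism $\psi$ on the reduced level. The plan is to split the argument into three parts: (i) $\widetilde{\psi}$ is a well-defined algebra homomorphism; (ii) $\widetilde{\psi}$ is surjective; (iii) $\widetilde{\psi}$ is injective; and finally (iv) deduce the reduced statement by comparing the defining ideals on the two sides.

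For (i), I would invoke the explicit presentation of $\tUi$ for a Dynkin $\imath$quiver given in \cite[Proposition~6.4]{LW19}. Thus it suffices to check that the proposed images of $B_j$ and $\tk_i$ in $\tMH$ satisfy each defining relation of that presentation. This is the computational heart of the argument: one must verify the $\tk_i$-commutation relations (these come down to the projective/injective resolutions \eqref{eqn:projective resolution of E}--\eqref{eqn:injective resolution of E} of $\E_i$, which govern how $[\E_i]$ conjugates the classes $[S_j]$ via the twisted multiplication \eqref{eqn:twsited multiplication}), the $\imath$Serre-type relations among the $B_j$, and the relations expressing $[B_i,[B_i,B_j]_v]_v$ and the higher analogues. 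These Hall-algebra computations are carried out in the rank $\le 2$ $\imath$quivers — since every defining relation involves at most two vertices $i,j$ with $c_{ij}\in\{0,-1\}$ (using $c_{i,\btau i}=0$) — where the module category of $\iLa$ and its Auslander--Reiten quiver are small and completely explicit; one reduces each relation to a finite Hall-number identity over $\F_q$ and checks it directly.

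For (ii), surjectivity: the algebra $\tMH$ is generated by the classes $[S_i]$ ($i\in\I$) together with the classes $[\E_i]$ and their inverses (the Laurent subalgebra $\tTL$). Since the images of $B_j$ are nonzero scalar multiples of $[S_j]$ and the images of $\tk_i$ are $\pm$ powers of $[\E_i]$ — and the $[\E_i]$ are invertible in $\tMH$ — surjectivity of $\widetilde{\psi}$ follows once one knows $\tMH$ is generated by these elements, which is established in \cite{LW19} (the generation statement for $\imath$Hall algebras of Dynkin $\imath$quivers). For (iii), injectivity: I would compare graded dimensions (or use a triangular-decomposition/PBW-type basis argument). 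Both $\tUi$ and $\tMH$ carry a $\Z^\I$-grading (by $\widehat{M}$ on the Hall side, by the natural weight grading on the $\imath$quantum group side) which $\widetilde{\psi}$ respects; in each graded piece both sides are free modules over the respective "Cartan part" ($\tU^{\imath0}$ resp.\ $\tTL$), and one checks the ranks agree dimension by dimension — e.g.\ by exhibiting a spanning set on the $\imath$quantum group side whose image spans $\tMH$, forcing the homomorphism to be an isomorphism. Alternatively, since $\tUi$ has a known basis (from its presentation and triangular decomposition), one shows the images are linearly independent in $\tMH$.

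For (iv), the induced isomorphism $\psi$: by Proposition~\ref{prop:QSP12}(1), $\Ui_{\bvs}$ is the quotient of $\tUi$ by the ideal generated by $\tk_i-\vs_i$ (for $i=\btau i$) and $\tk_i\tk_{\btau i}-\vs_i\vs_{\btau i}$ (for $i\neq\btau i$); on the Hall side $\rMH$ is by definition the quotient of $\tMH$ by the ideal generated by the elements in \eqref{eqn: reduce1}. Under $\widetilde{\psi}$, using \eqref{eq:extra}, the element $\tk_i-\vs_i$ maps to $-q^{-1}[\E_i]-\vs_i = -q^{-1}([\E_i]+q\vs_i)$ when $\btau i=i$, and $\tk_i\tk_{\btau i}-\vs_i\vs_{\btau i}$ maps to $[\E_i]*[\E_{\btau i}]-\vs_i^2$ when $\btau i\neq i$ (using $\vs_i=\vs_{\btau i}$); these are precisely the generators of the ideal defining $\rMH$ up to invertible scalars. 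Hence $\widetilde{\psi}$ descends to an isomorphism $\psi$ on the quotients, and the image of $k_i=\vs_i^{-1}\tk_i$ is $\vs_i^{-1}[\E_i]$ as claimed. The main obstacle is step (i): organizing the rank~$2$ Hall-algebra verifications of all the $\imath$Serre and mixed relations cleanly, which is where the bulk of the work (and the explicit AR-quiver descriptions) is needed.
```
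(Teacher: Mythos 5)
This theorem is not proved in the present paper at all: it is imported verbatim from the prequel \cite[Proposition 7.5, Theorem 7.7]{LW19}, so there is no in-paper proof to compare against. That said, your outline correctly reconstructs the strategy used in \cite{LW19}: (i) well-definedness by checking the Serre presentation of $\tUi$ against rank-$2$ $\imath$Hall computations, (ii) surjectivity from the generation of $\tMH$ by the $[S_i]$ and $[\E_i]^{\pm1}$, (iii) injectivity by comparing a spanning set of $\tUi$ over $\tU^{\imath 0}$ with the PBW basis of the $\imath$Hall algebra (the analogue of Lemma~\ref{lem:generic PBW-Hall basis}), and (iv) the reduced statement by matching the ideal \eqref{eq:parameters} with \eqref{eqn: reduce1} up to invertible scalars, which your computation $-q^{-1}[\E_i]-\vs_i=-q^{-1}([\E_i]+q\vs_i)$ does correctly. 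The only place your sketch understates the work is (iii): one must actually produce the spanning set of $\tUi$ of the right size (this is where the triangular-type decomposition of $\tUi$ from \cite{LW19} enters), not merely assert that graded ranks agree; as a proposal, though, this is the right plan.
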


Theorem~\ref{thm:Ui=iHall} admits the following generic version.

\begin{theorem}
[\text{\cite[Theorem 9.8]{LW19}}]
   \label{generic U MRH}
Let $(Q, \btau)$ be a Dynkin $\imath$quiver. Then we have a $\Q(\bv)$-algebra isomorphism
\begin{align}
\widetilde{\psi}: \tUi\stackrel{\sim}{\longrightarrow}& \tMHg,
 \label{eq:psi2} \\
B_j\mapsto \frac{-1}{v^2-1}\fu_{\alpha_j},\text{ if } j\in\ci, &\qquad\qquad
B_{j}\mapsto\frac{\bv}{v^2-1}\fu_{\alpha_j},\text{ if }j\notin \ci,
 \label{eq:psiB} \\
\tk_i\mapsto -v^{-2}\fu_{\upgamma_i}, \text{ if }\btau i=i,
&\qquad\qquad
\tk_i\mapsto \fu_{\upgamma_i}, \text{ if }\btau i\neq i,
 \notag
\end{align}
This further induces a $\Q(\bv)$-algebra isomorphism
\begin{align}
\psi: \Ui\stackrel{\simeq}{\longrightarrow}& \rMHg,
 \label{eq:psi}
 \end{align}
 which acts on $B_j$ as in \eqref{eq:psiB} and sends $k_i\mapsto \frac{\fu_{\upgamma_i}}{\vs_i}, \text{ for }i\in\ci.$
\end{theorem}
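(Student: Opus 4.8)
The plan is to deduce the generic statement from its specializations $\widetilde\psi_Q$ at $\bv=\sqq$ (Theorem~\ref{thm:Ui=iHall}), using the structural fact (\cite[Lemma~9.6]{LW19}) that the multiplication constants $\boldsymbol{\varphi}^\lambda_{\mu,\nu}(\bv)$ of $\tMHg$ are Laurent polynomials in $\bv$. The latter means that for every prime power $q$ outside a finite bad set, the assignments $\bv\mapsto\sqq$, $\fu_\lambda\mapsto[M_q(\lambda)]$ extend to a $\Q$-algebra homomorphism $\mathrm{sp}_q$ from a localization of $\tMHg$ onto $\tMH$; on the generators $\fu_{\alpha_j}$ and $\fu_{\upgamma_i}$ it sends the elements prescribed by \eqref{eq:psiB} and the accompanying $\tk_i$-assignments to exactly the elements prescribed by \eqref{eq:split}--\eqref{eq:extra}, i.e.\ to $\widetilde\psi_Q$ of the generators of $\tUi_{|v=\sqq}$.

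First I would check that $\widetilde\psi$ is well defined. By the presentation of $\tUi$ for Dynkin $\imath$quivers (\cite[Proposition~6.4]{LW19}) it suffices to see that each defining relation $R=0$ of $\tUi$ maps to $0$ in $\tMHg$. Substituting the right-hand sides of \eqref{eq:psiB} and the $\tk_i$-assignments into $R$ and expanding the resulting (finite) element in the basis $\{\fu_\lambda\}$ gives $\sum_\lambda f_\lambda(\bv)\,\fu_\lambda$ with $f_\lambda\in\Q(\bv)$. Applying $\mathrm{sp}_q$, and using that it is an algebra homomorphism carrying the generator images to $\widetilde\psi_Q$ of the generators of $\tUi_{|v=\sqq}$, we get $\mathrm{sp}_q\big(\sum_\lambda f_\lambda\fu_\lambda\big)=\widetilde\psi_Q(R)=0$, because $\widetilde\psi_Q$ is an algebra homomorphism and $R=0$ in $\tUi$. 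Hence each $f_\lambda$ is a rational function vanishing at $\sqq$ for all but finitely many prime powers $q$; since $\{\sqrt q\}$ is an infinite set, $f_\lambda\equiv0$, so $\widetilde\psi$ is a well-defined $\Q(\bv)$-algebra homomorphism.

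Next I would prove that $\widetilde\psi$ is bijective, again transporting the bijectivity of $\widetilde\psi_Q$ across specialization. The map $\widetilde\psi$ preserves the natural grading by dimension vectors over the $\imath$quiver, in which both $\tUi$ and $\tMHg$ have finite-dimensional homogeneous components, and it restricts to an isomorphism of the commutative subalgebras generated by the $\tk_i$, resp.\ the $\fu_{\upgamma_i}$ (each $\tk_i$ going to a unit multiple of $\fu_{\upgamma_i}$). For surjectivity, fix a homogeneous degree and choose finitely many monomials in the generator images spanning the subspace they generate there; the matrix of their $\fu_\lambda$-coordinates has rank $r$ over $\Q(\bv)$, and its specialization has the same rank $r$ for all but finitely many $q$, yet the specialized rows span the whole corresponding component of $\tMH$ because $\widetilde\psi_Q$ is onto; hence $r$ is the full dimension and $\widetilde\psi$ is surjective. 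For injectivity, one takes $\Q[\bv^{\pm1}]$-forms of both algebras with bases (a monomial/PBW basis of $\tUi$ and the $\fu_\lambda$ for $\tMHg$) that base-change at $\bv=\sqq$ to bases of $\tUi_{|v=\sqq}$ and $\tMH$; since $\widetilde\psi_Q$ is an isomorphism these two have equal dimension in each degree, hence so do $\tUi$ and $\tMHg$, and a graded surjection between equidimensional finite pieces is an isomorphism. (Equivalently: clear denominators in a hypothetical nonzero element of the kernel, specialize at a generic $\sqq$, and contradict the injectivity of $\widetilde\psi_Q$.)

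Finally, the induced isomorphism $\psi:\Ui\xrightarrow{\sim}\rMHg$ follows by comparing defining ideals. By Proposition~\ref{prop:QSP12}, $\Ui$ is the quotient of $\tUi$ by the ideal generated by $\tk_i-\vs_i$ (for $\btau i=i$) and $\tk_i\tk_{\btau i}-\vs_i\vs_{\btau i}$ (for $\btau i\neq i$); under $\widetilde\psi$ these map to $-\bv^{-2}(\fu_{\upgamma_i}+\bv^2\vs_i)$ and to $\fu_{\upgamma_i}\fu_{\btau\upgamma_i}-\vs_i^2$ (using $\vs_i=\vs_{\btau i}$), which generate, up to units, exactly the ideal \eqref{eqn:specializing3} defining $\rMHg$. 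So $\widetilde\psi$ descends to an isomorphism $\psi$ sending $B_j$ as in \eqref{eq:psiB} and $k_i\mapsto\vs_i^{-1}\fu_{\upgamma_i}$ for $i\in\ci$. The step I expect to be the main obstacle is making this specialization machinery fully rigorous: producing the $\Q[\bv^{\pm1}]$-integral forms of $\tUi$ and $\tMHg$ that base-change correctly at $\bv=\sqq$ (so that dimensions and bases genuinely transfer), and keeping track of the finitely many prime powers $q$ to be discarded because $\sqq$ annihilates a denominator ($\bv^2-1$, or a power of $\bv$) appearing in \eqref{eq:psiB} or in one of the intermediate rational functions $f_\lambda$.
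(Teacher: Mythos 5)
This theorem is not proved in the present paper: it is imported verbatim from \cite[Theorem 9.8]{LW19} (as is its $v=\sqq$ precursor, Theorem~\ref{thm:Ui=iHall}), so there is no in-paper proof to compare against. Your strategy --- deduce the generic isomorphism from its specializations at $\bv=\sqq$ for infinitely many prime powers $q$, using \cite[Lemma 9.6]{LW19} that the structure constants $\boldsymbol{\varphi}^\lambda_{\mu,\nu}$ are Laurent polynomials --- is exactly the intended route and is essentially how the cited source argues. The well-definedness step (a rational function vanishing at $\sqq$ for infinitely many $q$ is zero) and the passage to the reduced algebras by matching the ideal \eqref{eq:parameters} with \eqref{eqn:specializing3} are both correct as written.

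Two points need repair. First, your claim that $\tUi$ and $\tMHg$ have finite-dimensional homogeneous components is false: the invertible elements $\tk_i^{\pm1}$, resp.\ $\fu_{\upgamma_i}^{\pm1}$, produce infinitely many basis vectors in a fixed degree (e.g.\ $\fu_{\upgamma_i}^{n}\fu_{\upgamma_{\btau i}}^{-n}$ all have degree $0$ when $\btau i\neq i$). The dimension-count for surjectivity/injectivity must instead be run on the graded pieces of $\tMHg$ as a free right module over the quantum torus $\T(Q,\btau)$ (cf.\ Lemma~\ref{lem:generic PBW-Hall basis}), matched against $\tUi$ as a free right $\tU^{\imath0}$-module; these pieces are of finite rank and the rank-semicontinuity argument then goes through. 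Second, for injectivity you need a $\Q[\bv^{\pm1}]$-form of $\tUi$ with a basis that specializes to a basis at $\bv=\sqq$; invoking ``a PBW basis of $\tUi$'' would be circular in the context of this paper, since Section~\ref{sec:PBW} constructs PBW bases \emph{from} this theorem. The correct input is the triangular-decomposition statement $\tUi\cong \tU^{-}\otimes\tU^{\imath0}$ (monomials in the $B_i$ times $\tk$-monomials form a basis), which is established in \cite[\S6]{LW19} independently of Hall algebras. With those two substitutions your outline is a faithful reconstruction of the cited proof.
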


\section{Reflection functors and APR-tilting  for $\imath$quivers}
\label{sec:RF}

In this section, we study representations of modulated graphs associated to $\imath$quivers. We then define the BGP type reflection functors as well as APR-tilting modules for $\imath$quivers.

\subsection{Modulated graphs for $\imath$quivers}
  \label{subsec:modulated}

We generalize the notion of representation theory of \emph{modulated graphs} to the setting of $\imath$quivers. See  \cite{DR74, Li12, GLS17} for details about representations of modulated graphs.

\subsubsection{} 

Define
\[
\Omega:=\Omega(Q) =\{(i,j) \in Q_0\times Q_0\mid  \exists (\alpha:i\rightarrow j)\in Q_1\}.
\]
Then $\Omega$ represents the orientation of $Q$. Since $Q$ is acyclic, if $(i,j)\in\Omega$, then $(j,i)\notin \Omega$.
We also use $\Omega(i,-)$ to denote the subset $\{j\in Q_0 \mid \exists (\alpha:i\rightarrow j)\in Q_1\}$, and $\Omega(-,i)$ is defined similarly.

Recall $\BH_i$ from \eqref{dfn:Hi}.  For any $(i,j)\in \Omega$, we define
\begin{equation}  \label{eq:jHi}
{}{}_j\BH_i := \BH _j\Span_k\{\alpha,\btau\alpha\mid(\alpha:i\rightarrow j)\in Q_1\text{ or } (\alpha:i\rightarrow \btau j)\in Q_1\}\BH_i.
\end{equation}
Note that ${}{}_j\BH_i ={}_{\btau j} \BH _{\btau i}={}_{j} \BH _{\btau i}={}_{\btau j} \BH _{i}$ for any $(i,j)\in \Omega$.

We describe a $k$-basis of ${}_j\BH_i $ for each $(i,j)\in\Omega$ by separating into 2 cases (i)-(ii):
\begin{itemize}
\item[(i)] $\btau i=i$ and $\btau j=j$. Then $\{\alpha,\alpha\varepsilon_i\mid (\alpha:i\rightarrow j)\in Q_1\}$ forms a basis of ${}_j\BH_i $.

\item[(ii)]  $\btau i\neq i$ or $\btau j\neq j$. Then
\begin{align*}
&\{\alpha,\btau\alpha,\varepsilon_j\alpha =\btau\alpha \varepsilon_i, \varepsilon_{\btau j} \btau\alpha =\alpha \varepsilon_{\btau i}\mid (\alpha:i\rightarrow j)\in Q_1\}
\\ \cup &
\{\alpha,\btau\alpha,\varepsilon_{\btau j}\alpha =\btau\alpha \varepsilon_i, \varepsilon_{j} \btau\alpha =\alpha \varepsilon_{\btau i}\mid (\alpha:i\rightarrow \btau j)\in Q_1\}
\end{align*}
forms a basis of ${}_j\BH_i $.
\end{itemize}
Hence ${}_j\BH_i $ is an $\BH _j\mbox{-}\BH _i$-bimodule, which is free as a left $\BH _j$-module (and respectively, right $\BH _i$-module), with a basis $_j\LL_i$ (and respectively, $_j\RR_i$) defined in \cite[(2.12), (2.13)]{LW19}:
\begin{eqnarray}\label{basis of Hij left}
_j\LL_i&=&\left\{ \begin{array}{cc}
\{\alpha | (\alpha:i\rightarrow j)\in Q_1\} & \text{ if }\btau i=i, \btau j=j,\\
\{\alpha+\btau\alpha | (\alpha:i\rightarrow j)\in Q_1\} & \text{ if }\btau i=i, \btau j\neq j,\\
\{\alpha,\btau \alpha | (\alpha:i\rightarrow j)\in Q_1\} & \text{ if }\btau i\neq i,\btau j=j,\\
\{\alpha+\btau \alpha | (\alpha:i\rightarrow j) \text{ or }(\alpha:i\rightarrow \btau j)\in Q_1\} & \text{ if }\btau i\neq i,\btau j\neq j;\label{eqn:basis of L}
\end{array}\right.
\end{eqnarray}
\begin{eqnarray}
_j\RR_i&=& \left\{ \begin{array}{cc}\label{basis of Hij right}
\{\alpha | (\alpha:i\rightarrow j)\in Q_1\} & \text{ if }\btau i=i,  \btau j=j,\\
\{\alpha,\btau \alpha | (\alpha:i\rightarrow j)\in Q_1\} & \text{ if }\btau i=i, \btau j\neq j,\\
\{\alpha+\btau \alpha | (\alpha:i\rightarrow j)\in Q_1\} & \text{ if }\btau i\neq i,\btau j=j,\\
\{\alpha+\btau \alpha | (\alpha:i\rightarrow j) \text{ or }(\alpha:i\rightarrow \btau j)\in Q_1\} & \text{ if }\btau i\neq i,\btau j\neq j.\end{array}\right. \label{eqn:basis of R}
\end{eqnarray}
Note that $\btau(_j\LL_i)= {}_j\LL_i$ and $\btau({}_j\RR_i)= {}_j\RR_i$.

\subsubsection{}

Denote
\begin{equation}  \label{eq:ovOmega}
\overline{\Omega}:=\{(i,j)\in \ci \times \ci \mid (i,j)\in\Omega\text{ or }(i,\btau j)\in\Omega\}.
\end{equation}
Recall that $\ci $ is a (fixed) subset of $Q_0$ formed by the representatives of all $\btau$-orbits.
The tuple $(\BH_i,\,_j\BH_i):=(\BH_i,\,_j\BH_i)_{i\in\ci ,(i,j)\in\ov{\Omega}}$  is called a \emph{modulation} of $(Q, \btau)$ and is denoted by $\cm(Q, \btau)$.

A representation $(N_i,N_{ji}):=(N_i,N_{ji})_{i\in\ci ,(i,j)\in\ov{\Omega}}$ of $\cm(Q, \btau)$ is defined to be a finite-dimensional $\BH_i$-module $N_i$ for each $i\in \ci $ and an $\BH_j$-morphism
$N_{ji}:\,_j\BH_i\otimes_{\BH_i} N_i\rightarrow N_j$ for each $(i,j)\in \overline{\Omega}$. A morphism $f:L\rightarrow N$ of representations $L=(L_i,L_{ji})$ and $N=(N_i,N_{ji})$ of $\cm(Q, \btau)$ is a tuple $f=(f_i)_i$ of $\BH_i$-morphisms $f_i:L_i\rightarrow N_i$ for $i\in \ci $ such that the following diagram commutes for each $(i,j)\in\overline{\Omega}$:
\[
\xymatrix{_j\BH_i\otimes_{\BH_i} L_i \ar[r]^{1\otimes f_i} \ar[d]^{L_{ji}}&  _j\BH_i\otimes_{\BH_i} N_i\ar[d]^{N_{ji}}\\
 L_j\ar[r]^{f_j} & N_j}
 \]

The representations of $\cm(Q, \btau)$ form an abelian category, which is denoted by $\rep(\cm(Q, \btau))$.
Similarly to \cite[Proposition 5.1]{GLS17} (see also \cite[Theorem 3.2]{Li12}), we have the following.

\begin{proposition}
[\text{\cite[Proposition 2.16]{LW19}}]
\label{prop:modulated representation}
The categories $\rep(\cm(Q, \btau))$ and $\rep(\ov{Q},\ov{I})$ are isomorphic.
\end{proposition}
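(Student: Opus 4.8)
The plan is to establish the isomorphism $\rep(\cm(Q,\btau)) \cong \rep(\ov Q, \ov I)$ by constructing mutually inverse functors, using the explicit description of the modulation $\cm(Q,\btau)$ and the quiver-with-relations $(\ov Q, \ov I)$ from \cite{LW19} (in particular \cite[(2.12),(2.13)]{LW19} and \cite[Proposition 2.6]{LW19}). Since both categories are abelian and the functors I construct will be additive, exact, and mutually inverse, it suffices to work at the level of objects and morphisms and check compatibility. This mirrors \cite[Proposition 5.1]{GLS17} and \cite[Theorem 3.2]{Li12}, so the proof will largely consist of unwinding definitions; the genuinely new input is bookkeeping the involution $\btau$ correctly, especially the identifications ${}_j\BH_i = {}_{\btau j}\BH_{\btau i} = {}_j\BH_{\btau i} = {}_{\btau j}\BH_i$ noted after \eqref{eq:jHi}.

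First I would describe the functor $\rep(\ov Q, \ov I) \to \rep(\cm(Q,\btau))$. Given a representation $V = (V_x, V_\gamma)$ of $(\ov Q, \ov I)$, for each $i \in \ci$ I collect the vector spaces at the vertices of $\ov Q$ lying over the $\btau$-orbit of $i$ together with the maps $\varepsilon_i, \varepsilon_{\btau i}$ between them; by the relations in $\ov I$ coming from \eqref{dfn:Hi} (namely $\varepsilon_i^2 = 0$ if $\btau i = i$, and $\varepsilon_i\varepsilon_{\btau i} = 0 = \varepsilon_{\btau i}\varepsilon_i$ otherwise) this data is precisely a finite-dimensional $\BH_i$-module $N_i$. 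For each $(i,j) \in \ov\Omega$, the arrows of $\ov Q$ between the relevant vertices, subject to the remaining relations in $\ov I$, assemble into a $\BH_j$-linear map ${}_j\BH_i \otimes_{\BH_i} N_i \to N_j$; here the basis $_j\LL_i$ of ${}_j\BH_i$ as a free left $\BH_j$-module is exactly what encodes which arrows of $\ov Q$ appear, so the structure map $N_{ji}$ is determined by where it sends the basis elements, which are the images of the arrow-maps $V_\gamma$. The remaining relations in $\ov I$ (those expressing $\varepsilon\alpha = \btau\alpha\,\varepsilon$ type identities, cf. case (ii) of \S\ref{subsec:modulated}) are exactly the condition that $N_{ji}$ be well-defined and $\BH_j$-linear. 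Conversely, given a representation $(N_i, N_{ji})$ of $\cm(Q,\btau)$, I recover a representation of $(\ov Q, \ov I)$ by taking $V_i = N_i$ and $V_{\btau i} = \varepsilon_i N_i$ when $\btau i = i$ (or $V_i, V_{\btau i}$ the two eigenspace-like summands when $\btau i \neq i$), with $\varepsilon$-maps induced by the $\BH_i$-action and arrow-maps read off from $N_{ji}$ via the dual basis $_j\RR_i$; the relations in $\ov I$ then hold automatically because $N_{ji}$ is $\BH_j$-linear.

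The key steps, in order, are: (1) fix notation relating vertices/arrows of $\ov Q$ to the orbit data, using \cite[Proposition 2.6]{LW19}; (2) define the functor $\Phi: \rep(\ov Q,\ov I) \to \rep(\cm(Q,\btau))$ on objects as above and check that $\ov I$-relations translate exactly into the well-definedness and $\BH_j$-linearity of the maps $N_{ji}$; (3) define $\Phi$ on morphisms and verify functoriality --- a morphism of $(\ov Q,\ov I)$-representations is a collection of maps commuting with all arrows, which is visibly the same as a tuple $(f_i)$ of $\BH_i$-maps making the pentagon in \S\ref{subsec:modulated} commute; (4) define the inverse functor $\Psi$ and verify $\Psi\Phi \cong \Id$, $\Phi\Psi \cong \Id$ by direct inspection. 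I expect the main obstacle to be step (2): one must carefully match the two bases $_j\LL_i$ and $_j\RR_i$ of ${}_j\BH_i$ (as a left, resp. right, $\BH$-module) with the arrows of $\ov Q$ and check that the change-of-basis between them (governed by the $\varepsilon$'s) corresponds precisely to the "commuting square" relations in $\ov I$ in case (ii), where $\btau$ acts nontrivially; getting the indices and the left/right module conventions consistent is the delicate bookkeeping, but it is entirely parallel to \cite[Proposition 5.1]{GLS17} once the dictionary is set up, so no essential difficulty arises beyond careful verification.
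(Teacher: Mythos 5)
The paper offers no proof of this proposition: it is imported from \cite[Proposition 2.16]{LW19}, whose proof (modeled on \cite[Proposition 5.1]{GLS17} and \cite[Theorem 3.2]{Li12}) is exactly the construction of mutually inverse functors that you describe. Your outline is correct and matches that standard argument: the $\varepsilon$-relations in $\ov I$ encode the $\BH_i$-module structures on the orbit-wise collections of vector spaces, the remaining relations in $\ov I$ encode precisely the well-definedness and $\BH_j$-linearity of the structure maps $N_{ji}$ out of the tensor product ${}_j\BH_i\otimes_{\BH_i}N_i$, and the bases ${}_j\LL_i$, ${}_j\RR_i$ supply the dictionary between arrow maps and bimodule maps.

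Two small points to clean up. First, in the split case $\btau i=i$ the vertex $\btau i$ of $\ov Q$ \emph{is} the vertex $i$, so the inverse functor should simply set $V_i=N_i$ with the loop acting by $\varepsilon_i$; your formula $V_{\btau i}=\varepsilon_i N_i$ (the image of $\varepsilon_i$, a proper subspace in general) is not what is wanted there. The idempotent decomposition $V_i=e_iN_i$, $V_{\btau i}=e_{\btau i}N_i$ is the right recipe, and it is only relevant when $\btau i\neq i$. Second, the proposition asserts an \emph{isomorphism} of categories, not merely an equivalence, so in step (4) you should arrange $\Psi\Phi=\Id$ and $\Phi\Psi=\Id$ on the nose; the explicit choices of idempotents and of the bases ${}_j\LL_i$ do give this, but it is worth saying so rather than writing $\cong$. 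Neither point affects the substance of the argument.
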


\subsubsection{ }

The materials in this subsection are inspired by \cite{GLS17}  and will be used in \S\ref{subsec:APR}--\ref{subsec:APR2} to define reflection functors and APR-tilting modules for $\imath$quivers.

Let $Q^*$ be the quiver constructed from $Q$ by reversing all the arrows. We have $(i,j)\in\Omega$ if and only if $(j,i)\in \Omega^*:=\Omega(Q^*)$. For any $\alpha:i\rightarrow j$ in $Q$, denote by $\widetilde{\alpha}:j\rightarrow i$ the corresponding arrow in $Q^*$. In general, for any $b\in kQ$, we denote by $\widetilde{b}$ the corresponding element in $kQ^*$. Then $\btau$ induces an involution $\btau'$ of $Q^*$. Clearly, $\btau' i=\btau i$ for any vertex $i\in Q_0$. Then similarly we can define $\Lambda^*=\K Q^*\otimes_\K R_2$, and an involution $\btau^{'\sharp}$ for $\Lambda^*$, and its $\btau^{'\sharp}$-fixed point subalgebra $(\Lambda^*)^\imath$. Note that $\BH$ is also a subalgebra of $(\Lambda^*)^\imath$.

It is worth noting that $\ci$ is also a subset of representatives of $\btau'$-orbits. In this way, one can define $\ov{\Omega}^*$ (cf. \eqref{eq:ovOmega} for $\ov{\Omega}$).

For any $(i,j)\in\Omega$, we have $(j,i)\in\Omega^*$, and then we can define ${}_i\BH_j$ as in \eqref{eq:jHi} by considering $Q^*$. Recall from \eqref{basis of Hij left}--\eqref{basis of Hij right} the basis $_i\LL_j$ (and respectively, $_i\RR_j$) for ${}_i\BH_j $ as a left $\BH _i$-module (and respectively, right $\BH _j$-module). Note that $b\in {}_j\LL_i$ (respectively, $b\in {}_j\RR_i$) if and only if $\widetilde{b}\in {}_i\RR_j$ (respectively, $\widetilde{b}\in {}_i\LL_j$).
Let $_j\LL_i^*$ and $_j\RR_i^*$ be the dual bases of $\Hom_{\BH_j}(_j \BH_i, \BH_j)$ and $\Hom_{\BH_i}(_j\BH_i,\BH_i)$, respectively. Denote by $b^*$ the corresponding dual basis vector for any $b\in{_j\LL_i}$ or $b\in{_j\RR_i}$.

Since ${}_i\BH_j$ and $\Hom_{\BH_j}(_j\BH_i,\BH_j)$ are right free $\BH_j$-modules with bases given by ${}_i\RR_j$ and ${}_j\LL_i^*$ respectively, there is a right $\BH_j$-module isomorphism
\[
\rho:\, _i\BH_j \longrightarrow \Hom_{\BH_j}(_j\BH_i,\BH_j)
\]
such that $\rho(\widetilde{b})=b^*$ for any $b\in{}_j\LL_i$.
It is then routine to check that $\rho$ is actually an $\BH_i$-$\BH_j$-bimodule isomorphism.
Similarly, there is an $\BH_i$-$\BH_j$-bimodule isomorphism
$$\lambda: \, _i\BH_j\longrightarrow \Hom_{\BH_i}(_j\BH_i,\BH_i).$$
These two isomorphisms satisfy that $\rho(_i\RR_j)={}_j\LL_i^*$ and $\lambda(_i\LL_j)={}_j\RR_i^*$. We sometimes identify the spaces
$\Hom_{\BH_j}(_j\BH_i,\BH_j)$, $_i\BH_j$ and $\Hom_{\BH_i}(_j\BH_i,\BH_i)$ via $\rho$ and $\lambda$.

If $N_j$ is an $\BH_j$-module, then we have a natural isomorphism of $\BH_i$-modules
$$\Hom_{\BH_j}(_j\BH_i,N_j)\longrightarrow\, _i\BH_j\otimes_{\BH_j}N_j$$
defined by
$$f\mapsto \sum_{b\in_j\LL_i} b^*\otimes f(b).$$
Furthermore, for any $\BH_i$-module $L_i$, there is a natural isomorphism of $\K$-vector spaces:
$$\Hom_{\BH_j}(_j\BH_i\otimes_{\BH_i}L_i, N_j)\longrightarrow \Hom_{\BH_i}(L_i,\Hom_{\BH_j}(_j\BH_i,N_j)).$$
Composing the two maps above, we obtain the following.

\begin{lemma}
  \label{lem:ad}
There exists a canonical $k$-linear isomorphism
\begin{align*}
{\rm ad}_{ji}  ={\rm ad}_{ji}(L_i,N_j)  : & \Hom_{\BH_j}(_j\BH_i\otimes_{\BH_i} L_i,N_j)\longrightarrow \Hom_{\BH_i}(L_i,\,_i\BH_j\otimes_{\BH_j}N_j)
\\
{\rm ad}_{ji}: & f\mapsto \big(f^\vee:l\mapsto \sum_{b\in_j\LL_i} b^*\otimes f(b\otimes l) \big).
\end{align*}
The inverse ${\rm ad}_{ji}^{-1}$ is given by
$
{\rm ad}_{ji}^{-1} (g) = \big(g^\vee:h\otimes l\mapsto \sum_{b\in_j\LL_i} b^*(h)(g(l))_b \big),
$ 
where the elements $(g(l))_b\in N_j$ are uniquely determined by
$g(l)=\sum_{b\in_j\LL_i} b^*\otimes(g(l))_b.$
\end{lemma}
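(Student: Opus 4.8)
The plan is to verify that $\mathrm{ad}_{ji}$ is a composition of two isomorphisms already assembled in the text, and then to check the explicit formula for the inverse by a direct unwinding. First I would name the two ingredient maps. Let
\[
\Phi: \Hom_{\BH_j}({}_j\BH_i\otimes_{\BH_i} L_i, N_j)\stackrel{\sim}{\longrightarrow} \Hom_{\BH_i}\big(L_i,\Hom_{\BH_j}({}_j\BH_i,N_j)\big)
\]
be the standard tensor-hom adjunction isomorphism, which sends $f$ to the map $l\mapsto (h\mapsto f(h\otimes l))$; this is $k$-linear and natural, and $\BH_i$-linearity of the target is forced by the $\BH_i$-bimodule structure on ${}_j\BH_i$. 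Let
\[
\Psi: \Hom_{\BH_j}({}_j\BH_i,N_j)\stackrel{\sim}{\longrightarrow} {}_i\BH_j\otimes_{\BH_j}N_j,\qquad \Psi(g)=\sum_{b\in{}_j\LL_i} b^*\otimes g(b),
\]
be the isomorphism of $\BH_i$-modules recorded just before the lemma (using that ${}_j\BH_i$ is free as a right $\BH_i$-module with basis ${}_j\RR_i$ and as a left $\BH_j$-module with basis ${}_j\LL_i$, and the duality pairing $\rho$). Then I would set $\mathrm{ad}_{ji}:=(\Psi\circ{-})\circ\Phi$, i.e.\ postcompose with $\Psi$ the value of $\Phi$. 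Since both $\Phi$ and $\Psi$ are $k$-linear isomorphisms, $\mathrm{ad}_{ji}$ is a $k$-linear isomorphism; composing the two explicit formulas immediately gives $f^\vee(l)=\sum_{b\in{}_j\LL_i} b^*\otimes f(b\otimes l)$, which is exactly the stated formula.

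Next I would compute the inverse. Since $\mathrm{ad}_{ji}=(\Psi\circ-)\circ\Phi$, we have $\mathrm{ad}_{ji}^{-1}=\Phi^{-1}\circ(\Psi^{-1}\circ-)$. The inverse of $\Phi$ sends $\tilde g\in\Hom_{\BH_i}(L_i,\Hom_{\BH_j}({}_j\BH_i,N_j))$ to the map $h\otimes l\mapsto \tilde g(l)(h)$. The inverse of $\Psi$ sends an element $z\in{}_i\BH_j\otimes_{\BH_j}N_j$ to the $\BH_j$-homomorphism ${}_j\BH_i\to N_j$ determined as follows: write $z=\sum_{b\in{}_j\LL_i} b^*\otimes z_b$ with $z_b\in N_j$ uniquely determined (possible since $\{b^*\}$ is a basis of $\Hom_{\BH_j}({}_j\BH_i,N_j)$ as left $\BH_i$-module dual to ${}_j\RR_i$ — here one must be slightly careful and use the correct pairing of $\LL$ and $\RR$ bases), and send $h\mapsto \sum_{b} b^*(h)\, z_b$, where $b^*(h)\in\BH_j$. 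Therefore, given $g:L_i\to{}_i\BH_j\otimes_{\BH_j}N_j$ with $g(l)=\sum_b b^*\otimes (g(l))_b$, we get
\[
\mathrm{ad}_{ji}^{-1}(g)(h\otimes l)=\big(\Psi^{-1}(g(l))\big)(h)=\sum_{b\in{}_j\LL_i} b^*(h)\,(g(l))_b,
\]
which is the asserted formula for $g^\vee$.

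The key steps, in order, are: (1) record the tensor-hom adjunction $\Phi$ and confirm that the $\BH_i$-module structure on the inner hom makes the target land where claimed; (2) recall from the preceding paragraph that ${}_j\BH_i$ is free on both sides with the stated dual bases, so that $\Psi$ is a well-defined isomorphism and its inverse is given by expanding in the dual basis; (3) define $\mathrm{ad}_{ji}$ as the composite and read off the forward formula; (4) invert the composite and simplify to obtain the formula for $g^\vee$. The main obstacle, and the only genuinely delicate point, is bookkeeping with the two bases ${}_j\LL_i$ and ${}_j\RR_i$ and their duals $\lambda,\rho$: one must make sure that the element $(g(l))_b$ is indeed well-defined by $g(l)=\sum_b b^*\otimes (g(l))_b$ (i.e.\ that $\{b^*\mid b\in{}_j\LL_i\}$ is the correct basis of the relevant hom-space, via the identification $\lambda(_i\LL_j)={}_j\RR_i^*$), and that the evaluation $b^*(h)$ lands in $\BH_j$ and interacts correctly with the $\BH_j$-module structure on $N_j$ so that the resulting map is $\BH_j$-linear in $h$. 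Everything else is a formal manipulation of adjunctions, so once these basis identifications are pinned down the proof is immediate; I would simply cite \cite[(2.12),(2.13)]{LW19} and the bimodule isomorphisms $\rho,\lambda$ for those facts and present the composite calculation.
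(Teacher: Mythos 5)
Your proposal is correct and is essentially the paper's own argument: the paper obtains the lemma precisely by composing the two isomorphisms displayed immediately before it (the adjunction $\Hom_{\BH_j}({}_j\BH_i\otimes_{\BH_i}L_i,N_j)\cong\Hom_{\BH_i}(L_i,\Hom_{\BH_j}({}_j\BH_i,N_j))$ and the dual-basis isomorphism $\Hom_{\BH_j}({}_j\BH_i,N_j)\cong{}_i\BH_j\otimes_{\BH_j}N_j$), and your unwinding of the inverse via the expansion $g(l)=\sum_{b}b^*\otimes(g(l))_b$ matches the stated formula. Your caution about the $\LL/\RR$ bases and the identifications $\rho,\lambda$ is exactly the right point to pin down, and it is handled by the freeness statements recorded before the lemma.
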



%
%
\subsection{Reflection functors}
  \label{subsec:APR}

In this subsection, we shall introduce the reflection functors in the setting of $\imath$quivers.

Let $(Q, \btau)$ be an acyclic $\imath$quiver. Without loss of generality, we assume $Q$ to be connected and $(Q,\btau)$ of rank $\ge 2$. Recall that $\Omega=\Omega(Q)$ is the orientation of $Q$. For any sink $\ell \in Q_0$, define the quiver $s_\ell (Q)$ by reversing all the arrows ending at $\ell $.
By definition, $\ell $ is a sink of $Q$ if and only if $\btau \ell $ is a sink of $Q$.  Define the quiver
\begin{align}  \label{eq:QQ}
Q' = \bs_\ell Q =\left\{ \begin{array}{cc} s_\ell (Q) & \text{ if } \btau \ell =\ell ,\\ s_{\ell }s_{\btau\ell  }(Q) &\text{ if }\btau \ell \neq \ell .  \end{array}\right.
\end{align}
Note that $s_\ell s_{\btau \ell }(Q)=s_{\btau \ell }s_\ell (Q)$.
Then $\btau$ also induces an involution $\btau'$ on the quiver $Q'$. In this way, we can define $\Lambda'=\K Q'\otimes_k R_2$ with an involution $\btau'^\sharp$, and denote the $\btau'^\sharp$-fixed point subalgebra by $\Lambda'^\imath =\bs_{\ell }\Lambda^{\imath}$. Note that
$\bs_{\ell }\Lambda^{\imath}=\bs_{\btau \ell }\Lambda^{\imath}$
for any sink $\ell \in Q_0$.
The quiver $\ov{Q'}$ of $\bs_\ell \Lambda^{\imath}$ can be constructed from $\ov{Q}$ by reversing all the arrows in $Q$ ending at $\ell $ and $\btau \ell $. Denote by $\Omega':=\Omega(Q')$ the orientation of $Q'$.

We shall define a {\rm reflection functor} associated to a sink $\ell \in Q_0$ (compare with \cite{GLS17})
\begin{align} \label{def:RefF}
F_\ell ^+:\rep(\ov{Q},\ov{I})=\mod(\Lambda^{\imath})\longrightarrow \rep(\ov{Q'},\ov{I'})=\mod(\bs_\ell \iLa),
\end{align}
in \eqref{eq:F+} below.
Using Proposition \ref{prop:modulated representation}, we shall identify the category $\rep(\ov{Q},\ov{I})$ with $\rep(\cm(Q, \btau))$, and respectively, $\rep(\ov{Q'},\ov{I'})$ with $\rep(\cm(Q',\btau'))$.

Without loss of generality, we assume that the sink $\ell \in\ci$; cf. \eqref{eq:ci} for notation $\ci$.
Let $L=(L_i,L_{ji})\in \rep(\cm(Q, \btau))$. Then $L_{ji}:\,_j\BH_i\otimes_{\BH_i} L_i\rightarrow L_j$ is an $\BH_i$-morphism for any $(i,j)\in\ov{\Omega}$. Denote by
\begin{align}
\label{eq:Lin}
L_{\ell ,{\rm in}}:= (L_{\ell i})_i:\bigoplus_{i\in\ov{\Omega}(-,\ell )}  \,_\ell \BH_i\otimes_{\BH_i} L_i\longrightarrow L_\ell.
\end{align}

Let $N_\ell :=\ker(L_{\ell ,{\rm in}})$. By definition, there exists an exact sequence
\begin{align}
\label{def:reflection}
0\longrightarrow N_\ell \longrightarrow \bigoplus_{i\in\ov{\Omega}(-,\ell )}  \,_\ell \BH_i\otimes_{\BH_i} L_i\xrightarrow{L_{\ell ,{\rm in}}} L_\ell .
\end{align}
Denote by
$(N_{i\ell }^\vee)_i$ the inclusion map $N_\ell \rightarrow \bigoplus_{i\in\ov{\Omega}(-,\ell )}\,_\ell \BH_i\otimes_{\BH_i} L_i $.

For any $L\in \rep(\cm(Q, \btau))$, define
\begin{align}
  \label{eq:F+}
F_\ell ^+(L)=(N_s,N_{rs})\in \rep(\cm(Q',\btau')),
\end{align}
where
\[
N_s:=\left\{ \begin{array}{cc} L_s & \text{ if }s\neq \ell ,
\\
N_{\ell } &\text{ if }s=\ell ,
\end{array} \right.
\qquad
 N_{rs}:=\left\{ \begin{array}{cc} L_{rs} &\text{ if } (s,r)\in \ov{\Omega} \text{ with }r\neq \ell ,
\\{\rm ad}_{r\ell }^{-1}(N_{r\ell }^\vee) & \text{ if } (s,r)\in\ov{\Omega}^* \text{ and } s=\ell.
  \end{array}  \right.
  \]
For any $f=(f_i)_{i\in \mathbb{I}_\btau}:L=(L_i,L_{ji})\rightarrow M=(M_i,M_{ji})$, define $F_\ell^+(f)=(g_i)_{i\in\I_\btau}$ where
$g_i=f_i$ if $i\neq \ell$, and $g_\ell$ is the unique morphism such that the following diagram commutes:
\[\xymatrix{0\ar[r]& F_\ell^+(L)_\ell \ar[r] \ar[d]^{g_\ell} &\bigoplus\limits_{i\in\ov{\Omega}(-,\ell )}  \,_\ell \BH_i\otimes_{\BH_i} L_i\ar[r]^{\qquad \quad L_{\ell,{\rm in}}}\ar[d]^{(1\otimes f_i)_i} & L_\ell\ar[d]^{f_\ell}
\\
 0\ar[r]& F_\ell^+(M)_\ell \ar[r]&\bigoplus\limits_{i\in\ov{\Omega}(-,\ell )}  \,_\ell \BH_i\otimes_{\BH_i} M_i\ar[r]^{\quad\qquad M_{\ell,{\rm in}}} & M_\ell}\]
It is routine to verify that $(g_i)_{i\in \mathbb{I}_\btau}$ is a morphism by Lemma 3.2 and the definition of morphisms of representations of modulated graphs (see \S3.1.2).
The above construction is clearly functorial. Hence we obtain a reflection functor  $F_\ell ^+:\mod(\Lambda^{\imath}) \rightarrow\mod(\bs_\ell \Lambda^{\imath}).$

Dually, associated to any source $\ell \in Q_0$, we have a reflection functor
\begin{align}
F_\ell ^-:\mod(\Lambda^{\imath}) \longrightarrow\mod(\bs_\ell \Lambda^{\imath}).
\end{align}

%
%

%
\subsection{AR quivers for rank 2 $\imath$quiver algebras}

We describe explicitly the Auslander-Reiten (AR) quivers for the rank 2 $\imath$quivers, including  $\imath$quiver of diagonal type (which gives rise to $\tU$ of type $A_2$), split $\imath$quiver (which gives rise to split $\tUi$ of type $A_2$), and $\imath$quiver with $\btau \neq \Id$ (which gives rise to quasi-split $\tUi$ of type $A_3$). We refer to \cite[Chapter IV]{ASS04} for the basics of AR theory, and \cite{BG82} for the coverings of the AR quiver
of a representation-finite algebra introduced by Riedtmann.
\subsubsection{$\imath$Quiver of diagonal type}

Let $Q=(Q_0,Q_1)$ be an acyclic quiver, $Q^{\sharp}$ be its double framed quiver. Let $Q^{\dbl} =Q\sqcup  Q^{\diamond}$,  where $Q^{\diamond}$ is an identical copy of $Q$ with a vertex set $\{i^{\diamond} \mid i\in Q_0\}$ and an arrow set $\{ \alpha^{\diamond} \mid \alpha \in Q_1\}$.
Let $\Lambda=kQ\otimes_k R_2$, $\Lambda^{\diamond} =k Q^{\diamond} \otimes_k R_2$, and $\Lambda^{\dbl}:=kQ^{\dbl}\otimes R_2$. Then the double framed quiver $(Q^{\dbl})^{\sharp}$ of $Q^{\dbl}$ is $Q^{\sharp}\sqcup (Q^{\sharp})^{\diamond}$, and
$\Lambda^{\dbl}=\Lambda\times \Lambda^{\diamond} \cong \Lambda\times \Lambda$.

Let $\rm{swap}$ be the involution of $Q^{\rm dbl}$ uniquely determined by
$\swa(i)=i^\diamond$ for any $i\in Q_0$ (by viewing $Q$ and $Q^\diamond$ as subquivers of $Q^{\dbl}$). Then $(\Lambda^{\dbl})^\imath$ is isomorphic to $\Lambda$. Explicitly, let $(\ov{Q}^{\dbl},\ov{I}^{\dbl})$ be the bound quiver of $(\Lambda^{\dbl})^\imath$. Then $(\ov{Q}^{\dbl},\ov{I}^{\dbl})$ coincides with the double $\imath$quiver $(Q^{\sharp},I^{\sharp})$. So we just use $(Q^{\sharp},I^{\sharp})$ as the bound quiver of $(\Lambda^{\dbl})^\imath$ and identify $(\Lambda^{\dbl})^\imath$ with $\Lambda$.

In this subsection, we give the Auslander-Reiten quiver of $\Lambda$ for $Q$ of type $A_2$.

Let $Q=(\xymatrix{ 1\ar[r]^{\alpha} &2})$. Then its double framed quiver $Q^{\sharp}$ is
\begin{center}\setlength{\unitlength}{0.7mm}
\vspace{-2cm}
\begin{equation*}
\begin{picture}(100,40)(0,20)
\put(49,8){\small $1'$}
\put(50,31){\small $1$}
\put(72,8){\small $2'$}
\put(72,31){\small $2$}

\put(53,10){\vector(1,0){18.5}}
\put(53,32.5){\vector(1,0){18.5}}

\put(60,12.5){$_{\alpha'}$}
\put(60,35){$_\alpha$}
\color{purple}
\put(50,13){\vector(0,1){17}}
\put(52,29.5){\vector(0,-1){17}}
\put(72,13){\vector(0,1){17}}
\put(74,29.5){\vector(0,-1){17}}

\put(45,20){\small $\varepsilon_1'$}
\put(53,20){\small $\varepsilon_1$}
\put(67,20){\small $\varepsilon_2'$}
\put(75,20){\small $\varepsilon_2$}
\end{picture}
\end{equation*}
\vspace{-0.2cm}
\end{center}
and $I^{\sharp}$ is generated by
\begin{eqnarray*}
&&\varepsilon_1\varepsilon_1', \,\,\,\,\varepsilon_1'\varepsilon_1, \,\,\,\,\varepsilon_2'\varepsilon_2, \,\,\,\,\varepsilon_2\varepsilon_2',\quad\alpha' \varepsilon_1 -\varepsilon_2 \alpha,\,\,\,\, \alpha \varepsilon_1'- \varepsilon_2' \alpha'.
\end{eqnarray*}
The Auslander-Reiten quiver of $\Lambda$ is displayed in Figure 1. 
The modules on the leftmost column of the figure have to be identified with the corresponding modules on the rightmost one. The projective $\Lambda$-modules are marked with a solid frame.
\begin{center}\setlength{\unitlength}{0.6mm}
\begin{figure}\label{figure 1}
 \begin{picture}(200,100)(0,0)
 \put(0,45){\begin{picture}(10,10)\setlength{\unitlength}{0.5mm}
 \put(5,10){\tiny$1$}
 \put(0,5){\tiny$1'$}
 \put(10,5){\tiny$2$}
 \put(5,0){\tiny$2'$}
 \end{picture}
 }
 \put(12,47){\vector(2,-1){8}}

 \put(-1,44){\begin{picture}(10,20)\setlength{\unitlength}{0.6mm}
\put(0,0){\line(1,0){13}}
\put(0,0){\line(0,1){13}}
\put(13,0){\line(0,1){13}}
\put(0,13){\line(1,0){13}}
\end{picture}}

 \put(5,25){\tiny$1'$}
 \put(9,27){\vector(1,1){10}}

\put(5,75){\tiny$2$}
\put(7,73){\vector(1,-2){14}}
\put(7,79){\vector(1,1){14}}

\put(20,35){\begin{picture}(10,10)\setlength{\unitlength}{0.5mm}
\put(5,10){\tiny$1$}
 \put(0,5){\tiny$1'$}
 \put(10,5){\tiny$2$}
 \end{picture}
 }

 \put(19,90){\begin{picture}(10,10)\setlength{\unitlength}{0.5mm}
 \put(10,5){\tiny$2'$}
 \put(5,0){\tiny$2$}
 \end{picture}
 }
  \put(22,89){\begin{picture}(10,20)\setlength{\unitlength}{0.45mm}
\put(0,0){\line(1,0){13}}
\put(0,0){\line(0,1){13}}
\put(13,0){\line(0,1){13}}
\put(0,13){\line(1,0){13}}
\end{picture}}
\put(33,91){\vector(1,-1){11}}

\put(31,37){\vector(1,-1){11}}

\put(30,44){\vector(1,2){14}}

 \put(50,26){\vector(1,1){12}}

 \put(40,22.5){\begin{picture}(10,10)\setlength{\unitlength}{0.5mm}
 \put(10,5){\tiny$1$}
 \put(5,0){\tiny$2$}
 \end{picture}
 }

 \put(40,70){\begin{picture}(10,10)\setlength{\unitlength}{0.5mm}
 \put(5,10){\tiny$1$}
 \put(0,5){\tiny$1'$}
 \put(10,5){\tiny$2$}
 \put(15,10){\tiny$2'$}
 \end{picture}
 }

 \put(54,82){\vector(1,1){9}}

 \put(60,90){\begin{picture}(10,10)\setlength{\unitlength}{0.5mm}
 \put(10,5){\tiny$1$}
 \put(5,0){\tiny$1'$}
 \end{picture}
 }

  \put(73,91){\vector(1,-1){14}}

 \put(62,35){\begin{picture}(10,10)\setlength{\unitlength}{0.5mm}
\put(5,5){\tiny$2$}
 \put(0,10){\tiny$1$}
 \put(10,10){\tiny$2'$}
 \end{picture}
 }

\put(89,25){\tiny$2'$}
\put(95,23){\vector(1,-1){12}}
\put(95,27){\vector(1,1){11}}

\put(89,75){\tiny$1$}
\put(94,73){\vector(1,-2){14}}

 \put(74,38){\vector(1,-1){11}}

 \put(74,45){\vector(1,2){14}}
 \put(74,43){\vector(1,1){8}}

 \put(84,45){\begin{picture}(10,10)\setlength{\unitlength}{0.5mm}
 \put(5,10){\tiny$1'$}
 \put(0,5){\tiny$1$}
 \put(10,5){\tiny$2'$}
 \put(5,0){\tiny$2$}
 \end{picture}
 }
\put(97,50){\vector(1,-1){9}}

  \put(84,44){\begin{picture}(10,20)\setlength{\unitlength}{0.6mm}
\put(0,0){\line(1,0){13}}
\put(0,0){\line(0,1){13}}
\put(13,0){\line(0,1){13}}
\put(0,13){\line(1,0){13}}
\end{picture}}

  \put(107,35){\begin{picture}(10,10)\setlength{\unitlength}{0.5mm}
\put(5,10){\tiny$1'$}
 \put(0,5){\tiny$1$}
 \put(10,5){\tiny$2'$}
 \end{picture}
 }
 \put(117,38){\vector(1,-1){12}}
 \put(117,45){\vector(1,2){13}}

   \put(110,5){\begin{picture}(10,10)\setlength{\unitlength}{0.5mm}
\put(5,10){\tiny$2$}
 \put(0,5){\tiny$2'$}
 \end{picture}
 }

   \put(108.5,7.5){\begin{picture}(10,20)\setlength{\unitlength}{0.45mm}
\put(0,0){\line(1,0){13}}
\put(0,0){\line(0,1){13}}
\put(13,0){\line(0,1){13}}
\put(0,13){\line(1,0){13}}
\end{picture}}

\put(118,11){\vector(1,1){11}}

  \put(130,20){\begin{picture}(10,10)\setlength{\unitlength}{0.5mm}
 \put(5,10){\tiny$1'$}
 \put(0,5){\tiny$1$}
 \put(10,5){\tiny$2'$}
 \put(15,10){\tiny$2$}
 \end{picture}
 }
  \put(142,24){\vector(1,-1){11}}
 \put(142,31){\vector(1,1){9}}

  \put(128,72.5){\begin{picture}(10,10)\setlength{\unitlength}{0.5mm}
 \put(10,5){\tiny$1'$}
 \put(5,0){\tiny$2'$}
 \end{picture}
 }

 \put(138,71){\vector(1,-2){12}}

    \put(152.5,5){\begin{picture}(10,10)\setlength{\unitlength}{0.5mm}
\put(5,10){\tiny$1'$}
 \put(0,5){\tiny$1$}
 \end{picture}
 }
 \put(161,12){\vector(1,1){11}}

   \put(150,35){\begin{picture}(10,10)\setlength{\unitlength}{0.5mm}
\put(5,5){\tiny$2'$}
 \put(0,10){\tiny$1'$}
 \put(10,10){\tiny$2$}
 \end{picture}
 }
 \put(160,43){\vector(2,1){8}}

 \put(160,39){\vector(1,-1){12}}
 \put(161,47){\vector(1,2){13}}

  \put(170,45){\begin{picture}(10,10)\setlength{\unitlength}{0.5mm}
 \put(5,10){\tiny$1$}
 \put(0,5){\tiny$1'$}
 \put(10,5){\tiny$2$}
 \put(5,0){\tiny$2'$}
 \end{picture}
 }
 \put(169,44){\begin{picture}(10,20)\setlength{\unitlength}{0.6mm}
\put(0,0){\line(1,0){13}}
\put(0,0){\line(0,1){13}}
\put(13,0){\line(0,1){13}}
\put(0,13){\line(1,0){13}}
\end{picture}}
\put(50,72){\vector(1,-2){13}}

 \put(175,25){\tiny$1'$}

\put(175,75){\tiny$2$}

\dashline{3}(34,93)(62,93)
\put(34,93){\vector(-1,0){1}}

\put(10,21){\begin{picture}(0,0)
\dashline{3}(-1,4)(32,4)
\put(-1,4){\vector(-1,0){1}}
\end{picture}
}

\put(8,74){\begin{picture}(0,0)
\dashline{3}(2,3)(32,3)
\put(2,3){\vector(-1,0){1}}
\end{picture}
}

\put(52,73){\begin{picture}(0,0)
\dashline{3}(2,3)(32,3)
\put(2,3){\vector(-1,0){1}}
\end{picture}
}

\put(95,73){\begin{picture}(0,0)
\dashline{3}(2,3)(34,3)
\put(2,3){\vector(-1,0){1}}
\end{picture}
}

\put(138,73){\begin{picture}(0,0)
\dashline{3}(2,3)(34,3)
\put(2,3){\vector(-1,0){1}}
\end{picture}
}

\put(140,23){\begin{picture}(0,0)
\dashline{3}(4,3)(32,3)
\put(4,3){\vector(-1,0){1}}
\end{picture}
}

\put(94,23){\begin{picture}(0,0)
\dashline{3}(2,3)(32,3)
\put(2,3){\vector(-1,0){1}}
\end{picture}
}

\put(52,22){\begin{picture}(0,0)
\dashline{3}(0,3)(32,3)
\put(0,3){\vector(-1,0){1}}
\end{picture}
}

\put(72,38){\begin{picture}(2,0)
\dashline{3}(2,3)(32,3)
\put(2,3){\vector(-1,0){1}}
\end{picture}
}

\put(30,38){\begin{picture}(2,0)
\dashline{3}(2,3)(32,3)
\put(2,3){\vector(-1,0){1}}
\end{picture}
}

\put(118,38){\begin{picture}(2,0)
\dashline{3}(2,3)(32,3)
\put(2,3){\vector(-1,0){1}}
\end{picture}
}

\put(118,8){\begin{picture}(2,0)
\dashline{3}(2,3)(32,3)
\put(2,3){\vector(-1,0){1}}
\end{picture}
}

\put(143,38){\begin{picture}(2,0)
\dashline{3}(17,3)(32,3)
\put(17,3){\vector(-1,0){1}}
\end{picture}
}

\put(-13,38){\begin{picture}(2,0)
\dashline{3}(17,3)(32,3)
\put(17,3){\vector(-1,0){1}}
\end{picture}
}

\end{picture}
\caption{Auslander-Reiten quiver for $\Lambda$ with $Q$ of type $A_2$}
\end{figure}
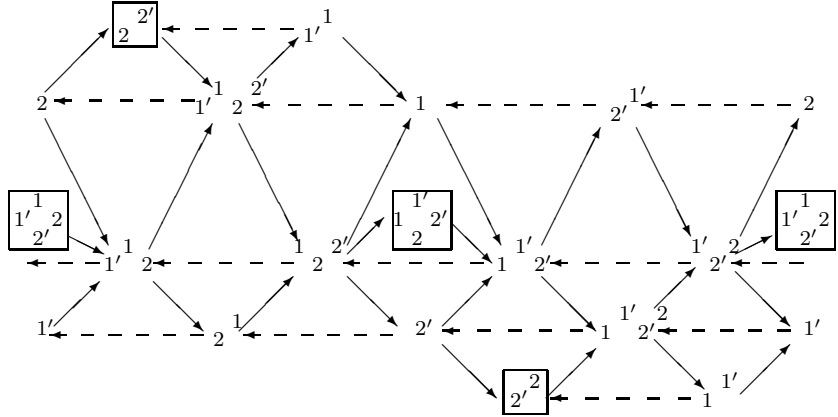
\vspace{-0.5cm}
\end{center}

\subsubsection{Split $\imath$quiver}

Let $Q=(\xymatrix{ 1\ar[r]^{\alpha} &2})$ with $\btau=\Id$. Then $\Lambda^\imath$ is isomorphic to the algebra with its quiver $\ov Q$ and relations as follows:
\begin{center}\setlength{\unitlength}{0.7mm}
 \begin{picture}(30,13)(0,0)
\put(0,-2){\small $1$}
\put(2.5,0){\vector(1,0){17}}
\put(10,-0.5){$^{\alpha}$}
\put(20,-2){\small $2$}
\color{purple}
\put(0,9){\small $\varepsilon_1$}
\put(20,9){\small $\varepsilon_2$}

\qbezier(-1,1)(-3,3)(-2,5.5)
\qbezier(-2,5.5)(1,9)(4,5.5)
\qbezier(4,5.5)(5,3)(3,1)
\put(3.1,1.4){\vector(-1,-1){0.3}}

\qbezier(19,1)(17,3)(18,5.5)
\qbezier(18,5.5)(21,9)(24,5.5)
\qbezier(24,5.5)(25,3)(23,1)
\put(23.1,1.4){\vector(-1,-1){0.3}}
\end{picture}
\vspace{0.2cm}
\end{center}
\[
\varepsilon_1^2=0=\varepsilon_2^2, \quad \varepsilon_2 \alpha=\alpha\varepsilon_1.
\]
Indeed, $\Lambda^\imath$ is isomorphic to $\K Q\otimes_\K R_1$.
Its Auslander-Reiten quiver of $\Lambda^\imath$  is displayed in Figure 2; see \cite[\S13.5]{GLS17}. 
The modules on the leftmost column of the figure have to be identified with the corresponding modules on the rightmost one. The projective $\Lambda^\imath$-modules are marked with a solid frame.

\begin{center}\setlength{\unitlength}{0.6mm}
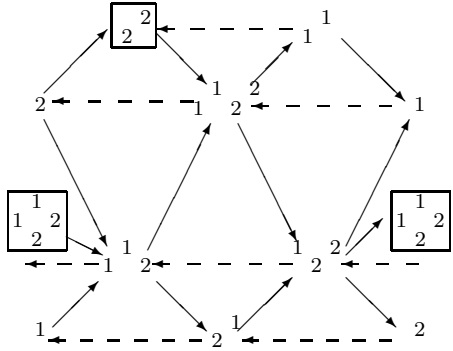
\begin{figure}\label{figure 2}
 \begin{picture}(100,80)(0,20)
 \put(0,45){\begin{picture}(10,10)\setlength{\unitlength}{0.5mm}
 \put(5,10){\tiny$1$}
 \put(0,5){\tiny$1 $}
 \put(10,5){\tiny$2$}
 \put(5,0){\tiny$2 $}
 \end{picture}
 }
 \put(12,47){\vector(2,-1){8}}

 \put(-1,44){\begin{picture}(10,20)\setlength{\unitlength}{0.6mm}
\put(0,0){\line(1,0){13}}
\put(0,0){\line(0,1){13}}
\put(13,0){\line(0,1){13}}
\put(0,13){\line(1,0){13}}
\end{picture}}

 \put(5,25){\tiny$1 $}
 \put(9,27){\vector(1,1){10}}

\put(5,75){\tiny$2$}
\put(7,73){\vector(1,-2){14}}
\put(7,79){\vector(1,1){14}}

\put(20,35){\begin{picture}(10,10)\setlength{\unitlength}{0.5mm}
\put(5,10){\tiny$1$}
 \put(0,5){\tiny$1 $}
 \put(10,5){\tiny$2$}
 \end{picture}
 }

 \put(20,90){\begin{picture}(10,10)\setlength{\unitlength}{0.5mm}
 \put(10,5){\tiny$2$}
 \put(5,0){\tiny$2 $}
 \end{picture}
 }
  \put(22,89){\begin{picture}(10,20)\setlength{\unitlength}{0.45mm}
\put(0,0){\line(1,0){13}}
\put(0,0){\line(0,1){13}}
\put(13,0){\line(0,1){13}}
\put(0,13){\line(1,0){13}}
\end{picture}}
\put(32,92){\vector(1,-1){11}}

\put(32,37){\vector(1,-1){11}}

\put(30,44){\vector(1,2){14}}

 \put(50,26){\vector(1,1){12}}

 \put(40,22.5){\begin{picture}(10,10)\setlength{\unitlength}{0.5mm}
 \put(10,5){\tiny$1$}
 \put(5,0){\tiny$2$}
 \end{picture}
 }

 \put(40,70){\begin{picture}(10,10)\setlength{\unitlength}{0.5mm}
 \put(5,10){\tiny$1$}
 \put(0,5){\tiny$1 $}
 \put(10,5){\tiny$2$}
 \put(15,10){\tiny$2 $}
 \end{picture}
 }

 \put(53,81){\vector(1,1){9}}

 \put(60,90){\begin{picture}(10,10)\setlength{\unitlength}{0.5mm}
 \put(10,5){\tiny$1$}
 \put(5,0){\tiny$1 $}
 \end{picture}
 }

  \put(73,91){\vector(1,-1){14}}

 \put(62,35){\begin{picture}(10,10)\setlength{\unitlength}{0.5mm}
\put(5,5){\tiny$2$}
 \put(0,10){\tiny$1$}
 \put(10,10){\tiny$2 $}
 \end{picture}
 }

\put(89,25){\tiny$2 $}

\put(89,75){\tiny$1$}
\put(50,72){\vector(1,-2){13}}

 \put(74,38){\vector(1,-1){11}}

 \put(74,45){\vector(1,2){14}}
 \put(74,43){\vector(1,1){8}}

 \put(85,45){\begin{picture}(10,10)\setlength{\unitlength}{0.5mm}
 \put(5,10){\tiny$1$}
 \put(0,5){\tiny$1 $}
 \put(10,5){\tiny$2$}
 \put(5,0){\tiny$2 $}
 \end{picture}
 }

  \put(84,44){\begin{picture}(10,20)\setlength{\unitlength}{0.6mm}
\put(0,0){\line(1,0){13}}
\put(0,0){\line(0,1){13}}
\put(13,0){\line(0,1){13}}
\put(0,13){\line(1,0){13}}
\end{picture}}

\dashline{3}(33,93)(62,93)
\put(33,93){\vector(-1,0){1}}

\put(10,21){\begin{picture}(0,0)
\dashline{3}(-1,3)(32,3)
\put(-1,3){\vector(-1,0){1}}
\end{picture}
}

\put(8,74){\begin{picture}(0,0)
\dashline{3}(2,3)(32,3)
\put(2,3){\vector(-1,0){1}}
\end{picture}
}

\put(52,73){\begin{picture}(0,0)
\dashline{3}(2,3)(32,3)
\put(2,3){\vector(-1,0){1}}
\end{picture}
}

\put(52,21){\begin{picture}(0,0)
\dashline{3}(0,3)(32,3)
\put(0,3){\vector(-1,0){1}}
\end{picture}
}

\put(72,38){\begin{picture}(2,0)
\dashline{3}(2,3)(18,3)
\put(2,3){\vector(-1,0){1}}
\end{picture}
}

\put(30,38){\begin{picture}(2,0)
\dashline{3}(2,3)(32,3)
\put(2,3){\vector(-1,0){1}}
\end{picture}
}

\put(-13,38){\begin{picture}(2,0)
\dashline{3}(17,3)(32,3)
\put(17,3){\vector(-1,0){1}}
\end{picture}
}

\end{picture}
\caption{Auslander-Reiten quiver for $\Lambda^\imath$ with $Q$ of type $A_2$ and $\btau=\Id$}
\end{figure}
\vspace{-0.5cm}
\end{center}

\subsubsection{$\imath$Quiver with $\btau \neq \Id$}

Let $Q=(\xymatrix{ 1 &2 \ar[l]_{\alpha} \ar[r]^{\beta}  & 3})$, with $\btau$ being the nontrivial involution of $Q$.
Then $\Lambda^\imath$ is isomorphic to the algebra with its quiver $\ov Q$ and relations as follows:
\begin{center}\setlength{\unitlength}{0.7mm}
 \begin{picture}(50,20)(0,-10)
\put(0,-2){\small $1$}
\put(20,-2){\small $3$}
\put(2,-11){$_{\alpha}$}
\put(17,-11){$_{\beta}$}

\put(12.5,-19){\vector(1,2){8}}
\put(9.5,-19){\vector(-1,2){8}}
\put(9.5,-23){\small $2$}
\color{purple}
\put(2.5,1){\vector(1,0){17}}
\put(19.5,-1){\vector(-1,0){17}}
\put(10,1){\small $^{\varepsilon_1}$}
\put(10,-4){\small $_{\varepsilon_3}$}
\put(10,-28){\small $_{\varepsilon_2}$}
\begin{picture}(50,23)(-10,19)
\color{purple}
\qbezier(-1,-1)(-3,-3)(-2,-5.5)
\qbezier(-2,-5.5)(1,-9)(4,-5.5)
\qbezier(4,-5.5)(5,-3)(3,-1)
\put(3.1,-1.4){\vector(-1,1){0.3}}
\end{picture}
\end{picture}
\vspace{1.4cm}
\end{center}
\[
\varepsilon_1\varepsilon_3=0=\varepsilon_3\varepsilon_1,
\quad
 \varepsilon_2^2=0,
 \quad
 \beta  \varepsilon_2=\varepsilon_1 \alpha,
 \quad
 \alpha\varepsilon_2=\varepsilon_3\beta.
\]

\begin{center}\setlength{\unitlength}{0.5mm}
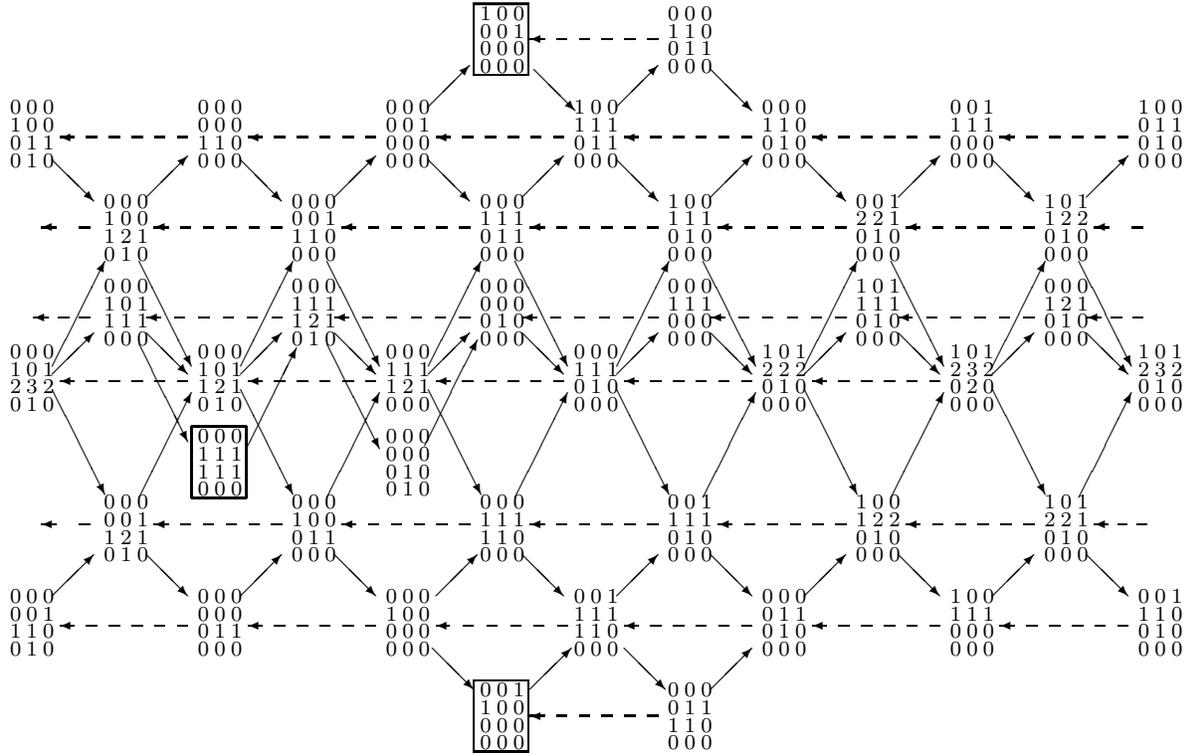
\begin{figure}\label{figure 3}
 \begin{picture}(280,220)(20,0)

\put(0,40){\tiny $\setlength{\arraycolsep}{1pt}\renewcommand{\arraystretch}{0.5}{\begin{array}{cccc} 0&0&0\\ 0&0&1\\ 1&1&0\\ 0&1&0  \end{array} }$ }

\put(50,40){\tiny $\setlength{\arraycolsep}{1pt}\renewcommand{\arraystretch}{0.5}{\begin{array}{cccc} 0&0&0\\ 0&0&0\\ 0&1&1\\ 0&0&0  \end{array} }$ }

\put(100,40){\tiny $\setlength{\arraycolsep}{1pt}\renewcommand{\arraystretch}{0.5}{\begin{array}{cccc} 0&0&0\\ 1&0&0\\ 0&0&0\\ 0&0&0  \end{array} }$ }

\put(150,40){\tiny $\setlength{\arraycolsep}{1pt}\renewcommand{\arraystretch}{0.5}{\begin{array}{cccc} 0&0&1\\ 1&1&1\\ 1&1&0\\ 0&0&0  \end{array} }$ }

\put(200,40){\tiny $\setlength{\arraycolsep}{1pt}\renewcommand{\arraystretch}{0.5}{\begin{array}{cccc} 0&0&0\\ 0&1&1\\ 0&1&0\\ 0&0&0  \end{array} }$ }

\put(250,40){\tiny $\setlength{\arraycolsep}{1pt}\renewcommand{\arraystretch}{0.5}{\begin{array}{cccc} 1&0&0\\ 1&1&1\\ 0&0&0\\ 0&0&0  \end{array} }$ }

\put(300,40){\tiny $\setlength{\arraycolsep}{1pt}\renewcommand{\arraystretch}{0.5}{\begin{array}{cccc} 0&0&1\\ 1&1&0\\ 0&1&0\\ 0&0&0  \end{array} }$ }

\put(75,65){\tiny $\setlength{\arraycolsep}{1pt}\renewcommand{\arraystretch}{0.5}{\begin{array}{cccc} 0&0&0\\ 1&0&0\\ 0&1&1\\ 0&0&0  \end{array} }$ }

\put(125,65){\tiny $\setlength{\arraycolsep}{1pt}\renewcommand{\arraystretch}{0.5}{\begin{array}{cccc} 0&0&0\\ 1&1&1\\ 1&1&0\\ 0&0&0  \end{array} }$ }

\put(175,65){\tiny $\setlength{\arraycolsep}{1pt}\renewcommand{\arraystretch}{0.5}{\begin{array}{cccc} 0&0&1\\ 1&1&1\\ 0&1&0\\ 0&0&0  \end{array} }$ }

\put(225,65){\tiny $\setlength{\arraycolsep}{1pt}\renewcommand{\arraystretch}{0.5}{\begin{array}{cccc} 1&0&0\\ 1&2&2\\ 0&1&0\\ 0&0&0  \end{array} }$ }

\put(275,65){\tiny $\setlength{\arraycolsep}{1pt}\renewcommand{\arraystretch}{0.5}{\begin{array}{cccc} 1&0&1\\ 2&2&1\\ 0&1&0\\ 0&0&0  \end{array} }$ }

\put(25,65){\tiny $\setlength{\arraycolsep}{1pt}\renewcommand{\arraystretch}{0.5}{\begin{array}{cccc} 0&0&0\\ 0&0&1\\ 1&2&1\\ 0&1&0  \end{array} }$ }

\put(0,105){\tiny $\setlength{\arraycolsep}{1pt}\renewcommand{\arraystretch}{0.5}{\begin{array}{cccc} 0&0&0\\ 1&0&1\\ 2&3&2\\ 0&1&0  \end{array} }$ }

\put(50,105){\tiny $\setlength{\arraycolsep}{1pt}\renewcommand{\arraystretch}{0.5}{\begin{array}{cccc} 0&0&0\\ 1&0&1\\ 1&2&1\\ 0&1&0  \end{array} }$ }

\put(100,105){\tiny $\setlength{\arraycolsep}{1pt}\renewcommand{\arraystretch}{0.5}{\begin{array}{cccc} 0&0&0\\ 1&1&1\\ 1&2&1\\ 0&0&0  \end{array} }$ }

\put(150,105){\tiny $\setlength{\arraycolsep}{1pt}\renewcommand{\arraystretch}{0.5}{\begin{array}{cccc} 0&0&0\\ 1&1&1\\ 0&1&0\\ 0&0&0  \end{array} }$ }

\put(200,105){\tiny $\setlength{\arraycolsep}{1pt}\renewcommand{\arraystretch}{0.5}{\begin{array}{cccc} 1&0&1\\ 2&2&2\\ 0&1&0\\ 0&0&0  \end{array} }$ }

\put(250,105){\tiny $\setlength{\arraycolsep}{1pt}\renewcommand{\arraystretch}{0.5}{\begin{array}{cccc} 1&0&1\\ 2&3&2\\ 0&2&0\\ 0&0&0  \end{array} }$ }

\put(300,105){\tiny $\setlength{\arraycolsep}{1pt}\renewcommand{\arraystretch}{0.5}{\begin{array}{cccc} 1&0&1\\ 2&3&2\\ 0&1&0\\ 0&0&0  \end{array} }$ }

\put(25,145){\tiny $\setlength{\arraycolsep}{1pt}\renewcommand{\arraystretch}{0.5}{\begin{array}{cccc} 0&0&0\\ 1&0&0\\ 1&2&1\\ 0&1&0  \end{array} }$ }

\put(75,145){\tiny $\setlength{\arraycolsep}{1pt}\renewcommand{\arraystretch}{0.5}{\begin{array}{cccc} 0&0&0\\ 0&0&1\\ 1&1&0\\ 0&0&0  \end{array} }$ }

\put(125,145){\tiny $\setlength{\arraycolsep}{1pt}\renewcommand{\arraystretch}{0.5}{\begin{array}{cccc} 0&0&0\\ 1&1&1\\ 0&1&1\\ 0&0&0  \end{array} }$ }

\put(175,145){\tiny $\setlength{\arraycolsep}{1pt}\renewcommand{\arraystretch}{0.5}{\begin{array}{cccc} 1&0&0\\ 1&1&1\\ 0&1&0\\ 0&0&0  \end{array} }$ }

\put(225,145){\tiny $\setlength{\arraycolsep}{1pt}\renewcommand{\arraystretch}{0.5}{\begin{array}{cccc} 0&0&1\\ 2&2&1\\ 0&1&0\\ 0&0&0  \end{array} }$ }

\put(275,145){\tiny $\setlength{\arraycolsep}{1pt}\renewcommand{\arraystretch}{0.5}{\begin{array}{cccc} 1&0&1\\ 1&2&2\\ 0&1&0\\ 0&0&0  \end{array} }$ }

\put(0,170){\tiny $\setlength{\arraycolsep}{1pt}\renewcommand{\arraystretch}{0.5}{\begin{array}{cccc} 0&0&0\\ 1&0&0\\ 0&1&1\\ 0&1&0  \end{array} }$ }

\put(50,170){\tiny $\setlength{\arraycolsep}{1pt}\renewcommand{\arraystretch}{0.5}{\begin{array}{cccc} 0&0&0\\ 0&0&0\\ 1&1&0\\ 0&0&0  \end{array} }$ }

\put(100,170){\tiny $\setlength{\arraycolsep}{1pt}\renewcommand{\arraystretch}{0.5}{\begin{array}{cccc} 0&0&0\\ 0&0&1\\ 0&0&0\\ 0&0&0  \end{array} }$ }

\put(150,170){\tiny $\setlength{\arraycolsep}{1pt}\renewcommand{\arraystretch}{0.5}{\begin{array}{cccc} 1&0&0\\ 1&1&1\\ 0&1&1\\ 0&0&0  \end{array} }$ }

\put(200,170){\tiny $\setlength{\arraycolsep}{1pt}\renewcommand{\arraystretch}{0.5}{\begin{array}{cccc} 0&0&0\\ 1&1&0\\ 0&1&0\\ 0&0&0  \end{array} }$ }

\put(250,170){\tiny $\setlength{\arraycolsep}{1pt}\renewcommand{\arraystretch}{0.5}{\begin{array}{cccc} 0&0&1\\ 1&1&1\\ 0&0&0\\ 0&0&0  \end{array} }$ }

\put(300,170){\tiny $\setlength{\arraycolsep}{1pt}\renewcommand{\arraystretch}{0.5}{\begin{array}{cccc} 1&0&0\\ 0&1&1\\ 0&1&0\\ 0&0&0  \end{array} }$ }

\put(125,15){\tiny $\setlength{\arraycolsep}{1pt}\renewcommand{\arraystretch}{0.5}{\begin{array}{cccc} 0&0&1\\ 1&0&0\\ 0&0&0\\ 0&0&0  \end{array} }$ }

\put(175,15){\tiny $\setlength{\arraycolsep}{1pt}\renewcommand{\arraystretch}{0.5}{\begin{array}{cccc} 0&0&0\\ 0&1&1\\ 1&1&0\\ 0&0&0  \end{array} }$ }

\put(125,195){\tiny $\setlength{\arraycolsep}{1pt}\renewcommand{\arraystretch}{0.5}{\begin{array}{cccc} 1&0&0\\ 0&0&1\\ 0&0&0\\ 0&0&0  \end{array} }$ }

\put(175,195){\tiny $\setlength{\arraycolsep}{1pt}\renewcommand{\arraystretch}{0.5}{\begin{array}{cccc} 0&0&0\\ 1&1&0\\ 0&1&1\\ 0&0&0  \end{array} }$ }

\put(50,82.5){\tiny $\setlength{\arraycolsep}{1pt}\renewcommand{\arraystretch}{0.5}{\begin{array}{cccc} 0&0&0\\ 1&1&1\\ 1&1&1\\ 0&0&0  \end{array} }$ }
\put(100,82.5){\tiny $\setlength{\arraycolsep}{1pt}\renewcommand{\arraystretch}{0.5}{\begin{array}{cccc} 0&0&0\\ 0&0&0\\ 0&1&0\\ 0&1&0  \end{array} }$ }

\put(25,122.5){\tiny $\setlength{\arraycolsep}{1pt}\renewcommand{\arraystretch}{0.5}{\begin{array}{cccc} 0&0&0\\ 1&0&1\\ 1&1&1\\ 0&0&0  \end{array} }$ }

\put(75,122.5){\tiny $\setlength{\arraycolsep}{1pt}\renewcommand{\arraystretch}{0.5}{\begin{array}{cccc} 0&0&0\\ 1&1&1\\ 1&2&1\\ 0&1&0  \end{array} }$ }

\put(125,122.5){\tiny $\setlength{\arraycolsep}{1pt}\renewcommand{\arraystretch}{0.5}{\begin{array}{cccc} 0&0&0\\ 0&0&0\\ 0&1&0\\ 0&0&0  \end{array} }$ }

\put(175,122.5){\tiny $\setlength{\arraycolsep}{1pt}\renewcommand{\arraystretch}{0.5}{\begin{array}{cccc} 0&0&0\\ 1&1&1\\ 0&0&0\\ 0&0&0  \end{array} }$ }

\put(225,122.5){\tiny $\setlength{\arraycolsep}{1pt}\renewcommand{\arraystretch}{0.5}{\begin{array}{cccc} 1&0&1\\ 1&1&1\\ 0&1&0\\ 0&0&0  \end{array} }$ }

\put(275,122.5){\tiny $\setlength{\arraycolsep}{1pt}\renewcommand{\arraystretch}{0.5}{\begin{array}{cccc} 0&0&0\\ 1&2&1\\ 0&1&0\\ 0&0&0  \end{array} }$ }

\put(12,49){\vector(1,1){11}}
\put(62,49){\vector(1,1){11}}
\put(114,49){\vector(1,1){11}}
\put(162,49){\vector(1,1){11}}
\put(212,49){\vector(1,1){11}}
\put(262,49){\vector(1,1){11}}

\put(12,108){\vector(1,1){11}}
\put(12,104){\vector(1,-2){14}}
\put(12,110){\vector(1,2){14}}

\put(62,108){\vector(1,1){11}}
\put(62,104){\vector(1,-2){14}}
\put(62,110){\vector(1,2){14}}

\put(112,108){\vector(1,1){11}}
\put(112,104){\vector(1,-2){14}}
\put(112,110){\vector(1,2){14}}

\put(162,108){\vector(1,1){11}}
\put(162,104){\vector(1,-2){14}}
\put(162,110){\vector(1,2){14}}

\put(212,108){\vector(1,1){11}}
\put(212,104){\vector(1,-2){14}}
\put(212,110){\vector(1,2){14}}

\put(262,108){\vector(1,1){11}}
\put(262,104){\vector(1,-2){14}}
\put(262,110){\vector(1,2){14}}

\put(12,165){\vector(1,-1){11}}
\put(62,165){\vector(1,-1){11}}
\put(112,165){\vector(1,-1){11}}
\put(162,165){\vector(1,-1){11}}

\put(212,165){\vector(1,-1){11}}

\put(262,165){\vector(1,-1){11}}

\put(37,60){\vector(1,-1){11}}
\put(87,60){\vector(1,-1){11}}
\put(137,60){\vector(1,-1){11}}
\put(187,60){\vector(1,-1){11}}
\put(237,60){\vector(1,-1){11}}
\put(287,60){\vector(1,-1){11}}

\put(37,119){\vector(1,-1){11}}

\put(87,119){\vector(1,-1){11}}
\put(137,119){\vector(1,-1){11}}
\put(187,119){\vector(1,-1){11}}
\put(237,119){\vector(1,-1){11}}
\put(287,119){\vector(1,-1){11}}

\put(35,75){\vector(1,2){14}}
\put(85,75){\vector(1,2){14}}

\put(135,75){\vector(1,2){14}}
\put(185,75){\vector(1,2){14}}
\put(235,75){\vector(1,2){14}}
\put(285,75){\vector(1,2){14}}

\put(35,138){\vector(1,-2){14}}
\put(85,138){\vector(1,-2){14}}
\put(135,138){\vector(1,-2){14}}
\put(185,138){\vector(1,-2){14}}
\put(235,138){\vector(1,-2){14}}
\put(285,138){\vector(1,-2){14}}

\put(37,155){\vector(1,1){11}}
\put(87,155){\vector(1,1){11}}
\put(137,155){\vector(1,1){11}}
\put(187,155){\vector(1,1){11}}
\put(237,155){\vector(1,1){11}}

\put(287,155){\vector(1,1){11}}
\put(35,116){\vector(1,-2){13}}
\put(85,116){\vector(1,-2){14}}
\put(64,89){\vector(1,2){13}}
\put(111,89){\vector(1,2){14}}

\put(112,179){\vector(1,1){11}}
\put(162,179){\vector(1,1){11}}

\put(139,24){\vector(1,1){11}}

\put(187,24){\vector(1,1){11}}
\put(113,35){\vector(1,-1){11}}
\put(164,35){\vector(1,-1){11}}
\put(187,189){\vector(1,-1){11}}
\put(140,189){\vector(1,-1){11}}

\put(0,1){\begin{picture}(0,0)
\dashline{3}(15,40)(48,40)
\put(15,40){\vector(-1,0){1}}
\end{picture}}

\put(50,1){\begin{picture}(0,0)
\dashline{3}(15,40)(48,40)
\put(15,40){\vector(-1,0){1}}
\end{picture}}

\put(100,1){\begin{picture}(0,0)
\dashline{3}(15,40)(49,40)
\put(15,40){\vector(-1,0){1}}
\end{picture}}

\put(150,1){\begin{picture}(0,0)
\dashline{3}(15,40)(48,40)
\put(15,40){\vector(-1,0){1}}
\end{picture}}

\put(200,1){\begin{picture}(0,0)
\dashline{3}(15,40)(48,40)
\put(15,40){\vector(-1,0){1}}
\end{picture}}

\put(250,1){\begin{picture}(0,0)
\dashline{3}(15,40)(48,40)
\put(15,40){\vector(-1,0){1}}
\end{picture}}

\put(0,66){\begin{picture}(0,0)
\dashline{3}(15,40)(48,40)
\put(15,40){\vector(-1,0){1}}
\end{picture}}

\put(50,66){\begin{picture}(0,0)
\dashline{3}(15,40)(48,40)
\put(15,40){\vector(-1,0){1}}
\end{picture}}

\put(100,66){\begin{picture}(0,0)
\dashline{3}(15,40)(48,40)
\put(15,40){\vector(-1,0){1}}
\end{picture}}

\put(150,66){\begin{picture}(0,0)
\dashline{3}(15,40)(48,40)
\put(15,40){\vector(-1,0){1}}
\end{picture}}

\put(200,66){\begin{picture}(0,0)
\dashline{3}(15,40)(48,40)
\put(15,40){\vector(-1,0){1}}
\end{picture}}

\put(0,131){\begin{picture}(0,0)
\dashline{3}(15,40)(48,40)
\put(15,40){\vector(-1,0){1}}
\end{picture}}

\put(50,131){\begin{picture}(0,0)
\dashline{3}(15,40)(48,40)
\put(15,40){\vector(-1,0){1}}
\end{picture}}

\put(100,131){\begin{picture}(0,0)
\dashline{3}(15,40)(48,40)
\put(15,40){\vector(-1,0){1}}
\end{picture}}

\put(150,131){\begin{picture}(0,0)
\dashline{3}(15,40)(48,40)
\put(15,40){\vector(-1,0){1}}
\end{picture}}

\put(200,131){\begin{picture}(0,0)
\dashline{3}(15,40)(48,40)
\put(15,40){\vector(-1,0){1}}
\end{picture}}

\put(250,131){\begin{picture}(0,0)
\dashline{3}(15,40)(48,40)
\put(15,40){\vector(-1,0){1}}
\end{picture}}

\put(25,28){\begin{picture}(0,0)
\dashline{3}(15,40)(48,40)
\put(15,40){\vector(-1,0){1}}
\end{picture}}

\put(75,28){\begin{picture}(0,0)
\dashline{3}(15,40)(48,40)
\put(15,40){\vector(-1,0){1}}
\end{picture}}

\put(125,28){\begin{picture}(0,0)
\dashline{3}(15,40)(48,40)
\put(15,40){\vector(-1,0){1}}
\end{picture}}

\put(175,28){\begin{picture}(0,0)
\dashline{3}(15,40)(48,40)
\put(15,40){\vector(-1,0){1}}
\end{picture}}

\put(225,28){\begin{picture}(0,0)
\dashline{3}(15,40)(48,40)
\put(15,40){\vector(-1,0){1}}
\end{picture}}

\put(275,28){\begin{picture}(0,0)
\dashline{3}(15,40)(28,40)
\put(15,40){\vector(-1,0){1}}
\end{picture}}

\put(-5,28){\begin{picture}(0,0)
\dashline{3}(15,40)(27,40)
\put(15,40){\vector(-1,0){1}}
\end{picture}}

\put(-5,107){\begin{picture}(0,0)
\dashline{3}(15,40)(27,40)
\put(15,40){\vector(-1,0){1}}
\end{picture}}

\put(25,107){\begin{picture}(0,0)
\dashline{3}(15,40)(48,40)
\put(15,40){\vector(-1,0){1}}
\end{picture}}

\put(75,107){\begin{picture}(0,0)
\dashline{3}(15,40)(48,40)
\put(15,40){\vector(-1,0){1}}
\end{picture}}

\put(125,107){\begin{picture}(0,0)
\dashline{3}(15,40)(48,40)
\put(15,40){\vector(-1,0){1}}
\end{picture}}

\put(175,107){\begin{picture}(0,0)
\dashline{3}(15,40)(48,40)
\put(15,40){\vector(-1,0){1}}
\end{picture}}

\put(225,107){\begin{picture}(0,0)
\dashline{3}(15,40)(48,40)
\put(15,40){\vector(-1,0){1}}
\end{picture}}

\put(275,107){\begin{picture}(0,0)
\dashline{3}(15,40)(27,40)
\put(15,40){\vector(-1,0){1}}
\end{picture}}

\put(125,-23){\begin{picture}(0,0)
\dashline{3}(15,40)(48,40)
\put(15,40){\vector(-1,0){1}}
\end{picture}}

\put(125,157){\begin{picture}(0,0)
\dashline{3}(15,40)(48,40)
\put(15,40){\vector(-1,0){1}}
\end{picture}}

\put(25,83){\begin{picture}(0,0)
\dashline{3}(13,40)(48,40)
\put(13,40){\vector(-1,0){1}}
\end{picture}}

\put(75,83){\begin{picture}(0,0)
\dashline{3}(13,40)(48,40)
\put(13,40){\vector(-1,0){1}}
\end{picture}}

\put(125,83){\begin{picture}(0,0)
\dashline{3}(13,40)(48,40)
\put(13,40){\vector(-1,0){1}}
\end{picture}}

\put(175,83){\begin{picture}(0,0)
\dashline{3}(13,40)(48,40)
\put(13,40){\vector(-1,0){1}}
\end{picture}}

\put(225,83){\begin{picture}(0,0)
\dashline{3}(13,40)(48,40)
\put(13,40){\vector(-1,0){1}}
\end{picture}}

\put(275,83){\begin{picture}(0,0)
\dashline{3}(13,40)(27,40)
\put(13,40){\vector(-1,0){1}}
\end{picture}}

\put(-5,83){\begin{picture}(0,0)
\dashline{3}(13,40)(27,40)
\put(13,40){\vector(-1,0){1}}
\end{picture}}

\put(124,7.5){\begin{picture}(10,20)
\put(0,0){\line(1,0){14.5}}
\put(0,0){\line(0,1){19}}
\put(14.5,0){\line(0,1){19}}
\put(0,19){\line(1,0){14.5}}
\end{picture}}

\put(124,187.5){\begin{picture}(10,20)
\put(0,0){\line(1,0){14.5}}
\put(0,0){\line(0,1){19}}
\put(14.5,0){\line(0,1){19}}
\put(0,19){\line(1,0){14.5}}
\end{picture}}

\put(49,75){\begin{picture}(10,20)
\put(0,0){\line(1,0){14.5}}
\put(0,0){\line(0,1){19}}
\put(14.5,0){\line(0,1){19}}
\put(0,19){\line(1,0){14.5}}
\end{picture}}
%
\end{picture}
\caption{Auslander-Reiten quiver of $\Lambda^\imath$ with $Q$ of type $A_3$ and $\btau \neq \Id$}
\end{figure}
\end{center}
Recall that $\Lambda^\imath$ is a graded algebra by letting $\deg(\varepsilon_i)=1$ and $\deg(\alpha)=0=\deg(\beta)$.
Its Auslander-Reiten quiver of $\Lambda^\imath$  is displayed in Figure 3. 
As vertices we have the graded dimension vectors of the indecomposable $\Lambda^\imath$-modules.
The modules on the leftmost column of the figure have to be identified with the corresponding modules on the rightmost one. The projective $\Lambda^\imath$-modules are marked with a solid frame.

\subsection{APR-tilting}
  \label{subsec:APR2}

Let $\DTr$ be the Auslander-Reiten translation of $\Lambda^{\imath}$, and $\TrD$ be its inverse.
For a sink $\ell$ in $Q$,  define
\begin{align}
\label{eqn:def of T}
T & =T[\ell] :=\left\{
\begin{array}{ll}
(\Lambda^{\imath}\big/ \E_\ell) \oplus  \TrD\E_\ell, & \text{ if }\btau \ell =\ell
 \\
\big(\Lambda^{\imath} \big/(\E_\ell \oplus \E_{\btau \ell } ) \big)\oplus \TrD\E_\ell  \oplus \TrD\E_{\btau \ell}, &\text{ if }\btau \ell \neq \ell  \end{array}\right.
\end{align}


\begin{theorem}
\label{thm:tilting module}
(1) The module $T$ in \eqref{eqn:def of T} is a tilting module of $\Lambda^{\imath}$.

(2) Let $B=\End_{\Lambda^{\imath}}(T)^{op}$. Then the functors
\[
F_\ell ^+: \mod(\Lambda^{\imath})\longrightarrow \mod(\bs_\ell \iLa )\text{ and }\Hom_{\Lambda^{\imath}}(T, -): \mod(\Lambda^{\imath})\longrightarrow \mod(B)
\]
are equivalent, i.e., there exists an equivalence
$S: \mod(\bs_\ell \iLa )\rightarrow \mod(B)$ which gives rise to an isomorphism of functors $S\circ F_\ell^+\cong \Hom_{\Lambda^{\imath}}(T, -)$.
\end{theorem}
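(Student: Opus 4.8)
The plan is to verify in part (1) the three defining conditions of a classical tilting module for $T$, and then to deduce part (2) by identifying $\End_{\Lambda^\imath}(T)^{op}$ with $\bs_\ell\iLa$ and matching $\Hom_{\Lambda^\imath}(T,-)$ with $F_\ell^+$ summand by summand, in close analogy with \cite{GLS17}. Throughout one may assume $\ell\in\ci$ and treat $\btau\ell=\ell$ and $\btau\ell\ne\ell$ in parallel, writing $T=T'\oplus\TrD\E_\ell$ (resp.\ $T'\oplus\TrD\E_\ell\oplus\TrD\E_{\btau\ell}$) with $T'=\bigoplus_{i\ne\ell,\btau\ell}\Lambda^\imath e_i$. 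For $\pd T\le1$: since $\ell$ (hence $\btau\ell$) is a sink of $Q$, the sum in \eqref{eqn:projective resolution of E} is empty, so $\E_\ell=\Lambda^\imath e_\ell$ and $\E_{\btau\ell}=\Lambda^\imath e_{\btau\ell}$ are projective (so that $T=(\Lambda^\imath/\E_\ell)\oplus\TrD\E_\ell$ matches \eqref{eqn:def of T}) and $T'$ is projective. Applying the inverse Nakayama functor $\nu^{-1}=\Hom_{\Lambda^\imath}(D\Lambda^\imath,-)$ to the injective coresolution \eqref{eqn:injective resolution of E} of $\E_\ell$, using that $\Lambda^\imath$ is $1$-Gorenstein (so $\pd(D\Lambda^\imath)\le1$, whence $\Ext^{\ge2}_{\Lambda^\imath}(D\Lambda^\imath,-)=0$), that $\Ext^1_{\Lambda^\imath}(D\Lambda^\imath,I)=0$ for injective $I$, and that $\nu^{-1}(D(e\Lambda^\imath))=\Lambda^\imath e$, collapses the long exact sequence to
\[
0\to \Hom_{\Lambda^\imath}(D\Lambda^\imath,\E_\ell)\to \Lambda^\imath e_{\btau\ell}\xrightarrow{\ f\ }\bigoplus_{(\alpha\colon j\to\btau\ell)\in Q_1}\Lambda^\imath e_j\to \TrD\E_\ell\to 0 .
\]
The first term vanishes: a nonzero map from an indecomposable injective $I(m)$ to $\E_\ell$ would have image both a submodule of $\E_\ell$, hence supported on $\{\ell,\btau\ell\}$, and a quotient of $I(m)$; but $\Top(I(m))=D(\soc(e_m\Lambda^\imath))$ is supported away from $\{\ell,\btau\ell\}$, since $\ell,\btau\ell$ being sinks of $Q$ forces the only $\ov Q$-arrows emanating from them to be the $\varepsilon$-arrows, leaving no nonzero element of $e_m\Lambda^\imath e_\ell$ or $e_m\Lambda^\imath e_{\btau\ell}$ annihilated on the right by $\rad\Lambda^\imath$. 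Hence $f$ is injective and $\pd\TrD\E_\ell\le1$; likewise $\pd\TrD\E_{\btau\ell}\le1$, so $\pd T\le1$ (minimality of the resolutions is not needed).

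For rigidity, $T'$ being projective reduces the task to $\Ext^1_{\Lambda^\imath}(\TrD\E_\ell,Y)=0$ (and its $\btau\ell$-analogue) for $Y$ an indecomposable summand of $T$. As $\E_\ell$ is not injective ($Q$ connected of rank $\ge2$, so the last term of \eqref{eqn:injective resolution of E} is nonzero), $\DTr(\TrD\E_\ell)=\E_\ell$, and the Auslander--Reiten formula gives $\Ext^1_{\Lambda^\imath}(\TrD\E_\ell,Y)\cong D\,\ov{\Hom}_{\Lambda^\imath}(Y,\E_\ell)$, with $\ov{\Hom}$ denoting morphisms modulo those factoring through an injective. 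Now $\Hom_{\Lambda^\imath}(\Lambda^\imath e_i,\E_\ell)=e_i\E_\ell=0$ for $i\ne\ell,\btau\ell$, while $\Hom_{\Lambda^\imath}(\TrD\E_\ell,\E_\ell)=\Hom_{\Lambda^\imath}(\TrD\E_{\btau\ell},\E_\ell)=0$, because from the presentation above $\Top(\TrD\E_\ell)$ and $\Top(\TrD\E_{\btau\ell})$ are quotients of $\bigoplus_{j\to\btau\ell}S_j$, resp.\ $\bigoplus_{j\to\ell}S_j$, hence supported away from $\{\ell,\btau\ell\}$, whereas any nonzero image in $\E_\ell$ is supported on $\{\ell,\btau\ell\}$. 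Thus $\Ext^1(T,T)=0$. Finally $\TrD\E_\ell$ and $\TrD\E_{\btau\ell}$ are indecomposable ($=\TrD$ of indecomposable non-injectives), non-projective, mutually non-isomorphic and distinct from the $\Lambda^\imath e_i$, so $T$ has exactly $|Q_0|$ pairwise non-isomorphic indecomposable summands, i.e.\ the number of simple $\Lambda^\imath$-modules. A rigid module of projective dimension $\le1$ with this many indecomposable summands is tilting (Bongartz), which proves (1).

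For (2), set $B=\End_{\Lambda^\imath}(T)^{op}$. Computing the Gabriel quiver and relations of $B$ from the summands of $T$ --- the arrows not incident to $\ell,\btau\ell$ being those of $\ov Q$ and the ones incident to $\ell,\btau\ell$ reversed --- yields an algebra isomorphism $\bs_\ell\iLa\xrightarrow{\sim}B$ and hence an equivalence $S\colon\mod(\bs_\ell\iLa)\to\mod B$ (this also gives Corollary~\ref{cor:Tiso}). To exhibit a natural isomorphism $S\circ F_\ell^+\cong\Hom_{\Lambda^\imath}(T,-)$, take $L\in\mod\Lambda^\imath$ regarded as a representation of $\cm(Q,\btau)$ via Proposition~\ref{prop:modulated representation} and decompose $\Hom_{\Lambda^\imath}(T,L)=\Hom(T',L)\oplus\Hom(\TrD\E_\ell,L)$ (plus the $\btau\ell$-term). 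The first summand $\Hom(T',L)=\bigoplus_{i\ne\ell,\btau\ell}e_iL$ is precisely the part of $F_\ell^+(L)$ left unchanged in \eqref{eq:F+}; applying $\Hom_{\Lambda^\imath}(-,L)$ to the projective presentation of $\TrD\E_\ell$ identifies $\Hom(\TrD\E_\ell,L)$ with the kernel of the induced map $\bigoplus_{j\to\btau\ell}e_jL\to e_{\btau\ell}L$, which under the modulated-graph dictionary is exactly $N_\ell=\ker(L_{\ell,\mathrm{in}})$ of \eqref{def:reflection} --- here one uses that $f$ arose by applying $\nu^{-1}$ to \eqref{eqn:injective resolution of E}, so that dualizing recovers the structure maps ${}_\ell\BH_i\otimes_{\BH_i}L_i\to L_\ell$ of $L$. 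One then checks that the reversed arrows at $\ell$ (resp.\ $\btau\ell$) act on this kernel via $(N^\vee_{r\ell})^\vee={\rm ad}^{-1}_{r\ell}(N^\vee_{r\ell})$ of \eqref{eq:F+} in exactly the way the corresponding elements of $\Hom_{\Lambda^\imath}(\TrD\E_\ell,\Lambda^\imath e_r)\subseteq B$ act on $\Hom(T,L)$, the translation being the canonical adjunction ${\rm ad}_{r\ell}$ of Lemma~\ref{lem:ad}; naturality in $L$ follows from functoriality of kernels and of $\Hom$, giving $S\circ F_\ell^+\cong\Hom_{\Lambda^\imath}(T,-)$.

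The main obstacle is this last step: verifying that the abstractly defined structure maps of the reflected representation $F_\ell^+(L)$ --- built from the kernel \eqref{def:reflection} and the adjunctions ${\rm ad}_{r\ell}$ --- coincide, compatibly with the $B$-action, with those of $\Hom_{\Lambda^\imath}(T,L)$ read off from a projective presentation of $\TrD\E_\ell$. This is combinatorial bookkeeping mirroring \cite{GLS17}, the genuinely new ingredients being the explicit bimodules ${}_j\BH_i$ of \eqref{eq:jHi} and Lemma~\ref{lem:ad} for the $\imath$quiver modulation $\cm(Q,\btau)$; keeping the cases $\btau\ell=\ell$ and $\btau\ell\ne\ell$ aligned, and checking that the presentations derived above behave as expected, are secondary routine points.
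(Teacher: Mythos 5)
Your proof is correct and follows essentially the same route as the paper's (which itself defers to \cite[Theorem 9.7, Lemmas 9.9--9.11]{GLS17}): $\pd T\le 1$ via the inverse Nakayama functor applied to \eqref{eqn:injective resolution of E}, rigidity via the Auslander--Reiten formula, the count of indecomposable summands for tiltingness, and the identification $B\cong\bs_\ell\iLa$ followed by summand-by-summand matching of $\Hom_{\Lambda^\imath}(T,-)$ with $F_\ell^+$. You even supply details the paper leaves implicit --- notably the vanishing of $\Hom_{\Lambda^\imath}(D\Lambda^\imath,\E_\ell)$, though your socle argument is better phrased via the arrows of $\ov Q$ \emph{ending} at $\ell$ together with the relation $\varepsilon_\ell\beta=\btau(\beta)\varepsilon_j\neq 0$ --- and you defer exactly the same final bookkeeping (the compatibility of the $B$-action with the adjunctions of Lemma~\ref{lem:ad}) that the paper also leaves to \cite{GLS17}.
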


We shall adapt and modify the arguments for \cite[Theorem 9.7]{GLS17} with some more details added at various places.

\subsection{Proof of Theorem~\ref{thm:tilting module}(1)}
  \label{subsec:proof1}

We start with some preparations. The left $\iLa$-module $T$ can be described as
\begin{align}
\label{eqn:Tleft}
T=\iLa/\BH_\ell\oplus \TrD(\BH_\ell ),
\end{align}
where $\BH_\ell $ is defined as in \eqref{dfn:Hi}.

Let $I_i:=D(e_i \iLa)$, $P_i:=\iLa e_i$ for any $i\in Q_0$. We have
\begin{align}
\label{eqn: Ti}
T=\bigoplus_{i\in \I} T_i, \qquad \text{ where }
T_i:=\left\{ \begin{array}{cl} P_i, & \text{ if }i\neq \ell ,\btau \ell ,\\
\TrD (\E_{\btau \ell}), & \text{ if }i=\ell ,\\
\TrD (\E_\ell), &\text{ if }i=\btau \ell. \end{array}\right.
\end{align}

From \eqref{eqn:injective resolution of E}, we have the following minimal injective resolution of $\BH_\ell$:
\begin{equation}
0 \longrightarrow \BH_\ell  \longrightarrow \ov{I}_\ell \stackrel{f}{\longrightarrow} \bigoplus_{j\in\ov{\Omega}(-,\ell)} I_j\otimes_{\BH_j}{}_j\BH_\ell   \longrightarrow 0,
\end{equation}
where
\[
\ov{I}_\ell=\left\{\begin{array}{cc} I_\ell, & \text{ if }\btau \ell=\ell,
\\
I_\ell\oplus I_{\btau \ell}, &\text{ if }\btau \ell\neq \ell.  \end{array} \right.
\]
By applying the inverse Nakayama functor
\[
\nu^-=\Hom_{\iLa}(D(\iLa),-):{\rm inj}(\iLa)\longrightarrow \proj(\iLa),
\]
we have $\nu^{-}(f):\BH_\ell \rightarrow\bigoplus_{j\in\ov{\Omega}(-,\ell)} P_{j}\otimes_{\BH_j}{}_j\BH_\ell$ by noting that
$\nu^-(I_\ell)=P_{\ell}=\E_\ell$ and $\nu^-(I_{\btau\ell})=P_{\btau\ell}=\E_{\btau \ell}$ since $\ell$, $\btau\ell$ are sink vertices in $Q$.
Then there is an exact sequence
\begin{equation}\label{eqn: pd of tau inverse of Ha}
0\longrightarrow \BH_\ell \xrightarrow{\nu^-(f)} \bigoplus_{j\in\ov{\Omega}(-,\ell)} P_{j}\otimes_{\BH_j}{}_j\BH_\ell \longrightarrow \TrD(\BH_\ell )\longrightarrow0,
\end{equation}
which is a minimal projective resolution of $\TrD(\BH_\ell )$.
So the projective dimension of $\TrD(\BH_\ell )$ is at most one.
Moreover, by \eqref{eqn:injective resolution of E} and \eqref{eqn: pd of tau inverse of Ha}, we obtain a minimal projective resolution of $\TrD(\E_{\btau\ell })$:
\begin{align}\label{eqn: pd of tau inverse of Ea}
0\longrightarrow \E_{ \ell }\longrightarrow \bigoplus_{(\alpha:j\rightarrow  \ell)\in Q_1 } \iLa \, e_j\longrightarrow \TrD(\E_{\btau\ell} )\longrightarrow0.
\end{align}

It follows from the Auslander-Reiten formulas (cf. \cite[Chapter IV.2, Corollary 2.14]{ASS04}) that
\begin{align}
\label{eqn:AR1}
&\Hom_{\iLa}(\TrD(\BH_\ell ),\iLa)=0;
\end{align}

Now we are ready to prove that $T$ is a tilting module.

\begin{proof}[Proof of Theorem~\ref{thm:tilting module}(1)]

It follows from \eqref{eqn: pd of tau inverse of Ha} that $\pd_{\La^\imath} T= 1$.
Furthermore, we have
\begin{align*}
\Ext^1_{\La^\imath}(T,T)=&\Ext^1_{\La^\imath}( \TrD(\BH_\ell),T)
\\
\cong&D\Hom_{\La^\imath}(T,\BH_\ell)
\\
=&D\Hom_{\La^\imath}(\TrD(\BH_\ell ),\BH_\ell)
=0.
\end{align*}
The first equality above holds since $\iLa/\BH_\ell$ is projective; cf. \eqref{eqn:Tleft}. The isomorphism in the second line follows by the Auslander-Reiten formulas and $\pd_{\Lambda^\imath} \TrD(\BH_\ell)\leq1$. The equality in the third line holds since $\ell,\btau\ell$ are sink vertices, and the last equality follows from \eqref{eqn:AR1}.
Note that $T$ has exactly $|\I|$ pairwise non-isomorphic indecomposable direct summands; cf. \eqref{eqn: Ti}. Therefore $T$ is a tilting module.
\end{proof}

\subsection{Proof of Theorem~\ref{thm:tilting module}(2)}
  \label{subsec:proof2}

We write ${}_B D(T)$ to denote $D(T)$ viewed as a left $B$-module.

\begin{lemma}
\label{lem:BDT}
We have
\begin{align*}
{}_B D(T)\cong \left\{
\begin{array}{cc} \big(D(B)/D(e_\ell B) \big)\oplus \DTr_B D(e_\ell B) &  \text{ if }\btau \ell=\ell,\\
D(B)\big/ \big(D(e_\ell B)\oplus D(e_{\btau \ell}B) \big)\oplus \DTr_B D(e_\ell B) \oplus \DTr_B D(e_{\btau \ell}B) &  \text{ if }\btau \ell\neq \ell.\end{array}\right.
\end{align*}
\end{lemma}

\begin{proof}
We have
\begin{align*}
{}_BD(T)\cong \Hom_{\La^\imath}(T,D(\Lambda^\imath))=\bigoplus_{j\in\I} \Hom_{\La^\imath}(T,D(e_j\La^\imath)).
\end{align*}
For $j\notin\{ \ell,\btau \ell\}$, there are left $B$-module isomorphisms
\begin{align*}
\Hom_{\La^\imath}(T,D(e_j\La^\imath))\cong D\Hom_{\La^\imath}(\La^\imath e_j,T)=D\Hom_{\La^\imath}(T_j,T)\cong D(e_jB).
\end{align*}
For $j\in\{\ell,\btau \ell\}$, since $\Ext^1_{\La^\imath}(T,\iLa e_j)\neq0$, the Connecting Lemma \cite[\S2.3]{HR82} implies that
\begin{align*}
\Hom_{\iLa}(T,D(e_j\iLa))\cong \TrD_B(\Ext^1_{\iLa}(T,\iLa e_j)).
\end{align*}
Using the Auslander-Reiten formulas and $\ind \iLa e_j\leq1$, we have the $B$-module isomorphisms
\begin{align*}
\Ext^1_{\iLa}(T,\iLa e_j)\cong D\Hom_{\iLa}(\TrD(\iLa e_j),T)\cong D(e_{\btau j}B).
\end{align*}
Here the last isomorphism follows from \eqref{eqn: Ti}. This proves the lemma.
\end{proof}

For any of the algebras $A\in \{\iLa,\bs_\ell(\iLa),B\}$ and any $j\in Q_0$ let
\begin{align}
\ct_j^A:=\{X\in\mod(A)\mid \Hom_A(X,S_j\oplus S_{\btau j})=0\},\\
\cs_j^A:=\{X\in\mod(A)\mid \Hom_A(S_j\oplus S_{\btau j},X)=0\}.\label{eq:defSAj}
\end{align}
Let
\begin{align}
\ct:=&\{X\in\mod(\iLa)\mid \Ext^1_{\iLa}(T,X)=0\},
\\
\cy:=&\{Y\in\mod(B)\mid \Tor^1_{B}(T,Y)=0\}
\label{eq:YY}.
\end{align}

\begin{lemma}
\label{lem:tilting theory}
The functors $F:=\Hom_{\iLa}(T,-)$ and $G:=T\otimes_B-$ induce mutually quasi-inverse equivalences $F:\ct\rightarrow \cy$ and $G:\cy\rightarrow \ct$. In particular, we have
$F(\mod(\iLa))\subseteq \cy\text{ and }G(\mod(B))\subseteq \ct$, and
\begin{align}
\label{eq:desY}
\ct= \ct_\ell^{\iLa}, \qquad\cy=\cs_\ell^{B}.
\end{align}
\end{lemma}

\begin{proof}
Since $T$ is a tilting module, the first statement follows from the classical tilting theory (see, e.g., \cite[\S2]{HR82}). In particular,
$F(\mod(\iLa))\subseteq \cy\text{ and }G(\mod(B))\subseteq \ct$.

Let us prove the first formula in \eqref{eq:desY}. By using the Auslander-Reiten formulas, we have
\begin{align}
\label{eq:desT2}
\ct=\{X\in\mod(\iLa)\mid \Ext^1_{\iLa}(T,X)=0\}=\{X\in\mod(\iLa)\mid D\Hom_{\iLa}(X,\BH_\ell )=0\}.
\end{align}
Denote by $S=S_\ell\oplus S_{\btau\ell}$ if $\btau\ell\neq \ell$, by $S=S_\ell$ if $\btau\ell=\ell$. 
Since the set of composition factors of $\BH_\ell$ is $\{S_\ell,S_{\btau \ell}\}$ and there is an injective morphism $S\rightarrow \BH_\ell$,
we have
\begin{align*}
\text{RHS}\eqref{eq:desT2} = \{X\in\mod(\iLa)\mid D\Hom_{\iLa}(X,S_\ell\oplus S_{\btau \ell} )=0\}=\ct_\ell^{\iLa}.
\end{align*}
The desired formula follows.

We prove the second formula in \eqref{eq:desY}. By using the Auslander-Reiten formulas, we have
\begin{align}
\cy
=&\{Y\in\mod(B)\mid D\Ext^1_{B}(Y,D(T))=0\}
 \label{eq:YY2} \\
=&\{Y\in\mod(B)\mid \Hom_{B}(D(e_\ell B)\oplus D(e_{\btau \ell}B),Y)=0 \}.
 \notag
\end{align}
For any $j\neq \ell,\btau \ell$, by Lemma \ref{lem:BDT} and the  Auslander-Reiten formulas, if $\btau \ell=\ell$, we have
\begin{align*}
\Hom_B(D(e_\ell B),D(e_jB))\cong \Ext^1_B(D(e_j B),D(T))=0;
\end{align*}
if $\btau \ell\neq\ell$, we have
\begin{align*}
\Hom_B(D(e_\ell B)\oplus D(e_{\btau \ell}B),D(e_jB))\cong \Ext^1_B(D(e_j B),D(T))=0.
\end{align*}
So every composition factor of $D(e_\ell B)\oplus D(e_{\btau \ell}B)$ is isomorphic to $S_{\ell}$ or $S_{\btau \ell}$.

{\bf Claim.} There is an epimorphism
$D(e_\ell B)\oplus D(e_{\btau \ell}B)\rightarrow S_\ell\oplus S_{\btau \ell}$.

Assuming the Claim for now, we have
\begin{align*}
& \text{RHS}\eqref{eq:YY2}
=\{Y\in\mod(B)\mid \Hom_{B}(S_\ell\oplus S_{\btau \ell},Y)=0 \}=\cs_\ell^{B},
\end{align*}
and then the desired formula follows.

It remains to prove the Claim. From the above, there exists an epimorphism $f:D(e_\ell B)\rightarrow S_j$ for some $j\in\{\ell,\btau \ell\}$. We can now assume $\btau\ell \neq\ell$. Denote also by $\btau$ the isomorphism of $\mod(\Lambda^\imath)$ induced by $\btau$. Since $\btau T=T$, $\btau$ induced an involution of $B$, and then of $\mod(B)$, which is also denoted by $\btau$. Then we obtain another epimorphism $\btau(f):D(e_{\btau \ell} B)\rightarrow S_{\btau j}$. The claim follows by combining the above two epimorphisms.
\end{proof}

\begin{lemma}
\label{lem:equisubcat}
Via restriction of the functors $F^+_\ell, F\circ F^-_\ell$ and $F$ we have a commutative diagram
\[\xymatrix{ \ct_\ell^{\Lambda^\imath} \ar[rr]^{F_\ell^+} \ar@{=}[d]&& \cs_\ell^{\bs_\ell\iLa} \ar[d]^{F\circ F_\ell^-} \\
\ct_\ell^{\iLa} \ar[rr]^{F} && \cs_\ell^B}\]
of equivalences of subcategories.
\end{lemma}

\begin{proof}
First, note that $L\in \ct_\ell^{\iLa}$ if and only if $L_{\ell ,{\rm in}}: \,_\ell \BH_i\otimes_{\BH_i} L_i\rightarrow L_\ell $ is epic; cf. \eqref{eq:Lin}. In this case, \eqref{def:reflection} is a short exact sequence. Denote by $N=F_\ell^+(L)$. From \eqref{def:reflection}, it is obvious that $N\in  \cs_\ell^{\bs_\ell\iLa}$. Similarly, we have $F_\ell^-(\cs_\ell^{\bs_\ell\iLa})\subseteq \ct_\ell^{\iLa}$. By their constructions, we have $F_\ell^-F_\ell^+(L)=L$ for any $L\in \ct_\ell^{\iLa}$, and
$F_\ell^-F_\ell^+(f)=f$ for any morphism  $f$ in $\ct_\ell^{\iLa}$, i.e., $F_\ell^-F_\ell^+\simeq\Id$ by restricting to $\ct_\ell^{\iLa}$. Dually, $F_\ell^+F_\ell^-\simeq\Id$ by restricting to $\cs_\ell^{\bs_\ell\iLa}$.
Together with Lemma \ref{lem:tilting theory}, the result follows.
\end{proof}

\begin{lemma}
\label{lem:comdiag2}
Via restriction of the functor $F\circ F_\ell^-$ we obtain a commutative diagram
\[\xymatrix{ \mod(\iLa)\ar[rr]^{F_\ell^+} \ar@{=}[d]&& \cs_\ell^{\bs_\ell\iLa} \ar[d]^{F\circ F_\ell^-} \\
\mod(\iLa) \ar[rr]^{F} && \cs_\ell^B}\]
with $F\circ F_\ell^-$ an equivalence of subcategories.
\end{lemma}

\begin{proof}
It follows from Lemma \ref{lem:equisubcat}, the definition of $F^+_\ell$ and Lemma \ref{lem:tilting theory}.
\end{proof}

\begin{lemma}
\label{lem:projinclusion}
We have
\begin{align*}
\proj(\bs_\ell\iLa)\subseteq \cs_\ell^{\bs_\ell\iLa}, \text{ and } \proj(B)\subseteq \cs_\ell^B.
\end{align*}
\end{lemma}

\begin{proof}
Since $\ell$ is a source of $\bs_\ell Q$, we have $\proj(\bs_\ell\iLa)\subseteq \cs_\ell^{\bs_\ell\iLa}$ by definition; see \eqref{eq:defSAj}.
The second inclusion is obvious since the $B$-modules $\Hom_{\iLa}(T,T_j)$ are (up to isomorphism) the indecomposable projective $B$-modules.
\end{proof}

Now we can complete the proof of Theorem \ref{thm:tilting module}.

\begin{proof}[Proof of Theorem \ref{thm:tilting module}(2)]
By Lemmas~ \ref{lem:comdiag2}--\ref{lem:projinclusion} and noting that $\cs_\ell^A$ is closed under taking submodules for $A\in\{\bs_\ell\iLa,B\}$, the assumptions of \cite[Lemma 2.2]{APR79} are satisfied. The theorem follows now by applying \cite[Lemma 2.2]{APR79}.
\end{proof}
\subsection{Some consequences}

The following is a direct corollary of Theorem \ref{thm:tilting module}.

\begin{corollary}
The tilting module $T$ in \eqref{eqn:def of T} induces a derived equivalence
\[\RHom_{\Lambda^\imath}(T,-):D^b(\mod(\Lambda^\imath))\longrightarrow D^b(\mod(\bs_\ell\Lambda^\imath)).\]
\end{corollary}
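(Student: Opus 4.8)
The plan is to deduce this from Happel's derived equivalence theorem for tilting modules together with Theorem~\ref{thm:tilting module}. Recall that for a finite-dimensional algebra $A$ and a tilting module $T$ over $A$ (which, being tilting, has finite projective dimension — here $\pd T\le 1$ by \eqref{eqn: pd of tau inverse of Ha}), the total right-derived functor $\RHom_A(T,-)\colon D^b(\mod A)\to D^b(\mod B)$, with $B=\End_A(T)^{op}$, is an equivalence of triangulated categories, a quasi-inverse being $T\otimes_B^{\bL}(-)$. First I would apply this with $A=\Lambda^\imath$ and $T=T[\ell]$ from \eqref{eqn:def of T}: by Theorem~\ref{thm:tilting module}(1), $T$ is a tilting $\Lambda^\imath$-module, so $\RHom_{\Lambda^\imath}(T,-)\colon D^b(\mod\Lambda^\imath)\to D^b(\mod B)$ is a triangle equivalence, where $B=\End_{\Lambda^\imath}(T)^{op}$.

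It then remains to identify $D^b(\mod B)$ with $D^b(\mod(\bs_\ell\Lambda^\imath))$. By Theorem~\ref{thm:tilting module}(2) the functor $S\colon\mod(\bs_\ell\Lambda^\imath)\to\mod B$ is an equivalence of abelian categories, hence (being exact) extends to a triangle equivalence $D^b(S)\colon D^b(\mod(\bs_\ell\Lambda^\imath))\xrightarrow{\,\sim\,}D^b(\mod B)$; composing $\RHom_{\Lambda^\imath}(T,-)$ with its quasi-inverse yields the asserted derived equivalence $D^b(\mod\Lambda^\imath)\to D^b(\mod(\bs_\ell\Lambda^\imath))$. Equivalently, one may invoke the algebra isomorphism $\bs_\ell\Lambda^\imath\cong\End_{\Lambda^\imath}(T)^{op}=B$ (Corollary~\ref{cor:Tiso}) and simply restrict scalars along it. Either way, since $S\circ F_\ell^+\cong\Hom_{\Lambda^\imath}(T,-)$ on module categories, the corollary records exactly the statement that the reflection functor $F_\ell^+$ underlies this derived equivalence.

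I do not expect any obstacle here: the two ingredients needed — that $T$ is a tilting module of finite projective dimension, and that $\mod(\bs_\ell\Lambda^\imath)\simeq\mod B$ (equivalently $\bs_\ell\Lambda^\imath\cong B$ as algebras) — have both just been established in the proof of Theorem~\ref{thm:tilting module}. For a source $\ell$ of $Q$ the dual statement for $F_\ell^-$ follows by applying the same reasoning to the opposite algebra $(\Lambda^\imath)^{op}$, or to the duality-transported situation.
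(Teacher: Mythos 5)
Your argument is correct and is exactly the standard one the paper leaves implicit (the corollary is stated without proof as an immediate consequence of Theorem~\ref{thm:tilting module}): Happel's theorem gives the triangle equivalence $\RHom_{\Lambda^\imath}(T,-)\colon D^b(\mod(\Lambda^\imath))\to D^b(\mod(B))$ for the tilting module $T$ of projective dimension at most one, and the identification of $D^b(\mod(B))$ with $D^b(\mod(\bs_\ell\Lambda^\imath))$ follows from the equivalence $S$ in Theorem~\ref{thm:tilting module}(2) (equivalently, from the algebra isomorphism of Corollary~\ref{cor:Tiso}). No gaps.
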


From Lemma \ref{lem:comdiag2}, we have $F\circ F_\ell^-\circ F_\ell^+\simeq F$.
Via restriction of $F\circ F_\ell ^-$, we obtain an equivalence of subcategories
\[
\proj(\bs_\ell \iLa)\longrightarrow \proj(B).
\]
In particular, this equivalence induces an algebra isomorphism
\begin{align}
 \label{cor:Tiso}
\bs_\ell \iLa\xrightarrow{\sim}B=\End_{\Lambda^\imath}(T)^{op}.
\end{align}
We shall always identify $\End_{\Lambda^\imath}(T)^{op}$ with $\bs_\ell \iLa$ in this way, and hence identify $F$ with $F_\ell^+$ (in particular when studying the BGP-type reflection functors later on). Furthermore,  it follows from \eqref{cor:Tiso}, the definition of $F_\ell^+$ and \eqref{eqn: pd of tau inverse of Ea} that
\[
F_\ell ^+(T_i)=P_i',
\qquad \forall i \in \I.
\]

\begin{corollary}
\label{lem:reflecting dimen}
Let $(Q, \btau)$ be an $\imath$quiver with a sink $\ell$. Let $L\in\mod(\K Q)\subseteq \mod(\iLa)$ be an indecomposable module. Then either $F_\ell^+(L)=0$ (equivalently $L\cong S_\ell$ or $S_{\btau \ell}$) or $F_\ell^+(L)$ is indecomposable with
$\dimv F_\ell^+(L)=\bs_\ell(\dimv L)$, where $\bs_\ell$ is as in \eqref{def:simple reflection}.
\end{corollary}

\begin{proof}
Denote the indecomposable module $L$ by $L=(L_i,L_{ji})\in\cm(Q, \btau)$.
Let $T$ be the corresponding tilting module defined in \eqref{eqn:def of T}. Let $B=\End_{\iLa}(T)^{op}\cong \bs_\ell(\iLa)$.

If $L\cong S_\ell$ or $S_{\btau \ell}$, by definition of $F_\ell^+$, we have $F_\ell^+(L)=0$.

Otherwise, we have $L\in \Fac T$. Then $F_\ell^+(L)\cong \Hom_{\iLa}(T,L)$. As $\Hom_{\iLa}(T,-):\Fac T\subseteq \ct\xrightarrow{\simeq} \cy$ (see \eqref{eq:YY}), we have $\End_B(F_\ell^+(L))\cong \End_{\iLa}(L)$ which is a local algebra since $L$ is indecomposable. So $F_\ell^+(L)$ is also indecomposable.

Let $F^+_{\ell}(L)=(N_i,N_{ji})\in\cm(Q, \btau)$.
Since $L$ is indecomposable and $\ell$ is a sink, it follows that the morphism $L_{\ell,{\rm in}}$ in \eqref{def:reflection} is surjective, and then \eqref{def:reflection} becomes a short exact sequence. 
 We have
\begin{align*}
\dim_\K (e_{\ell}N_\ell) &=\sum_{(\alpha:i\rightarrow \ell)\in Q_1} \dim_\K (e_i L_i)-\dim_\K (e_{\ell} L_\ell),
  \\
\dim_\K (e_{\btau\ell}N_\ell)& =\sum_{(\alpha:i\rightarrow \btau\ell)\in Q_1} \dim_\K (e_{\btau i} L_i)-\dim_\K (e_{\btau\ell} L_\ell).
\end{align*}
Since $N_i=L_i$ for $i\neq \ell$, from the above, $\dimv F^+_{\ell}(L)=\bs_\ell(\dimv L)$ by noting that $\bs_\ell=s_\ell$ if $\btau\ell=\ell$ and $\bs_\ell=s_\ell s_{\btau \ell}$ otherwise.
\end{proof}

Dual result also holds for $F_\ell^-$ for any source $\ell$ of $Q$.

\section{Symmetries of $\imath$Hall algebras}
  \label{sec:symmetryHall}

In this section, we show that the reflection functor associated to a sink $\ell \in Q_0$ induces an isomorphism of $\imath$Hall algebras, $\Gamma_{\ell}: \tMH \stackrel{\sim}{\longrightarrow}  \tMHl$,  for an arbitary acyclic $\imath$quiver. In the case of Dynkin $\imath$quivers, we work out explicitly the actions of $\Gamma_\ell$ on the simple modules.

\subsection{Tilting invariance}
  \label{subsec:tilt}

Let $(Q, \btau)$ be an $\imath$quiver, not necessarily of Dynkin type.  We assume $Q$ to be connected and  $(Q,\btau)$ of rank $\ge 2$. 
Let $\ell$ be a sink in $Q$.
Recall $Q'= \bs_\ell Q$ from \eqref{eq:QQ}. As in \S\ref{subsec:APR}, $\btau$ induces an involution $\btau'$ of $Q'$. Let $\bs_\ell\iLa$ denote the $\imath$quiver algebra associated to $(Q', \btau')$.

Specializing \cite[Theorem A.22]{LW19} to our setting gives us the following; here we recall Theorem \ref{thm:tilting module} and \eqref{cor:Tiso} regarding the tilting module $T$ defined in \eqref{eqn:def of T}.
\begin{lemma}
  \label{lem:Upsilon}
  Let $\ell$ be a sink of $Q$. Recall the tilting module $T$ defined in \eqref{eqn:def of T}. Set $F=\Hom_\Lambda(T,-)$.
Then we have an isomorphism of algebras:
\begin{align}
  \label{eqn:reflection functor 1}
\Gamma_{\ell}:\utMH&\stackrel{\sim}{\longrightarrow} \cm\ch(\bs_\ell\iLa)\\\notag
[M]&\mapsto q^{-\langle M,T_M\rangle} [F(T_M)]^{-1}\diamond [F(X_M)],
\end{align}
where $X_M\in \Fac T$ and $T_M\in\add T$ fit in a short exact sequence
$0\rightarrow M\rightarrow X_M\rightarrow T_M\rightarrow0.$ (Note that $\Gamma_\ell$ does not depend on the choice of $X_M$, $T_M$.)
\end{lemma}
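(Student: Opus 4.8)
The statement is an instance of the general tilting-invariance principle for modified Ringel--Hall algebras established in \cite[Theorem~A.22]{LW19}, so the plan is to verify that the hypotheses of that theorem are met in the present setting and then to unwind what its conclusion says here. The key inputs are already in place: by Theorem~\ref{thm:tilting module}(1) the module $T=T[\ell]$ of \eqref{eqn:def of T} is a tilting module of $\iLa$, and by Theorem~\ref{thm:tilting module}(3) and Corollary~\ref{cor:Tiso} we have an algebra isomorphism $\End_{\iLa}(T)^{op}\cong \bs_\ell\iLa$; moreover $\iLa$ and $\bs_\ell\iLa$ are both $1$-Gorenstein by \cite[Proposition~3.5]{LW19}. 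First I would record that $T$ has projective dimension $\le 1$ as an $\iLa$-module: this follows from \eqref{eqn:projective resolution of E} for the summand $\iLa/\BH_\ell$ and from the explicit length-one projective resolution \eqref{eqn: pd of tau inverse of Ha} (equivalently \eqref{eqn: pd of tau inverse of Ea}) for the summands $\TrD(\E_\ell)$ (and $\TrD(\E_{\btau\ell})$ when $\btau\ell\neq\ell$). With $\pd_{\iLa} T\le 1$ and $\iLa$ being $1$-Gorenstein, $T$ is a classical tilting module of the kind to which \cite[Theorem~A.22]{LW19} applies.

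\textbf{Key steps.} With the hypotheses checked, the body of the argument is to invoke \cite[Theorem~A.22]{LW19} with $A=\iLa$, $B'=\End_{\iLa}(T)^{op}\cong\bs_\ell\iLa$, and the tilting module $T$, producing the algebra isomorphism $\Gamma_\ell:\utMH\xrightarrow{\sim}\cm\ch(\bs_\ell\iLa)$. Next I would make precise the definition of $\Gamma_\ell$ on a basis element $[M]$. Since $T$ has projective dimension $\le 1$, the torsion pair/tilting theory machinery (as in the proof of Theorem~\ref{thm:tilting module}, cf. the subcategories $\ct=\ct_\ell^{\iLa}$ and $\cy=\cs_\ell^{B}$) guarantees that any $M\in\mod(\iLa)$ admits a short exact sequence
\[
0\longrightarrow M\longrightarrow X_M\longrightarrow T_M\longrightarrow 0
\]
with $X_M\in\Fac T=\ct$ and $T_M\in\add T$; indeed one takes a $\Fac T$-approximation, using that $\Ext^1_{\iLa}(T,X_M)=0$ for $X_M\in\Fac T$ and that $\Ext^2$ vanishes by $\pd T\le 1$. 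Applying $F=\Hom_{\iLa}(T,-)$, which is exact on $\Fac T$ and sends $\add T$ to $\proj(B)$, one gets $[F(X_M)]$ and the invertible element $[F(T_M)]$ in $\cm\ch(\bs_\ell\iLa)$ (invertibility of the class of a projective is exactly why the localization defining the modified Ringel--Hall algebra is needed), and the formula
\[
\Gamma_\ell([M]) = q^{-\langle M, T_M\rangle}\,[F(T_M)]^{-1}\diamond [F(X_M)]
\]
is then shown to be independent of the choice of the sequence and multiplicative, precisely as in \cite[Theorem~A.22]{LW19}. Finally I would note that the Euler-form twist $q^{-\langle M,T_M\rangle}$ is the standard normalization making $\Gamma_\ell$ an algebra homomorphism rather than merely a linear isomorphism.

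\textbf{Main obstacle.} The substantive content is entirely front-loaded into verifying $\pd_{\iLa}T\le 1$ together with $\End_{\iLa}(T)^{op}\cong\bs_\ell\iLa$, both of which are supplied by Theorem~\ref{thm:tilting module} and Corollary~\ref{cor:Tiso}; once these are granted, the present lemma is a direct specialization and the only care needed is bookkeeping (matching notation, checking that $T$ as defined in \eqref{eqn:def of T} is the tilting module to which the cited theorem is applied, and confirming the Euler-form normalization agrees with the twisted multiplication \eqref{eqn:twsited multiplication} used to define $\tMH$ versus the untwisted $\utMH$). The mildly delicate point I would be most careful about is the interplay between the untwisted version $\Gamma_\ell:\utMH\to\cm\ch(\bs_\ell\iLa)$ appearing here and the twisted $\imath$Hall algebras $\tMH$, $\tMHl$ of Theorem~\textbf{B}: one must check that $\Gamma_\ell$ remains an isomorphism after the Euler-form twist, i.e.\ that the twist is compatible with $F_\ell^+$ via Lemma~\ref{lem:reflecting dimen} (which says $\dimv F_\ell^+(L)=\bs_\ell(\dimv L)$ on indecomposable $\K Q$-modules), but this compatibility is deferred to the proof of Theorem~\ref{thm:Gamma} and is not needed for the present statement.
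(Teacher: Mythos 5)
Your proposal is correct and follows the same route as the paper: the lemma is obtained by specializing \cite[Theorem~A.22]{LW19} to the tilting module $T$ of \eqref{eqn:def of T}, with Theorem~\ref{thm:tilting module} and Corollary~\ref{cor:Tiso} supplying the facts that $T$ is tilting and that $\End_{\iLa}(T)^{op}\cong\bs_\ell\iLa$. Your extra verification that $\pd_{\iLa}T\le 1$ and your remark separating the untwisted isomorphism here from the Euler-form twist handled later in Theorem~\ref{thm:Gamma} are consistent with the paper's bookkeeping.
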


It is well known that the Cartan matrix $C$ for $Q$ is the matrix of the symmetric bilinear form $(\cdot,\cdot)_Q$ defined by
\[
(x,y)_Q:=\langle x,y\rangle_Q+\langle y,x\rangle_Q
\]
for $x,y\in K_0(\mod(\K Q))$. Here $\langle\cdot,\cdot\rangle_Q$ is the Euler form of $\K Q$.

For any $M,N\in \mod(\Lambda^{\imath})$, let
\begin{equation}\label{eqn:resolution of M and N}
0\longrightarrow M\longrightarrow X_M\longrightarrow T_M\longrightarrow0, \qquad
0\longrightarrow N\longrightarrow X_N\longrightarrow T_N\longrightarrow0
\end{equation}
be short exact sequences with $X_M,X_N\in\Fac T$, and $T_M,T_N\in \add T$. 

\begin{lemma}   \label{lem:reflection functor preserve bilnear form}
Let $\ell$ be a sink of $Q$. For any $M,N\in \mod(\Lambda^{\imath})$ as in \eqref{eqn:resolution of M and N}, we have
\begin{equation}
  \label{eq:Euler2}
\Big\langle \widehat{\res(F_{\ell}^+(X_M))}- \widehat{\res(F_{\ell}^+(T_M))}, \widehat{\res(F_{\ell}^+(X_N))}- \widehat{\res(F_{\ell}^+(T_N))} \Big \rangle_{Q'}=\langle \res(M),\res(N) \rangle_Q.
\end{equation}
\end{lemma}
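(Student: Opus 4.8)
The plan is to reduce the identity on the derived/Grothendieck-group level to a statement purely about $\K Q$-modules, where it becomes a property of the BGP reflection functor and the reflection $\bs_\ell$ acting on the root lattice. First I would observe that the left-hand side of \eqref{eq:Euler2} only depends on the classes $\widehat{\res(F_\ell^+(X_M))}-\widehat{\res(F_\ell^+(T_M))}$ and similarly for $N$ in $K_0(\mod \K Q')$, so it suffices to compute these classes. The short exact sequence $0\to M\to X_M\to T_M\to 0$ with $X_M\in\Fac T$ and $T_M\in\add T$ shows that applying the exact-on-$\Fac T$ reflection functor $F_\ell^+=\Hom_{\iLa}(T,-)$ (which is exact on this subcategory, being the tilting-theoretic equivalence $\ct\to\cy$ of Theorem~\ref{thm:tilting module}) yields a short exact sequence $0\to F_\ell^+(M)\to F_\ell^+(X_M)\to F_\ell^+(T_M)\to 0$ in $\mod(\bs_\ell\iLa)$; hence $\widehat{\res(F_\ell^+(X_M))}-\widehat{\res(F_\ell^+(T_M))}=\widehat{\res(F_\ell^+(M))}$ in $K_0(\mod\K Q')$. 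Thus \eqref{eq:Euler2} is equivalent to
\[
\big\langle \widehat{\res(F_\ell^+(M))},\ \widehat{\res(F_\ell^+(N))}\big\rangle_{Q'}=\big\langle \widehat{\res M},\ \widehat{\res N}\big\rangle_Q .
\]

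Next I would identify the effect of $F_\ell^+$ on dimension vectors with the action of $\bs_\ell$. By Lemma~\ref{lem:reflecting dimen}, for an indecomposable $\K Q$-module $L$ one has $\dimv F_\ell^+(L)=\bs_\ell(\dimv L)$ unless $L\cong S_\ell$ or $S_{\btau\ell}$, in which case $F_\ell^+(L)=0$ while $\bs_\ell(\dimv L)=-\alpha_\ell$ (resp. $-\alpha_{\btau\ell}$). However, note that $\res(M)$ and $\res(N)$ need not be $\K Q$-modules a priori; but $\res$ lands in $\mod\K Q$ and what actually enters is the class $\widehat{\res(F_\ell^+(M))}\in K_0(\mod\K Q')$. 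So I would argue at the level of Grothendieck groups directly: the functor $F_\ell^+$ together with $\res$ induces, on $K_0$, precisely the linear map $\bs_\ell\colon\Z^\I\to\Z^\I$ (identifying $K_0(\mod\K Q)=\Z^\I=K_0(\mod\K Q')$ via simple modules), because this holds on the classes of indecomposables by Lemma~\ref{lem:reflecting dimen} and these span. Hence $\widehat{\res(F_\ell^+(M))}=\bs_\ell\big(\widehat{\res M}\big)$ and likewise for $N$.

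With this in hand the claim becomes $\langle \bs_\ell(x),\bs_\ell(y)\rangle_{Q'}=\langle x,y\rangle_Q$ for all $x,y\in\Z^\I$, where $x=\widehat{\res M}$, $y=\widehat{\res N}$. Here I would use the standard fact that reversing the arrows at a sink $\ell$ changes the (non-symmetric) Euler form by a reflection: for $Q'=s_\ell(Q)$ one has $\langle s_\ell x, s_\ell y\rangle_{Q'}=\langle x,y\rangle_Q$; iterating at the two sinks $\ell$ and $\btau\ell$ (which commute, and the partial reflections commute with the corresponding arrow reversals) gives the statement for $\bs_\ell$ and $Q'=\bs_\ell Q$. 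Concretely, I would verify $\langle s_\ell x, s_\ell y\rangle_{s_\ell Q}=\langle x,y\rangle_Q$ by a direct check on basis elements $\alpha_i,\alpha_j$: when neither index is $\ell$ the Euler form entry is unchanged and $s_\ell$ fixes those coordinates, while the cases involving $\alpha_\ell$ reduce to the identity $\langle \alpha_i,\alpha_j\rangle_{s_\ell Q}+\langle\alpha_j,\alpha_i\rangle_{s_\ell Q}=(\alpha_i,\alpha_j)_Q=(\alpha_i,\alpha_j)_{s_\ell Q}$ (the symmetrization, i.e. the Cartan matrix, is orientation-independent) together with the fact that all arrows at $\ell$ point in one direction in $Q$ and the opposite in $s_\ell Q$.

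The main obstacle I anticipate is the bookkeeping in the last step: checking that the composite of the two partial arrow-reversals at $\ell$ and $\btau\ell$ genuinely realizes $Q'=\bs_\ell Q$ and that the partial reflections $s_\ell$, $s_{\btau\ell}$ commute in the required sense (which uses the hypothesis $c_{\ell,\btau\ell}=0$, guaranteed for Dynkin $\imath$quivers, so that $s_\ell s_{\btau\ell}=s_{\btau\ell}s_\ell$ and the arrow-reversals are independent). Once the invariance of the Euler form under a single sink-reflection is established, the general case follows by composing, and the exactness of $F_\ell^+$ on $\Fac T$ (already available from Theorem~\ref{thm:tilting module}) takes care of the passage from the difference of classes to the class of $F_\ell^+(M)$ itself.
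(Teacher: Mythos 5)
Your overall strategy---identify the class $\widehat{\res(F_{\ell}^+(X_M))}-\widehat{\res(F_{\ell}^+(T_M))}$ with $\bs_\ell\bigl(\widehat{\res M}\bigr)$ and then invoke the invariance of the Euler form under a BGP sink reflection---is sound and, once repaired, gives a more conceptual argument than the paper's four-case computation. But your first step contains a genuine error. The sequence $0\to F_\ell^+(M)\to F_\ell^+(X_M)\to F_\ell^+(T_M)\to 0$ is \emph{not} exact for general $M$: the functor $\Hom_{\iLa}(T,-)$ is only left exact, and since $X_M\in\Fac T\subseteq\ct$ the cokernel of $F_\ell^+(X_M)\to F_\ell^+(T_M)$ is $\Ext^1_{\iLa}(T,M)$, which vanishes only when $M\in\ct=\{X\mid \Hom_{\iLa}(X,S_\ell\oplus S_{\btau\ell})=0\}$. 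For $M=S_\ell$ this fails: one has $F_\ell^+(S_\ell)=0$, whereas the difference of classes is $-\widehat{S'_\ell}\neq 0$ (this is exactly what the paper computes in its Case (2), via the string module $X$ fitting into $0\to S_\ell\to X\to \E_j\to 0$). So the identity $\widehat{\res(F_{\ell}^+(X_M))}-\widehat{\res(F_{\ell}^+(T_M))}=\widehat{\res(F_\ell^+(M))}$ is false precisely in the cases that matter, and the reduction you base on it is invalid.

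You in fact notice the discrepancy yourself (``$F_\ell^+(L)=0$ while $\bs_\ell(\dimv L)=-\alpha_\ell$'') but then paper over it by asserting that $F_\ell^+$ induces $\bs_\ell$ on $K_0$ ``because this holds on indecomposables,'' which contradicts the sentence before it: Lemma~\ref{lem:reflecting dimen} gives $\dimv F_\ell^+(L)=\bs_\ell(\dimv L)$ only for $L\not\cong S_\ell,S_{\btau\ell}$, and $F_\ell^+$ is not exact, so it does not induce any map on $K_0$ by itself. The object that does descend to $K_0(\mod\iLa)$ is $M\mapsto \widehat{\res(F_{\ell}^+(X_M))}-\widehat{\res(F_{\ell}^+(T_M))}$, and establishing that it is well defined (independent of the choice of $X_M,T_M$, and additive) is itself a needed step---the paper records it before reducing to simples. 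Granting that, the map agrees with $\bs_\ell$ on all simples except $S_\ell,S_{\btau\ell}$, and for those two you must exhibit an explicit resolution and compute the difference to be $-\widehat{S'_\ell}$ (resp.\ $-\widehat{S'_{\btau\ell}}$); this computation cannot be bypassed and is the content of the paper's Case (2). With that inserted, your final step---$\langle \bs_\ell x,\bs_\ell y\rangle_{Q'}=\langle x,y\rangle_Q$, obtained by composing the two commuting arrow reversals at $\ell$ and $\btau\ell$ using $c_{\ell,\btau\ell}=0$---is correct and completes the proof.
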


\begin{proof}
Denote by $S_i'$ the simple $\K Q'$-module corresponding to $i\in \I$.
Similar to the proof of \cite[Theorem~A.15]{LW19}, it is routine to check that the left-hand side of \eqref{eq:Euler2}
is independent of the choices in \eqref{eqn:resolution of M and N}.

It suffices to prove \eqref{eq:Euler2} when $M,N$ are simple modules. Assume that $M=S_i$, $N=S_j$.
The proof is separated into the following four cases.

Case (1): \underline{$i,j\notin \{\ell, \btau\ell\}$}. Then
$S_i,S_j\in \Fac T$. Choose $X_{S_i}=S_i$, $X_{S_j}=S_j$ and $T_{S_i}=0=T_{S_j}$ in \eqref{eqn:resolution of M and N}. By Corollary \ref{lem:reflecting dimen}, we have
\begin{align*}
\big \langle & \res(F_{\ell}^+([S_i])),\res(F_{\ell}^+([S_j])) \big \rangle_{Q'}\\
&= \left \langle \res(F_{\ell}^+(S_i)), \res(F_{\ell}^+(S_j)) \right \rangle_{Q'}\\
&= \left\{ \begin{array}{ll} \left \langle \widehat{S'_i}+ |c_{\ell i}|\widehat{S'_{\ell}}+|c_{\btau \ell,i}| \widehat{S'_{\btau \ell}},\,  \widehat{S'_j}+ |c_{\ell j}|\widehat{S'_{\ell}} + |c_{\btau \ell, \,  j}|\widehat{S'_{\btau \ell}} \right \rangle_{Q'} & \text{ if }\btau \ell\neq \ell,\\
 \left \langle \widehat{S'_i}+ |c_{\ell i}|\widehat{S'_{\ell}},\,  \widehat{S'_j}+ |c_{\ell j}|\widehat{S'_{\ell}}  \right \rangle_{Q'} & \text{ if }\btau \ell= \ell.\end{array}\right.
\end{align*}
It follows that
\begin{align*}
\left \langle \res(F_{\ell}^+([S_i])),\res(F_{\ell}^+([S_j])) \right \rangle_{Q'}=\langle \widehat{S'_i},  \widehat{S'_j}\rangle_{Q'}=\langle \widehat{S_i},  \widehat{S_j}\rangle_{Q},
\end{align*}
since $\langle S_{\ell}',S'_{\btau \ell}\rangle_{Q'}=0=\langle S'_{\btau \ell},S_{\ell}'\rangle_{Q'}$, $\langle S_{\ell}',S_{\ell}'\rangle_{Q'}=1=\langle S_{\btau \ell}',S_{\btau \ell}'\rangle_{Q'}$.

Case (2): \underline{$i\in \{\ell, \btau\ell\}$, $j\notin \{\ell, \btau\ell\}$}. Without loss of generality, assume that $i=\ell$. As $\ell$ is a sink, there exist at least one arrow $\alpha:j\rightarrow \ell$ in $Q_0$. So there exists a string module $Y$ with its string $\ell\xleftarrow{\alpha} j\xrightarrow{\varepsilon_j}\btau j$ (see \cite{BR87} for the definitions of strings and string modules).
Then $Y,\E_j\in\Fac T$, and there exists a short exact sequence
$0\rightarrow S_\ell\rightarrow Y\rightarrow \E_j\rightarrow0$. Clearly,
$$\widehat{\res(F_{\ell}^+(X_{S_{\ell}}))}- \widehat{\res(F_{\ell}^+(T_{S_{\ell}}))}= \widehat{\res(F_{\ell}^+(Y))}- \widehat{\res(F_{\ell}^+(\E_j))}.$$
By direct calculations, we obtain that $\widehat{\res(F_{\ell}^+(Y))}- \widehat{\res(F_{\ell}^+(\E_j))}=-\widehat{S_{\ell}'}$.
Then
\begin{align*}
\left \langle \widehat{\res(F_{\ell}^+(X_{S_\ell}))} \right.
& \left. - \widehat{\res(F_{\ell}^+(T_{S_\ell}))}, \widehat{\res(F_{\ell}^+([S_j]))} \right\rangle_{Q'}\\
&= \left\{ \begin{array}{ll} \left \langle -\widehat{S_{\ell}'},\,   \widehat{S'_j}+ |c_{\ell j}|\widehat{S'_{\ell}} + |c_{\btau \ell,\,j}|\widehat{S'_{\btau \ell}} \right \rangle_{Q'} & \text{ if }\btau \ell\neq \ell,\\
 \left \langle -\widehat{S_{\ell}'},\,   \widehat{S'_j}+ |c_{\ell j}|\widehat{S'_{\ell}}  \right \rangle_{Q'} & \text{ if }\btau \ell= \ell.\end{array}\right.
 \end{align*}
It follows that
 \begin{align*}
\left \langle \widehat{\res(F_{\ell}^+(X_{S_\ell}))} \right.
 \left. - \widehat{\res(F_{\ell}^+(T_{S_\ell}))}, \widehat{\res(F_{\ell}^+([S_j]))} \right\rangle_{Q'}
= 0=\langle \widehat{S_\ell},  \widehat{S_j}\rangle_{Q},
\end{align*}
since $\langle\widehat{S_{\ell}'},   \widehat{S'_j}\rangle_{Q'}=-|c_{\ell j}|$.

Case (3): \underline{$j\in \{\ell, \btau\ell\}$, $i\notin \{\ell, \btau\ell\}$}. From Case (2), we obtain that
\begin{align*}
\left \langle\widehat{\res(F_{\ell}^+([S_i]))}, \right. &
\left.  \widehat{\res(F_{\ell}^+(X_{S_j}))}- \widehat{\res(F_{\ell}^+(T_{S_j}))}\right \rangle_{Q'}\\
&= \left\{ \begin{array}{ll} \left\langle    \widehat{S'_i}+ |c_{\ell i}|\widehat{S'_{\ell}} + |c_{\btau \ell,i}|\widehat{S'_{\btau \ell}} ,-\widehat{S_{\ell}'}\right\rangle_{Q'} & \text{ if }\btau\ell\neq\ell, \\
 \left\langle    \widehat{S'_i}+ |c_{\ell i}|\widehat{S'_{\ell}} ,-\widehat{S_{\ell}'}\right\rangle_{Q'} & \text{ if }\btau\ell\neq\ell.
 \end{array} \right.
 \end{align*}
It follows that
\begin{align*}
\left \langle\widehat{\res(F_{\ell}^+([S_i]))}, \right.
\left.  \widehat{\res(F_{\ell}^+(X_{S_j}))}- \widehat{\res(F_{\ell}^+(T_{S_j}))}\right \rangle_{Q'}= -|c_{\ell i}|=\langle \widehat{S_i},  \widehat{S_\ell}\rangle_{Q},
\end{align*}
since $\langle \widehat{S'_i},\widehat{S'_\ell}\rangle_{Q'}=0$.

Case (4): \underline{$i,j\in \{\ell, \btau\ell\}$}.
As in Case (2), we have
\[
\widehat{\res(F_{\ell}^+(X_{S_i}))}- \widehat{\res(F_{\ell}^+(T_{S_i}))}= -\widehat{S'_i},
\qquad
\widehat{\res(F_{\ell}^+(X_{S_j}))}- \widehat{\res(F_{\ell}^+(T_{S_j}))}= -\widehat{S'_j},
\]
and then the desired formula follows.
\end{proof}

\subsection{Isomorphisms $\Gamma_\ell$ from reflection functors}
   \label{subsec:RF}

We identify $\End_{\iLa}(T)^{op} \cong \bs_\ell(\iLa)$ by \eqref{cor:Tiso}. Then the functor $F: \Fac T \longrightarrow\mod(\End_{\iLa}(T)^{op})$ is equivalent to the functor
\[
F_\ell^+: \Fac T \longrightarrow\mod(\bs_\ell\iLa),
\]
which allows us to carry out computations effectively via quivers.
Recall $\sqq =\sqrt{q}.$ As the Euler forms are matched by Lemma \ref{lem:reflection functor preserve bilnear form}, the following proposition follows by Lemma~\ref{lem:Upsilon} (to which we refer for notations).

\begin{theorem}
  \label{thm:Gamma}
The isomorphism $\Gamma_\ell$ in (\ref{eqn:reflection functor 1}) induces the following isomorphism of $\imath$Hall algebras: 
\begin{eqnarray}
\Gamma_{\ell}:\tMH & \stackrel{\sim}{\longrightarrow} & \tMHl,
  \label{eqn:reflection functor 2} \\
\, [M]&\mapsto& \sqq^{\langle \res (T_M),\res(M)\rangle_Q}q^{-\langle T_M,M\rangle} [F_{\ell}^+(T_M)]^{-1}* [F_{\ell}^+(X_M)].
 \notag
\end{eqnarray}
\end{theorem}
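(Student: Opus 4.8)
The plan is to deduce the statement from the isomorphism $\Gamma_\ell$ of \emph{untwisted} modified Ringel--Hall algebras established in Lemma~\ref{lem:Upsilon}, together with the Euler-form identity of Lemma~\ref{lem:reflection functor preserve bilnear form}, by means of the standard twisting formalism. First I would record that each of $\cm\ch(\iLa)$ and $\cm\ch(\bs_\ell\iLa)$ is graded by its Grothendieck group, with $[M]$ placed in degree $\widehat{M}$: the Hall product, the defining ideal $I$, and the localizing set $\cs$ are all homogeneous, so the grading passes to the localization (as in \cite{LW19}). Since $\res$ is exact, it induces homomorphisms $K_0(\mod\iLa)\to K_0(\mod\K Q)$ and $K_0(\mod\bs_\ell\iLa)\to K_0(\mod\K Q')$, which I also denote by $\res$; consequently $\tMH$ is the twist of $\cm\ch(\iLa)$ by the bi-multiplicative form $\psi_Q(\alpha,\beta):=\sqq^{\langle\res\alpha,\res\beta\rangle_Q}$, that is, $[M]*[N]=\psi_Q(\widehat{M},\widehat{N})\,[M]\diamond[N]$, and likewise $\tMHl$ is the twist of $\cm\ch(\bs_\ell\iLa)$ by $\psi_{Q'}$; bi-multiplicativity of $\psi_Q$ (bilinearity of the Euler form plus additivity of $\res$) is exactly what makes $*$ associative.

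Next I would observe that the $\Q(\sqq)$-linear bijection $\Gamma_\ell$ of \eqref{eqn:reflection functor 1} sends homogeneous elements to homogeneous elements: $\Gamma_\ell([M])=q^{-\langle M,T_M\rangle}[F_\ell^+(T_M)]^{-1}\diamond[F_\ell^+(X_M)]$ is homogeneous of degree $\widehat{F_\ell^+(X_M)}-\widehat{F_\ell^+(T_M)}$, and, using that $\Gamma_\ell$ is a $\diamond$-algebra isomorphism, the assignment $\bar\Gamma_\ell\colon\widehat{M}\mapsto\widehat{F_\ell^+(X_M)}-\widehat{F_\ell^+(T_M)}$ is a well-defined group isomorphism $K_0(\mod\iLa)\xrightarrow{\sim}K_0(\mod\bs_\ell\iLa)$. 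The key point is that $\psi_Q=\psi_{Q'}\circ(\bar\Gamma_\ell\times\bar\Gamma_\ell)$: since $\res\bar\Gamma_\ell(\widehat{M})=\widehat{\res F_\ell^+(X_M)}-\widehat{\res F_\ell^+(T_M)}$, this identity, evaluated on classes of modules, is precisely Lemma~\ref{lem:reflection functor preserve bilnear form}, and it extends to all of $K_0(\mod\iLa)$ by bilinearity. Granting this, for homogeneous $x,y$ one has $\Gamma_\ell(x*y)=\psi_Q(\cdots)\Gamma_\ell(x\diamond y)=\psi_Q(\cdots)\Gamma_\ell(x)\diamond\Gamma_\ell(y)=\psi_{Q'}(\cdots)\Gamma_\ell(x)\diamond\Gamma_\ell(y)=\Gamma_\ell(x)*\Gamma_\ell(y)$, so the \emph{same} bijection $\Gamma_\ell$ is an algebra isomorphism $\tMH\xrightarrow{\sim}\tMHl$.

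It then remains only to rewrite $\Gamma_\ell([M])$ in terms of the twisted product, which produces the coefficient displayed in \eqref{eqn:reflection functor 2}. Here $[F_\ell^+(T_M)]$ is invertible in $\tMHl$ because $T_M\in\add T$ and $F_\ell^+$ carries $\add T$ into $\proj(\bs_\ell\iLa)$ (recall $F_\ell^+(T_i)=P_i'$), so these classes have finite projective dimension; passing from $\diamond$ to $*$ in $[F_\ell^+(T_M)]^{-1}\diamond[F_\ell^+(X_M)]$ introduces a power of $\sqq$ whose exponent is a $\langle\cdot,\cdot\rangle_{Q'}$-expression in $\widehat{\res F_\ell^+(T_M)}$ and $\widehat{\res F_\ell^+(X_M)}$. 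Combining this with $q^{-\langle M,T_M\rangle}$ and invoking the homological identities for the $\imath$quiver algebra $\iLa$ from \cite{LW19} (relating the Euler form of $\iLa$, the asymmetry between $\langle M,T_M\rangle$ and $\langle T_M,M\rangle$, and the $\K Q$-form on restrictions), one checks that the total coefficient collapses to $\sqq^{\langle\res(T_M),\res(M)\rangle_Q}q^{-\langle T_M,M\rangle}$, which is \eqref{eqn:reflection functor 2}. I expect no genuine difficulty to remain once Lemmas~\ref{lem:Upsilon} and \ref{lem:reflection functor preserve bilnear form} are in hand — the conceptual content is that $\Gamma_\ell$ survives the twist — so the only thing requiring care is this last bookkeeping of the powers of $\sqq$ and $q$, keeping straight which Euler form ($\iLa$, $\K Q$, or $\K Q'$) is in play at each step.
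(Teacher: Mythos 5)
Your argument is essentially the paper's own proof: the paper deduces Theorem~\ref{thm:Gamma} in one line from Lemma~\ref{lem:Upsilon} together with the Euler-form compatibility of Lemma~\ref{lem:reflection functor preserve bilnear form}, the grading/twisting formalism you spell out being left implicit there. The only step neither you nor the paper carries out explicitly is the final conversion of the coefficient when passing from $\diamond$ to $*$ (in particular reconciling $q^{-\langle M,T_M\rangle}$ in \eqref{eqn:reflection functor 1} with $q^{-\langle T_M,M\rangle}$ in \eqref{eqn:reflection functor 2}), which you correctly isolate as the one piece of bookkeeping requiring care.
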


For any $i\in \I$, denote by $S_i$ (respectively, $S_i'$) the simple $\iLa$-module (respectively, $\bs_\ell\iLa$-module), denote by $\E_i$ (respectively, $\E_i'$) the
generalized simple $\iLa$-module (respectively, $\bs_\ell\iLa$-module). We similarly define $\E_\alpha,\E_\beta'$ for $\alpha\in K_0(\mod(\K Q))$, $\beta\in K_0(\mod(\K(\bs_\ell Q)))$
in the (twisted) semi-derived Ringel-Hall algebras (where $\ell$ is a sink of $Q$).

Recall the root lattice $\Z^{\I}=\Z\alpha_1\oplus\cdots\oplus\Z\alpha_n$. We have an isomorphism of abelian groups $\Z^\I\rightarrow K_0(\mod(\K Q))$, $\alpha_i\mapsto \widehat{S_i}$. This isomorphism induces an action of the reflection $s_i$ on $K_0(\mod(\K Q))$.
Thus, for $\alpha\in K_0(\mod (\K Q))$ and $i\in\I$, $\E_{s_i\alpha} \in \tMH$ is well defined. Similarly,  we have $\E'_{s_i\alpha}\in \tMHl$.

\begin{proposition}
   \label{prop:reflection}
Let $(Q, \btau)$ be an $\imath$quiver, and $\ell\in Q_0$ be a sink.
Then the isomorphism $\Gamma_{\ell}:\tMH\xrightarrow{\sim}  \tMHl$ acts on the generators as follows:
\begin{align}
\Gamma_{\ell}([M])&= [F_{\ell}^+(M)], \quad \forall M\in\Fac (T),
 \label{eqn:reflection 1}
  \\
\Gamma_{\ell}([S_\ell])&= \left\{
\begin{array}{cc}\sqq[\E'_\ell]^{-1}* [S_{\btau \ell}'], &\text{ if }\btau \ell\neq \ell
\\ \,[\E'_\ell]^{-1}* [S_{\btau \ell}'], &\text{ if }\btau \ell= \ell, \end{array}\right.
\label{eqn:reflection 2}
 \\
\Gamma_{\ell}([S_{\btau \ell}])&=
\sqq [\E'_{\btau \ell}]^{-1} * [S_{\ell}'], \quad\quad \text{ if }\btau \ell\neq \ell,
\label{eqn:reflection 3}
 \\
\Gamma_{\ell}([\E_\alpha])&= [\E'_{\bs_{\ell}\alpha}],
\quad \forall\alpha\in K_0(\mod(\K Q)).\label{eqn:reflection 4}
\end{align}
\end{proposition}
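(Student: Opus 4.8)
\emph{Strategy.} The plan is to read each formula off the explicit expression for $\Gamma_\ell$ in Theorem~\ref{thm:Gamma} by feeding in a well-chosen short exact sequence $0\to M\to X_M\to T_M\to 0$ with $X_M\in\Fac(T)$, $T_M\in\add(T)$, and then to rewrite the resulting expression — which a priori is built from reflection functors applied to summands of the tilting module $T$ — inside $\cm\ch(\bs_\ell\iLa)$ using the defining relations of the modified Ringel--Hall algebra, above all the collapsing relation $[L]=[K\oplus N]$ whenever $0\to K\to L\to N\to 0$ with $K\in\cp^{<\infty}(\bs_\ell\iLa)$ (in particular $K$ projective). Formula \eqref{eqn:reflection 1} is immediate: for $M\in\Fac(T)$ take $X_M=M$ and $T_M=0$; then $F_\ell^+(0)=0$ and the Euler form $\langle\res(T_M),\res(M)\rangle_Q$ and the exponent $\langle T_M,M\rangle$ both vanish, so Theorem~\ref{thm:Gamma} yields $\Gamma_\ell([M])=[F_\ell^+(M)]$ at once.

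\emph{Formula \eqref{eqn:reflection 4}.} Since $\Gamma_\ell$ is an algebra isomorphism and both $\alpha\mapsto[\E_\alpha]$ and $\alpha\mapsto[\E'_{\bs_\ell\alpha}]$ are multiplicative in $\alpha\in K_0(\mod\K Q)$ (the $[\E_i]$, resp.\ $[\E'_i]$, generate Laurent polynomial algebras, and $\bs_\ell$ is linear), it suffices to treat $\alpha=\alpha_i$. If $i\notin\{\ell,\btau\ell\}$, then $\E_i$ is a quotient of $P_i\in\add(T)$ by \eqref{eqn:projective resolution of E}, hence $\E_i\in\Fac(T)$, and by \eqref{eqn:reflection 1} we only need to identify $F_\ell^+(\E_i)$; a direct computation with the kernel construction \eqref{def:reflection} shows it is the indecomposable $\bs_\ell\iLa$-module obtained from the generalized simple $\E_i$ by adjoining generalized simples at $\ell$, $\btau\ell$ with multiplicities $|c_{i\ell}|$, $|c_{i,\btau\ell}|$, and one checks that, after the collapsing relations in $\cm\ch(\bs_\ell\iLa)$, its class equals the monomial $[\E'_i]*[\E'_\ell]^{|c_{i\ell}|}*[\E'_{\btau\ell}]^{|c_{i,\btau\ell}|}=[\E'_{\bs_\ell\alpha_i}]$. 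If $i=\ell$ (the case $i=\btau\ell$ being symmetric), then $\E_\ell=P_\ell$ is projective since $\ell$ is a sink; applying Theorem~\ref{thm:Gamma} to the short exact sequence $0\to\E_\ell\to\bigoplus_{j\to\ell}P_j\to\TrD\E_{\btau\ell}\to 0$ — namely \eqref{eqn: pd of tau inverse of Ea} with $\ell$ and $\btau\ell$ interchanged — and using $F_\ell^+(\TrD\E_{\btau\ell})=P'_\ell$ and $F_\ell^+(P_j)=P'_j$ from \eqref{eqn: Ti}, gives $\Gamma_\ell([\E_\ell])=c\cdot[P'_\ell]^{-1}*[\bigoplus_{j\to\ell}P'_j]$ with an explicit scalar $c$; comparing with the projective resolution $0\to\bigoplus_{j\to\ell}P'_j\to P'_\ell\to\E'_\ell\to 0$ of $\E'_\ell$ over $\bs_\ell\iLa$ (valid as $\ell$ is a source of $Q'$) and with the identity $[N]=c'\cdot[P_1]^{-1}*[P_0]$ attached to any projective resolution $0\to P_1\to P_0\to N\to 0$ in the modified Ringel--Hall algebra, this becomes $[\E'_\ell]^{-1}=[\E'_{\bs_\ell\alpha_\ell}]$ once the scalars are seen to cancel.

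\emph{Formulas \eqref{eqn:reflection 2}--\eqref{eqn:reflection 3}.} Because $T[\ell]=T[\btau\ell]$ and $\bs_\ell\iLa=\bs_{\btau\ell}\iLa$, the isomorphism $\Gamma_\ell$ and the functor $F_\ell^+$ are symmetric in $\ell,\btau\ell$, so \eqref{eqn:reflection 3} follows from \eqref{eqn:reflection 2} by interchanging $\ell$ and $\btau\ell$; it thus suffices to handle $S_\ell$. As $S_\ell\notin\Fac(T)$, I would use the unique non-split extension $0\to S_\ell\to X\to\TrD\E_{\btau\ell}\to 0$: that $\Ext^1_{\iLa}(\TrD\E_{\btau\ell},S_\ell)\cong D\,\overline{\Hom}_{\iLa}(S_\ell,\E_{\btau\ell})$ is one-dimensional follows from $S_\ell=\soc\E_{\btau\ell}$, and $X\in\Fac(T)$ from $\Hom_{\iLa}(X,S_\ell\oplus S_{\btau\ell})=0$ (which uses that the tops of the summands of $T$ avoid $S_\ell,S_{\btau\ell}$). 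Applying $F_\ell^+\cong\Hom_{\iLa}(T,-)$ to this sequence, and using $\Hom_{\iLa}(T,S_\ell)=0$, $\Ext^1_{\iLa}(T,X)=0$, and $\Ext^1_{\iLa}(T,S_\ell)\cong S'_\ell$ as a $\bs_\ell\iLa$-module (again by Auslander--Reiten duality), one obtains $F_\ell^+(X)=\rad P'_\ell$, while $F_\ell^+(\TrD\E_{\btau\ell})=P'_\ell$. Hence Theorem~\ref{thm:Gamma} gives $\Gamma_\ell([S_\ell])=c''\cdot[P'_\ell]^{-1}*[\rad P'_\ell]$; feeding in the short exact sequence $0\to\bigoplus_{j\to\ell}P'_j\to\rad P'_\ell\to S'_{\btau\ell}\to 0$ (coming from the radical filtration of $P'_\ell$ together with $P'_\ell/\bigoplus_j P'_j\cong\E'_\ell$ and $\rad\E'_\ell=S'_{\btau\ell}$), and the projective-resolution identity for $[\E'_\ell]$ from the previous paragraph, the right-hand side collapses to $c'''\cdot[\E'_\ell]^{-1}*[S'_{\btau\ell}]$, and a final scalar check gives $c'''=\sqq$ when $\btau\ell\ne\ell$ and $c'''=1$ when $\btau\ell=\ell$.

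\emph{Main obstacle.} The conceptual skeleton above is short; the work is in the scalar bookkeeping. Every invocation of Theorem~\ref{thm:Gamma} and every use of a defining relation of $\cm\ch(\bs_\ell\iLa)$ introduces powers of $\sqq$ and $q$ coming from Euler forms and from orders of $\Hom$-spaces, and checking that these match the clean right-hand sides of \eqref{eqn:reflection 2}--\eqref{eqn:reflection 4} is the delicate step; Lemma~\ref{lem:reflection functor preserve bilnear form} is the tool that keeps the Euler forms aligned under $F_\ell^+$. A secondary technical point is the explicit evaluation of $F_\ell^+$ on $\E_i$ through the modulated-graph kernel construction of \S\ref{subsec:modulated}--\ref{subsec:APR} and on the non-$\Fac(T)$ modules $S_\ell$ and $\E_\ell$, which is what forces the slightly roundabout short exact sequences used above.
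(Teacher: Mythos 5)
Your proposal is correct in substance, and for \eqref{eqn:reflection 1} and \eqref{eqn:reflection 4} it coincides with the paper's argument (trivial resolution for $M\in\Fac T$; for $\E_i$ with $i\notin\{\ell,\btau\ell\}$ the class of $F_\ell^+(\E_i)$ in $K_0(\cp^{<\infty}(\bs_\ell\iLa))$; for $\E_\ell$ the sequence $0\to\E_\ell\to\bigoplus_{j\to\ell}P_j\to\TrD\E_{\btau\ell}\to0$ followed by the projective resolution of $\E'_\ell$). Where you genuinely diverge is \eqref{eqn:reflection 2}: you resolve $S_\ell$ by the universal extension $0\to S_\ell\to X\to\TrD\E_{\btau\ell}\to0$, identify $F_\ell^+(X)\cong\rad P'_\ell$ via the long exact sequence for $\Hom_{\iLa}(T,-)$ and Auslander--Reiten duality, and then peel off $S'_{\btau\ell}$ using $0\to\bigoplus_{j\to\ell}P'_j\to\rad P'_\ell\to S'_{\btau\ell}\to0$; the paper instead uses the three-dimensional string module $X$ on the string $\ell\xleftarrow{\alpha}j\xrightarrow{\varepsilon_j}\btau j$, with cokernel $\E_j$, and evaluates $[F_\ell^+(X)]$ by a direct Hall-number computation. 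Your choice has the merit that the cokernel $\TrD\E_{\btau\ell}$ lies in $\add T$ (so it matches the hypotheses of Lemma~\ref{lem:Upsilon} literally, whereas the paper's $\E_j$ is only in $\Fac T$ and relies on independence of the choice of resolution), and it replaces Hall-number manipulations by tilting-theoretic identifications; the paper's choice keeps everything inside explicit string modules and makes the power of $\sqq$ visible at each step. The one incomplete point in your write-up is the scalar bookkeeping, which you flag but do not carry out: the prefactor $\sqq^{\langle\res(T_M),\res(M)\rangle_Q}q^{-\langle T_M,M\rangle}$ from Theorem~\ref{thm:Gamma} and the twists arising from each collapsing relation in $\cm\ch(\bs_\ell\iLa)$ must be multiplied out to produce exactly $\sqq^{1-\delta_{\ell,\btau\ell}}$ in \eqref{eqn:reflection 2}; this is routine but is precisely where the paper spends most of its effort, and Lemma~\ref{lem:reflection functor preserve bilnear form} alone does not discharge it, since one also needs the Euler-form exponents introduced when rewriting $[\rad P'_\ell]$ as a product $[\bigoplus_j P'_j]*[S'_{\btau\ell}]$ up to a power of $\sqq$.
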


\begin{proof}
The identity \eqref{eqn:reflection 1} follows from \eqref{eqn:reflection functor 2}.

To prove (\ref{eqn:reflection 4}), it suffices to check it for $\E_i$, for $i\in Q_0$.

Case (1): \underline{$i\notin \{\ell, \btau\ell\}$.} Then $\E_i\in\Fac T$, which implies that $\Gamma_{\ell}([\E_i])= [F_{\ell}^+([\E_i])]$.
If $i\notin \Omega(-,\ell)\cup \Omega(-,\btau \ell)$, then by the definition of $F_{\ell}^+$, we obtain that $\Gamma_{\ell}([\E_i])= [F_{\ell}^+([\E_i])]=[\E'_i]$.
If $i\in \Omega(-,\ell)\cup \Omega(-,\btau \ell)$, then $\E_i\in \Fac T$. By using $F_{\ell}^+$, we obtain that
\begin{align*}
\widehat{F_{\ell}^+(\E_i)}=\left\{\begin{array}{cc} |c_{\ell i}| \, \widehat{\E}'_a +\widehat{\E}'_i=\widehat{\E}'_{s_{\ell}(\widehat{S_i})}, & \text{ if }\btau \ell=\ell, \\
|c_{\ell i}|\widehat{\E}'_\ell +|c_{\btau \ell ,i}|\widehat{\E}'_{\btau \ell}+\widehat{\E}'_i=\widehat{\E}'_{s_{\ell}s_{\btau \ell}(\widehat{S_i})}, &\text{ if }\btau \ell\neq \ell
\end{array}\right.
\end{align*}
in $K_0(\cp^{<\infty}(s_{\ell}(\iLa)))$.
So we obtain that
\begin{align*}
\Gamma_{\ell}([\E_i])=\left\{ \begin{array}{cc}\E'_{s_{\ell}(\widehat{S_i})} &\text{ if }\btau \ell=\ell,\\\, \E'_{s_{\ell}s_{\btau \ell}(\widehat{S_i})} &\text{ if }\btau \ell\neq \ell.\end{array}\right.
\end{align*}

Case (2): \underline{$i\in\{\ell,\btau\ell\}$.} We only give the details for $i=\ell$.
Note that \eqref{eqn: pd of tau inverse of Ea}
has the property analogous to \eqref{eqn:resolution of M and N}.
By \cite[Theorem~A.22]{LW19}, as $F=\Hom_{\iLa}(T,-): \Fac T\rightarrow\mod(\bs_\ell\iLa)$ is a full embedding which also preserves all $\Ext$-groups, we obtain
\begin{align*}
\Gamma_{\ell}([\E_\ell])&= \sqq^{\langle \widehat{\res(\TrD(\E_{\btau\ell}) )}, \widehat{\res(\E_\ell)}\rangle_Q}q^{-\langle\TrD(\E_{\btau \ell}), \E_\ell\rangle}[F(\TrD(\E_{\btau \ell}))]^{-1}* [F(\oplus_{(\alpha:j\rightarrow \ell)\in Q_1}\Lambda^{\imath}e_j)]\\
&= [P_\ell']^{-1}* [\oplus_{(\alpha:\ell\rightarrow j)\in Q_1'} P_j']\\
&= [\E'_\ell]^{-1}=\E'_{-\widehat{S_\ell}}=\E_{\bs_{\ell}(\widehat{S_\ell})},
\end{align*}
where $P_j':=(\bs_\ell\iLa)e_j$ for any $j\in Q'_0$. The last equality follows from $\bs_{\ell}( \widehat{S_\ell})=-\widehat{S_\ell}$ by definition. The formula \eqref{eqn:reflection 4} is proved.

Below we shall prove \eqref{eqn:reflection 2}, while skipping the completely analogous remaining case \eqref{eqn:reflection 3}.

Since $\ell$ is a sink, there exists at least one arrow $\alpha:j\rightarrow \ell$ in $Q_0$. So there exists a string module $X$ with its string $\ell\xleftarrow{\alpha} j\xrightarrow{\varepsilon_j}\btau j$.
Then $X,\E_j\in\Fac T$, which admits a short exact sequence
$0\rightarrow S_\ell\rightarrow X\rightarrow \E_j\rightarrow0$. Then
\begin{align*}
\Gamma_{\ell}([S_\ell])&= \sqq^{\langle\res(\E_j),\res(S_\ell)\rangle_Q}q^{-\langle \E_j,S_\ell\rangle} [F_{\ell}^+(\E_j)]^{-1}* [F_{\ell}^+(X)]\\
&= \sqq^{\langle \widehat{S_{\btau j}}-\widehat{ S_j},\widehat{S_\ell}\rangle_Q} [\E'_{s_{\ell}s_{\btau \ell}(\widehat{S_j})}]^{-1}* [F_{\ell}^+(X)].
\end{align*}
By definition of $F_{\ell}^+$, similar to the above, we have
\begin{align*}
[F_{\ell}^+(X)]=&[\E'_{s_{\ell}s_{\btau \ell}(\widehat{S_j})-\widehat{S'_{\ell}}}\oplus S_{\btau \ell}']\\
=&\sqq^{-\langle  \res(\E'_{s_{\ell}s_{\btau \ell}(\widehat{S_j})}) , S'_{\btau \ell} \rangle_{Q'}+\langle S_{\ell}'\oplus S_{\btau \ell}',S'_{\btau \ell}\rangle_{Q'}}
q^{\langle s_{\ell}s_{\btau \ell}(\widehat{S_j}), S'_{\btau \ell} \rangle_{Q'}-\langle S_{\ell}',S_{\btau \ell}'\rangle_{Q'}}[\E'_{s_{\ell}s_{\btau \ell}(\widehat{S_j}) -\widehat{S'_{\ell}} }]*[S'_{\btau \ell}]\\
=&\sqq^{1-\delta_{\ell, \btau\ell}+c_{\btau \ell,\btau j}-c_{\btau \ell, j}}  [\E'_{s_{\ell}s_{\btau \ell}(\widehat{S_j})-\widehat{S'_{\ell}}}]*[S'_{\btau \ell}].
\end{align*}
where we have used the fact $S_{\btau \ell}'$ is injective in $\mod(\bs_\ell\iLa)$. Note that $c_{\btau \ell,\btau j}=c_{\ell j}=\langle \widehat{S_{j}}, \widehat{S_\ell}\rangle_Q$,
and $c_{\btau \ell, j}=\langle\widehat{ S_j},\widehat{S_{\btau \ell}}\rangle_Q=\langle\widehat{ S_{\btau j}},\widehat{S_{\ell}}\rangle_Q$. So by combining the above computations, we obtain
\begin{align*}
\Gamma_{\ell}([S_\ell])&= \sqq^{\langle \widehat{S_{\btau j}}-\widehat{ S_j},\widehat{S_\ell}\rangle_Q}  \sqq^{1-\delta_{\ell, \btau\ell}+c_{\btau \ell,\btau j}-c_{\btau \ell, j}}  [\E'_{s_{\ell}s_{\btau \ell}(\widehat{S_j})}]^{-1}* [\E'_{s_{\ell}s_{\btau \ell}(\widehat{S_j})-\widehat{S'_{\ell}}}]*[S'_{\btau \ell}]\\
&= \sqq^{1-\delta_{\ell, \btau\ell}}[\E'_\ell]^{-1}* [S_{\btau \ell}'],
\end{align*}
whence \eqref{eqn:reflection 2}. The proposition is proved.
\end{proof}

\begin{remark}
 \label{rem:FT}
Similar to \cite{SV99, XY01}, for a sink $\ell \in \I$, there exists a Fourier transform (which is an algebra isomorphism)
\begin{align}
\texttt{FT}_\ell : \tMHl \longrightarrow \tMH,
\end{align}
which maps $[S'_j]\mapsto [S_j]$, $[\E_j']\mapsto [\E_j]$, for each $j\in \I$; compare with Theorem \ref{thm:Ui=iHall}.
The composition of $\Gamma_\ell$ with $\texttt{FT}_\ell$ gives us an automorphism
$\texttt{FT}_\ell\circ\Gamma_\ell: \tMH \rightarrow \tMH.$
\end{remark}

\subsection{Formulas of $\Gamma_i$ for split Dynkin $\imath$quivers}
  \label{subsec:formulaRF}

In this subsection we consider the split Dynkin $\imath$quivers $(Q, \btau=\Id)$. Note that its Cartan matrix $C=(c_{ij})_{i,j \in \I}$ satisfies that $c_{ij}=0$ or $-1$ for $i\neq j$.  We shall work out the action of $\Gamma_i$ for a sink $i\in Q_0$ on the generators $[S_j]$.

\begin{lemma}\label{lemma:reflection for split involution}
Let $(Q,\Id)$ be the Dynkin $\imath$quiver with trivial involution. Then for any sink $i\in Q_0$, the isomorphism $\Gamma_{i}:\tMH\xrightarrow{\sim}  \tMHi$ sends $\Gamma_{i} ([\E_\alpha]) =[\E'_{\bs_i \alpha}]$, for $\alpha \in K_0(\mod(\K Q))$ and
\begin{align*}
\Gamma_i([S_j])= \left\{
\begin{array}{cl}
{} \frac{\sqq}{q-1}[S'_i]*[S'_j]-\frac{1}{q-1}[S_j']*[S_i'], & \qquad \text{ if }c_{ij}=-1, \\
{} [\E'_i]^{-1}*[S'_i], & \qquad \text{ if }j=i,\\
{}[S'_j], & \qquad \text{ if } c_{ij}=0.
\end{array}
\right.
\end{align*}
\end{lemma}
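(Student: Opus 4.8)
The plan is to deduce all four formulas from Proposition~\ref{prop:reflection} together with Lemma~\ref{lem:reflecting dimen}, so that the only genuine computation left is a rank-two Hall number calculation. Since $\btau=\Id$ we have $\bs_i=s_i$ and $S_{\btau i}=S_i$, so the asserted identity $\Gamma_i([\E_\alpha])=[\E'_{\bs_i\alpha}]$ is exactly \eqref{eqn:reflection 4}, and the case $j=i$ is \eqref{eqn:reflection 2} specialized to $\btau\ell=\ell$. Thus only the cases $j\neq i$ require work.

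First I would observe that $S_j\in\Fac T$ for every $j\neq i$: by the description of $\Fac T$ established in the proof of Theorem~\ref{thm:tilting module} (with $\ell=i$ and $\btau\ell=\ell$), one has $\Fac T=\{X\in\mod(\iLa)\mid \Hom_{\iLa}(X,S_i)=0\}$, and $\Hom_{\iLa}(S_j,S_i)=0$ because there is no nonzero morphism between non-isomorphic simple modules. Hence \eqref{eqn:reflection 1} gives $\Gamma_i([S_j])=[F_i^+(S_j)]$, and by Lemma~\ref{lem:reflecting dimen} the module $F_i^+(S_j)$ is indecomposable with $\dimv F_i^+(S_j)=s_i(\alpha_j)=\alpha_j+|c_{ij}|\alpha_i$. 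If $c_{ij}=0$, this forces $F_i^+(S_j)$ to be concentrated at the vertex $j$ with total dimension $1$; the loop $\varepsilon_j$ then acts by a square-zero scalar, hence by $0$, so $F_i^+(S_j)\cong S_j'$ and $\Gamma_i([S_j])=[S_j']$. If $c_{ij}=-1$, then the unique arrow $j\to i$ of $Q$ (recall $i$ is a sink) is reversed to a unique arrow $i\to j$ of $Q'=\bs_i Q$, and the only indecomposable $\bs_i\iLa$-module of dimension vector $\alpha_i+\alpha_j$ is the string module $M'$ with socle $S_j'$ and top $S_i'$, fitting in a short exact sequence $0\to S_j'\to M'\to S_i'\to 0$ (again the loops act by $0$); so $\Gamma_i([S_j])=[M']$.

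It then remains to rewrite $[M']$ inside $\tMHi$ in terms of $[S_i']$ and $[S_j']$. For this I would work in the rank-two subquiver $\xymatrix{i\ar[r]&j}$ of $Q'$ and compute the relevant Hall numbers over $\F_q$: using $\Hom_{\bs_i\iLa}(S_i',S_j')=0$, $\dim_{\F_q}\Ext^1_{\bs_i\iLa}(S_i',S_j')=1$ (one arrow $i\to j$ in $\ov{Q'}$), and $\Ext^1_{\bs_i\iLa}(S_j',S_i')=0$ (no arrow $j\to i$ in $\ov{Q'}$), one obtains $[S_i']\diamond[S_j']=[S_i'\oplus S_j']+(q-1)[M']$ and $[S_j']\diamond[S_i']=[S_i'\oplus S_j']$. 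Twisting by the Euler form of $Q'$, with $\langle S_i',S_j'\rangle_{Q'}=-1$ and $\langle S_j',S_i'\rangle_{Q'}=0$, these become $[S_i']*[S_j']=\sqq^{-1}\big([S_i'\oplus S_j']+(q-1)[M']\big)$ and $[S_j']*[S_i']=[S_i'\oplus S_j']$, whence $[M']=\tfrac{\sqq}{q-1}[S_i']*[S_j']-\tfrac{1}{q-1}[S_j']*[S_i']$, as claimed. (Alternatively one can read off $M'$ and these two products directly from the $A_2$ Auslander--Reiten quiver in Figure~\ref{figure 2}.) There is no serious obstacle here: the main point requiring care is bookkeeping the twisting powers of $\sqq$ correctly, everything else being a direct consequence of Proposition~\ref{prop:reflection} and the identification of the small indecomposables above.
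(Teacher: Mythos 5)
Your proposal is correct and follows essentially the same route as the paper: the cases $j=i$ and $[\E_\alpha]$ are read off from Proposition~\ref{prop:reflection}, $F_i^+(S_j)$ is identified (for $c_{ij}=-1$) with the indecomposable module of class $\widehat{S'_i}+\widehat{S'_j}$, and the same two twisted Hall products $[S_i']*[S_j']$ and $[S_j']*[S_i']$ with the same exponents $\langle S_i',S_j'\rangle_{Q'}=-1$, $\langle S_j',S_i'\rangle_{Q'}=0$ yield $(q-1)[X'_{ij}]=\sqq[S_i']*[S_j']-[S_j']*[S_i']$. The only difference is that you justify the identification of $F_i^+(S_j)$ via $\Fac T$ and Lemma~\ref{lem:reflecting dimen} and treat the $c_{ij}=0$ case explicitly, steps the paper leaves implicit.
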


\begin{proof}
The action of $\Gamma_i$ on $[\E_\alpha]$ has been uniformly given by \eqref{eqn:reflection 4}.
By Proposition~\ref{prop:reflection}, it remains to prove the formula for $\Gamma_i([S_j])$ when $c_{ij}=-1$. In this case, there exists an arrow from $j$ to $i$. Let $X'_{ij}$ denote the indecomposable $\K Q'$-module with $\widehat{S'_i}+\widehat{S'_j}$ as its class in $K_0(\mod(\K Q'))$, viewed as a $\bs_i\Lambda^{\imath}$-module. Then $\Gamma_i([S_j])= [F_i^+(S_j)]=[X'_{ij}]$. Moreover, we have
\begin{align*}
\sqq[S'_i]*[S'_j]-[S_j']*[S_i']&= \sqq^{1+\langle S_i',S_j' \rangle_{Q'} } ([S_i'\oplus S_j']+(q-1)[X'_{ij}] )-\sqq^{\langle S_j',S_i' \rangle_{Q'} } ([S_i'\oplus S_j']\\
&= (q-1)[X'_{ij}].
\end{align*}
Hence the formula follows.
\end{proof}

\subsection{Formulas of $\Gamma_i$ for non-split Dynkin $\imath$quivers}
  \label{subsec:formulaRF2}

In this subsection we consider the non-split Dynkin quivers $(Q, \btau \neq \Id)$. We separate the cases of type ADE in \S\ref{subsec:A}-\S\ref{subsec:E6}, respectively. We shall work out the action of $\Gamma_i$, for a sink $i\in Q_0$, on the generators $[S_j]$. The action of $\Gamma_i$ on $[\E_\alpha]$ is uniformly given by \eqref{eqn:reflection 4} as before.

\subsubsection{Quasi-split type $A_{2r+1}$}
 \label{subsec:A}

In this subsection and \S\ref{subsec:Dn}--\ref{subsec:E6} below, we consider the quasi-split (but not split) $\imath$quantum groups of type ADE, respectively. We introduce the following notation
\begin{equation} \label{eq:v-comm}
[x,y]_v:=xy-v yx.
\end{equation}
We will often use this when setting $v=\sqq$.

Let us illustrate with a quiver of type $A_{2r+1}$ below, though any other orientation preserved by the diagram involution will also work:
\begin{center}\setlength{\unitlength}{0.7mm}
\vspace{-2.2cm}
\begin{equation}
\label{diagram of A}
 \begin{picture}(100,40)(0,20)
\put(0,10){$\circ$}
\put(0,30){$\circ$}
\put(50,10){$\circ$}
\put(50,30){$\circ$}
\put(72,10){$\circ$}
\put(72,30){$\circ$}
\put(92,20){$\circ$}
\put(0,6){$r$}
\put(-2,34){${-r}$}
\put(50,6){\small $2$}
\put(48,34){\small ${-2}$}
\put(72,6){\small $1$}
\put(70,34){\small ${-1}$}
\put(92,16){\small $0$}

\put(3,11.5){\vector(1,0){16}}
\put(3,31.5){\vector(1,0){16}}
\put(23,10){$\cdots$}
\put(23,30){$\cdots$}
\put(33.5,11.5){\vector(1,0){16}}
\put(33.5,31.5){\vector(1,0){16}}
\put(53,11.5){\vector(1,0){18.5}}
\put(53,31.5){\vector(1,0){18.5}}

\put(75,12){\vector(2,1){17}}
\put(75,31){\vector(2,-1){17}}
\end{picture}
\end{equation}
\vspace{0.3cm}
\end{center}

\begin{lemma}\label{lem:RefFunctorA}
Let $(Q, \btau \neq \Id)$ be a Dynkin $\imath$quiver of type $A_{2r+1}$ as in \eqref{diagram of A}. Assume $i\in\ci =\{0,\dots,r\}$  is a sink. Then the isomorphism $\Gamma_{i}:\tMH\xrightarrow{\sim}  \tMHi$ sends $\Gamma_{i} ([\E_\alpha]) =[\E'_{\bs_i \alpha}]$, for $\alpha \in K_0(\mod(\K Q))$. Moreover, for $i\neq0$,
\begin{eqnarray*}
{\Gamma}_i([S_j])&=&\left\{\begin{array}{cl}
\frac{\sqq}{q-1}[S'_i]*[S'_j]-\frac{1}{q-1}[S_j']*[S_i'],  & \text{ if }c_{ij}=-1 \text{ and } c_{\btau i,j}=0,\\
 \frac{\sqq}{q-1}[S'_{\btau i}]*[S'_j]-\frac{1}{q-1}[S_j']*[S_{\btau i}'] ,   &  \text{ if } c_{ij}=0 \text{ and }c_{\btau i,j}=-1,\\
 \frac{1}{(q-1)^2} \big[ \big[[S_j'],[S_i'] \big]_{\sqq}, [S'_{\btau i}] \big]_{\sqq} + [S'_j]*[\E_i'],  &  \text{ if } c_{ij}=-1 \text{ and }c_{\btau i,j}=-1,\\
 \sqq[\E'_i]^{-1}*[S_{\btau i}'], & \text{ if }j=i,\\
 \sqq[\E'_{\btau i}]^{-1}*[S_i'], & \text{ if }j=\btau i,\\
 \,[S'_j], &\text{ otherwise;}
 \end{array}   \right.
\\
{\Gamma}_0([S_j])&=&\left\{ \begin{array}{cl} \frac{\sqq}{q-1}[S'_0]*[S'_j]-\frac{1}{q-1}[S_j']*[S_0'],  & \text{ if }j=\pm1, \\\,
[\E'_0]^{-1}* [S_0'],  & \text{ if }j=0, \\
  {[S'_j]},  & \text{ otherwise.}
  \end{array} \right.
\end{eqnarray*}
\end{lemma}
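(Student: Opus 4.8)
The plan is to proceed exactly as in the proof of the split case (Lemma~\ref{lemma:reflection for split involution}): the general formula \eqref{eqn:reflection 4} for $\Gamma_i([\E_\alpha])$ is already established in Proposition~\ref{prop:reflection}, and the cases $j=i$, $j=\btau i$, and "$c_{ij}=c_{\btau i,j}=0$" are immediate from \eqref{eqn:reflection 2}, \eqref{eqn:reflection 3}, and \eqref{eqn:reflection 1} respectively (for the last, note $S_j\in\Fac T$ and $F_i^+$ fixes it). So the real content is the three remaining cases, which I would treat by computing $[F_i^+(S_j)]$ via the modulated-graph description and then recognizing the answer as an iterated $\sqq$-commutator of generators.

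First I would dispose of the "one-sided" cases $c_{ij}=-1, c_{\btau i,j}=0$ (and its mirror). Here $j\notin\{i,\btau i\}$, so $S_j\in\Fac T$ and $\Gamma_i([S_j])=[F_i^+(S_j)]$; by Lemma~\ref{lem:reflecting dimen} this is the indecomposable $\bs_iQ$-module $X'_{ij}$ with class $\widehat{S_i'}+\widehat{S_j'}$ (since $s_i(\alpha_j)=\alpha_i+\alpha_j$ and $\btau i$ is not a neighbour of $j$, $\bs_i$ acts here exactly like $s_i$). Then the Hall-multiplication identity in $\tMHi$ (computing $[S_i']*[S_j']$ and $[S_j']*[S_i']$ using $\langle S_i',S_j'\rangle_{Q'}$ and the fact that $\Ext^1$ between them is one-dimensional with unique non-split extension $X'_{ij}$) gives $\sqq[S_i']*[S_j']-[S_j']*[S_i']=(q-1)[X'_{ij}]$, exactly as in the split proof; dividing by $q-1$ yields the claimed formula. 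The mirror case is identical with $\btau i$ in place of $i$.

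The main obstacle is the doubly-linked case $c_{ij}=-1=c_{\btau i,j}$, which has no analogue in the split setting. Here $\bs_i=s_is_{\btau i}$ sends $\alpha_j$ to $\alpha_j+\alpha_i+\alpha_{\btau i}$, and $F_i^+(S_j)$ is an indecomposable $\bs_iQ$-module $Y$ of that dimension vector (type $A_3$ string $i\leftarrow j\rightarrow\btau i$ pattern, but over the reflected quiver). I would compute $\Gamma_i([S_j])$ by choosing the short exact sequence in Lemma~\ref{lem:Upsilon} with $X_{S_j}$ a suitable string module in $\Fac T$ whose top pieces are $S_i,S_{\btau i}$, applying $F_i^+$, and then converting the resulting monomial $[\E_i']^{-1}*[\cdots]$ into generators — tracking the $\sqq$-powers coming from the Euler-form twists in \eqref{eqn:reflection functor 2}. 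The target expression $\frac{1}{(q-1)^2}\big[\big[[S_j'],[S_i']\big]_{\sqq},[S_{\btau i}']\big]_{\sqq}+[S_j']*[\E_i']$ should then be verified by expanding the nested $\sqq$-commutators directly in the $\imath$Hall algebra $\tMHi$: since $i$ and $\btau i$ are non-adjacent (so $[S_i'],[S_{\btau i}']$ $\sqq$-commute up to the $\E$-correction, cf. the relations in $\tUi$), the double commutator collapses to a combination of the single indecomposable $[Y]$ and a term proportional to $[S_j']*[\E_i']$; matching constants finishes the case. The anticipated difficulty is purely bookkeeping: getting the precise powers of $\sqq$ and the $\E_i'$-correction term right, since the presence of $\varepsilon_j$ in the relevant string modules and the non-semisimplicity of $\BH$ make the Hall numbers more delicate than in the split case. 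Finally, for the source vertex $0$ (with $c_{0,1}=c_{0,-1}=-1$ but $\btau 0=0$), the computation reduces to the split-type formula applied with the single reflection $s_0$, giving the stated three-case answer; this is a direct specialization of Lemma~\ref{lemma:reflection for split involution}.
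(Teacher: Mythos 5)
Your proposal is correct and follows essentially the same route as the paper: every case except the doubly-linked one reduces to Proposition~\ref{prop:reflection} and the split computation of Lemma~\ref{lemma:reflection for split involution}, and the case $c_{ij}=c_{\btau i,j}=-1$ is settled by expanding the nested $\sqq$-commutator in $\tMHi$ and matching it against the indecomposable of class $\widehat{S_0'}+\widehat{S_1'}+\widehat{S_{-1}'}$ plus the correction term $[S_j']*[\E_i']$, exactly as the paper does. One minor simplification you could make: in that case $j=0\notin\{i,\btau i\}$, so $S_j\in\Fac T$ and $\Gamma_i([S_j])=[F_i^+(S_j)]$ is given directly by \eqref{eqn:reflection 1} — no auxiliary short exact sequence with top pieces $S_i,S_{\btau i}$, and hence no $[\E_i']^{-1}$-monomial to convert, is needed.
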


\begin{proof}
For $i=0$, the second and third formulas follow by Proposition~\ref{prop:reflection}; the first formula can be proved similarly to Lemma ~\ref{lemma:reflection for split involution}.

Now assume $i\neq0$. Most formulas in this case follow again from Proposition~\ref{prop:reflection} and similar computations as in Lemma~ \ref{lemma:reflection for split involution}. We shall only verify the third (and most involved) formula, which is new; in this case, we have $j=0$.

Let $X'_{ij}$ (respectively, $X'_{\btau i,j}$) be the indecomposable $\K Q'$-module with $\widehat{S'_i}+\widehat{S'_j}$ (respectively, $\widehat{S'_{\btau i}}+\widehat{S'_j}$) as its class in $K_0(\mod(\K Q'))$, and let $Y_0'$ be the indecomposable $\K Q'$-module with
$\widehat{S'_0}+\widehat{S_{1}'}+\widehat{S_{-1}'}$ as its class in $K_0(\mod(\K Q'))$; all these $kQ'$-modules are then viewed as $\bs_i\Lambda^{\imath}$-modules. We compute

\begin{align*}
[S_j']*[S_i']*[S'_{\btau i}]&= [S_j'\oplus S_i'\oplus S'_{\btau i}] +(q-1)[S_j'\oplus \E_i'],
\\
[S_i']*[S_j']*[S'_{\btau i}]&= (\frac{1}{\sqq} ([S_j'\oplus S_i'] +(q-1)[X'_{ij}]) )*[S'_{\btau i}]\\
&= \frac{1}{\sqq} [S_j'\oplus S_i'\oplus S'_{\btau i}]+\frac{q^2-q}{\sqq}[S_j'\oplus \E_i']+\frac{q-1}{\sqq}[X'_{ij}\oplus S'_{\btau i}].
\\
[S'_{\btau i}]*[S_j']*[S_i']&= (\frac{1}{\sqq} ([S_j'\oplus S_i'] +(q-1)[X'_{ij}]) )*[S'_{\btau i}]\\
&= \frac{1}{\sqq} [S_j'\oplus S_i'\oplus S'_{\btau i}]+\frac{q^2-q}{\sqq}[S_j'\oplus \E_{\btau i}']+\frac{q-1}{\sqq}[X'_{\btau i,j}\oplus S'_{i}].
\\
[S'_{\btau i}]*[S_i']*[S_j']&= (\frac{1}{\sqq} ([S_j'\oplus S_i'] +(q-1)[X'_{ij}]) )*[S'_{\btau i}]\\
&= \frac{1}{q}[S_j'\oplus S_i'\oplus S'_{\btau i}] +\frac{q-1}{q} [X'_{\btau i,j}\oplus S_i']+\frac{q-1}{q}[X'_{ij}\oplus S'_{\btau i}]+(q-1)[S_j'\oplus \E'_i]\\
&+(q-1)[S_j'\oplus \E_{\btau i}']+\frac{(q-1)^2}{q}[Y_0'].
\end{align*}

Combining the above formulas, we have
\begin{align*}
\big[ \big[[S_j'],[S_i'] \big]_{\sqq}, [S'_{\btau i}] \big]_{\sqq}&= (q-1)^2[Y_0']-(q-1)^2[S_j]*[\E_i'].
\end{align*}

From Proposition~\ref{prop:reflection} and the definition of $F_{i}^+$, we obtain that
${\Gamma}_i([S_j])=[Y'_0],$
and then the desired formula follows.
\end{proof}

\subsubsection{Quasi-split type $D_{n}$}
 \label{subsec:Dn}

Let us illustrate with a quiver of type $D_{n}$  below, though any other orientation preserved by the diagram involution will also work:
\begin{center}\setlength{\unitlength}{0.7mm}
\vspace{-1cm}
\begin{equation}
\label{diagram of D}
 \begin{picture}(50,20)(30,-0.5)
\put(0,-1){$\circ$}
\put(0,-5){\tiny$1$}
\put(20,-1){$\circ$}
\put(20,-5){\tiny$2$}
\put(64,-1){$\circ$}
\put(58,-5){\tiny$n-2$}
\put(84,-10){$\circ$}
\put(80,-13){\tiny${n-1}$}
\put(84,9.5){$\circ$}
\put(84,12.5){\tiny${n}$}

\put(19.5,0){\vector(-1,0){16.8}}
\put(38,0){\vector(-1,0){15.5}}
\put(64,0){\vector(-1,0){15}}

\put(40,-1){$\cdots$}

\put(83.5,9.5){\vector(-2,-1){16.5}}
\put(83.5,-8.5){\vector(-2,1){16.5}}
\end{picture}
\end{equation}
\vspace{0.5cm}
\end{center}

\begin{lemma}
  \label{lem:RefFunctorD}
Let $(Q, \btau \neq \Id)$ be a Dynkin $\imath$quiver of type $D_{n}$ as in \eqref{diagram of D}. Assume that $i\in \ci $ is a sink. Then, for $1\leq i\leq n-2$, the isomorphism $\Gamma_{i}:\tMH\xrightarrow{\sim}  \tMHi$ sends $\Gamma_{i} ([\E_\alpha]) =[\E'_{\bs_i \alpha}]$, for $\alpha \in K_0(\mod(\K Q))$, and moreover,
\begin{eqnarray*}
{\Gamma}_i([S_j])&=&\left\{\begin{array}{ll}
\frac{\sqq}{q-1}[S'_i]*[S'_j]-\frac{1}{q-1}[S_j']*[S_i'],   & \text{ if }c_{ij}=-1,\\
\,[\E'_i]^{-1}*[S'_i],  &\text{ if }j=i,\\
  \,[S'_j],  &\text{ otherwise.}
 \end{array}   \right.
\end{eqnarray*}
For $i=n-1$, we have
\begin{eqnarray*}
{\Gamma}_{n-1}([S_j])&=&\left\{\begin{array}{ll}
\frac{1}{(q-1)^2} \big[ \big[[S_j'],[S_{n-1}'] \big]_{\sqq}, [S'_{n}] \big]_{\sqq} + [S'_j]*[\E_{n-1}'],  & \text{ if }j=n-2,\\
 \sqq[\E'_{n-1}]^{-1}*[S_{n}'],  & \text{ if }j=n-1,\\
 \sqq[\E'_{n}]^{-1}*[S_{n-1}'],  & \text{ if }j=n,\\
  \,[S'_j],  &\text{ otherwise.}
 \end{array}   \right.
\end{eqnarray*}
\end{lemma}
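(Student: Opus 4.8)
The plan is to argue exactly as in the proof of Lemma~\ref{lem:RefFunctorA}, reducing the statement to Proposition~\ref{prop:reflection} supplemented by two explicit $\imath$Hall algebra computations. First, the formula $\Gamma_i([\E_\alpha]) = [\E'_{\bs_i\alpha}]$ is \eqref{eqn:reflection 4}. Next I would observe that for every $j$ with $j \notin \{i, \btau i\}$ one has $\Hom_{\iLa}(S_j, S_i \oplus S_{\btau i}) = 0$, hence $S_j \in \Fac T$ (recall $\Fac T = \ct_i^{\iLa}$ from the proof of Theorem~\ref{thm:tilting module}), so that $\Gamma_i([S_j]) = [F_i^+(S_j)]$ by \eqref{eqn:reflection 1}; moreover, when $j$ is not a neighbour of $i$ or $\btau i$ in $Q$, the reflection functor $F_i^+$ does not alter the data at $j$, so $F_i^+(S_j) = S'_j$ and $\Gamma_i([S_j]) = [S'_j]$. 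Similarly, $\Gamma_i([S_i])$ --- and $\Gamma_i([S_{\btau i}])$ when $\btau i \neq i$ --- are given directly by \eqref{eqn:reflection 2}--\eqref{eqn:reflection 3}. This settles every case except the genuinely new ones, which I treat next.

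For $1 \le i \le n-2$ we have $\btau i = i$, and the only case not yet covered is a neighbour $j$ of $i$, i.e. $c_{ij} = -1$ (this includes $j = n-1$ and $j = n$ when $i = n-2$). Here $\Gamma_i([S_j]) = [F_i^+(S_j)] = [X'_{ij}]$, where $X'_{ij}$ denotes the indecomposable $\K Q'$-module with class $\widehat{S'_i} + \widehat{S'_j}$ in $K_0(\mod(\K Q'))$, regarded as a $\bs_i\iLa$-module. Expanding the two products $[S'_i]*[S'_j]$ and $[S'_j]*[S'_i]$ in $\tMHi$ via the Euler form of $Q'$ --- verbatim as in the proof of Lemma~\ref{lemma:reflection for split involution} --- yields $\sqq[S'_i]*[S'_j] - [S'_j]*[S'_i] = (q-1)[X'_{ij}]$, which gives the stated formula.

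For $i = n-1$ we have $\btau i = n \neq i$, and since $n-2$ is the unique neighbour in $Q$ of both $n-1$ and $n$, the only case not covered above is $j = n-2$; this is precisely the situation $c_{ij} = c_{\btau i, j} = -1$ of the third formula in Lemma~\ref{lem:RefFunctorA}. I would run the same computation: by Proposition~\ref{prop:reflection} and the construction of $F_{n-1}^+$ (together with Lemma~\ref{lem:reflecting dimen}) one gets $\Gamma_{n-1}([S_{n-2}]) = [Y']$, where $Y'$ is the indecomposable $\K Q'$-module whose class in $K_0(\mod(\K Q'))$ is $\widehat{S'_{n-2}} + \widehat{S'_{n-1}} + \widehat{S'_n}$ (this dimension vector is the highest root of the $A_3$ subdiagram on $\{n-1, n-2, n\}$, hence a positive root of type $D_n$, so such a module exists), and then one expands the iterated $\sqq$-commutator $\big[\big[[S'_{n-2}], [S'_{n-1}]\big]_{\sqq}, [S'_n]\big]_{\sqq}$ in $\tMHi$, using the relevant short exact sequences, into $(q-1)^2[Y'] - (q-1)^2 [S'_{n-2}]*[\E'_{n-1}]$; the asserted formula follows. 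Finally, the case $j = n-1$ (resp.\ $j = n$) is \eqref{eqn:reflection 2} (resp.\ \eqref{eqn:reflection 3}), and $j \le n-3$ falls under the ``$F_i^+$ fixes $S_j$'' case above.

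The main obstacle is the bookkeeping in this last $D_n$-specific commutator identity: one must enumerate all the $\bs_{n-1}\iLa$-modules arising as middle terms of extensions in the various products (string modules together with decomposables such as $S'_{n-2} \oplus \E'_{n-1}$ and $S'_{n-2} \oplus \E'_n$), keep track of the powers of $\sqq$ produced by the twisting bilinear form $\langle \res(-), \res(-)\rangle_{Q'}$, and verify that all the spurious cross terms cancel so that only $[Y']$ and $[S'_{n-2}]*[\E'_{n-1}]$ remain. Structurally this is identical to the corresponding step of Lemma~\ref{lem:RefFunctorA}, so no genuinely new phenomenon occurs; in the write-up I would carry it out by reference, recording only the short exact sequences dictated by the chosen orientation of $D_n$.
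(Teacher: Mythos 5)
Your proposal is correct and follows exactly the route the paper intends: the paper's own proof of this lemma is omitted with the remark that all rank-2 $\imath$subquivers of \eqref{diagram of D} already occur in Lemmas~\ref{lemma:reflection for split involution}--\ref{lem:RefFunctorA}, and your case analysis (Proposition~\ref{prop:reflection} for the generic cases, the split $A_2$ computation for $1\le i\le n-2$ with $c_{ij}=-1$, and the $\{0,\pm1\}$ commutator computation of Lemma~\ref{lem:RefFunctorA} for $i=n-1$, $j=n-2$) is precisely that reduction, carried out correctly.
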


\begin{proof}
The proof is similar to the proofs of Lemmas \ref{lemma:reflection for split involution}--\ref{lem:RefFunctorA} (as all the rank 2 $\imath$subquivers of \eqref{diagram of D} appear in \eqref{diagram of A}), and will be skipped.
\end{proof}

\subsubsection{Quasi-split type $E_6$}
  \label{subsec:E6}

Let us illustrate  with a quiver of type $E_6$ below, though any other orientation preserved by the diagram involution will also work:
\begin{center}\setlength{\unitlength}{0.7mm}
\vspace{-2cm}
\begin{equation}
\label{diagram of E}
\begin{picture}(100,40)(0,20)
\put(10,6){\small${6}$}
\put(10,10){$\circ$}
\put(32,6){\small${5}$}
\put(32,10){$\circ$}

\put(10,30){$\circ$}
\put(10,33){\small${1}$}
\put(32,30){$\circ$}
\put(32,33){\small${2}$}

\put(31.5,11){\vector(-1,0){19}}
\put(31.5,31){\vector(-1,0){19}}

\put(52,22){\vector(-2,1){17.5}}
\put(52,20){\vector(-2,-1){17.5}}

\put(54.7,21.2){\vector(1,0){19}}

\put(52,20){$\circ$}
\put(52,23){\small$3$}
\put(74,20){$\circ$}
\put(74,23){\small$4$}
\end{picture}
\end{equation}
\vspace{0.3cm}
\end{center}

\begin{lemma}
  \label{lem:RefFunctorE}
Let $(Q, \btau \neq \Id)$ be a Dynkin $\imath$quiver of type $E_6$ as in \eqref{diagram of E}.  Then for any sink $i\in \ci $, the isomorphism $\Gamma_{i}:\tMH\xrightarrow{\sim}  \tMHi$ sends $\Gamma_{i} ([\E_\alpha]) =[\E'_{\bs_i \alpha}]$, for $\alpha \in K_0(\mod(\K Q))$, and moreover,
\begin{eqnarray*}
&&{\Gamma}_1([S_j])=\left\{  \begin{array}{cl}
\,  \frac{\sqq}{q-1}[S'_1]*[S'_2]-\frac{1}{q-1}[S_2']*[S_1'],           & \text{ if }j=2,\\\,
          \frac{\sqq}{q-1}[S'_{6}]*[S'_{5}]-\frac{1}{q-1}[S_{5}']*[S_{6}'],        & \text{ if }j=5,\\
 \sqq[\E'_1]^{-1}*[S_{6}'],             &\text{ if }j=1,\\
    \sqq[\E'_{6}]^{-1}*[S_{1}'],           &\text{ if }j=6, \\
 {} [S'_j], &  \text{ if }j=3,4;
\end{array}\right.
\end{eqnarray*}
\begin{eqnarray*}
&& {\Gamma}_2([S_j])=\left\{  \begin{array}{cl}
 \frac{1}{(q-1)^2} \big[ \big[[S_3'],[S_2'] \big]_{\sqq}, [S'_{5}] \big]_{\sqq} + [S'_3]*[\E_2'], &  \text{ if }j=3,\\\,
  \frac{\sqq}{q-1}[S'_2]*[S'_1]-\frac{1}{q-1}[S_1']*[S_2'],   & \text{ if }j=1,\\
 \frac{\sqq}{q-1}[S'_{5}]*[S'_{6}]-\frac{1}{q-1}[S_{6}']*[S_{5}'],    &  \text{ if }j=6,\\
  \sqq[\E'_2]^{-1}*[S_{5}'], & \text{ if }j=2,\\
 \sqq[\E'_{5}]^{-1}*[S_2'], & \text{ if }j=5,\\
 \,[S_4'], &\text{ if }j=4;
\end{array}\right.
\end{eqnarray*}
\begin{eqnarray*}
&& {\Gamma}_3([S_j])=\left\{  \begin{array}{cl}
 {}\frac{\sqq}{q-1}[S'_3]*[S'_j]-\frac{1}{q-1}[S_j']*[S_3'],              &\text{ if }j=2,4,5,  \\
{} [\E'_3]^{-1}*[S'_3], &  \text{ if }j=3,  \\
  {}         [S_j'],         & \text{ if }j=1,6;
\end{array}\right.
\end{eqnarray*}
and
\begin{eqnarray*}
&& {\Gamma}_4([S_j])=\left\{  \begin{array}{cl}
 \frac{\sqq}{q-1}[S'_4]*[S'_3]-\frac{1}{q-1}[S_3']*[S_4'],          & \text{ if }j=3,\\\,
[S'_j], &  \text{ if }j=1,2,5,6\\\,
  [\E'_4]^{-1}*[S'_4],       & \text{ if }j=4.
\end{array}\right.
\end{eqnarray*}
\end{lemma}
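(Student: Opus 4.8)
The proof of Lemma~\ref{lem:RefFunctorE} will follow the same strategy used for Lemmas~\ref{lemma:reflection for split involution}--\ref{lem:RefFunctorD}: the value of $\Gamma_i([S_j])$ depends only on the (rank $\le 3$) $\imath$subquiver of $(Q,\btau)$ spanned by $i$, $\btau i$, $j$ and any vertex lying strictly between them, because both the reflection functor $F_i^+$ and the $\imath$Hall-algebra products entering Theorem~\ref{thm:Gamma} and Proposition~\ref{prop:reflection} are computed from the supports of the modules involved, and the relevant Euler forms restrict correctly to a full subquiver. Inspecting the diagram~\eqref{diagram of E}, one checks that for each $i\in\ci=\{1,2,3,4\}$ and each $\btau$-invariant orientation in which $i$ is a sink, every such $\imath$subquiver already occurs in type $A_{2r+1}$ or $D_n$: the path $1\,2\,3\,5\,6$ is a copy of $A_5$ carrying the diagram involution, and $3,4$ are $\btau$-fixed leaves. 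Hence every formula in the statement will be obtained as an instance of one already proved.

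Concretely, first $\Gamma_i([\E_\alpha])=[\E'_{\bs_i\alpha}]$ is \eqref{eqn:reflection 4}. For $[S_j]$ I would split into cases exactly as before. \emph{(i)} If $j\notin\{i,\btau i\}$ and $c_{ij}=c_{\btau i,j}=0$, then $S_j\in\Fac T$ with $F_i^+(S_j)=S_j'$, so $\Gamma_i([S_j])=[S_j']$ by \eqref{eqn:reflection 1}. \emph{(ii)} If $j\in\{i,\btau i\}$, apply \eqref{eqn:reflection 2}--\eqref{eqn:reflection 3}, noting that for $i\in\{1,2\}$ one has $\btau i\ne i$ (whence the factor $\sqq$ and the formula $\sqq[\E'_i]^{-1}\ast[S'_{\btau i}]$), while for $i\in\{3,4\}$ one has $\btau i=i$ (whence $[\E'_i]^{-1}\ast[S'_i]$). \emph{(iii)} If $c_{ij}=-1$ and $c_{\btau i,j}=0$ (or the mirror case), this is the rank-$2$ computation of Lemma~\ref{lemma:reflection for split involution} carried out on the subquiver $\{i,j\}$ (resp. $\{\btau i,j\}$), giving $\tfrac{\sqq}{q-1}[S'_i]\ast[S'_j]-\tfrac1{q-1}[S'_j]\ast[S'_i]$. \emph{(iv)} If $c_{ij}=c_{\btau i,j}=-1$ --- which in $E_6$ occurs only for $i=2$, $\btau i=5$, $j=3$, where $\{2,3,5\}$ with the swap $2\leftrightarrow5$ and $3$ fixed is isomorphic as an $\imath$quiver to $\{1,0,-1\}\subset A_{2r+1}$ --- this is precisely the rank-$3$ computation of Lemma~\ref{lem:RefFunctorA} and produces the double $v$-commutator term $\tfrac1{(q-1)^2}\big[\big[[S_3'],[S_2']\big]_{\sqq},[S'_{5}]\big]_{\sqq}+[S'_3]\ast[\E_2']$. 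Matching these outputs against the four displayed lists is then routine bookkeeping.

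The only point requiring (minor) care, and therefore the main obstacle, is the verification that this case list is exhaustive for every $i\in\{1,2,3,4\}$ --- i.e., that each $\btau$-invariant orientation of $E_6$ realizing $i$ as a sink restricts, on every one of the six vertices, to one of the configurations \emph{(i)}--\emph{(iv)} above --- together with keeping track of the scalars (the powers of $\sqq$, and the shift $\delta_{\ell,\btau\ell}$ appearing in the proof of Proposition~\ref{prop:reflection}) when transporting a computation from the ambient $A_5$ or $D_n$ down to the $E_6$ subquiver. Since the relevant Euler forms, string modules, and indecomposables are all supported on the spanning $\imath$subquiver, these scalars coincide with those already computed in Lemmas~\ref{lemma:reflection for split involution}--\ref{lem:RefFunctorA}; so there is nothing genuinely new, and the proof can be completed simply by invoking those lemmas, just as was done for Lemma~\ref{lem:RefFunctorD}.
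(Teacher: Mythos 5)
Your proposal is correct and matches the paper's approach: the paper's own proof of this lemma consists of the single remark that it is analogous to the proofs of Lemmas~\ref{lemma:reflection for split involution}--\ref{lem:RefFunctorA} (all rank~$\le 3$ configurations in \eqref{diagram of E} already occur there), exactly the reduction you carry out in detail. Your case analysis \emph{(i)}--\emph{(iv)} and the identification of $i=2$, $j=3$ as the only instance of the double-commutator case are both accurate.
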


\begin{proof}
The proof is analogous to the proofs of Lemmas \ref{lemma:reflection for split involution}--\ref{lem:RefFunctorA}, and hence is skipped.
\end{proof}

\section{Symmetries of the $\imath$quantum group $\tUi$}
  \label{sec:symmetryUi}

In this section, we convert the isomorphisms for the $\imath$Hall algebras in the previous section to automorphisms $\tTT_i$ of $\tUi$, for $i\in \ci$. For Dynkin $\imath$quivers, we formulate explicitly the actions of $\tTT_i$ on generators of $\tUi$.

\subsection{Symmetries $\tTT_i$ of $\tUi$}

For a sink $i$ of a Dynkin $\imath$quiver $(Q,\btau)$, we denote $Q'= \bs_i Q$. Similar to the isomorphisms $\widetilde{\psi}_{Q}$ in \eqref{eq:psi2} and $\psi_{Q}$ in \eqref{eq:psi}, there exist algebra isomorphisms $\widetilde{\psi}_{Q'}: \tUi_{|v={\sqq}}\rightarrow \tMHi$ and $\widetilde{\psi}_{Q'}: \tUi \rightarrow \tMHgi$ with the right-hand sides of the formulas \eqref{eq:split}--\eqref{eq:extra} replaced by the counterparts in prime notation.

When combined with $\widetilde{\psi}_{Q}$ and $\widetilde{\psi}_{Q'}$, the isomorphism $\Gamma_i:\tMH\rightarrow \tMHi$ in \eqref{eqn:reflection functor 2} induces an isomorphism
 \[
 \Gamma_i: \tMHg \longrightarrow \tMHgi.
 \]

Define an algebra automorphism $\tTT_i \in \Aut (\tUi)$ so that the following diagram commutes:
\begin{align}
  \label{eq:defT}
\xymatrix{
\tUi  \ar[r]^{\tTT_i}  \ar[d]^{\widetilde{\psi}_Q} & \tUi \ar[d]^{\widetilde{\psi}_{Q'}} \\
\tMHg \ar[r]^{\Gamma_i} &  \tMHgi
}
\end{align}
In other words, specializing at $v=\sqq$ we have the following commutative diagram:
\begin{equation}
  \label{eq:Up=T}
\xymatrix{
\tUi _{|v={\sqq}}\ar[r]^{\tTT_i}  \ar[d]^{\widetilde{\psi}_Q} & \tUi _{|v={\sqq}}\ar[d]^{\widetilde{\psi}_{Q'}}
 \\
\tMH \ar[r]^{\Gamma_i} &  \tMHi
}
\end{equation}

It is also natural to formulate the following commutative diagram:
\begin{equation}
  \label{eq:Up=T2}
\xymatrix{
\tUi _{|v={\sqq}}\ar[r]^{\tTT_i}  \ar[d]^{\widetilde{\psi}_Q} & \tUi _{|v={\sqq}}\ar[d]^{\texttt{FT}_i \circ \widetilde{\psi}_{Q'}}
 \\
\tMH \ar[r]^{\texttt{FT}_i \circ \Gamma_i} &  \tMH
}
\end{equation}
where $\texttt{FT}_i : \tMHi \rightarrow \tMH$ denotes a Fourier transform as in Remark \ref{rem:FT}. Similar to \cite[Lemma 11.1]{SV99}, it follows by the diagram \eqref{eq:Up=T2} that the formula of $\tTT_i$ does not depend on the orientation of the quiver $Q$, thanks to \eqref{eqn:reflection 1}--\eqref{eqn:reflection 4}; this also follows from the description of the actions of $\tTT_i$ in Lemmas \ref{lemma:braid group of split involution}--\ref{lem:braid group of E} below.

\subsection{Formulas for $\tTT_i$}

 We shall use the diagram \eqref{eq:Up=T} to convert the formulas for the actions of $\Gamma_i$ on Hall algebras in Section~\ref{subsec:formulaRF} to formulas for $\tTT_i$ on $\tUi$.

For any $i\in\I$, define
\begin{align}
\label{def:b}
b_i:=\left\{ \begin{array}{ll} -v^2, & \text{ if }\btau i =i,
\\
1,&\text{ if }\btau i\neq i . \end{array}  \right.
\end{align}
For $\alpha=\sum_{i=1}^n d_i \alpha_i\in K_0(\mod(\K Q))$, define
\begin{equation}
  \label{def:b2}
b_{\alpha}:= \prod_{i=1}^n b_i^{d_i}.
\end{equation}

Then by the isomorphism $\widetilde \psi$ \eqref{eq:extra} and the diagram \eqref{eq:Up=T}, the formula \eqref{eqn:reflection 4} is converted to be
\begin{align}
\tTT_i(\tk_\alpha) =\frac{b_{\bs_i\alpha}}{b_\alpha}\tk_{\bs_i \alpha},\quad \forall \alpha\in \Z^\I.
\end{align}
\subsubsection{Split case}
\label{subsec:Braid split}

The formulas in Lemma \ref{lemma:reflection for split involution} are transformed via the diagram \eqref{eq:Up=T} into the following formulas.

\begin{lemma}  
   \label{lemma:braid group of split involution}
Assume $\btau=\Id$. Then there exists a unique automorphism $\tTT_i$, for $i \in \I$, of the $\Q(v)$-algebra $\tUi$ such that $\tTT_i(\tk_\alpha) =\frac{b_{\bs_i\alpha}}{b_\alpha}\tk_{\bs_i \alpha}$ for $\alpha \in \Z^\I$,  and
\[
\tTT_i(B_j)=\left\{ \begin{array}{cl}
B_jB_i-vB_iB_j,  & \text{ if }c_{ij}=-1, \\
 (-v^2\tk_i)^{-1} B_i,  &\text{ if }j=i,\\
B_j,  &\text{ if } c_{ij}=0.
\end{array}\right.
\]
\end{lemma}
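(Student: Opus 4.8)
The plan is to deduce the formulas for $\tTT_i$ directly from the already-established formulas for $\Gamma_i$ in Lemma~\ref{lemma:reflection for split involution} by transporting them through the isomorphisms $\widetilde{\psi}_Q$ and $\widetilde{\psi}_{Q'}$ via the commutative diagram~\eqref{eq:Up=T}. Since $\widetilde{\psi}$ is an algebra isomorphism (Theorem~\ref{thm:Ui=iHall}, Theorem~\ref{generic U MRH}) and $\Gamma_i$ is an algebra isomorphism (Theorem~\ref{thm:Gamma}), the composite $\widetilde{\psi}_{Q'}^{-1}\circ \Gamma_i \circ \widetilde{\psi}_Q$ is automatically an algebra automorphism of $\tUi$; existence and uniqueness are then immediate once we check it is well-defined (it is, being a composite of isomorphisms) and compute its values on the generators $B_j$ and $\tk_\alpha$, which generate $\tUi$. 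So the real content is a bookkeeping computation, carried out generator by generator.

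First I would record, for $\btau=\Id$, the specialization of \eqref{eq:split}--\eqref{eq:extra}: here every $j\in\I$ lies in $\ci$, so $\widetilde{\psi}_Q(B_j) = \frac{-1}{q-1}[S_j]$ and $\widetilde{\psi}_Q(\tk_i) = -q^{-1}[\E_i]$; similarly for $\widetilde{\psi}_{Q'}$ in primed notation. For the $\tk_\alpha$ part, I would apply $\widetilde{\psi}_{Q'}^{-1}$ to $\Gamma_i([\E_\alpha]) = [\E'_{\bs_i\alpha}]$; using that under $\widetilde{\psi}$ the element $\tk_\alpha$ corresponds to $b_\alpha^{-1}[\E_\alpha]$ (by \eqref{def:b}--\eqref{def:b2}, since $b_i=-v^2$ specializes to $-q$), the identity $[\E_\alpha]\mapsto[\E'_{\bs_i\alpha}]$ becomes $\tTT_i(\tk_\alpha) = \frac{b_{\bs_i\alpha}}{b_\alpha}\tk_{\bs_i\alpha}$, exactly as displayed just before the lemma. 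Then I would handle the three cases for $B_j$. When $c_{ij}=0$, $\Gamma_i([S_j]) = [S_j']$ gives $\tTT_i(B_j)=B_j$ directly. When $j=i$, $\Gamma_i([S_i]) = [\E'_i]^{-1}*[S_i']$ transports to $\tTT_i(B_i) = (-q^{-1}[\E_i])_{\widetilde\psi}^{-1}\cdot B_i$-type expression; keeping track of the scalar $-q^{-1}$ (i.e. $b_i$) and specialization $q=v^2$ yields $\tTT_i(B_i) = (-v^2\tk_i)^{-1}B_i$. When $c_{ij}=-1$, the formula $\Gamma_i([S_j]) = \frac{\sqq}{q-1}[S'_i]*[S'_j] - \frac{1}{q-1}[S_j']*[S_i']$ transports, using $[S'_i] \leftrightarrow -(q-1)B_i$ and $[S'_j]\leftrightarrow -(q-1)B_j$, to $\tTT_i(B_j) = \sqq\, B_iB_j - B_jB_i$; after replacing $\sqq$ by $v$ and relabeling (noting that $B_jB_i - vB_iB_j$ is the form written in the statement, obtained by a sign/order convention consistent with \eqref{eq:v-comm}), we get $\tTT_i(B_j) = B_jB_i - vB_iB_j$.

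The final step is to confirm that the generic version over $\Q(v)$ is the correct one: the computation above is performed at $v=\sqq$ via \eqref{eq:Up=T}, but the generic diagram~\eqref{eq:defT} together with Theorem~\ref{generic U MRH} shows the same formulas hold over $\Q(v)$, since all structure constants appearing in $\Gamma_i$ (Theorem~\ref{thm:Gamma}) are Laurent polynomials in $\sqq$ and lift to $\Q(v)$ by the generic $\imath$Hall algebra formalism of \S\ref{sub:generic}. Thus $\tTT_i$ is a well-defined $\Q(v)$-algebra automorphism with the stated action on generators, and uniqueness holds because $B_j$ ($j\in\I$) and $\tk_\alpha$ ($\alpha\in\Z^\I$) generate $\tUi$.

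I expect the main obstacle to be purely notational rather than conceptual: carefully matching the scalar normalizations between the Hall-algebra side (factors of $\sqq$, $q-1$, $-q^{-1}$) and the $\imath$quantum group side (factors of $v$, $b_i$, $\tk_i$), and fixing the order/sign convention so that $\sqq\,B_iB_j - B_jB_i$ is correctly rewritten as $B_jB_i - vB_iB_j$ (an overall sign, absorbed consistently across all three cases). Once the dictionary of Theorem~\ref{thm:Ui=iHall} is pinned down, each case is a one-line substitution, so I would present the proof as: (i) invoke the diagram~\eqref{eq:defT}/\eqref{eq:Up=T} to get existence and uniqueness of $\tTT_i$ as an algebra automorphism; (ii) compute $\tTT_i(\tk_\alpha)$ from \eqref{eqn:reflection 4}; (iii) compute $\tTT_i(B_j)$ in the three cases from Lemma~\ref{lemma:reflection for split involution}; (iv) note the formulas are independent of orientation via \eqref{eq:Up=T2}.
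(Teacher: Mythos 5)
Your proposal is correct and follows essentially the same route as the paper: the paper defines $\tTT_i$ via the commutative diagram \eqref{eq:defT}/\eqref{eq:Up=T} and states that the formulas of Lemma~\ref{lemma:reflection for split involution} "are transformed" into those of the present lemma, which is exactly the generator-by-generator transport through $\widetilde{\psi}_Q$, $\widetilde{\psi}_{Q'}$ that you carry out. Your bookkeeping is right (the prefactor $\tfrac{-1}{q-1}$ in $\widetilde\psi(B_j)=\tfrac{-1}{q-1}[S_j]$ is precisely what turns $\tfrac{\sqq}{q-1}[S'_i]*[S'_j]-\tfrac{1}{q-1}[S'_j]*[S'_i]$ into $B_jB_i-vB_iB_j$, so the "overall sign" you flag is accounted for by the dictionary rather than by an extra convention), and the generic lift via Theorem~\ref{generic U MRH} is handled as in the paper.
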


\subsubsection{Quasi-split type $A_{2r+1}$}
 \label{subsec:AtUi}

The formulas in Lemma \ref{lem:RefFunctorA} are then transformed via the commutative diagram \eqref{eq:Up=T} into the following formulas.

\begin{lemma}
    \label{lem:braid group of A}
Let $(Q, \btau \neq \Id)$ be of type $A_{2r+1}$ (\ref{diagram of A}).
Then there exists a unique automorphism $\tTT_i$ ($0\leq i\leq r$) of $\tUi$ such that $\tTT_i(\tk_\alpha) =\frac{b_{\bs_i\alpha}}{b_\alpha}\tk_{\bs_i \alpha}$ for $\alpha \in \Z^\I$,  and for $1\leq i\leq r$,
\begin{align*}
\tTT_i(B_j) &=\left\{ \begin{array}{ll}
B_jB_i -vB_iB_j,  & \text{ if }c_{ij}=-1 \text{ and } c_{\btau i,j}=0,\\
B_{\btau i}B_j-v^{-1} B_jB_{\btau i},  & \text{ if } c_{ij}=0 \text{ and }c_{\btau i,j}=-1 ,\\
-{v^{-1} [[B_j,B_i]_v,B_{\btau i}]_v}+ B_j\tk_{i},  & \text{ if }i=\pm 1, j=0, \\
-\tk_{i}^{-1}B_{\btau i},  & \text{ if }j=i,\\
-v^2\tk_{\btau i}^{-1}  B_i,  &\text{ if }j=\btau i,\\
B_j, & \text{ otherwise;}
 \end{array}\right.
\\
\tTT_0(B_j) & =\left\{ \begin{array}{ll} B_jB_0 -vB_0B_j, & \text{ if } j=\pm1,\\
(-v^2\tk_0)^{-1} B_0,  & \text{ if } j=0, \\
B_j,  & \text{ otherwise.}  \end{array}\right.
\end{align*}
\end{lemma}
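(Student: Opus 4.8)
\textbf{Proof proposal for Lemma~\ref{lem:braid group of A}.}
The plan is to transport the Hall-algebra identities of Lemma~\ref{lem:RefFunctorA} across the commutative square \eqref{eq:Up=T}, using the fact that $\widetilde\psi_Q$ and $\widetilde\psi_{Q'}$ are algebra isomorphisms (Theorem~\ref{thm:Ui=iHall}, and its generic counterpart Theorem~\ref{generic U MRH}) and that $\tTT_i$ is by definition the unique map making the square commute. Concretely, for each basis generator $B_j$ (and $\tk_\alpha$) of $\tUi$ I would compute $\widetilde\psi_Q(B_j)$ via \eqref{eq:split}--\eqref{eq:extra} (i.e.\ $B_j\mapsto \frac{-1}{q-1}[S_j]$ if $j\in\ci$, $B_j\mapsto \frac{\sqq}{q-1}[S_j]$ if $j\notin\ci$, and likewise $\tk_i\mapsto -q^{-1}[\E_i]$ or $[\E_i]$), apply $\Gamma_i$ using Lemma~\ref{lem:RefFunctorA}, and then pull the answer back through $\widetilde\psi_{Q'}^{-1}$, which reads the prime-notation Hall generators $[S'_k],[\E'_k]$ as scalar multiples of $B_k,\tk_k$. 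The scalars $b_i$ in \eqref{def:b} and $b_\alpha$ in \eqref{def:b2} are exactly the proportionality constants appearing when one passes between $\tk_\alpha$ and $[\E_\alpha]$ under the two isomorphisms, which is why the $\tk_\alpha$ formula becomes $\tTT_i(\tk_\alpha)=\frac{b_{\bs_i\alpha}}{b_\alpha}\tk_{\bs_i\alpha}$; this case is immediate from \eqref{eqn:reflection 4}. For the generic statement over $\Q(v)$ one uses Theorem~\ref{generic U MRH} in place of Theorem~\ref{thm:Ui=iHall}, noting that all the Hall-algebra formulas in Lemma~\ref{lem:RefFunctorA} already have the form of polynomials in $\sqq$ (see \S\ref{sub:generic}), so they specialize consistently and lift back to $\bv$.

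The bulk of the work is a finite bookkeeping of prefactors. For the generic split-type reflections (rows with $c_{ij}=-1,\ c_{\btau i,j}=0$ or $c_{ij}=0,\ c_{\btau i,j}=-1$, and the cases $j=i,\btau i$, and $j$ not adjacent), the translation is essentially the same computation as in Lemma~\ref{lemma:braid group of split involution}, and one checks that $\frac{\sqq}{q-1}[S'_i]*[S'_j]-\frac{1}{q-1}[S'_j]*[S'_i]$ pulls back to $B_jB_i-vB_iB_j$ (up to the appropriate power of $\sqq$ dictated by whether $i,j\in\ci$), and that $\sqq[\E'_i]^{-1}*[S'_{\btau i}]$ pulls back to $-\tk_i^{-1}B_{\btau i}$. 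The only genuinely new identity is the third one, where $i=\pm1$, $j=0$: here one takes the Hall identity
\[
\Gamma_i([S_0])=\tfrac{1}{(q-1)^2}\big[\big[[S'_0],[S'_i]\big]_{\sqq},[S'_{\btau i}]\big]_{\sqq}+[S'_0]*[\E'_i],
\]
which was proved inside Lemma~\ref{lem:RefFunctorA} by an explicit expansion in the modified Ringel--Hall algebra, and converts it term-by-term: the nested $\sqq$-commutator of Hall classes becomes a nested $v$-commutator of the $B$'s, and $[S'_0]*[\E'_i]$ becomes $B_0\tk_i$, with the overall scalar collecting into the stated $-v^{-1}[[B_j,B_i]_v,B_{\btau i}]_v+B_j\tk_i$. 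One then records the $\tTT_0$ formulas, which are the same as the split case applied at the branch vertex $0$ (only $j=\pm1$ are adjacent), since $0\in\ci$ and $\btau 0=0$.

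The main obstacle, and the step I would be most careful about, is getting all the powers of $\sqq$ (equivalently $v$) and the signs correct under the base change, since $\widetilde\psi_Q$ assigns different prefactors $\frac{-1}{q-1}$ versus $\frac{\sqq}{q-1}$ depending on whether a vertex lies in $\ci$, and the twisted multiplication \eqref{eqn:twsited multiplication} contributes further Euler-form powers that must cancel correctly against the $\langle\res(T_M),\res(M)\rangle_Q$ and $q^{-\langle T_M,M\rangle}$ factors in $\Gamma_i$ (Theorem~\ref{thm:Gamma}). For the new $j=0$ case one must also verify that the $v$-commutator identity $[[B_0,B_i]_v,B_{\btau i}]_v$ produced by the translation is well-defined in $\tUi$ (i.e.\ matches the relation among generators in \cite[Proposition~6.4]{LW19}); this is automatic because $\Gamma_i$ is an algebra isomorphism and $\widetilde\psi_Q,\widetilde\psi_{Q'}$ are isomorphisms, so $\tTT_i$ is a bona fide algebra automorphism and the formulas on generators are forced, but it is worth a sentence in the write-up. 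Existence and uniqueness of $\tTT_i$ are then simply the content of \eqref{eq:defT} together with the fact that $\widetilde\psi_Q,\widetilde\psi_{Q'}$ are isomorphisms; the lemma is exactly the list of its values on generators. Since the computations are routine expansions parallel to those in Lemmas~\ref{lemma:braid group of split involution}--\ref{lem:braid group of A}, I would present the split-type rows briefly and give the $j=0$ computation in full, then state that the $D_n$ and $E_6$ cases (Lemmas to follow) are handled identically since all their rank-$2$ $\imath$subquivers already occur in type $A$.
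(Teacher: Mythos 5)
Your proposal is correct and follows exactly the paper's (very terse) argument: the paper proves this lemma simply by transporting the formulas of Lemma~\ref{lem:RefFunctorA} through the commutative diagram \eqref{eq:Up=T} using the isomorphisms $\widetilde\psi_Q$ and $\widetilde\psi_{Q'}$ of Theorems~\ref{thm:Ui=iHall} and \ref{generic U MRH}, with existence and uniqueness of $\tTT_i$ built into the definition \eqref{eq:defT}. Your bookkeeping of the prefactors (e.g.\ $[S'_j]\mapsto -(q-1)B_j$ for $j\in\ci$ versus $\frac{q-1}{\sqq}B_j$ for $j\notin\ci$) is the right way to make the one-line proof explicit, and it does produce the stated $-v^{-1}[[B_j,B_i]_v,B_{\btau i}]_v+B_j\tk_i$ in the $j=0$ case.
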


\subsubsection{Quasi-split type $D_{n}$}
 \label{subsec:DntUi}

The formulas in Lemma \ref{lem:RefFunctorD} are then transformed via the commutative diagram \eqref{eq:Up=T} into the following formulas.

\begin{lemma}
   \label{lem:braid group of D}
Let $(Q, \btau \neq \Id)$  be of type $D_n$ (\ref{diagram of D}). Then there exists a unique algebra automorphism $\tTT_i$ of $\tUi$, for $1\le i \le n-1$, such that $\tTT_i(\tk_\alpha) =\frac{b_{\bs_i\alpha}}{b_\alpha}\tk_{\bs_i \alpha}$ for $\alpha \in \Z^\I$, and for $1\leq i\leq n-2$,
\begin{align*}
\tTT_i(B_j)  &=\left\{ \begin{array}{ll}
B_jB_i-vB_iB_j,  &\text{ if }c_{ij}=-1,\\
 (-v^2\tk_i)^{-1} B_i,  & \text{ if } j=i,\\
B_j, & \text{otherwise;}
 \end{array}\right.
   \\ 
\tTT_{n-1}(B_j)  &=\left\{ \begin{array}{ll} - {v^{-1} [[B_j,B_{n-1}]_v,B_{n}]_v}+ B_j\tk_{n-1}, & \text{ if } j=n-2,\\\,
-\tk_{n-1}^{-1}B_{n}, & \text{ if }j=n-1,\\
 -v^2 \tk_{n}^{-1}B_{n-1},  & \text{ if }j=n,\\
  \,B_j, &\text{otherwise.}  \end{array}\right.
\end{align*}
%
\end{lemma}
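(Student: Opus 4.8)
The plan is to deduce the claimed formulas directly from the definition of $\tTT_i$ through the commutative diagram \eqref{eq:defT} (equivalently, at $v=\sqq$, through \eqref{eq:Up=T}). Writing $Q':=\bs_iQ$, one has $\tTT_i=\widetilde{\psi}_{Q'}^{-1}\circ\Gamma_i\circ\widetilde{\psi}_Q$; since $\tUi$ depends only on the Cartan datum $(C,\btau)$, which is unchanged under the reflection $Q\mapsto Q'$, this composite of algebra isomorphisms is automatically an algebra automorphism of $\tUi$, and it is the unique one realizing the diagram because the $B_j$ ($j\in\I$) together with the $\tk_i$ generate $\tUi$. So the whole task is to evaluate $\tTT_i$ on these generators, which I would do after fixing an orientation of $Q$ in which $i$ (and, when $\btau i\ne i$, also $\btau i$) is a sink — harmless by the orientation-independence noted around \eqref{eq:Up=T2}. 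For the toral generators I would combine \eqref{eq:extra} (and its generic analogue in Theorem~\ref{generic U MRH}) with the uniform formula \eqref{eqn:reflection 4}: expressing $\widetilde{\psi}_Q(\tk_\alpha)$ as $b_\alpha^{-1}$ times the class of $\E_\alpha$, with $b_\alpha$ as in \eqref{def:b}--\eqref{def:b2}, the map $\Gamma_i$ sends it to $b_\alpha^{-1}$ times the class of $\E'_{\bs_i\alpha}$, and comparison with $\widetilde{\psi}_{Q'}(\tk_{\bs_i\alpha})=b_{\bs_i\alpha}^{-1}[\E'_{\bs_i\alpha}]$ gives $\tTT_i(\tk_\alpha)=\tfrac{b_{\bs_i\alpha}}{b_\alpha}\tk_{\bs_i\alpha}$.

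The substantive part is the computation on the $B_j$. For each $j$ I would substitute $\widetilde{\psi}_Q(B_j)$ — which by \eqref{eq:psiB} equals $\tfrac{-1}{v^2-1}\fu_{\alpha_j}$ when $j\in\ci$ and $\tfrac{v}{v^2-1}\fu_{\alpha_j}$ when $j\notin\ci$ — into the generic lifts of the explicit formulas of Lemma~\ref{lem:RefFunctorD}, re-express each resulting monomial in the $\fu_{\alpha_k}$ and in the classes of the $\E'_k$ in terms of $\widetilde{\psi}_{Q'}(B_k)$ and $\widetilde{\psi}_{Q'}(\tk_k)$ via the primed versions of \eqref{eq:psiB}--\eqref{eq:extra}, and then apply $\widetilde{\psi}_{Q'}^{-1}$. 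Since $\Gamma_i$ is an algebra homomorphism and the bracket $[-,-]_v$ of \eqref{eq:v-comm} is bilinear, the scalars $\tfrac{-1}{v^2-1}$ and $\tfrac{v}{v^2-1}$ pull straight through and, after cancellation, leave exactly the powers of $v$ displayed in the statement. In fact the only Hall-algebra identities needed are the rank $\le 2$ computations (plus the single rank $3$ computation for the nested bracket) already performed in the proofs of Lemmas~\ref{lemma:reflection for split involution}--\ref{lem:RefFunctorA}, because every connected $\imath$subquiver of \eqref{diagram of D} of rank $\le 2$ sits inside \eqref{diagram of A}; accordingly, passing from Lemma~\ref{lem:RefFunctorD} to the present lemma is the precise analogue of passing from Lemmas~\ref{lemma:reflection for split involution}--\ref{lem:RefFunctorA} to Lemmas~\ref{lemma:braid group of split involution}--\ref{lem:braid group of A}.

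The step I expect to be the main obstacle is the bookkeeping of these scalars around the two $\btau$-swapped nodes $n-1$ and $n$, precisely one of which ($n-1$) lies in $\ci$. This asymmetry in the normalization of $\widetilde{\psi}$ between nodes in $\ci$ and nodes outside is what forces the asymmetry between $\tTT_{n-1}(B_{n-1})=-\tk_{n-1}^{-1}B_n$ and $\tTT_{n-1}(B_n)=-v^2\tk_n^{-1}B_{n-1}$: the ratio $\tfrac{v}{v^2-1}\big/\tfrac{-1}{v^2-1}=-v$ enters twice (once on the source side, once on the target side) in the second case but not in the first, producing the discrepancy of a factor $v^2$; this must be combined with $\widetilde{\psi}_{Q'}(\tk_{n-1})=[\E'_{n-1}]$ and $\widetilde{\psi}_{Q'}(\tk_n)=[\E'_n]$ to recover the $\tk$-twists. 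By contrast, for $1\le i\le n-2$ (where $\btau i=i$) the scalars cancel cleanly and one gets the uniform formula $\tTT_i(B_j)=B_jB_i-vB_iB_j$ for $c_{ij}=-1$, even when $j\in\{n-1,n\}$. The remaining case requiring attention is $j=n-2$ for $\tTT_{n-1}$: feeding $\tfrac{-1}{v^2-1}\fu_{\alpha_{n-2}}$ into the nested-bracket formula $\tfrac{1}{(q-1)^2}\big[\big[[S'_j],[S'_{n-1}]\big]_{\sqq},[S'_n]\big]_{\sqq}+[S'_j]*[\E'_{n-1}]$ of Lemma~\ref{lem:RefFunctorD} and collapsing the constants yields $-v^{-1}[[B_j,B_{n-1}]_v,B_n]_v+B_j\tk_{n-1}$, as claimed. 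Beyond careful tracking of these scalars and the identification of the class of $\E'_k$ with $\widetilde{\psi}_{Q'}(\tk_k)$, no new input over Lemma~\ref{lem:RefFunctorD} is required.
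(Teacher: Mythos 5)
Your proposal is correct and follows exactly the paper's route: the paper proves this lemma by transporting the Hall-algebra formulas of Lemma~\ref{lem:RefFunctorD} through the commutative diagram \eqref{eq:Up=T} using the normalizations \eqref{eq:psiB}--\eqref{eq:extra}, which is precisely what you do, and your scalar bookkeeping (including the origin of the $-v^2$ asymmetry at the $\btau$-swapped nodes $n-1,n$ and the collapse of the nested-bracket constants to $-v^{-1}$) checks out.
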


\subsubsection{Quasi-split type $E_6$}
  \label{subsec:E6tUi}

The formulas in Lemma \ref{lem:RefFunctorE} are then transformed via the commutative diagram \eqref{eq:Up=T} into the following formulas.

\begin{lemma}
   \label{lem:braid group of E}
Let $(Q, \btau \neq \Id)$  be of type $E_6$  (\ref{diagram of E}). Then there exists a unique algebra automorphism $\tTT_i$ of $\tUi$ such that $\tTT_i(\tk_\alpha) =\frac{b_{\bs_i\alpha}}{b_\alpha}\tk_{\bs_i \alpha}$ for $\alpha \in \Z^\I$, and
\begin{eqnarray*}
&&\tTT_1(B_j)=\left\{  \begin{array}{ll}
     B_2B_1-vB_1B_2,       & \text{ if }j=2,\\
                    B_6B_5-v^{-1} B_5B_6,        & \text{ if }j=5,\\
 -\tk_1^{-1} B_6 ,            &\text{ if }j=1,\\
 -v^2\tk_6^{-1} B_1,         &\text{ if }j=6, \\
B_j, &  \text{ if }j=3,4;
\end{array}\right.
\end{eqnarray*}
\begin{eqnarray*}
&&\tTT_2(B_j)=\left\{  \begin{array}{ll}
 -v^{-1}  \big[[B_3,B_2]_v,B_5 \big]_v+B_3\tk_2, &  \text{ if }j=3,\\
  B_1B_2-vB_2B_1,   & \text{ if }j=1,\\
 B_5B_6-v^{-1} B_6B_5,    &  \text{ if }j=6,\\
   - \tk_2^{-1}B_5, & \text{ if }j=2,\\
  -v^2\tk_5^{-1} B_2,  & \text{ if }j=5,\\
B_4, &\text{ if }j=4;
\end{array}\right.
\end{eqnarray*}
\begin{align*}
\tTT_3(B_j)& =\left\{  \begin{array}{ll}
 B_jB_3-vB_3B_j,              &\text{ if }j=2,4,5,  \\
 (-v^2\tk_3)^{-1} B_3,&  \text{ if }j=3, \\
B_j, &  \text{ if }j=1, 6;
\end{array}\right.
\\
\tTT_4(B_j)& =\left\{  \begin{array}{ll}
B_3B_4-vB_4B_3,        & \text{ if }j=3,  \\
 (-v^2\tk_4)^{-1} B_4,       & \text{ if }j =4,  \\
B_j,      & \text{ if }j\neq3, 4.
\end{array}\right.
\end{align*}
\end{lemma}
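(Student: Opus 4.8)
The plan is to read off the asserted formulas for $\tTT_i$ from the corresponding formulas for the $\imath$Hall algebra isomorphisms $\Gamma_i$ in Lemma~\ref{lem:RefFunctorE}, by transporting them through the commutative square \eqref{eq:defT} (equivalently \eqref{eq:Up=T}). The existence and uniqueness of an algebra automorphism $\tTT_i$ fitting that square is automatic: $\Gamma_i$ is an algebra isomorphism by Theorem~\ref{thm:Gamma}, and $\widetilde\psi_Q,\widetilde\psi_{Q'}$ are algebra isomorphisms by Theorem~\ref{generic U MRH}, so $\tTT_i:=\widetilde\psi_{Q'}^{-1}\circ\Gamma_i\circ\widetilde\psi_Q$ is an algebra isomorphism $\tUi\to\tUi$ and is determined by its images on the generators $B_j$ ($j\in\I$) and $\tk_i$ ($i\in\I$). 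The action on the Cartan part, $\tTT_i(\tk_\alpha)=\tfrac{b_{\bs_i\alpha}}{b_\alpha}\tk_{\bs_i\alpha}$, has already been recorded from \eqref{eqn:reflection 4} in the discussion preceding Lemma~\ref{lemma:braid group of split involution}; so only the values $\tTT_i(B_j)$ need to be computed.

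Next I would fix the combinatorics of the $E_6$ $\imath$quiver \eqref{diagram of E}: $\ci=\{1,2,3,4\}$, with $\btau$ interchanging $1\leftrightarrow 6$ and $2\leftrightarrow 5$ and fixing $3,4$. Under $\widetilde\psi_Q$ one has $B_j\mapsto\tfrac{-1}{v^2-1}\fu_{\alpha_j}$ for $j\in\{1,2,3,4\}$ and $B_j\mapsto\tfrac{v}{v^2-1}\fu_{\alpha_j}$ for $j\in\{5,6\}$, together with $\tk_i\mapsto\fu_{\upgamma_i}$ for $i\in\{1,2,5,6\}$ and $\tk_i\mapsto-v^{-2}\fu_{\upgamma_i}$ for $i\in\{3,4\}$, and likewise for $\widetilde\psi_{Q'}$ in prime notation. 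I would also first lift the formulas of Lemma~\ref{lem:RefFunctorE}, which are stated in the finite $\imath$Hall algebra $\tMH$ at $\sqq=\sqrt q$, to the generic algebra $\tMHg$: each such identity equates two $\Z[v^{\pm1}]$-combinations of basis elements and holds for all prime powers $q$, hence it persists in $\tMHg$ with $\sqq$ replaced by $v$, $[S_j]$ by $\fu_{\alpha_j}$, and $[\E_i]$ by $\fu_{\upgamma_i}$.

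With this in place the proof is a finite case check: for each $i\in\{1,2,3,4\}$ and each $j\in\I$, write $\widetilde\psi_Q(B_j)$ as the appropriate scalar multiple of $\fu_{\alpha_j}$, apply the (lifted) formula for $\Gamma_i$ from Lemma~\ref{lem:RefFunctorE}, and then apply $\widetilde\psi_{Q'}^{-1}$, using $\fu_{\alpha_k}\mapsto-(v^2-1)B_k$ for $k\in\{1,2,3,4\}$, $\fu_{\alpha_k}\mapsto\tfrac{v^2-1}{v}B_k$ for $k\in\{5,6\}$, and $\fu_{\upgamma_i}\mapsto\tk_i$ (resp. $-v^2\tk_i$) for $i$ with $\btau i\neq i$ (resp. $\btau i=i$). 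In every case the rescaling constants cancel exactly and produce the asserted output: the $v$-bracket $[B_j,B_i]_v$ of \eqref{eq:v-comm} and its reversed variant $B_{\btau i}B_j-v^{-1}B_jB_{\btau i}$ in the ``edge'' cases; the monomials $-\tk_i^{-1}B_{\btau i}$, $-v^2\tk_{\btau i}^{-1}B_i$, or $(-v^2\tk_i)^{-1}B_i$ in the diagonal cases $j=i$ and $j=\btau i$; and the double $v$-bracket $-v^{-1}\bigl[[B_3,B_2]_v,B_5\bigr]_v+B_3\tk_2$ in the single type-$A_3$-subquiver case $(i,j)=(2,3)$. These are exactly the formulas in the statement.

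The computations are routine; the one place that requires care — and the main place to slip — is the bookkeeping of the asymmetric normalizations built into $\widetilde\psi$. A generator $B_k$ attached to a vertex $k\notin\ci$ carries an extra factor $v$ relative to one attached to a vertex in $\ci$, and this discrepancy is exactly what turns a structurally symmetric Hall-algebra expression (the formulas for $\Gamma_1([S_2])$ and $\Gamma_1([S_5])$ look identical) into the two different asymmetric brackets $[\cdot,\cdot]_v$ appearing in the statement; similarly the scalars $-v^{-2}$ versus $1$ attached to the $\tk_i$ must be tracked to get the powers of $v$ and the signs right in the diagonal and the double-bracket cases. A useful structural reduction, cutting the genuinely new verifications down to essentially one, is that every rank-$2$ $\imath$-subquiver occurring in \eqref{diagram of E} already appears in split type $A_2$ or in type $A_{2r+1}$ \eqref{diagram of A}, so the edge and diagonal cases are literally instances of Lemmas~\ref{lemma:braid group of split involution} and \ref{lem:braid group of A}, leaving only the pattern at $(i,j)=(2,3)$, which I would verify in detail exactly as in the proof of Lemma~\ref{lem:braid group of A}.
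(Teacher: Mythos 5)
Your proposal is correct and is exactly the paper's (largely implicit) argument: the paper introduces this lemma with the single sentence that the formulas of Lemma~\ref{lem:RefFunctorE} are transformed via the commutative diagram \eqref{eq:Up=T}, which is precisely the conjugation $\tTT_i=\widetilde\psi_{Q'}^{-1}\circ\Gamma_i\circ\widetilde\psi_Q$ and bookkeeping of the normalizations in \eqref{eq:split}--\eqref{eq:extra} that you carry out. Your additional remarks on lifting to the generic $\imath$Hall algebra and on reducing all but the $(i,j)=(2,3)$ case to rank-2 $\imath$subquivers already handled in Lemmas~\ref{lemma:braid group of split involution} and \ref{lem:braid group of A} supply detail the paper omits but do not change the route.
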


\section{Braid group actions on $\imath$quantum groups of finite type}
  \label{sec:braid}

In this section, for Dynkin $\imath$quivers, we show that the symmetries $\tTT_i$ on $\tUi$ 
satisfy the braid group relations. The proofs here do not extend to more general $\imath$quivers, which would require to consider additional $\imath$subquivers.

\subsection{Reduction via $\imath$subquivers}

Let $({}'Q, {}'\btau)$ be an $\imath$subquiver of a Dynkin $\imath$quiver $(Q,\btau)$. Denote by ${}'\tUi$ and $\tUi$ their $\imath$quantum groups and denote by ${}'\tTT_i$ and $\tTT_i$ the isomorphisms defined in \eqref{eq:defT}, respectively.

\begin{lemma}
\label{lem: reduction1}
Let $({}'Q, {}'\btau)$ be an $\imath$subquiver of $(Q,\btau)$. Then ${}'\tUi$ is a subalgebra of $\tUi$, which is compatible with the action of $\tTT_i$,
i.e., ${}'\tTT_i(x) = \tTT_i(x)$ for any $x\in {}'\tUi$, and $i \in Q'$.
\end{lemma}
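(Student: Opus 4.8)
The statement asserts two compatible facts: (a) $\,'\tUi$ embeds as a subalgebra of $\tUi$, and (b) the reflection automorphisms $\tTT_i$ restrict compatibly, i.e.\ $\,'\tTT_i = \tTT_i|_{\,'\tUi}$ for every sink $i$ of $'Q$. For (a), I would first pin down what an $\imath$subquiver is: $'Q$ is a full subquiver of $Q$ on a $\btau$-stable vertex subset $'Q_0 \subseteq Q_0$, with $'\btau = \btau|_{'Q_0}$. The generators $B_j$ ($j \in {}'Q_0$) and $\tk_j$ ($j \in {}'\ci$) of $'\tUi$ are, by the defining formulas $B_j = F_j + E_{\btau j}\tk_j'$ and $\tk_j = \tK_j\tK_{\btau j}'$, literally the same elements of $\tU$ (hence of $\tUi \subseteq \tU$); so there is an obvious algebra homomorphism $'\tUi \to \tUi$. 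That it is injective is the content that needs justification: I would invoke the $\imath$Hall algebra realization (Theorem~\ref{generic U MRH}), under which $'\tUi \cong \widetilde{\ch}({}'Q,{}'\btau)$ and $\tUi \cong \widetilde{\ch}(Q,\btau)$. The $\imath$quiver algebra $\Lambda^{\imath}_{'Q}$ is a quotient of $\Lambda^{\imath}_Q$ (by killing the idempotents at vertices outside $'Q_0$), so $\mathrm{rep}(\overline{'Q}, \overline{'I})$ is a full exact subcategory of $\mathrm{rep}(\overline Q, \overline I)$ closed under extensions and subquotients; the induced map on (twisted, modified) Ringel--Hall algebras is then an injective algebra homomorphism onto the subspace spanned by isoclasses supported on $'Q_0$. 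Transporting back via $\widetilde\psi$ gives the embedding $'\tUi \hookrightarrow \tUi$.

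For (b), the cleanest route is to check compatibility on generators and use the commutative square \eqref{eq:defT} defining $\tTT_i$. Fix a sink $i$ of $'Q$; it is automatically a sink of $Q$ once we choose an orientation (and by the orientation-independence of $\tTT_i$ noted after \eqref{eq:Up=T2}, the choice is harmless). The reflection functor $F_i^+$ on $\mathrm{rep}(\overline Q)$ restricts to the reflection functor on $\mathrm{rep}(\overline{'Q})$ because its construction \eqref{def:reflection}--\eqref{eq:F+} is entirely local: the kernel $N_\ell$ is computed from the arrows incident to $i$ and $\btau i$, all of which lie in $'Q$ when $i \in {}'Q_0$ (recall $\btau i \in {}'Q_0$ since $'Q_0$ is $\btau$-stable). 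Hence the square
\[
\xymatrix{
\mathrm{rep}(\overline{'Q}) \ar[r]^{{}'F_i^+} \ar@{^{(}->}[d] & \mathrm{rep}(\overline{{}'Q'}) \ar@{^{(}->}[d]\\
\mathrm{rep}(\overline{Q}) \ar[r]^{F_i^+} & \mathrm{rep}(\overline{Q'})
}
\]
commutes up to the identifications above, which forces $\Gamma_i$ on $\widetilde{\ch}(Q,\btau)$ to restrict to $\Gamma_i$ on $\widetilde{\ch}({}'Q,{}'\btau)$ (using the explicit formula \eqref{eqn:reflection functor 2}, whose ingredients --- the tilting module $T[i]$, the Euler form $\langle -,-\rangle_Q$ restricted to $'Q$-supported classes --- are themselves compatible with the inclusion). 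Finally, since $\widetilde\psi_Q$ restricts to $\widetilde\psi_{'Q}$ on $'\tUi$ (immediate from the formulas \eqref{eq:psiB} and the fact that $S_j, \E_i$ for $j, i \in {}'Q_0$ are the same objects in both module categories), chasing the defining diagram \eqref{eq:defT} for both $'\tTT_i$ and $\tTT_i$ yields $'\tTT_i(x) = \tTT_i(x)$ for all $x \in {}'\tUi$.

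Alternatively --- and this may be even shorter to write --- one can bypass the categorical argument and simply verify $'\tTT_i(B_j) = \tTT_i(B_j)$ and $'\tTT_i(\tk_\alpha) = \tTT_i(\tk_\alpha)$ directly on generators by inspecting the explicit formulas in Lemmas~\ref{lemma:braid group of split involution}--\ref{lem:braid group of E}: the right-hand side of each formula for $\tTT_i(B_j)$ involves only $B_j, B_i, B_{\btau i}, \tk_i, \tk_{\btau i}$ with $i, \btau i \in {}'Q_0$, and the defining Cartan data $c_{ij}, c_{\btau i,j}$ agree in $'Q$ and $Q$ (full subquiver); likewise $\bs_i \alpha$ for $\alpha$ supported on $'Q_0$ stays supported on $'Q_0$. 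Since $'\tUi \subseteq \tUi$ is generated by the $B_j, \tk_j$ with $j \in {}'Q_0$, agreement on generators gives agreement everywhere.

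\textbf{Main obstacle.} The genuinely nontrivial point is \emph{injectivity} of $'\tUi \to \tUi$ (part (a)) --- it is not formal, and relies on the Hall-algebra realization together with the structural fact that $\Lambda^{\imath}_{'Q}$ is a quotient of $\Lambda^{\imath}_Q$ so that the relevant module subcategory is closed under subobjects, quotients, and extensions. Everything in part (b) is then a diagram chase / formula inspection, provided one has carefully checked that the reflection functor, the tilting module $T[i]$, and $\widetilde\psi$ all behave well under the subquiver inclusion --- which is the one routine verification I would not want to fully grind out here.
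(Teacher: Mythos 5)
The paper states this lemma with no proof at all (it is treated as routine), so there is no argument of the authors' to compare against; what you have written is a legitimate way to fill the gap, and your second, generator-checking route is the one that best matches how the lemma is actually used. Concretely: once one knows the standard embedding ${}'\tU\hookrightarrow \tU$ for a full $\btau$-stable subdiagram, the generators $B_j$, $\tk_j$ ($j\in{}'Q_0$) of ${}'\tUi$ are literally elements of $\tUi$, so part (a) reduces to that standard fact rather than requiring the full Hall-algebra detour; and for part (b) it suffices to observe that the formulas for $\tTT_i$ in Lemmas~\ref{lemma:braid group of split involution}--\ref{lem:braid group of E} involve only $B_j,B_i,B_{\btau i},\tk_i,\tk_{\btau i}$ and the Cartan integers $c_{ij}$, $c_{\btau i,j}$, all of which are unchanged under passage to a full subquiver, so the two automorphisms agree on generators and hence everywhere (note $\tTT_i({}'\tUi)\subseteq{}'\tUi$ is part of what these formulas show). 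Those lemmas are proved in Sections~4--5, before the present lemma is invoked, so there is no circularity. Your first, categorical route is also viable but carries more verification burden than you acknowledge: in passing from $\mod(\iLa)$ to the Serre subcategory of modules supported on ${}'Q_0$ one must check not only that $\Hom$, $\Ext^1$ and the Euler-form twist agree (they do, by fullness and closure under extensions), but also that the localization at the set $\cs$ of \eqref{eq:Sca} and the resolutions $0\to M\to X_M\to T_M\to 0$ entering \eqref{eqn:reflection functor 2} can be chosen inside the subcategory --- the projectives and the tilting module $T[i]$ of $\iLa$ are \emph{not} supported on ${}'Q_0$, so this is not purely formal. Since the generator computation sidesteps all of this, I would lead with it.
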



Let $(Q,\btau)$ be an $\imath$Dynkin quiver. Let $(Q^1,\btau)$ and $(Q^2,\btau)$ be two connected (full) $\imath$subquivers of $Q$ such that $Q^1\cap Q^2=\emptyset$. 
Let $\iLa$, ${}^1\iLa$, and ${}^2\iLa$ be the $\imath$quiver algebras associated to $(Q,\btau)$, $(Q^1,\btau)$, and $(Q^2,\btau)$ respectively. We identify $\mod({}^1\iLa)$ and $\mod({}^2\iLa)$ as subcategories of $\mod(\iLa)$, which are closed under taking extensions. So $\cs\cd\widetilde{\ch}({}^1\iLa),\cs\cd\widetilde{\ch}({}^2\iLa)$ can be viewed as subalgebras of $\tMH$.

 Assume that there exists an arrow $\alpha:i\rightarrow j$ such that $i\in Q^1$, $j\in Q^2$. Let $Q'$ be the quiver constructed from $Q$ by reversing $\alpha$ and $\btau(\alpha)$. Then $(Q,\btau)$ induces an $\imath$quiver $(Q',\btau')$. Denote by $'\iLa$ its $\imath$quiver algebra.
Then $\mod({}^1\iLa)$, $\mod({}^2\iLa)$ can be viewed as subcategories of $\mod('\iLa)$. So $\cs\cd\widetilde{\ch}({}^1\iLa),\cs\cd\widetilde{\ch}({}^2\iLa)$ can be viewed as subalgebras of $\cs\cd\widetilde{\ch}('\iLa)$.
We have isomorphisms
$\widetilde{\psi}:  \tUi_{|v={\sqq}}\rightarrow \tMH$ and $\widetilde{\psi}':  \tUi _{|v={\sqq}}\rightarrow \cs\cd\widetilde{\ch}('\iLa)$ by Theorem~ \ref{thm:Ui=iHall}.

\begin{lemma}
\label{lem:reduction3}
Retain the notation and assumption as above. For any $M\in \mod({}^1\iLa)$, $N\in \mod({}^2\iLa)$, we have
\begin{align*}
\widetilde{\psi}^{-1}([M])\cdot \widetilde{\psi}^{-1}( [N])= \widetilde{\psi}'^{-1}([M])\cdot \widetilde{\psi}'^{-1}([N])
\end{align*}
in $\tUi_{|v={\sqq}}$.
In particular, we have
$\widetilde{\psi}^{-1}([M]*[N])= \widetilde{\psi}'^{-1}([M]*[N]).$
\end{lemma}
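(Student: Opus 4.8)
\textbf{Proof plan for Lemma~\ref{lem:reduction3}.}
The key point is that the Hall multiplication $[M]\diamond[N]$ (and hence the twisted product $[M]*[N]$) in the modified Ringel--Hall algebra of $\iLa$ is controlled entirely by $\Ext^1_{\iLa}(M,N)$ and $\Hom_{\iLa}(M,N)$, together with the extensions $L$ occurring, and that all of these data are insensitive to the orientation of the single arrow $\alpha$ (and $\btau(\alpha)$) joining $Q^1$ to $Q^2$. The plan is to show that, for $M\in\mod(kQ^1)$ and $N\in\mod(kQ^2)$, the relevant $\Hom$ and $\Ext^1$ spaces computed over $\iLa$ agree with those computed over $'\iLa$, and that the middle terms $L$ of short exact sequences $0\to N\to L\to M\to 0$ are the same $\iLa$- (equivalently $'\iLa$-) modules, supported on the $\imath$subquiver $Q^1\cup Q^2\cup\{\alpha,\btau\alpha\}$ together with the $\varepsilon$-loops. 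Granting this, the structure constants defining $[M]\diamond[N]$ coincide in $\utMH$ and in $\cm\widetilde{\ch}('\iLa)$; the twisting factor $\sqq^{\langle\res M,\res N\rangle_Q}$ in \eqref{eqn:twsited multiplication} is also orientation-independent here, because reversing $\alpha$ and $\btau\alpha$ does not change the Euler form $\langle\res M,\res N\rangle$ for $M$ supported on $Q^1$ and $N$ supported on $Q^2$ (only the contribution of the edges between $Q^1$ and $Q^2$ is affected by orientation, and since $\Hom$ and $\Ext^1$ between such $M$ and $N$ are orientation-stable, so is their alternating sum). This proves $\widetilde\psi^{-1}([M]*[N])=\widetilde\psi'^{-1}([M]*[N])$, and the first displayed identity follows since $\widetilde\psi,\widetilde\psi'$ send these Hall basis elements to the same products of generators $B_i$ by Theorem~\ref{thm:Ui=iHall}.

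First I would set up the bookkeeping: fix the ambient $\imath$quiver $\overline Q$ of $\iLa$ and the ambient $\imath$quiver $\overline{Q'}$ of $'\iLa$; these differ only by reversing $\alpha$ and $\btau\alpha$ (and leave the two framed subquivers $\overline{Q^1},\overline{Q^2}$ and all $\varepsilon$-loops untouched). A module $M\in\mod(kQ^1)$ is a $kQ$-module, hence a $\iLa$-module via the pullback functor $\iota$, with zero $\varepsilon$-action; likewise for $N$. Since $M$ is concentrated on vertices of $Q^1$ and $N$ on vertices of $Q^2$, every extension $0\to N\to L\to M\to 0$ of $\iLa$-modules has underlying vector space $N\oplus M$, with the only possibly-nonzero new structure maps being along arrows from a $Q^1$-vertex to a $Q^2$-vertex (i.e. $\alpha$, $\btau\alpha$, possibly an $\varepsilon$ connecting them). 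Because $L$ is then forced to be a representation of the bound quiver supported on $Q^1\cup Q^2\cup\{\alpha,\btau\alpha,\varepsilon\text{'s}\}$, and the relations $\overline I$ restricted to this locus are the same whether $\alpha$ points one way or the other after relabelling $\alpha\leftrightarrow\widetilde\alpha$, the set of isoclasses $[L]$ with nonzero Hall number is the same, and the Hall numbers themselves — ratios $|\Ext^1(M,N)_L|/|\Hom(M,N)|$ — are computed from the same linear-algebra data. I would make this precise by exhibiting an explicit bijection between representations of the relevant bound subquiver of $\overline Q$ and of $\overline{Q'}$ (identity on all data except transpose/dualize along $\alpha,\btau\alpha$), and checking it is exact and preserves dimension vectors; this is the content of Proposition~\ref{prop:modulated representation}-type reasoning applied to the two-vertex-block situation, essentially the same device underlying Lemma~\ref{lem: reduction1}.

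The main obstacle is the careful verification that the ideal defining the modified Hall algebra — the relations $[L]=[K\oplus M]$ whenever $0\to K\to L\to M\to 0$ with $K\in\cp^{<\infty}(\iLa)$ — behaves compatibly under this identification, i.e. that a module supported on $Q^1\cup Q^2$ (plus $\varepsilon$'s) has finite projective dimension over $\iLa$ if and only if it does over $'\iLa$. This should follow from the fact that finite projective dimension for $\imath$quiver algebras is detected by the $\varepsilon$-action being ``as free as possible'' on each $\BH_i$-block (cf. the resolutions \eqref{eqn:projective resolution of E}--\eqref{eqn:injective resolution of E} and the $1$-Gorenstein property from \cite[Proposition~3.5]{LW19}), a condition on each vertex-block $\BH_i$ that is manifestly unchanged by reorienting $\alpha$. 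Once this compatibility of the localizing ideals and of $\cs$ is in place, the two localized algebras restricted to the subcategory generated by such $M,N$ are canonically isomorphic, and the identity follows. I expect the bulk of the work to be notational rather than conceptual, so I would present it by reducing to the rank-calculation on the two-block bound subquiver and then invoking orientation-independence of Euler forms and of projective dimension there.
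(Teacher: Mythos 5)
Your central claim --- that the Hall product $[M]*[N]$ has the same structure constants (same middle terms $L$, same Hall numbers, same Euler-form twist) in $\tMH$ and in $\cm\widetilde{\ch}('\iLa)$ --- is false, and the lemma does not assert it. Take the connecting arrow $\alpha: i\to j$ with $i\in Q^1$, $j\in Q^2$. For an extension $0\to N\to L\to M\to 0$ one has $L_i=M_i$ and $L_j=N_j$, so the structure map along $\alpha$ is an arbitrary linear map $M_i\to N_j$ and $\Ext^1_{\iLa}(M,N)$ is typically nonzero; after reversing to $\widetilde\alpha: j\to i$, the structure map $N_j\to L_i$ must land in $N_i=0$ because $N$ is a submodule, so the corresponding extensions all split and $[M]*[N]$ collapses to a multiple of $[M\oplus N]$. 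Likewise the twist $\sqq^{\langle\res(M),\res(N)\rangle_Q}$ is not orientation-independent: only the \emph{symmetrized} Euler form is, and $\langle\widehat{S_i},\widehat{S_j}\rangle_Q$ changes from $-1$ to $0$ when $\alpha$ is reversed. So the two Hall products are genuinely different linear combinations of isoclasses, and a term-by-term matching of structure constants cannot work; your discussion of the localizing ideal and projective dimensions addresses a non-issue for the same reason.

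What is actually true, and what the paper proves in three lines, is that the \emph{preimages} agree in $\tUi$. Since $M\in\mod({}^1\iLa)$ and $(Q^1,\btau)$ is an $\imath$subquiver of both $(Q,\btau)$ and $(Q',\btau')$ (the reorientation only touches the arrow joining the blocks), the restrictions of $\widetilde{\psi}$ and of $\widetilde{\psi}'$ to the subalgebra ${}^1\tUi$ both coincide with the isomorphism $\widetilde{\psi}^1$ onto $\cm\widetilde{\ch}({}^1\iLa)$; hence $\widetilde{\psi}^{-1}([M])=(\widetilde{\psi}^1)^{-1}([M])=\widetilde{\psi}'^{-1}([M])$, and similarly for $[N]$ via ${}^2\tUi$. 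Multiplying these two identities inside $\tUi$ gives the first display, and applying the algebra isomorphisms $\widetilde{\psi}$, $\widetilde{\psi}'$ yields the ``in particular'' statement. The content of the lemma is that two different isomorphisms onto two different Hall algebras have the same restriction to each block --- not that the two Hall algebras compute the same product.
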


\begin{proof}
Denote by ${}^1\tUi$, ${}^2\tUi$ the $\imath$quantum groups of $(Q^1,\btau^1)$, $(Q^2,\btau^2)$. Then ${}^1\tUi$, ${}^2\tUi$ are subalgebras of $\tUi$.
There exist two isomorphisms $\widetilde{\psi}^1: {}^1\tUi _{|v={\sqq}}\rightarrow \cs\cd\widetilde{\ch}({}^1\iLa)$ and $\widetilde{\psi}^2: {}^2\tUi_{|v={\sqq}} \rightarrow \cs\cd\widetilde{\ch}({}^2\iLa)$. In particular, $\widetilde{\psi}|_{{}^l\tUi} =\widetilde{\psi}^l=\widetilde{\psi}'|_{{}^l\tUi}$ for $l=1,2$.
So we have
\[
\widetilde{\psi}^{-1}([M])\cdot \widetilde{\psi}^{-1}( [N])= (\widetilde{\psi}^1)^{-1}([M])\cdot (\widetilde{\psi}^2)^{-1}( [N])
= \widetilde{\psi}'^{-1}([M])\cdot \widetilde{\psi}'^{-1}([N]).
\]
The lemma is proved.
\end{proof}

It is worth noting that the formula in Lemma \ref{lem:reduction3} also holds for $\tUi$ and generic Hall algebras by using the isomorphism $\widetilde \psi$ in \eqref{eq:psi2}, which will be used below.

\subsection{Braid relations I}

Recall that $\{\alpha_i\mid i\in\I\}$ is the set of simple roots. For any $\gamma =\sum_{i=1}^n a_i\alpha_i\in \Phi^\imath$, we shall write $\fu_{\gamma} = \fu_{\delta_{\gamma}}$ in generic Hall algebra,  where $\delta_{\gamma}$ is the characteristic function in $\tilde{\mathfrak{P}}^\imath$; cf. \eqref{eq:tPi}.

\begin{lemma}
\label{lem:BGij=0}
For any $i,j\in\ci$ such that $c_{ij}=0$, we have
$\tTT_i\tTT_j=\tTT_j\tTT_i$.
\end{lemma}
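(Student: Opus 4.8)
The claim is that $\tTT_i$ and $\tTT_j$ commute whenever $c_{ij}=0$, for $i,j\in\ci$. My plan is to reduce to a rank-$2$ (or rather rank $\le 4$) situation via Lemma~\ref{lem: reduction1} and then verify the relation by a direct computation either on generators of $\tUi$ or, preferably, in the $\imath$Hall algebra using the explicit formulas of Lemmas~\ref{lemma:braid group of split involution}--\ref{lem:braid group of E}. First I would observe that both $\tTT_i$ and $\tTT_j$ are algebra automorphisms of $\tUi$, so it suffices to check that $\tTT_i\tTT_j$ and $\tTT_j\tTT_i$ agree on a generating set, namely on $B_k$ for all $k\in\I$ and on $\tk_\alpha$ for $\alpha\in\Z^\I$. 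For the torus part this is immediate: $\tTT_i(\tk_\alpha)=\tfrac{b_{\bs_i\alpha}}{b_\alpha}\tk_{\bs_i\alpha}$, and since $c_{ij}=0$ the reflections $\bs_i$ and $\bs_j$ commute on $\Z^\I$ (they act on disjoint sets of coordinates, $\{i,\btau i\}$ and $\{j,\btau j\}$, which are disjoint because $c_{ij}=c_{\btau i,\btau j}=c_{i,\btau j}=c_{\btau i,j}=0$), and the scalar factors $b_\bullet$ multiply over coordinates, so $\tTT_i\tTT_j(\tk_\alpha)=\tTT_j\tTT_i(\tk_\alpha)$ follows from $b_{\bs_i\bs_j\alpha}/b_\alpha = b_{\bs_j\bs_i\alpha}/b_\alpha$.

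The substance is checking agreement on the $B_k$. Here I would split into cases according to the position of $k$ relative to $i$ and $j$ in the Dynkin $\imath$quiver. If $k$ is ``far'' from both $i$ and $j$ (i.e.\ $c_{ik}=c_{\btau i,k}=c_{jk}=c_{\btau j,k}=0$ and the relevant generalized-simple/central elements do not interfere), then both composites fix $B_k$ by the last lines of the formula lemmas. If $k$ is adjacent only to $i$ (respectively only to $j$), then $\tTT_j$ (resp.\ $\tTT_i$) fixes $B_k$ while also fixing $\tTT_i(B_k)$ (resp.\ $\tTT_j(B_k)$) --- one must check that $\tTT_i(B_k)$, which is a commutator like $B_kB_i-vB_iB_j$ or a product involving $\tk_i^{-1}$, is left untouched by $\tTT_j$; this holds because $\tTT_j$ fixes $B_k$, $B_i$, and $\tk_i$ (as $c_{ij}=0$ implies $j,\btau j\notin\{i,\btau i\}$ and $\bs_j$ fixes $\alpha_i$), and these are the only ingredients appearing. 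The genuinely nontrivial case is when $k$ is simultaneously adjacent to $i$ and to $j$ --- for instance $k=3$ sitting between the two legs $\{1,2\}$ and $\{5,6\}$ in type $E_6$ with $i\in\{1,2\}$, $j\in\{5,6\}$, or the analogous configuration in type $A_{2r+1}$ or $D_n$. In this case $\tTT_i(B_k)$ is (up to scalars) a double $v$-commutator $[[B_k,B_i]_v,B_{\btau i}]_v + B_k\tk_i$ and one must apply $\tTT_j$ to this expression, use that $\tTT_j$ fixes $B_i,B_{\btau i},\tk_i$ and sends $B_k$ to a similar expression in $B_j,B_{\btau j}$, and verify the result is symmetric in $(i,j)$. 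Because $i$ and $j$ (and their $\btau$-twins) lie in disjoint connected $\imath$subquivers meeting $k$, Lemma~\ref{lem: reduction1} lets me restrict to the $\imath$subquiver $Q^1\cup\{k\}\cup Q^2$ where $Q^1\ni i,\btau i$ and $Q^2\ni j,\btau j$, and Lemma~\ref{lem:reduction3} lets me compute the mixed products $[S_i]*[S_k]*[S_j]$ etc.\ either in $\mod(\iLa)$ or after reflecting, whichever is convenient.

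Rather than grinding through generators I would actually prefer to carry out the verification inside the $\imath$Hall algebra via the commutative diagram \eqref{eq:defT}: it suffices to show $\Gamma_j^{(\bs_i Q)}\circ\Gamma_i^{Q}=\Gamma_i^{(\bs_j Q)}\circ\Gamma_j^{Q}$ as isomorphisms $\tMHg\to\cm\widetilde{\ch}(\bs_i\bs_j Q,\btau)$ (noting $\bs_i\bs_j Q=\bs_j\bs_i Q$ since reversing arrows at $i,\btau i$ and at $j,\btau j$ are independent operations when $c_{ij}=0$). On the generating classes $[\E_\alpha]$ both sides give $[\E'_{\bs_i\bs_j\alpha}]=[\E'_{\bs_j\bs_i\alpha}]$ by \eqref{eqn:reflection 4}. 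On $[S_k]$ for $k$ far from $i$ and $j$ both sides are $[S'_k]$. For $k$ adjacent to $i$ or $j$ one uses \eqref{eqn:reflection 1}: the relevant module $M$ lies in $\Fac T$ for the first reflection and its image stays in $\Fac(T')$ for the second, so the composite is just $F^+$ applied twice, which manifestly depends only on the underlying (commuting) change of orientation at the disjoint vertices. The main obstacle, and where I expect to spend the most effort, is the doubly-adjacent case: here the image $\Gamma_i([S_k])$ is an explicit linear combination of classes (the indecomposable $Y'_0$ with dimension vector $\widehat{S'_k}+\widehat{S'_i}+\widehat{S'_{\btau i}}$, plus a term $[S'_k]*[\E'_i]$), and one must push this through $\Gamma_j$, tracking how each summand sits in $\Fac T'$, and then check the outcome equals the $(i\leftrightarrow j)$-swapped expression. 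This is a finite but somewhat intricate computation of Hall numbers for a rank-$\le 5$ $\imath$quiver; I would organize it by first computing $\Gamma_j$ on each of the relevant indecomposable $\bs_i Q$-modules and on the central $[\E'_i]$, then assembling, and finally invoking the symmetry of the configuration under exchanging the two legs. Lemmas~\ref{lem: reduction1} and \ref{lem:reduction3} are exactly what keep this computation finite-dimensional and honest.
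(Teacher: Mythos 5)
Your proposal is correct and follows the same basic strategy as the paper: check the identity on generators by transporting it to the $\imath$Hall algebras via the diagram \eqref{eq:defT} and the reflection functors. The one point where you diverge is the doubly-adjacent case, which you single out as ``the main obstacle'' requiring an intricate Hall-number computation with the double $v$-commutator expressions. The paper sidesteps this entirely: it chooses the orientation so that \emph{both} $i$ and $j$ are sinks of $Q$, and then for every $l\notin\{i,\btau i,j,\btau j\}$ (adjacent to one, both, or neither of $i,j$) the simple $S_l$ lies in $\Fac T$, its image under the first reflection is again an indecomposable lying in $\Fac T'$ for the second, and by \eqref{eqn:reflection 1} together with Lemma~\ref{lem:reflecting dimen} both composites send $\fu_{\alpha_l}$ to the single basis element $\fu_{\bs_i\bs_j\alpha_l}=\fu_{\bs_j\bs_i\alpha_l}$ --- no expansion into products of simples and no Hall numbers are needed. (Note also that $\Gamma_i([S_k])$ in that case is exactly the single class $[Y_0']$; the double-commutator expression in Lemma~\ref{lem:RefFunctorA} is just its re-expression in terms of simples, so there is no genuine ``linear combination'' to track.) The only remaining case is $l\in\{i,\btau i,j,\btau j\}$, which the paper handles with the explicit formula $\Gamma_i(\fu_{\alpha_i})=v^{1-\delta_{i,\btau i}}\fu_{\upgamma_i}^{-1}*\fu_{\alpha_{\btau i}}$ from Proposition~\ref{prop:reflection} and the observation that $\Gamma_j$ fixes all the ingredients; this matches your ``adjacent only to one'' analysis. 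So your plan would succeed, but committing fully to the module-theoretic route you mention in passing would spare you the computation you budgeted the most effort for.
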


\begin{proof}
Let  $(Q,\btau)$ be the $\imath$quiver such that $\widetilde{\psi}:\tUi\xrightarrow{\sim}\tMHg$. Since $c_{ij}=0$, we  can assume that $i$ and $j$ are sink vertices of $Q$. Then
$i$ (and respectively, $j$) is a sink of $\bs_j Q$ (and respectively, $\bs_i Q$). We shall show that $\Gamma_i\Gamma_j(\fu_{\alpha_l})= \Gamma_j\Gamma_i(\fu_{\alpha_l})$ for any $i\in\I$.

For any $l\notin\{i,\btau i,j,\btau j\}$, from the identity \eqref{eqn:reflection 1} and Corollary~ \ref{lem:reflecting dimen}, we have
$\Gamma_i\Gamma_j(\fu_{\alpha_l})=\fu_{\bs_j\bs_i\alpha_l}$, and
$\Gamma_i\Gamma_j(\fu_{\alpha_l})=\fu_{\bs_j\bs_i\alpha_l}$. Since $c_{ij}=0$, we have
$\bs_i\bs_j=\bs_j\bs_i$, and then $\fu_{\bs_i\bs_j\alpha_l}=\fu_{\bs_j\bs_i\alpha_l}$. So $\Gamma_i\Gamma_j(\fu_{\alpha_l})=\Gamma_j\Gamma_i(\fu_{\alpha_l})$.

For any $l\in\{i,\btau i,j,\btau j\}$, we only prove the identity for $l=i$ since the other cases are similar. It follows from Proposition~ \ref{prop:reflection} that
\[
\Gamma_i\Gamma_j(\fu_{\alpha_i})=\Gamma_i(\fu_{\alpha_i})=v^{1-\delta_{i,\btau i}} \fu_{\upgamma_i}^{-1}*\fu_{\alpha_{\btau i}}.
\]
Therefore, we have
\[
\Gamma_j\Gamma_i(\fu_{\alpha_i})= \Gamma_j(v^{1-\delta_{i,\btau i}} \fu_{\upgamma_i}^{-1}*\fu_{\alpha_{\btau i}})=v^{1-\delta_{i,\btau i}} \fu_{\upgamma_i}^{-1}*\fu_{\alpha_{\btau i}}=\Gamma_i\Gamma_j(\fu_{\alpha_i})
\]
by noting that $c_{i,\btau j}=0$, $c_{\btau i,j}=0$ and $c_{\btau i,\btau j}=0$.
%
%
\end{proof}

The following preparatory lemma will be used in Lemmas~\ref{lem:bgrank2}--\ref{lem:bgij5} below.
\begin{lemma}
\label{lem: BG rank 2}
Let $i,j\in\ci$. Then, for any $l\in\{i,\btau i,j,\btau j\}$, we have
\begin{align}
\label{eqn:BG bij1}
\tTT_i\tTT_j\tTT_i(B_l)=\tTT_j\tTT_i\tTT_j(B_l),& \qquad \text{ if } \quad
{\tiny \xymatrix{ (\btau i =) i & j (=\btau j) \ar@{-}[l]} }
\text{ or }\;
\begin{picture}(30,13)(0,0)
\put(-2,-2){\tiny $\btau i$}
\put(5.5,0){\line(1,0){20}}
\put(27,-2){\tiny $\btau j$}
\put(1,7){\tiny $i$}
\put(5,9){\line(1,0){23}}
\put(29,7){\tiny $j$}
\end{picture}
%
\\
\label{eqn:BG bij2}
\tTT_i\tTT_j\tTT_i\tTT_j(B_l)=\tTT_j\tTT_i\tTT_j\tTT_i(B_l), & \qquad \text{ if }
\begin{picture}(100,40)(0,20)
\put(21,8){\tiny $i$}
\put(17,33){\tiny $\btau i$}
\put(49,20){\tiny $j (=\btau j)$}
\put(25,10){\line(2,1){23}}
\put(25,35){\line(2,-1){23}}
\end{picture}
\end{align}
\end{lemma}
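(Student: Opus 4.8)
The plan is to reduce the braid relations \eqref{eqn:BG bij1}--\eqref{eqn:BG bij2} for the operators $\tTT_i$ to explicit computations inside a rank-$2$ $\imath$Hall algebra, via Lemmas~\ref{lem: reduction1} and \ref{lem:reduction3}. First I would observe that it suffices to verify the identities on the subalgebra of $\tUi$ generated by $B_l, \tk_l$ with $l$ ranging over the $\btau$-orbits of $i$ and $j$; indeed, Lemma~\ref{lem: reduction1} says that $\tTT_i$ restricted to the $\imath$subquiver supported on $\{i,\btau i, j,\btau j\}$ agrees with the ambient $\tTT_i$, and by the remark following Lemma~\ref{lem:reduction3} the generic Hall algebra computation is insensitive to the ambient quiver. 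So one is left with three genuinely rank-$2$ $\imath$quiver cases: the split $A_2$ configuration ($\btau i=i$, $\btau j=j$, $c_{ij}=-1$), the ``doubled $A_1\times A_1$'' configuration where $\btau$ swaps $i\leftrightarrow\btau i$ and $j\leftrightarrow\btau j$ with $c_{ij}=c_{\btau i,\btau j}=-1$ (yielding restricted rank-$2$ of type $A_1\times A_1$, hence a commutation relation should appear — but note the statement claims a \emph{length-$3$} braid relation here, matching $W_\btau$ of type $A_2$ in the relevant ambient diagrams, so I must be careful to use the correct pairing data), and the ``$A_3$ with nontrivial $\btau$'' configuration giving restricted type $B_2$ and hence the length-$4$ relation \eqref{eqn:BG bij2}.

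Next, for each of these rank-$2$ cases I would pin down an explicit acyclic orientation of the relevant small quiver, compute the full list of indecomposable $\iLa$-modules and their classes in $K_0$ (these are exactly the AR quivers drawn in Figures~\ref{figure 2} and \ref{figure 3} for the split $A_2$ and the $\btau\neq\Id$ $A_3$ cases), and then apply the explicit formulas for $\tTT_i$ on generators from Lemmas~\ref{lemma:braid group of split involution}--\ref{lem:braid group of E}. Concretely, I compute $\tTT_i\tTT_j\tTT_i(B_l)$ and $\tTT_j\tTT_i\tTT_j(B_l)$ (resp. the length-$4$ words) term by term, expressing everything in terms of $B$'s and $\tk$'s, and check they coincide. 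The bookkeeping is eased by working on the Hall-algebra side: $\tTT_i$ corresponds to $\Gamma_i$, and on $\Fac T$ one has simply $\Gamma_i([M]) = [F_i^+(M)]$ by \eqref{eqn:reflection 1}, with $F_i^+$ changing dimension vectors by $\bs_i$ (Lemma~\ref{lem:reflecting dimen}); the only subtle terms come from the simple modules $S_i, S_{\btau i}$ at the sink, governed by \eqref{eqn:reflection 2}--\eqref{eqn:reflection 3}, and from the ``new'' cubic-bracket terms appearing when $c_{ij}=c_{\btau i,j}=-1$.

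The main obstacle I anticipate is the length-$4$ relation \eqref{eqn:BG bij2} in the $A_3$, $\btau\neq\Id$ case: here the word $\tTT_i\tTT_j\tTT_i\tTT_j$ involves composing four reflection functors, the intermediate quivers change orientation at each step, and the images of $B_l$ under each partial composition are genuinely inhomogeneous elements (string modules, modules with multiplicity, and products with powers of $\tk$), as one sees from the size of the AR quiver in Figure~\ref{figure 3}. Keeping track of the $\sqq$-powers coming from the twisted multiplication \eqref{eqn:twsited multiplication} and from the Euler-form factors in \eqref{eqn:reflection functor 2} through four iterations is where errors are most likely; I would organize this by first establishing, once and for all, how each generator $B_l$ ($l\in\{i,\btau i,j,\btau j\}$) transforms under a single $\tTT$ in each of the relevant orientations, tabulating the results, and then simply substituting. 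A useful consistency check at the end is that both sides must have the same ``leading'' dimension vector, namely $\bs_i\bs_j\bs_i(\alpha_l)=\bs_j\bs_i\bs_j(\alpha_l)$ (resp. the length-$4$ analogue), which holds because these are the braid relations already valid in the restricted Weyl group $W_\btau$ by Lemma~\ref{lem:iWeyl}; this reduces the verification to checking the scalar and $\tk$-coefficients.
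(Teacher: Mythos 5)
Your proposal is correct and follows essentially the same route as the paper: reduce to the rank-2 $\imath$subquiver via Lemma~\ref{lem: reduction1}, transport the computation to the $\imath$Hall algebra side, and track each $\fu_{\alpha_l}$ through the successive reflection functors using $\Gamma_\ell([M])=[F_\ell^+(M)]$ together with Lemma~\ref{lem:reflecting dimen}, with Proposition~\ref{prop:reflection} handling the final step where the module becomes a simple at a sink. The paper's execution is slightly leaner than what you sketch — it never needs the full AR quivers or the cubic-bracket formulas, since for $l\in\{i,\btau i,j,\btau j\}$ the class $\fu_{\alpha_l}$ stays indecomposable under each reflection and only its dimension vector changes — but this is a matter of bookkeeping, not of substance.
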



%

\begin{proof}
From Lemma \ref{lem: reduction1}, without loss of generality, we assume that $\tUi$ is of rank 2, generated by $B_l$, $\tk_l$ for $l\in \{i,j,\btau i,\btau j\}$. Let  $(Q,\btau)$ be the $\imath$quiver and recall $\widetilde{\psi}:\tUi\stackrel{\sim}{\longrightarrow}\tMHg$ from \eqref{eq:psi2}.

We shall only prove \eqref{eqn:BG bij2}, while skipping a similar proof for \eqref{eqn:BG bij1}.





First, we assume that $j$ is a sink of $Q$. Then $\Gamma_i\Gamma_j\Gamma_i\Gamma_j$ is well defined since $i$ is a sink of $\bs_j Q$, and $j$ is a sink of $\bs_i\bs_j Q$, and so on.  Note
that $\bs_i \bs_j\bs_i\bs_j Q=Q$. So
\begin{align*}
\tTT_i\tTT_j\tTT_i\tTT_j(B_i)
=& \widetilde{\psi}^{-1}\Gamma_i\Gamma_j\Gamma_i\Gamma_j\widetilde{\psi}(B_i)
\\
=&\widetilde{\psi}^{-1}\Gamma_i\Gamma_j\Gamma_i \Gamma_j  (-\frac{1}{v^2-1}\fu_{\alpha_i})
=-\frac{1}{v^2-1}\widetilde{\psi}^{-1}\Gamma_i\Gamma_j\Gamma_i(\fu_{\alpha_{i}+\alpha_j})
\\
=&-\frac{1}{v^2-1}\widetilde{\psi}^{-1}\Gamma_i\Gamma_j(\fu_{\alpha_j+\alpha_{\btau i}})
=-\frac{1}{v^2-1}\widetilde{\psi}^{-1}\Gamma_i(\fu_{\alpha_{\btau i}})
\\
=&-\frac{1}{v^2-1}\widetilde{\psi}^{-1}(v \fu_{-\gamma_{\btau i}}* \fu_{\alpha_i} )
=v \tk_{\btau i}^{-1}B_i.
\end{align*}

On the other hand, assuming $i$ is a sink, we have
\begin{align*}
\tTT_j\tTT_i\tTT_j\tTT_i(B_i)
=& \widetilde{\psi}^{-1}\Gamma_j\Gamma_i\Gamma_j\Gamma_i\widetilde{\psi}(B_i)
\\
=&\widetilde{\psi}^{-1}\Gamma_j\Gamma_i\Gamma_j \Gamma_i  (-\frac{1}{v^2-1}\fu_{\alpha_i})
=-\frac{1}{v^2-1}\widetilde{\psi}^{-1}\Gamma_j\Gamma_i\Gamma_j(v\fu_{-\gamma_{ i}}*\fu_{\alpha_{\btau i}})
\\
=&-\frac{v}{v^2-1}\widetilde{\psi}^{-1}\Gamma_j\Gamma_i(\fu_{-\gamma_i-\gamma_j}* \fu_{\alpha_{\btau i+\alpha_j}})
= -\frac{v}{v^2-1}\widetilde{\psi}^{-1}\Gamma_j(\fu_{-\gamma_{\btau i}-\gamma_j} *\fu_{\alpha_{i}+\alpha_j})
\\=&-\frac{v}{v^2-1}\widetilde{\psi}^{-1}( \fu_{-\gamma_{\btau i}}* \fu_{\alpha_i} )
=v \tk_{\btau i}^{-1}B_i.
\end{align*}
This proves \eqref{eqn:BG bij2} for $l=i$.

For $l=j$ or $l=\btau i$, the proof is similar and hence omitted.
\end{proof}

\subsection{Braid relations II}

\begin{lemma}
\label{lem:bgrank2}
Suppose that $i,j\in\ci$ are contained in a Dynkin subdiagram of the form $\small \xymatrix{ (\btau i =) i & j (=\btau j) \ar@{-}[l]}$. Then we have
\begin{align*}
\tTT_i\tTT_j\tTT_i=\tTT_j\tTT_i\tTT_j.
\end{align*}
\end{lemma}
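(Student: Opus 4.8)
The statement to prove is the braid relation $\tTT_i\tTT_j\tTT_i=\tTT_j\tTT_i\tTT_j$ for $i,j\in\ci$ sitting in a rank-2 Dynkin subdiagram of type $A_2$ with $\btau i=i$, $\btau j=j$ (i.e. both fixed by the involution). The strategy is exactly the same as that of Lemma~\ref{lem: BG rank 2}: reduce to a rank-2 computation in the $\imath$Hall algebra, then use the isomorphism $\widetilde\psi$ of Theorem~\ref{generic U MRH} together with the reflection-functor formulas for $\Gamma_i$ in Proposition~\ref{prop:reflection} and Lemmas~\ref{lemma:reflection for split involution}--\ref{lem:RefFunctorE} to carry out the calculation on the Hall side, where it becomes a finite sequence of applications of $\Gamma_i$ and $\Gamma_j$ on the generic generators $\fu_{\alpha_i}$, $\fu_{\alpha_j}$, $\fu_{\upgamma_i}$, $\fu_{\upgamma_j}$.

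\textbf{Key steps.} First, by Lemma~\ref{lem: reduction1} one reduces to $\tUi$ of rank $2$ generated by $B_i,B_j,\tk_i,\tk_j$, corresponding to the split Dynkin $\imath$quiver $Q=(\xymatrix{i\ar[r]&j})$ of type $A_2$ with $\btau=\Id$; fix $\widetilde\psi\colon\tUi\xrightarrow{\sim}\tMHg$ from \eqref{eq:psi2}. Second, one observes that for a sink, the composites $\Gamma_i\Gamma_j\Gamma_i$ and $\Gamma_j\Gamma_i\Gamma_j$ are well defined: if $j$ is a sink of $Q$ then $i$ is a sink of $\bs_j Q$, $j$ is a sink of $\bs_i\bs_j Q$, and $\bs_i\bs_j\bs_i Q=\bs_j\bs_i\bs_j Q=Q^{*}$ (the opposite quiver); likewise with $i$ and $j$ interchanged. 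So it suffices to check that $\tTT_i\tTT_j\tTT_i$ and $\tTT_j\tTT_i\tTT_j$ agree on each generator $B_i$, $B_j$ (and on $\tk_i,\tk_j$, which is immediate from $\tTT_i(\tk_\alpha)=\tfrac{b_{\bs_i\alpha}}{b_\alpha}\tk_{\bs_i\alpha}$ and $\bs_i\bs_j\bs_i=\bs_j\bs_i\bs_j$ in $W$). Third, one computes $\tTT_i\tTT_j\tTT_i(B_j)=\widetilde\psi^{-1}\Gamma_i\Gamma_j\Gamma_i\widetilde\psi(B_j)$ by pushing $\widetilde\psi(B_j)=-\tfrac{1}{v^2-1}\fu_{\alpha_j}$ through the three reflections: $\Gamma_i$ sends $\fu_{\alpha_j}$ to $\fu_{\alpha_i+\alpha_j}$ (by \eqref{eqn:reflection 1} and Lemma~\ref{lem:reflecting dimen}), then $\Gamma_j$ sends $\fu_{\alpha_i+\alpha_j}$ to $\fu_{\alpha_i}$, then $\Gamma_i$ sends $\fu_{\alpha_i}$ to $v\,\fu_{\upgamma_i}^{-1}*\fu_{\alpha_i}$ via \eqref{eqn:reflection 2}; applying $\widetilde\psi^{-1}$ gives $-v\tk_i^{-1}B_i$ up to the explicit scalar bookkeeping. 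Running the other composite $\tTT_j\tTT_i\tTT_j(B_j)$ the same way (now $\widetilde\psi(B_j)$ first hits $\Gamma_j$, which turns $\fu_{\alpha_j}$ into $v\,\fu_{\upgamma_j}^{-1}*\fu_{\alpha_j}$, then two further reflections) must yield the same element; one repeats this for the generator $B_i$ and for the $\tk$'s. Throughout, the scalar factors $b_i=-v^2$ (since $\btau i=i$) from \eqref{def:b} and the $v$-powers coming from the twisting in \eqref{eqn:twsited multiplication} and Theorem~\ref{thm:Gamma} have to be tracked carefully but are entirely mechanical.

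\textbf{Main obstacle.} The only real work is the bookkeeping of the scalar prefactors: the $\sqq$-powers from the twisted multiplication and the $q^{-\langle T_M,M\rangle}$ terms in Theorem~\ref{thm:Gamma}, together with the $b_{\bs_i\alpha}/b_\alpha$ factors on the Cartan part, must combine to exactly match on both sides. Conceptually there is nothing to prove once one knows $\Gamma_i$ and $\Gamma_j$ satisfy $\Gamma_i\Gamma_j\Gamma_i=\Gamma_j\Gamma_i\Gamma_j$ as maps $\tMHg\to\cm\widetilde\ch(Q^{*},\btau)$, because the reflection functors $F_i^+$ themselves satisfy the braid relation at the categorical level (the three-fold composite on the module categories is an equivalence independent of the order of the mutations, being the derived equivalence attached to the corresponding tilting complex); so an alternative, cleaner route is to invoke the uniqueness in Theorem~\ref{thm:tilting module} and Corollary~\ref{cor:Tiso} to identify $F_i^+ F_j^+ F_i^+$ with $F_j^+ F_i^+ F_j^+$ up to canonical isomorphism, and then transport this identity through $\Gamma$ and $\widetilde\psi$. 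I expect the proof in the paper to follow the first (computational) route, paralleling Lemma~\ref{lem: BG rank 2}, and to simply state that the remaining cases ($l=i$ versus $l=j$) are entirely analogous. I would present it that way: do the computation for one generator in full, then remark that the others follow by the same manipulation, and that the $\tk$-part is forced by the Weyl group identity $\bs_i\bs_j\bs_i=\bs_j\bs_i\bs_j$.
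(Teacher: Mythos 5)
There is a genuine gap. You reduce to the rank-2 subalgebra generated by $B_i,B_j,\tk_i,\tk_j$ and only verify the braid identity on those generators, but $\tTT_i\tTT_j\tTT_i$ and $\tTT_j\tTT_i\tTT_j$ are automorphisms of the \emph{full} algebra $\tUi$, so they must be shown to agree on every generator $B_l$, $l\in\I$. The cases $l\in\{i,j\}$ are exactly what the preparatory Lemma~\ref{lem: BG rank 2} (equation \eqref{eqn:BG bij1}) already covers, and the paper's proof of Lemma~\ref{lem:bgrank2} explicitly invokes it and then devotes its entire body to the remaining case: a third vertex $l\notin\{i,j\}$ with $c_{il}=-1$ or $c_{jl}=-1$. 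For such $l$ the generator $B_l$ is moved nontrivially by one of the two reflections (e.g.\ $\tTT_j(B_l)=B_lB_j-vB_jB_l$ when $c_{jl}=-1$), and one must check, using the explicit formulas of Lemma~\ref{lemma:braid group of split involution} together with $\tTT_j\tTT_i(B_j)=B_i$, that both triple composites send $B_l$ to $B_lB_jB_i-vB_lB_iB_j-vB_jB_iB_l+v^2B_iB_jB_l$. Your proposal never addresses these vertices, so it proves only the part of the statement that the paper delegates to Lemma~\ref{lem: BG rank 2}, and omits the part that is actually new in Lemma~\ref{lem:bgrank2}.

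A secondary caveat: your suggested "cleaner route" via a categorical braid relation $F_i^+F_j^+F_i^+\cong F_j^+F_i^+F_j^+$ is not established anywhere in the paper and is not automatic from Theorem~\ref{thm:tilting module}; the two composites are equivalences onto the same target category, but identifying them (up to the precise scalars entering $\Gamma_\ell$ in Theorem~\ref{thm:Gamma}) would itself require an argument comparable in length to the direct computation. As stated it cannot be invoked as a shortcut.
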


\begin{proof}

By definition,  from Lemma \ref{lemma:reflection for split involution} (or Lemma~\ref{lemma:braid group of split involution}), we have $\tTT_i(\tk_\alpha)=\frac{b_{s_i\alpha}}{b_\alpha}\tk_{s_i\alpha}, \text{ for }\alpha\in \Z^\I$, and
\begin{align}
\label{eqn:braid group3}
\tTT_i(B_l)=\left\{ \begin{array}{ll} (-v^2\tk_i)^{-1}B_i,  &\text{ if }l=i \\
B_l & \text{ if }c_{il}=0,\\
B_lB_i-vB_iB_l, & \text{ if }c_{il}=-1 . \end{array}\right.
\end{align}


From Lemma \ref{lem: BG rank 2}, it is enough to check for the case $c_{il}=-1$ or $c_{jl}=-1$. We only prove the identity in the case $c_{jl}=-1$ since the other case is similar. Note that $c_{il}=0$.
Then
\begin{align*}
\tTT_i\tTT_j\tTT_i(B_l)=& \tTT_i\tTT_j(B_l)=\tTT_i(B_lB_j-vB_jB_l   ) \\
=&B_lB_jB_i-vB_lB_iB_j-vB_jB_iB_l+v^2B_iB_jB_l.
\end{align*}

Similar to the proof of Lemma \ref{lem: BG rank 2}, we have
$\tTT_j\tTT_i(B_j)=B_i$.
Then
\begin{align*}
\tTT_j\tTT_i\tTT_j(B_l)=& \tTT_j (B_lB_jB_i-vB_lB_iB_j-vB_jB_iB_l+v^2B_iB_jB_l)
\\
=&\tTT_j (B_l) \tTT_j \tTT_i(B_j)- v  \tTT_j \tTT_i(B_j) \tTT_j(B_l)
 \\
 =&(B_lB_j-vB_jB_l) B_i-vB_i (B_lB_j-vB_jB_l)
  \\
=&B_lB_jB_i-vB_lB_iB_j-vB_jB_iB_l+v^2B_iB_jB_l
\\
=&\tTT_i\tTT_j\tTT_i(B_l).
\end{align*}

For $k_\alpha$, it is obvious that
$
\tTT_i\tTT_j\tTT_i(\tk_\alpha)=\tTT_j\tTT_i\tTT_j(\tk_\alpha).
$

Combining the above, we have proved
\begin{align*}
\tTT_i\tTT_j\tTT_i=\tTT_j\tTT_i\tTT_j.
\end{align*}
The lemma is proved.
\end{proof}

\subsection{Braid relations III}

\begin{lemma}
Suppose that $i,j\in\ci$ are contained in a Dynkin subdiagram
$
\begin{picture}(30,13)(0,0)
\put(-2,-2){\tiny $\btau i$}
\put(5.5,0){\line(1,0){20}}
\put(27,-2){\tiny $\btau j$.}
\put(1,7){\tiny $i$}
\put(5,9){\line(1,0){23}}
\put(29,7){\tiny $j$}
\end{picture}$
\;\; Then
\begin{align*}
\tTT_i\tTT_j\tTT_i=\tTT_j\tTT_i\tTT_j.
\end{align*}
\end{lemma}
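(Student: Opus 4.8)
The plan is to run the same argument as in the proof of Lemma~\ref{lem:BGij=0}: transport the desired identity to the generic $\imath$Hall algebras via the isomorphisms $\widetilde{\psi}$ of \eqref{eq:psi2} and the commuting square \eqref{eq:defT}, and verify it on the algebra generators $\fu_{\alpha_l}$ and $\fu_{\upgamma_l}$, $l\in\I$. By Lemma~\ref{lem: reduction1} I may work throughout inside the $\imath$subquiver spanned by $\{i,\btau i,j,\btau j\}$ together with the vertices adjacent to this set. For each generator I will compute $\tTT_i\tTT_j\tTT_i$ and $\tTT_j\tTT_i\tTT_j$ as explicit elements of $\tUi$, using for each of the two words an orientation of the quiver adapted to its first reflection (the answer being independent of this choice by the remark following \eqref{eq:Up=T2}), and compare.

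On the toral generators this is immediate: from $\tTT_m(\tk_\alpha)=\tfrac{b_{\bs_m\alpha}}{b_\alpha}\tk_{\bs_m\alpha}$ one gets $\tTT_i\tTT_j\tTT_i(\tk_\alpha)=\tfrac{b_{\bs_i\bs_j\bs_i\alpha}}{b_\alpha}\tk_{\bs_i\bs_j\bs_i\alpha}$ and the analogous formula for $\tTT_j\tTT_i\tTT_j$, while $\bs_i\bs_j\bs_i=\bs_j\bs_i\bs_j$ in $W_\btau$ because $c_{i,\btau j}=c_{\btau i,j}=0$ (this relation is the product of the two commuting ordinary $A_2$-braid relations $s_is_js_i=s_js_is_j$ and $s_{\btau i}s_{\btau j}s_{\btau i}=s_{\btau j}s_{\btau i}s_{\btau j}$; cf.\ Lemma~\ref{lem:iWeyl}), so both sides agree. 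If $l\notin\{i,\btau i,j,\btau j\}$ is not adjacent to this set, then $\tTT_i$ and $\tTT_j$ both fix $B_l$ (equivalently, each reflection fixes $\fu_{\alpha_l}$ by \eqref{eqn:reflection 1} and Lemma~\ref{lem:reflecting dimen}), so there is nothing to check. This leaves finitely many generators: $B_l$ for $l\in\{i,\btau i,j,\btau j\}$, and $B_l$ for $l$ adjacent to this set.

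For $l\in\{i,\btau i,j,\btau j\}$ the identity is exactly the generic form of \eqref{eqn:BG bij1} in Lemma~\ref{lem: BG rank 2}. For $l$ adjacent to the set, since a Dynkin diagram has no triangles and no multiple edges there are just two possibilities. In the first, $\btau l\neq l$ and $l$ is adjacent to exactly one of $j,\btau j$ (after applying $\btau$ we may take $c_{jl}=-1$ and $c_{il}=c_{\btau i,l}=c_{\btau j,l}=0$); then $l$ plays the role of ``$j$'' relative to $i$, the relevant values $\tTT_i(B_j),\tTT_j(B_i),\tTT_j(B_l),\tTT_i(B_l)$ are read off from Lemma~\ref{lem:braid group of A} (or Lemma~\ref{lem:braid group of D}, Lemma~\ref{lem:braid group of E}), and, together with the auxiliary identity for $\tTT_j\tTT_i(B_j)$ obtained by a short Hall-algebra computation in the spirit of Lemma~\ref{lem: BG rank 2}, the verification proceeds along the lines of the proof of Lemma~\ref{lem:bgrank2}, using also $[B_i,B_l]=0$. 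In the second possibility $\btau l=l$ and $l$ is adjacent to both members of $\{j,\btau j\}$ (or of $\{i,\btau i\}$), so the corresponding two off-diagonal Cartan entries are both $-1$; this is the vertex ``$0$'' in type $A_{2r+1}$ (when one of $i,j$ is adjacent to it) and the trivalent vertex in type $E_6$ (with $\{i,j\}=\{1,2\}$, $l=3$).

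The main obstacle is this second case. Here $\tTT_j(B_l)$ (or $\tTT_i(B_l)$) is the nested $v$-bracket $-v^{-1}\big[[B_l,B_{\bullet}]_v,B_{\btau\bullet}\big]_v+B_l\tk_{\bullet}$ of Lemma~\ref{lem:braid group of A}/Lemma~\ref{lem:braid group of E}, and pushing this expression through the remaining two reflections --- carefully tracking the Hall products and the $\tk$-factors that appear whenever a reflection is applied at a simple module --- is a finite but lengthy bookkeeping, of the same nature as, and appreciably longer than, the computation in the proof of Lemma~\ref{lem:bgrank2}. No new idea is needed: all the requisite reflection-functor formulas are provided by Lemmas~\ref{lemma:reflection for split involution}--\ref{lem:RefFunctorE}, and one checks directly that $\tTT_i\tTT_j\tTT_i(B_l)$ and $\tTT_j\tTT_i\tTT_j(B_l)$ reduce to the same element of $\tUi$.
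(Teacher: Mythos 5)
Your overall architecture coincides with the paper's: reduce to generators, dispose of the toral part and of $l\in\{i,\btau i,j,\btau j\}$ via the second configuration of \eqref{eqn:BG bij1} in Lemma~\ref{lem: BG rank 2}, treat an adjacent $l$ with $\btau l\neq l$ as in Lemma~\ref{lem:bgrank2}, and isolate the genuinely hard subcase $\btau l=l$ with $c_{jl}=c_{\btau j,l}=-1$. Your case analysis is complete and your use of orientation-independence and of Lemma~\ref{lem: reduction1} matches the paper.

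The weakness is that the decisive subcase is asserted rather than carried out, and the assertion that ``no new idea is needed'' undersells what actually makes the computation close. The paper does \emph{not} push the nested $v$-bracket through the remaining reflections term by term: doing so would end with two large polynomials in the $B$'s whose equality one would have to verify from a presentation of $\tUi$ --- exactly the tedium the whole Hall-algebra approach is designed to avoid. Instead, the key mechanism is that $\tTT_i\tTT_j\tTT_i(B_l)$ is, up to the scalar $\frac{-1}{v^2-1}$, the image under $\widetilde{\psi}^{-1}$ of the class of the \emph{single indecomposable} of dimension vector $\alpha_i+\alpha_{\btau i}+\alpha_j+\alpha_{\btau j}+\alpha_l$ (this is \eqref{eq:Tiji}, a consequence of \eqref{eqn:reflection 1} and Lemma~\ref{lem:reflecting dimen}); one then expands this class in two different ways as $\Q(v)$-combinations of Hall products --- once grouping $\fu_{\alpha_i+\alpha_j}$ with $\fu_{\alpha_{\btau i}+\alpha_{\btau j}}$ as in \eqref{eqn: bgij1}, once grouping $\fu_{\alpha_l+\alpha_j+\alpha_{\btau j}}$ with $\fu_{\alpha_i},\fu_{\alpha_{\btau i}}$ as in \eqref{eq:uu} --- the first expansion being adapted to applying $\Gamma_j$ and the second to recognizing the result, with Lemma~\ref{lem:reduction3} invoked at each step to switch between the orientations $Q'$ and $Q''$ on which the two expansions live, and with the correction terms involving $\fu_{\upgamma_{\btau i}}\fu_{\upgamma_{\btau j}}$ tracked so that they recombine. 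Without exhibiting these two expansions (or an equivalent device), the comparison of $\tTT_i\tTT_j\tTT_i(B_l)$ with $\tTT_j\tTT_i\tTT_j(B_l)$ in this subcase remains unproved, so the core of the lemma is still open in your write-up even though the strategy is the right one.
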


\begin{proof}
By definition,  from Lemma \ref{lem:RefFunctorA} (or Lemma~\ref{lem:braid group of A}), we have $\tTT_i(\tk_\alpha) =\frac{b_{s_i\alpha}}{b_\alpha}\tk_{s_i\alpha}, \text{ for }\alpha\in \Z^\I$, and
\begin{align}
\label{eqn:braid2}
\tTT_i(B_l) &=\left\{ \begin{array}{ll}
B_lB_i -vB_iB_l,  & \text{ if }c_{il}=-1 \text{ and } c_{\btau i,l}=0,\\
B_{\btau i}B_l-v^{-1} B_lB_{\btau i},  & \text{ if } c_{il}=0 \text{ and }c_{\btau i,l}=-1 ,\\
-{v^{-1} [[B_l,B_i]_v,B_{\btau i}]_v}+B_lk_{i},  & \text{ if } c_{i, l}=-1, \btau l= l \\
-\tk_{i}^{-1}B_{\btau i},  & \text{ if }l=i,\\
-v^2\tk_{\btau i}^{-1} B_i,  &\text{ if }l=\btau i,\\
B_j, & \text{otherwise;}
 \end{array}\right.
\end{align}





From Lemma \ref{lem: BG rank 2}, it suffices to check for the case $l\notin \{i,j,\btau i,\btau j\}$, and $c_{il}=-1$ or $c_{jl}=-1$. We shall only prove the case when $c_{jl}=-1$ as the other case is similar. This case is divided into 2 subcases.

\underline{Subcase (i):}  $\btau l\neq l$. This subcase is similar to Lemma \ref{lem:bgrank2}, and we skip the details.
\vspace{2mm}


\underline{Subcase (ii):}  $\btau l=l$. Then $c_{jl}=-1=c_{\btau j,l}$. By Lemma \ref{lem: reduction1}, we may assume that $Q$ is $i\leftarrow j\leftarrow l\rightarrow \btau j \rightarrow\btau i$.
As in the proof of Lemma \ref{lem: BG rank 2}, $\Gamma_i\Gamma_j\Gamma_i$ is well defined, and
\begin{align}
 \label{eq:Tiji}
\tTT_i\tTT_j\tTT_i(B_l)=& \frac{-1}{v^2-1}\widetilde{\psi}_{Q'}^{-1}\Gamma_i\Gamma_j\Gamma_i(\fu_{\alpha_l} )
=\frac{-1}{v^2-1}\widetilde{\psi}_{Q'}^{-1}(\fu_{\alpha_i+ \alpha_{\btau i}+ \alpha_j +\alpha_{\btau j}+\alpha_l}).
\end{align}
Here we have denoted $Q'=i\rightarrow j\rightarrow l\leftarrow \btau j\leftarrow \btau i$. Note that $\tTT_i(B_l)=B_l$.

Let $'\iLa$ be the $\imath$quiver algebra of $(Q',\btau)$. We shall drop the notation $*$ for the multiplication in $\tMHg$ in this proof. Then
in $\tM ('\iLa)$, we have
\begin{align*}
[M_q & ({\alpha_i+\alpha_j})]  [M_q({\alpha_{\btau i}+\alpha_{ \btau j}})]  [M_q({\alpha_l})]
\\
=&\sqq^{-2} \big( [M_q({\alpha_i+\alpha_j}) \oplus M_q({\alpha_{\btau i} +\alpha_{\btau j}})   +(q-1) [M_q({\alpha_i +\alpha_j +\alpha_l}) \oplus M_q({\alpha_{\btau i} +\alpha_{\btau j}}) ]
\\
&+ (q-1) [M_q({\alpha_i +\alpha_j }) \oplus M_q({\alpha_{\btau i} +\alpha_{\btau j }+\alpha_l}) ] + (q-1)^2 [ M_q({\alpha_i+\alpha_j+\alpha_{\btau i} +\alpha_{\btau j} +\alpha_l})] \big)
\\
&+(q-1) [M_q({\upgamma_i})] [M_q({\upgamma_j})]   [M_q({\alpha_l})],
\\  \\
%
[M_q & ({\alpha_i+\alpha_j}) ]  [M_q({\alpha_l})] [M_q({\alpha_{\btau i}+\alpha_{ \btau j}})]
\\
=&\sqq^{-1} \big( [M_q({\alpha_i+\alpha_j}) \oplus M_q({\alpha_{\btau i} +\alpha_{\btau j}} ) ]  +(q-1) [M_q({\alpha_i +\alpha_j +\alpha_l} )\oplus M_q({\alpha_{\btau i} +\alpha_{\btau j}} )] \big)
\\
&+\sqq(q-1) [M_q({\upgamma_i})] [M_q({\upgamma_j} )]  [M_q({\alpha_l})],
\end{align*}
\begin{align*}
 [M_q& ({\alpha_{\btau i}+\alpha_{ \btau j}})]  [M_q({\alpha_i+\alpha_j} )] [ M_q({\alpha_l})]
\\
=&\sqq^{-1} \big( [M_q({\alpha_i+\alpha_j} )\oplus M_q({\alpha_{\btau i} +\alpha_{\btau j}} ) ]  +(q-1) [M_q({\alpha_i +\alpha_j })\oplus M_q({\alpha_{\btau i} +\alpha_{\btau j} +\alpha_l} )]  \big)
\\
&+\sqq(q-1) [M_q({\upgamma_{\btau i}})] [M_q({\upgamma_{\btau j}} )]  [M_q({\alpha_l})],
\\ \\
%
 [M_q& ({\alpha_l})] [ M_q({\alpha_{\btau i}+\alpha_{ \btau j}})]  [M_q({\alpha_i+\alpha_j} )]
\\
=&[M_q({\alpha_i+\alpha_j}) \oplus M_q({\alpha_{\btau i} +\alpha_{\btau j}}  )]  +(q-1)  [M_q({\alpha_l} )] [M_q({\upgamma_{\btau i}})] [ M_q({\upgamma_{\btau j}})].
\end{align*}
Then we obtain
\begin{align*}
[ M_q & ({\alpha_i+\alpha_j+\alpha_{\btau i} +\alpha_{\btau j} +\alpha_l})]
\\ \notag
=& \frac{1}{(q-1)^2} \big(q [M_q({\alpha_i+\alpha_j} )] [M_q({\alpha_{\btau i}+\alpha_{ \btau j}})]  [M_q({\alpha_l})]
\\
& - \sqq[ M_q({\alpha_i+\alpha_j})]   [M_q({\alpha_l})] [M_q({\alpha_{\btau i}+\alpha_{ \btau j}})]
\\
& -\sqq [M_q({\alpha_{\btau i}+\alpha_{ \btau j}})]  [M_q({\alpha_l})] [M_q({\alpha_i+\alpha_j} +   \fu_{\alpha_l})]  [M_q({\alpha_{\btau i}+\alpha_{ \btau j}})]  [M_q({\alpha_i+\alpha_j} )] \big) \notag
 \\
 &+[M_q({\alpha_l})]  [M_q({\upgamma_{\btau i}})] [M_q({\upgamma_{\btau j}})].\notag
\end{align*}

So in $\widetilde{\ch}( Q',\btau)$, we have
\begin{align}
\label{eqn: bgij1}
 \fu_{\alpha_i+\alpha_j+\alpha_{\btau i} +\alpha_{\btau j} +\alpha_l}
=& \frac{1}{(v^2-1)^2} \big (v^2 \fu_{\alpha_i+\alpha_j}  \fu_{\alpha_{\btau i}+\alpha_{ \btau j}}  \fu_{\alpha_l} - v \fu_{\alpha_i+\alpha_j}   \fu_{\alpha_l} \fu_{\alpha_{\btau i}+\alpha_{ \btau j}}
\\
&
 -v \fu_{\alpha_{\btau i}+\alpha_{ \btau j}}  \fu_{\alpha_l} \fu_{\alpha_i+\alpha_j} +   \fu_{\alpha_l}  \fu_{\alpha_{\btau i}+\alpha_{ \btau j}}  \fu_{\alpha_i+\alpha_j} \big) +\fu_{\alpha_l}  \fu_{\upgamma_{\btau i}} \fu_{\upgamma_{\btau j}}.\notag
\end{align}

Denote $Q''=i\rightarrow j\leftarrow l\rightarrow \btau j\leftarrow \btau i$.
It follows from \eqref{eqn: bgij1} and Lemma \ref{lem: reduction1} that
\begin{align*}
\tTT_j\tTT_i\tTT_j(B_l)
=&\frac{-1}{v^2-1}\tTT_j \widetilde{\psi}_{Q'}^{-1}(\fu_{\alpha_i+ \alpha_{\btau i}+ \alpha_j +\alpha_{\btau j}+\alpha_l})
\\
=&\frac{-1}{(v^2-1)^4}\tTT_j \widetilde{\psi}_{Q''}^{-1} \big(v^2 \fu_{\alpha_i+\alpha_j}  \fu_{\alpha_{\btau i}+\alpha_{ \btau j}}  \fu_{\alpha_l} - v \fu_{\alpha_i+\alpha_j}   \fu_{\alpha_l} \fu_{\alpha_{\btau i}+\alpha_{ \btau j}}
\\
& \qquad\qquad -v \fu_{\alpha_{\btau i}+\alpha_{ \btau j}}   \fu_{\alpha_l}  \fu_{\alpha_i+\alpha_j} +   \fu_{\alpha_l}  \fu_{\alpha_{\btau i}+\alpha_{ \btau j}}  \fu_{\alpha_i+\alpha_j} \big)
\\
&+\frac{-1}{(v^2-1)^2}\tTT_j \widetilde{\psi}_{Q''}^{-1}( \fu_{\alpha_l}  \fu_{\upgamma_{\btau i}} \fu_{\upgamma_{\btau j}}).
\end{align*}

Therefore, we have
\begin{align}
  \label{eq:TTT}
\tTT_j & \tTT_i\tTT_j(B_l)
\\
=&\frac{-1}{(v^2-1)^4}  \widetilde{\psi}_{\bs_jQ''}^{-1} \Gamma_j \big(v^2 \fu_{\alpha_i+\alpha_j}  \fu_{\alpha_{\btau i}+\alpha_{ \btau j}}  \fu_{\alpha_l} - v \fu_{\alpha_i+\alpha_j}   \fu_{\alpha_l} \fu_{\alpha_{\btau i}+\alpha_{ \btau j}}
 \notag
\\
&\qquad  -v \fu_{\alpha_{\btau i}+\alpha_{ \btau j}} \fu_{\alpha_l} \fu_{\alpha_i+\alpha_j}+   \fu_{\alpha_l}  \fu_{\alpha_{\btau i}+\alpha_{ \btau j}}  \fu_{\alpha_i+\alpha_j} \big)
 +\frac{-1}{(v^2-1)^2} \widetilde{\psi}_{\bs_jQ''}^{-1} \Gamma_j ( \fu_{\alpha_l}  \fu_{\upgamma_{\btau i}} \fu_{\upgamma_{\btau j}})
\notag
\\
=&\frac{-1}{(v^2-1)^4}  \widetilde{\psi}_{Q'}^{-1} \big(v^2 \fu_{\alpha_i}  \fu_{\alpha_{\btau i}}  \fu_{\alpha_l+\alpha_{j} +\alpha_{\btau j}} - v \fu_{\alpha_i}    \fu_{\alpha_l+\alpha_{j} +\alpha_{\btau j}} \fu_{\alpha_{\btau i}}
 \notag
\\
&-v \fu_{\alpha_{\btau i}} \fu_{\alpha_l+\alpha_{j} +\alpha_{\btau j}} \fu_{\alpha_i}+   \fu_{\alpha_l+\alpha_{j} +\alpha_{\btau j}}  \fu_{\alpha_{\btau i}}  \fu_{\alpha_i} \big)
+\frac{-1}{(v^2-1)^2} \widetilde{\psi}_{Q'}^{-1}(  \fu_{\alpha_l+\alpha_{j} +\alpha_{\btau j}} \fu_{\upgamma_{\btau i}}).
\notag
\end{align}
The last equality follows by using Lemma \ref{lem:reduction3} again.

Similar to \eqref{eqn: bgij1}, in $\widetilde{\ch}(Q',\btau)$, one can show that
\begin{align}
  \label{eq:uu}
\fu_{\alpha_i+ \alpha_{\btau i}+ \alpha_j +\alpha_{\btau j}+\alpha_l}
=& \frac{1}{(v^2-1)^2} \big(v^2 \fu_{\alpha_i}  \fu_{\alpha_{\btau i}}  \fu_{\alpha_l+\alpha_{j} +\alpha_{\btau j}} - v \fu_{\alpha_i}    \fu_{\alpha_l+\alpha_{j} +\alpha_{\btau j}} \fu_{\alpha_{\btau i}}
\\
&-v \fu_{\alpha_{\btau i}}  \fu_{\alpha_l+\alpha_{j} +\alpha_{\btau j}}\fu_{\alpha_i} +   \fu_{\alpha_l+\alpha_{j} +\alpha_{\btau j}}  \fu_{\alpha_{\btau i}}  \fu_{\alpha_i} \big)
+\fu_{\alpha_l+\alpha_{j} +\alpha_{\btau j}} \fu_{\upgamma_{\btau i}}.
\notag
\end{align}

Therefore, combining \eqref{eq:TTT}--\eqref{eq:uu}  and then comparing with \eqref{eq:Tiji} we obtain
\begin{align*}
\tTT_j\tTT_i\tTT_j(B_l)
=&  \frac{1}{(v^2-1)^2} \widetilde{\psi}_{Q'}^{-1} (\fu_{\alpha_i+ \alpha_{\btau i}+ \alpha_j +\alpha_{\btau j}+\alpha_l})
=\tTT_i\tTT_j\tTT_i(B_l).
\end{align*}

Also, it is clear that $\tTT_i\tTT_j\tTT_i(k_\alpha)=\tTT_j\tTT_i\tTT_j(k_\alpha).$

Summarizing the above, we have proved the lemma.
\end{proof}

\subsection{Braid relations  IV}

\begin{lemma}
\label{lem:bgij5}
Suppose that $i,j\in\ci$ are contained in a Dynkin subdiagram
\begin{picture}(65,16)(16,20)
\put(21,8){\tiny $i$}
\put(17,33){\tiny $\btau i$}
\put(49,20){\tiny $j (=\btau j)$}
\put(25,10){\line(2,1){23}}
\put(25,35){\line(2,-1){23}}
\end{picture}.
Then we have
\begin{align*}
\tTT_i\tTT_j\tTT_i\tTT_j=\tTT_j\tTT_i\tTT_j\tTT_i.
\end{align*}
\end{lemma}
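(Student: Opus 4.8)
The plan is to verify $\tTT_i\tTT_j\tTT_i\tTT_j=\tTT_j\tTT_i\tTT_j\tTT_i$ as an identity of automorphisms of $\tUi$ by evaluating both sides on the generators $\tk_\alpha$ ($\alpha\in\Z^\I$) and $B_l$ ($l\in\I$). On the toral generators it is immediate: from \eqref{eqn:reflection 4} together with \eqref{def:b}--\eqref{def:b2} one has $\tTT_i(\tk_\alpha)=\tfrac{b_{\bs_i\alpha}}{b_\alpha}\tk_{\bs_i\alpha}$, and since $b_{\bullet}\colon\Z^\I\to\Q(v)^\times$ is a group homomorphism the scalars telescope, so both sides send $\tk_\alpha$ to $\tfrac{b_{w\alpha}}{b_\alpha}\tk_{w\alpha}$ with $w=\bs_i\bs_j\bs_i\bs_j=\bs_j\bs_i\bs_j\bs_i$; the last equality is the order-$4$ braid relation between $\bs_i$ and $\bs_j$ in the restricted Weyl group $W_\btau$ (Lemma~\ref{lem:iWeyl}, the two nodes being joined by a double bond in the present configuration). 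On $B_l$ with $l\in\{i,\btau i,j\}$ (recall $\btau j=j$), the identity is precisely \eqref{eqn:BG bij2} of Lemma~\ref{lem: BG rank 2}. And if $l\notin\{i,\btau i,j\}$ has no neighbour in $\{i,\btau i,j\}$, then $\tTT_i(B_l)=B_l=\tTT_j(B_l)$ and both sides fix $B_l$.

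By Lemma~\ref{lem: reduction1} it then remains to handle the $B_l$ with $l\notin\{i,\btau i,j\}$ adjacent to one of $i,\btau i,j$, working inside the $\imath$subquiver spanned by $\{i,\btau i,j\}$ and their neighbours; by the $\btau$-equivariance of the construction it suffices to treat one $l$ from each $\btau$-orbit. Since the underlying graph is a tree and $i,\btau i$ already share the neighbour $j$, a neighbour $l$ of $i$ necessarily has $\btau l\neq l$ and $c_{\btau i,l}=0$; for such $l$ the verification is a direct computation using the explicit formulas of Lemmas~\ref{lem:braid group of A}--\ref{lem:braid group of E} and the rank-$2$ identities already established (for instance $\tTT_j\tTT_i(B_j)$ and $\tTT_i\tTT_j\tTT_i\tTT_j(B_i)$ extracted from \eqref{eqn:BG bij2}), entirely parallel to the corresponding case in the proof of Lemma~\ref{lem:bgrank2}. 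The genuinely new case is that of a $\btau$-fixed neighbour $l$ of $j$ with $c_{il}=c_{\btau i,l}=0$, which occurs in types $D_n$ and $E_6$.

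For that case I would argue exactly as in the preceding lemma (Braid relations III). Transport everything through the isomorphisms $\widetilde\psi_Q$ of Theorem~\ref{generic U MRH}, so that $B_l$ corresponds to a scalar multiple of $\fu_{\alpha_l}$ and $\tTT_i,\tTT_j$ to the reflection isomorphisms $\Gamma_i,\Gamma_j$ between generic $\imath$Hall algebras. Picking an orientation in which all four reflections can be applied with the relevant vertex a sink (possible because $\bs_i\bs_j\bs_i\bs_j\,Q=Q$), compute $\Gamma_i\Gamma_j\Gamma_i\Gamma_j(\fu_{\alpha_l})$ and $\Gamma_j\Gamma_i\Gamma_j\Gamma_i(\fu_{\alpha_l})$ step by step via \eqref{eqn:reflection 1}, \eqref{eqn:reflection 2}--\eqref{eqn:reflection 4} and Lemma~\ref{lem:reflecting dimen}; whenever an intermediate term is not a single $\fu_\alpha$, expand the relevant Hall product of classes of indecomposable $\K Q$-modules over the rank $\le 5$ $\imath$subquiver and isolate the coefficient of the appropriate indecomposable, just as in the computation leading to \eqref{eqn: bgij1} and \eqref{eq:uu}, using Lemma~\ref{lem:reduction3} (and \eqref{eq:Up=T2}, Remark~\ref{rem:FT}) to pass between orientations as needed. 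Comparing the two resulting elements of $\tMHg$ gives the identity on $B_l$, and with it the lemma.

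The main obstacle will be exactly this last Hall-algebra computation: the intermediate expressions involve products of four or five classes of indecomposable modules over a rank-$4$ or rank-$5$ $\imath$quiver, and one must expand them, identify the coefficient of the long indecomposable carrying $\widehat{S_i}+\widehat{S_{\btau i}}+\widehat{S_j}+\widehat{S_l}$ along with its $\E$-type corrections (as in \eqref{eq:uu}), and check that the two orders of reflections produce the same element. This is the direct analogue of \eqref{eqn: bgij1}--\eqref{eq:uu} in the proof above, only somewhat longer and with more terms; all the remaining steps are routine, if tedious, symbol pushing.
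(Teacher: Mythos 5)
Your reduction is sound and coincides with the paper's: the toral generators are immediate (the scalars $b_\bullet$ telescope and $\bs_i\bs_j\bs_i\bs_j=\bs_j\bs_i\bs_j\bs_i$ in $W_\btau$), the generators $B_l$ with $l\in\{i,\btau i,j\}$ are covered by \eqref{eqn:BG bij2}, non-neighbours are trivial, and the tree structure forces exactly the two remaining configurations you list (a neighbour $l$ of $i$, necessarily with $\btau l\neq l$ and $c_{\btau i,l}=0$; and a $\btau$-fixed neighbour $l$ of $j$). The problem is that everything after this point is deferred, and that deferred part is essentially the entire content of the proof.

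Two concrete issues. First, the paper does \emph{not} dispose of the case $c_{il}=-1$ by a formula-pushing argument parallel to Lemma~\ref{lem:bgrank2}; it runs this case through the $\imath$Hall algebra as well, passing through several reorientations and using the relation $\fu_{\alpha_{\btau i}}*\fu_{\alpha_l+\alpha_i}=\fu_{\alpha_l+\alpha_i}*\fu_{\alpha_{\btau i}}-(v^2-1)\fu_{\alpha_l}*\fu_{\upgamma_i}$ at a key step. Unlike in Lemma~\ref{lem:bgrank2}, the rank-two data does not collapse here (one needs $\tTT_i\tTT_j(B_i)$ and $\tTT_j\tTT_i\tTT_j(B_i)$, which are genuinely complicated elements, not a single generator), so calling this case "entirely parallel" and routine is an unverified claim rather than a proof. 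Second, for the case $c_{jl}=-1$ your plan to "compare the two resulting elements of $\tMHg$" omits the device that actually makes the comparison feasible: after expressing $\tTT_i\tTT_j\tTT_i\tTT_j(B_l)$ and $\tTT_j\tTT_i\tTT_j\tTT_i(B_l)$ as inverse images of commutator-type expressions living over two \emph{different} orientations, the paper post-composes both with the invertible operator $\tTT_i\tTT_j\tTT_l$, which collapses each side to $\tfrac{-1}{v^2-1}\widetilde{\psi}^{-1}(\fu_{\alpha_j+\alpha_l})$, and then cancels $\tTT_i\tTT_j\tTT_l$ by injectivity. Without this (or a substitute), one is left having to match two long Hall-product expansions directly; the required identities, the analogues of \eqref{eqn: bgij1} and \eqref{eqn:bgij2} for products of four and five indecomposables, are exactly where the risk lies and are nowhere produced in your proposal.
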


\begin{proof}
From Proposition~\ref{prop:reflection}, we have the action of $\tTT_i$ as given in \eqref{eqn:braid2}. On the other hand, the action of $\tTT_j$ is given in \eqref{eqn:braid group3}.

In this proof, we shall drop the notation $*$ when writing the multiplication in $\tMHg$. By Lemma \ref{lem: BG rank 2}, it remains to check $\tTT_i\tTT_j\tTT_i\tTT_j(B_l)=\tTT_j\tTT_i\tTT_j\tTT_i(B_l)$
in the 2 cases: (1) $c_{il}=-1$, or (2) $c_{jl}=-1$.

(1) Assume $c_{il}=-1$. We may assume that $Q = l\rightarrow i\rightarrow j\leftarrow \btau i\leftarrow \btau l$ by Lemma \ref{lem:reduction3}. Let $Q'=\bs_j\bs_i\bs_j Q$.
Then $\Gamma_j\Gamma_i\Gamma_j$ is well defined,
and
\begin{align*}
\tTT_i\tTT_j\tTT_i\tTT_j(B_l)
=& \tTT_i\widetilde{\psi}_{Q'}^{-1} \Gamma_j\Gamma_i\Gamma_j\widetilde{\psi}_Q(B_l)
\\
=&\frac{-1}{v^2-1}\tTT_i\widetilde{\psi}_{Q'}^{-1}\Gamma_j\Gamma_i\Gamma_j(\fu_{\alpha_l})
=\frac{-1}{v^2-1}\tTT_i\widetilde{\psi}_{Q'}^{-1}(\fu_{\alpha_i+\alpha_j+\alpha_l})
\\
=& \frac{-1}{(v^2-1)^2}\tTT_i\widetilde{\psi}_{Q'}^{-1}(v\fu_{\alpha_i+\alpha_j} \fu_{\alpha_l}-\fu_{\alpha_l} \fu_{\alpha_i+\alpha_j}).
\end{align*}
Let $Q''= l\rightarrow i\leftarrow j\rightarrow \btau i\leftarrow \btau l$.
Then Lemma \ref{lem:reduction3} shows that
\begin{align*}
\tTT_i\tTT_j\tTT_i\tTT_j(B_l)
=&\frac{-1}{(v^2-1)^2}\tTT_i\widetilde{\psi}_{Q''}^{-1}(v\fu_{\alpha_i+\alpha_j} \fu_{\alpha_l}-\fu_{\alpha_l} \fu_{\alpha_i+\alpha_j})
\\
=&\frac{-1}{(v^2-1)^2}\widetilde{\psi}_{\bs_iQ''}^{-1}\Gamma_i(v\fu_{\alpha_i+\alpha_j} \fu_{\alpha_l}-\fu_{\alpha_l} \fu_{\alpha_i+\alpha_j})
\\
=&\frac{-1}{(v^2-1)^2}\widetilde{\psi}_{\bs_iQ''}^{-1}(v\fu_{\alpha_{\btau i}+\alpha_j} \fu_{\alpha_l+\alpha_i}-\fu_{\alpha_l+\alpha_i} \fu_{\alpha_{\btau i}+\alpha_j}).
\end{align*}

On the other hand, since $j$ is a sink of $\bs_i Q''$, we can define $Q'''=\bs_j\bs_i Q''$. Then we have
\begin{align*}
&\tTT_j\tTT_i\tTT_j\tTT_i(B_l)
\\
=&\frac{-1}{(v^2-1)^2}\tTT_j\widetilde{\psi}_{\bs_iQ''}^{-1}(v\fu_{\alpha_{\btau i}+\alpha_j} \fu_{\alpha_l+\alpha_i}-\fu_{\alpha_l+\alpha_i} \fu_{\alpha_{\btau i}+\alpha_j})
\\
=&\frac{-1}{(v^2-1)^2} \widetilde{\psi}_{Q'''}^{-1}\Gamma_j(v\fu_{\alpha_{\btau i}+\alpha_j} \fu_{\alpha_l+\alpha_i}-\fu_{\alpha_l+\alpha_i} \fu_{\alpha_{\btau i}+\alpha_j})
\\
=&\frac{-1}{(v^2-1)^2} \widetilde{\psi}_{Q'''}^{-1}(v\fu_{\alpha_{\btau i}} \fu_{\alpha_i+\alpha_j+\alpha_l}-
\fu_{\alpha_i+\alpha_j+\alpha_l} \fu_{\alpha_{\btau i}})
\\
=&\frac{-1}{(v^2-1)^3} \widetilde{\psi}_{Q'''}^{-1} \big( v^2\fu_{\alpha_{\btau i}} \fu_{\alpha_j} \fu_{\alpha_l+\alpha_i} -v\fu_{\alpha_{\btau i}}  \fu_{\alpha_l+\alpha_i} \fu_{\alpha_j}-v\fu_{\alpha_j} \fu_{\alpha_l+\alpha_i} \fu_{\alpha_{\btau i}}
+\fu_{\alpha_l+\alpha_i} \fu_{\alpha_j} \fu_{\alpha_{\btau i}} \big)
\\
=&\frac{-1}{(v^2-1)^3} \widetilde{\psi}_{Q'''}^{-1} \big(v^2\fu_{\alpha_{\btau i}} \fu_{\alpha_j} \fu_{\alpha_l+\alpha_i} - v\fu_{\alpha_l+\alpha_i} \fu_{\alpha_{\btau i}}  \fu_{\alpha_j}-v\fu_{\alpha_j} \fu_{\alpha_{\btau i}}  \fu_{\alpha_l+\alpha_i}
+\fu_{\alpha_l+\alpha_i} \fu_{\alpha_j} \fu_{\alpha_{\btau i}} \big)  \\
&+\frac{-v}{(v^2-1)^2} \widetilde{\psi}_{Q'''}^{-1}(\fu_{\alpha_l} \fu_{\upgamma_i} \fu_{\alpha_j}-\fu_{\alpha_j} \fu_{\alpha_l} \fu_{\upgamma_i}).
\end{align*}
For the last equality above we have used that $\fu_{\alpha_{\btau i}} \fu_{\alpha_l+\alpha_i}=\fu_{\alpha_l+\alpha_i} \fu_{\alpha_{\btau i}}-(v^2-1)\fu_{\alpha_l} \fu_{\upgamma_i}$.
Since $\fu_{\alpha_l} \fu_{\upgamma_i} \fu_{\alpha_j}-\fu_{\alpha_j} \fu_{\alpha_l} \fu_{\upgamma_i}=0,$
we have
\begin{align*}
\tTT_j\tTT_i\tTT_j\tTT_i(B_l)
=&\frac{-1}{(v^2-1)^3}( \widetilde{\psi}_{Q'''}^{-1}(v^2\fu_{\alpha_{\btau i}} \fu_{\alpha_j})\cdot  \widetilde{\psi}_{Q'''}^{-1}(\fu_{\alpha_l+\alpha_i} ) -  \widetilde{\psi}_{Q'''}^{-1}(\fu_{\alpha_l+\alpha_i})\cdot \widetilde{\psi}_{Q'''}^{-1}(v\fu_{\alpha_{\btau i}}  \fu_{\alpha_j})
\\
&- \widetilde{\psi}_{Q'''}^{-1}(v\fu_{\alpha_j} \fu_{\alpha_{\btau i}} ) \cdot  \widetilde{\psi}_{Q'''}^{-1}(\fu_{\alpha_l+\alpha_i}) + \widetilde{\psi}_{Q'''}^{-1}(\fu_{\alpha_l+\alpha_i})\cdot  \widetilde{\psi}_{Q'''}^{-1}(\fu_{\alpha_j} \fu_{\alpha_{\btau i}})  ).
\end{align*}

Comparing $\bs_i Q''$ and $Q'''$, Lemma \ref{lem:reduction3}  implies that
$\widetilde{\psi}_{Q'''}^{-1} (\fu_{\alpha_j} \fu_{\alpha_{\btau i}})=\widetilde{\psi}_{\bs_iQ''}^{-1} (\fu_{\alpha_j} \fu_{\alpha_{\btau i}}) $, and
$\widetilde{\psi}_{Q'''}^{-1}(\fu_{\alpha_l+\alpha_i})=\widetilde{\psi}_{\bs_iQ''}^{-1}(\fu_{\alpha_l+\alpha_i} )$.
Then we obtain
\begin{align*}
\tTT_j & \tTT_i\tTT_j\tTT_i(B_l)
\\
=&\frac{-1}{(v^2-1)^3} \widetilde{\psi}_{\bs_iQ''}^{-1} (v^2\fu_{\alpha_{\btau i}} \fu_{\alpha_j}  \fu_{\alpha_l+\alpha_i} - v \fu_{\alpha_l+\alpha_i}  \fu_{\alpha_{\btau i}}  \fu_{\alpha_j}
-v\fu_{\alpha_j} \fu_{\alpha_{\btau i}}  \fu_{\alpha_l+\alpha_i}
+ \fu_{\alpha_l+\alpha_i}\fu_{\alpha_j} \fu_{\alpha_{\btau i}}  )
\\
=&\frac{-1}{(v^2-1)^3} \widetilde{\psi}_{\bs_iQ''}^{-1} \big(v (v\fu_{\alpha_{\btau i}} \fu_{\alpha_j}-  \fu_{\alpha_j} \fu_{\alpha_{\btau i}} )  \fu_{\alpha_l+\alpha_i} -  \fu_{\alpha_l+\alpha_i}  (v\fu_{\alpha_{\btau i}}  \fu_{\alpha_j}-\fu_{\alpha_j} \fu_{\alpha_{\btau i}}  ) \big)
\\
=&\frac{-1}{(v^2-1)^2}\widetilde{\psi}_{\bs_iQ''}^{-1}(v\fu_{\alpha_{\btau i+\alpha_j}} \fu_{\alpha_l+\alpha_i}-\fu_{\alpha_l+\alpha_i} \fu_{\alpha_{\btau i}+\alpha_j})
\\
=&\tTT_i\tTT_j\tTT_i\tTT_j(B_l).
\end{align*}

(2) Assume $c_{jl}=-1$. We can assume that $Q$ is
\[\xymatrix{
&\btau i \\
i & j \ar[l]  \ar[u]& l\ar[l]} \]
 by Lemma \ref{lem: reduction1}. Let $Q'=\bs_i\bs_j\bs_i(Q)$.
 Then $\Gamma_i \Gamma_j\Gamma_i$ is well defined, and
 \begin{align*}
\tTT_j\tTT_i\tTT_j\tTT_i(B_l)= & \tTT_j\widetilde{\psi}_{Q'}^{-1} \Gamma_j\Gamma_i\Gamma_j\widetilde{\psi}_Q(B_l)
\\
=&\frac{-1}{v^2-1}\tTT_j\widetilde{\psi}_{Q'}^{-1}\Gamma_j\Gamma_i\Gamma_j(\fu_{\alpha_l})
=\frac{-1}{v^2-1}\tTT_j\widetilde{\psi}_{Q'}^{-1}(\fu_{\alpha_i+\alpha_{\btau i} +\alpha_j+\alpha_l})
\\
=& \frac{-1}{(v^2-1)^2}\tTT_j\widetilde{\psi}_{Q'}^{-1}(v\fu_{\alpha_i+\alpha_{\btau i}+ \alpha_j} \fu_{\alpha_l}-\fu_{\alpha_l} \fu_{\alpha_i+\alpha_{\btau i}+\alpha_j}).
\end{align*}
Let $Q''=\bs_j \bs_l Q'$. Then Lemma \ref{lem:reduction3} shows that
 \begin{align*}
\tTT_j\tTT_i\tTT_j\tTT_i(B_l)
=& \frac{-1}{(v^2-1)^2}\tTT_j\widetilde{\psi}_{\bs_l Q'}^{-1}(v\fu_{\alpha_i+\alpha_{\btau i}+ \alpha_j} \fu_{\alpha_l}-\fu_{\alpha_l} \fu_{\alpha_i+\alpha_{\btau i}+\alpha_j})
\\
=&  \frac{-1}{(v^2-1)^2}  \widetilde{\psi}_{Q''}^{-1} (v\fu_{\alpha_i+\alpha_{\btau i}+\alpha_j }  \fu_{\alpha_l+\alpha_j} -\fu_{\alpha_l+\alpha_j} \fu_{\alpha_i+\alpha_{\btau i}+\alpha_j}  ) .
\end{align*}

On the other side, let $Q'''=\bs_i Q''$. Then
 \begin{align*}
\tTT_i\tTT_j\tTT_i\tTT_j(B_l)
=& \frac{-1}{(v^2-1)^2} \tTT_i \widetilde{\psi}_{Q''}^{-1} (v\fu_{\alpha_i+\alpha_{\btau i}+\alpha_j }  \fu_{\alpha_l+\alpha_j} -\fu_{\alpha_l+\alpha_j} \fu_{\alpha_i+\alpha_{\btau i}+\alpha_j}  )
\\
=&\frac{-1}{(v^2-1)^2}  \widetilde{\psi}_{Q'''}^{-1} (v\fu_{\alpha_j}  \fu_{\alpha_l+\alpha_j+\alpha_i+\alpha_{\btau i} } -
 \fu_{\alpha_l+\alpha_j+\alpha_i+\alpha_{\btau i} }   \fu_{\alpha_j} ).
\end{align*}

By definition of $Q'''$, $\Gamma_i\Gamma_j\Gamma_l$ is well defined. In particular,
$\bs_i\bs_j\bs_l Q'''= Q'$. So we have
\begin{align}
\label{eqn:bgl}
\tTT_i\tTT_j\tTT_l\tTT_i\tTT_j&\tTT_i\tTT_j(B_l)
= \frac{-1}{(v^2-1)^2}  \widetilde{\psi}_{Q'}^{-1} (v\fu_{\alpha_l}  \fu_{\alpha_j} -\fu_{\alpha_j}  \fu_{\alpha_l} )
\\
=&
\frac{-1}{(v^2-1)^2}  \widetilde{\psi}_{\bs_l Q'}^{-1} (v\fu_{\alpha_l}  \fu_{\alpha_j} -\fu_{\alpha_j}  \fu_{\alpha_l} )\notag
=\frac{-1}{v^2-1} \widetilde{\psi}_{\bs_l  Q'}^{-1}(\fu_{\alpha_l+\alpha_j}).\notag
\end{align}
The second equality above follows from Lemma \ref{lem:reduction3}.

Let $''\iLa$ be the $\imath$quiver algebra of $(Q'',\btau)$. Then in $\tM(''\iLa)$, we have the following:
\begin{align*}
[M_q({\alpha_i})] & [M_q({\alpha_{\btau i}})] [M_q({\alpha_j} )]
\\
=&[M_q({\alpha_j})\oplus M_q({\alpha_i})\oplus M_q({\alpha_{\btau i} })] + (q-1) [M_q({\upgamma_i} )]  [M_q({\alpha_j})],
\\
\\
[M_q({\alpha_j}) ] & [M_q({\alpha_{\btau i}})]  [M_q({\alpha_i})]
\\
=& q^{-1} \Big([M_q({\alpha_j})\oplus M_q({\alpha_i})\oplus M_q({\alpha_{\btau i} })]+ (q-1) [M_q({\alpha_j+\alpha_i}) \oplus M_q({\alpha_{\btau i}})]
\\
& +(q-1) [M_q({\alpha_j+\alpha_{\btau i}} )\oplus M_q({\alpha_i})] +(q-1)^2[M_q({\alpha_j+\alpha_i +\alpha_{\btau i}})] \Big)
\\
& +(q-1)[M_q({\alpha_j})] [M_q({\upgamma_{\btau i}})],
\end{align*}

\begin{align*}
 [M_q({\alpha_{\btau i}})] &  [M_q({\alpha_j})]  [M_q({\alpha_i})]
\\
=& \sqq^{-1} ([M_q({\alpha_j})\oplus M_q({\alpha_i})\oplus M_q({\alpha_{\btau i} })]+ (q-1) [M_q({\alpha_j+\alpha_i}) \oplus M_q({\alpha_{\btau i}})])
\\
&+\sqq(q-1)[M_q({\alpha_j})] [M_q({\upgamma_{\btau i}})],
\\
\\
%
[M_q({\alpha_{i}})] & [ M_q({\alpha_j}) ] [M_q({\alpha_{\btau i}})]
\\
=& \sqq^{-1} ([M_q({\alpha_j})\oplus M_q({\alpha_i})\oplus M_q({\alpha_{\btau i} })]+ (q-1) [M_q({\alpha_j+\alpha_{\btau i}}) \oplus M_q({\alpha_{ i}})])
\\
&+\sqq(q-1)[ M_q({\upgamma_{i}})] [M_q({\alpha_j})].
\end{align*}
Hence we obtain
\begin{align*}
[M_q & ({\alpha_i+\alpha_{\btau i}+\alpha_j })]
\\
=&\frac{1}{(q-1)^2} ( [M_q({\alpha_i})] [M_q({\alpha_{\btau i}})] [M_q({\alpha_j})] -\sqq[M_q({\alpha_{\btau i}} )]  [M_q({\alpha_j})]  [M_q({\alpha_i})]
\\
&-\sqq [M_q({\alpha_{i}})]   [M_q({\alpha_j})]  [M_q({\alpha_{\btau i}})]+ q[M_q( \fu_{\alpha_j}) ]  [M_q({\alpha_{\btau i}}) ] [M_q({\alpha_i})]) + [M_q({\upgamma_i})] [M_q({\alpha_j})].\notag
\end{align*}

Then in $\widetilde{\ch}(Q'',\btau)$, we have
\begin{align}
\fu_{\alpha_i+\alpha_{\btau i}+\alpha_j }
\label{eqn:bgij2}
=&\frac{1}{(v^2-1)^2} ( \fu_{\alpha_i} \fu_{\alpha_{\btau i}} \fu_{\alpha_j} -v\fu_{\alpha_{\btau i}}   \fu_{\alpha_j}  \fu_{\alpha_i}-v \fu_{\alpha_{i}}   \fu_{\alpha_j}  \fu_{\alpha_{\btau i}}+ v ^2 \fu_{\alpha_j}   \fu_{\alpha_{\btau i}}  \fu_{\alpha_i})
+ \fu_{\upgamma_i} \fu_{\alpha_j}.
\end{align}

 \begin{align*}
\tTT_j & \tTT_i\tTT_j\tTT_i(B_l)
\\
&=  \frac{-1}{(v^2-1)^4}  \widetilde{\psi}_{Q''}^{-1}
 \big (v
(   \fu_{\alpha_i} \fu_{\alpha_{\btau i}} \fu_{\alpha_j} -v\fu_{\alpha_{\btau i}}   \fu_{\alpha_j}  \fu_{\alpha_i}-v \fu_{\alpha_{i}}   \fu_{\alpha_j}  \fu_{\alpha_{\btau i}}+ v ^2 \fu_{\alpha_j}   \fu_{\alpha_{\btau i}}  \fu_{\alpha_i} ) \fu_{\alpha_l+\alpha_j}
\\
&-\fu_{\alpha_l+\alpha_j} ( \fu_{\alpha_i} \fu_{\alpha_{\btau i}} \fu_{\alpha_j} -v\fu_{\alpha_{\btau i}}   \fu_{\alpha_j}  \fu_{\alpha_i}-v \fu_{\alpha_{i}}   \fu_{\alpha_j}  \fu_{\alpha_{\btau i}}+ v ^2 \fu_{\alpha_j}   \fu_{\alpha_{\btau i}}  \fu_{\alpha_i}) \big)
\\
& +\frac{-1}{(v^2-1)^2}  \widetilde{\psi}_{Q''}^{-1} (v \fu_{\upgamma_i} \fu_{\alpha_j} \fu_{\alpha_l+\alpha_j}-  \fu_{\alpha_l+\alpha_j}   \fu_{\upgamma_i} \fu_{\alpha_j}).
\end{align*}
Comparing $Q''$ and $Q'''$, and using Lemma \ref{lem:reduction3}, we have
\begin{align*}
\tTT_j & \tTT_i\tTT_j\tTT_i(B_l)
\\
&=  \frac{-1}{(v^2-1)^4}  \widetilde{\psi}_{Q'''}^{-1}
\big(v
(   \fu_{\alpha_i} \fu_{\alpha_{\btau i}} \fu_{\alpha_j} -v\fu_{\alpha_{\btau i}}   \fu_{\alpha_j}  \fu_{\alpha_i}-v \fu_{\alpha_{i}}   \fu_{\alpha_j}  \fu_{\alpha_{\btau i}}+ v ^2 \fu_{\alpha_j}   \fu_{\alpha_{\btau i}}  \fu_{\alpha_i} ) \fu_{\alpha_l+\alpha_j}
\\
&-\fu_{\alpha_l+\alpha_j} ( \fu_{\alpha_i} \fu_{\alpha_{\btau i}} \fu_{\alpha_j} -v\fu_{\alpha_{\btau i}}   \fu_{\alpha_j}  \fu_{\alpha_i}-v \fu_{\alpha_{i}}   \fu_{\alpha_j}  \fu_{\alpha_{\btau i}}+ v ^2 \fu_{\alpha_j}   \fu_{\alpha_{\btau i}}  \fu_{\alpha_i}) \big)
\\
& +\frac{-1}{(v^2-1)^2}  \widetilde{\psi}_{Q'''}^{-1} (v \fu_{\upgamma_i} \fu_{\alpha_j} \fu_{\alpha_l+\alpha_j}-  \fu_{\alpha_l+\alpha_j}   \fu_{\upgamma_i} \fu_{\alpha_j}) .
\end{align*}

Similarly, we have
\begin{align*}
&\tTT_i\tTT_j\tTT_l\tTT_j\tTT_i\tTT_j\tTT_i(B_l)
= \frac{-1}{(v^2-1)^4}  \widetilde{\psi}_{Q'}^{-1}
\\
& \big(v
(   \fu_{\alpha_{\btau i} +\alpha_j} \fu_{\alpha_i +\alpha_j}  \fu_{\alpha_l} -v\fu_{\alpha_i+\alpha_j}   \fu_{\alpha_l}  \fu_{\alpha_{\btau i}+\alpha_j} -v \fu_{\alpha_{\btau i}+\alpha_j}   \fu_{\alpha_l}  \fu_{\alpha_i+\alpha_j}
\\
&+ v ^2 \fu_{\alpha_l}   \fu_{\alpha_i+\alpha_j}  \fu_{\alpha_{\btau i} +\alpha_j} ) \fu_{\alpha_i+\alpha_{\btau i}+\alpha_j}  \fu_{\upgamma_j}^{-1}  \fu_{\upgamma_i}^{-1}  \fu_{\upgamma_{\btau i}}^{-1}
\\
&-\fu_{\alpha_i+\alpha_{\btau i}+\alpha_j}   \fu_{\upgamma_j}^{-1}  \fu_{\upgamma_i}^{-1}  \fu_{\upgamma_{\btau i}}^{-1} (   \fu_{\alpha_{\btau i} +\alpha_j} \fu_{\alpha_i +\alpha_j}  \fu_{\alpha_l} -v\fu_{\alpha_i+\alpha_j}   \fu_{\alpha_l}  \fu_{\alpha_{\btau i}+\alpha_j}
\\
& -v \fu_{\alpha_{\btau i}+\alpha_j}   \fu_{\alpha_l}  \fu_{\alpha_i+\alpha_j}
+ v ^2 \fu_{\alpha_l}   \fu_{\alpha_i+\alpha_j}  \fu_{\alpha_{\btau i} +\alpha_j} )\big)
\\
& +\frac{-1}{(v^2-1)^2}  \widetilde{\psi}_{Q'}^{-1} (v \fu_{\upgamma_i}^{-1} \fu_{\alpha_l} \fu_{\alpha_i+\alpha_{\btau i}+\alpha_j}-  \fu_{\alpha_i+\alpha_{\btau i}+\alpha_j}   \fu_{\upgamma_i}^{-1} \fu_{\alpha_l})
\\
=& \frac{-1}{(v^2-1)^4}  \widetilde{\psi}_{\bs_lQ'}^{-1} \big(v
(   \fu_{\alpha_{\btau i} +\alpha_j} \fu_{\alpha_i +\alpha_j}  \fu_{\alpha_l} -v\fu_{\alpha_i+\alpha_j}   \fu_{\alpha_l}  \fu_{\alpha_{\btau i}+\alpha_j} -v \fu_{\alpha_{\btau i}+\alpha_j}   \fu_{\alpha_l}  \fu_{\alpha_i+\alpha_j}
\\
&+ v ^2 \fu_{\alpha_l}   \fu_{\alpha_i+\alpha_j}  \fu_{\alpha_{\btau i} +\alpha_j} ) \fu_{\alpha_i+\alpha_{\btau i}+\alpha_j}  \fu_{\upgamma_j}^{-1}  \fu_{\upgamma_i}^{-1}  \fu_{\upgamma_{\btau i}}^{-1}
\\
&-\fu_{\alpha_i+\alpha_{\btau i}+\alpha_j}   \fu_{\upgamma_j}^{-1}  \fu_{\upgamma_i}^{-1}  \fu_{\upgamma_{\btau i}}^{-1} (   \fu_{\alpha_{\btau i} +\alpha_j} \fu_{\alpha_i +\alpha_j}  \fu_{\alpha_l} -v\fu_{\alpha_i+\alpha_j}   \fu_{\alpha_l}  \fu_{\alpha_{\btau i}+\alpha_j}
\\
& -v \fu_{\alpha_{\btau i}+\alpha_j}   \fu_{\alpha_l}  \fu_{\alpha_i+\alpha_j}
+ v ^2 \fu_{\alpha_l}   \fu_{\alpha_i+\alpha_j}  \fu_{\alpha_{\btau i} +\alpha_j} )\big)
\\
& +\frac{-1}{(v^2-1)^2}  \widetilde{\psi}_{\bs_lQ'}^{-1} (v \fu_{\upgamma_i}^{-1} \fu_{\alpha_l} \fu_{\alpha_i+\alpha_{\btau i}+\alpha_j}-  \fu_{\alpha_i+\alpha_{\btau i}+\alpha_j}   \fu_{\upgamma_i}^{-1} \fu_{\alpha_l}) .
\end{align*}

Similar to \eqref{eqn:bgij2}, in $\widetilde{\ch}(\bs_l Q',\btau)$, we have
\begin{align*}
 \fu_{\alpha_{\btau i} +\alpha_j}  & \fu_{\alpha_i +\alpha_j}  \fu_{\alpha_l} -v\fu_{\alpha_i+\alpha_j}   \fu_{\alpha_l}  \fu_{\alpha_{\btau i}+\alpha_j} -v \fu_{\alpha_{\btau i}+\alpha_j}   \fu_{\alpha_l}  \fu_{\alpha_i+\alpha_j}
\\
&+ v ^2 \fu_{\alpha_l}   \fu_{\alpha_i+\alpha_j}  \fu_{\alpha_{\btau i} +\alpha_j} +(v^2-1)^2 \fu_{\alpha_l} \fu_{\upgamma_{\btau i}}  \fu_{\upgamma_j}
\\
=&(v^2-1)^2\fu_{\alpha_i+\alpha_{\btau i}+2\alpha_j+\alpha_l}.
\end{align*}

Then we have
\begin{align*}
\tTT_i & \tTT_j\tTT_l\tTT_j\tTT_i\tTT_j\tTT_i(B_l)
\\
=&\frac{-1}{(v^2-1)^2}  \widetilde{\psi}_{\bs_lQ'}^{-1} \big( (v \fu_{\alpha_i+\alpha_{\btau i}+2\alpha_j+\alpha_l}   \fu_{\alpha_i+\alpha_{\btau i}+\alpha_j} -  \fu_{\alpha_i+\alpha_{\btau i}+\alpha_j}   \fu_{\alpha_i+\alpha_{\btau i}+2\alpha_j+\alpha_l} )  \fu_{\upgamma_j}^{-1}  \fu_{\upgamma_i}^{-1}  \fu_{\upgamma_{\btau i}}^{-1} \big)
\\
&-\frac{-1}{(v^2-1)}  \widetilde{\psi}_{\bs_lQ'}^{-1}( v\fu_{\alpha_l} \fu_{\upgamma_i}^{-1}  \fu_{\alpha_i+\alpha_{\btau i}+\alpha_j} -\fu_{\alpha_i +\alpha_{\btau i}+\alpha_j}  \fu_{\alpha_l} \fu_{\upgamma_i}^{-1})
\\
&+\frac{-1}{(v^2-1)^2}  \widetilde{\psi}_{\bs_lQ'}^{-1} (v \fu_{\upgamma_i}^{-1} \fu_{\alpha_l} \fu_{\alpha_i+\alpha_{\btau i}+\alpha_j}-  \fu_{\alpha_i+\alpha_{\btau i}+\alpha_j}   \fu_{\upgamma_i}^{-1} \fu_{\alpha_l})
\\
=&\frac{-1}{(v^2-1)^2}  \widetilde{\psi}_{\bs_lQ'}^{-1} \big( (v \fu_{\alpha_i+\alpha_{\btau i}+2\alpha_j+\alpha_l}   \fu_{\alpha_i+\alpha_{\btau i}+\alpha_j} -  \fu_{\alpha_i+\alpha_{\btau i}+\alpha_j}   \fu_{\alpha_i+\alpha_{\btau i}+2\alpha_j+\alpha_l} )  \fu_{\upgamma_j}^{-1}  \fu_{\upgamma_i}^{-1}  \fu_{\upgamma_{\btau i}}^{-1} \big)
\\
=&\frac{-1}{v^2-1}  \widetilde{\psi}_{\bs_lQ'}^{-1}(\fu_{\alpha_j+\alpha_l}),
\end{align*}
where the last equality follows from a direct computation.

Comparing the formula above with \eqref{eqn:bgl}, we have established
$
\tTT_i\tTT_j\tTT_l\tTT_j\tTT_i\tTT_j\tTT_i(B_l)=\tTT_i\tTT_j\tTT_l\tTT_i\tTT_j\tTT_i\tTT_j(B_l),
$ 
and whence, $\tTT_j\tTT_i\tTT_j\tTT_i(B_l)=\tTT_i\tTT_j\tTT_i\tTT_j(B_l)$, since $\tTT_i\tTT_j\tTT_l$ is an isomorphism.
The lemma is proved.
\end{proof}

\subsection{Braid group actions on $\tUi$}

Recall from Lemma~\ref{lem:iWeyl} that the restricted Weyl group $W_{\btau}$ of $(\fg, \fg^\theta)$  is a finite reflection group of type $X_k$ generated by the simple reflections $\bs_i$, for $i \in \ci$. We shall denote by ${\rm Br}(W)$ the braid group associated to the Weyl group $W$ of $\fg$, and denote by ${\rm Br}(X_k)$ the braid group associated to a Weyl group of type $X_k$. Then the braid group associated to the restricted Weyl group for $(\fg, \fg^\theta)$ is of the form
\begin{equation}
  \label{eq:braidCox}
\brW =\langle t_i \mid i\in \I_\btau \rangle
\end{equation}
where $t_i$ satisfy the same braid relations as for $\bs_i$ in $W_{\btau}$. Then
\begin{align}
  \label{eq:BrBr}
\brW=\left\{\begin{array}{ll}
{\;\;}  {\rm Br} (W),   & \text{ if } \btau =\Id ,
\\
\left.\begin{array}{ll}
 {\rm Br} (B_{r}) , & \text{ if } \Delta \text{ is of type }A_{2r+1},\\
{\rm Br} (B_{n-1}) , & \text{ if } \Delta \text{ is of type }D_n,\\
 {\rm Br} (F_4) ,  & \text{ if } \Delta \text{ is of type }E_6,\end{array} \right\}& \text{ if }\btau \neq\Id.\end{array}   \right.
\end{align}

\begin{theorem}
\label{thm:bgtui}
Let $(Q,\btau)$ be a Dynkin $\imath$quiver. Recall $\tTT_i$ from \eqref{eq:defT}. Then there exists a homomorphism
$\brW \rightarrow \aut( \tUi)$, $t_i\mapsto \tTT_i$, for all $i\in \ci$.
\end{theorem}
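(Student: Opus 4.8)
The plan is to deduce Theorem~\ref{thm:bgtui} from the universal property of the braid‐group presentation, the two inputs being that each $\tTT_i$ is a genuine automorphism of $\tUi$ and that the $\tTT_i$ satisfy the defining braid relations of $\brW$. For the first input I would recall that $\tTT_i\in\Aut(\tUi)$ is defined for every $i\in\ci$: for a sink $i$ of $Q$, the commuting square \eqref{eq:defT} writes $\tTT_i=\widetilde{\psi}_{Q'}^{-1}\circ\Gamma_i\circ\widetilde{\psi}_Q$ as a composite of the $\Q(v)$-algebra isomorphisms $\widetilde{\psi}_Q\colon\tUi\xrightarrow{\sim}\tMHg$ and $\widetilde{\psi}_{Q'}\colon\tUi\xrightarrow{\sim}\tMHgi$ of Theorem~\ref{generic U MRH} with $\Gamma_i\colon\tMHg\xrightarrow{\sim}\tMHgi$ of Theorem~\ref{thm:Gamma}, where $Q'=\bs_iQ$ carries the same Cartan datum $(C,\btau)$, so that source and target are again $\tUi$. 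Since $\tUi$ is independent of the orientation of the underlying graph, for arbitrary $i\in\ci$ one orients the graph so that $i$ (equivalently $\btau i$) becomes a sink and defines $\tTT_i$ by this square; the diagram \eqref{eq:Up=T2}, equivalently the explicit formulas in Lemmas~\ref{lemma:braid group of split involution}--\ref{lem:braid group of E}, shows $\tTT_i$ is independent of that choice.

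The reduction is then set up as follows. By Lemma~\ref{lem:iWeyl} and \eqref{eq:braidCox}--\eqref{eq:BrBr}, $\brW$ is the group generated by $\{t_i\mid i\in\ci\}$ subject precisely to the braid relations $t_it_jt_i\cdots=t_jt_it_j\cdots$ ($m_{ij}$ letters on each side), where $m_{ij}$ is the order of $\bs_i\bs_j$ in the Coxeter group $W_\btau$, which is of type $A$, $D$, $E$, $B$ or $F$; in particular $m_{ij}\in\{2,3,4\}$, since $W_\btau$, being never of type $G_2$ for a Dynkin $\imath$quiver, has no bond of order $\ge5$. By the universal property of this presentation it then suffices to check that the automorphisms $\tTT_i$ obey the same relations, after which $t_i\mapsto\tTT_i$ extends uniquely to a homomorphism $\brW\to\Aut(\tUi)$.

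The heart of the argument is to match, for each pair $i\ne j$ in $\ci$, the value $m_{ij}$ and the position of the $\btau$-orbits of $i$ and $j$ in the diagrams of \S\ref{subsec:iQG} with one of the rank‐two identities established in \S\ref{sec:braid}. For $m_{ij}=2$, i.e. $c_{ij}=0$, Lemma~\ref{lem:BGij=0} gives $\tTT_i\tTT_j=\tTT_j\tTT_i$. For $m_{ij}=3$, an inspection of the diagrams shows $i,j$ lie either in a subdiagram with $\btau i=i$, $\btau j=j$ and an edge between $i$ and $j$ — covered by Lemma~\ref{lem:bgrank2} — or in one consisting of two parallel edges, one joining $i$ to $j$ and the other $\btau i$ to $\btau j$, with $\{i,\btau i\}\cap\{j,\btau j\}=\emptyset$ — covered by the lemma of \S\ref{sec:braid} immediately following Lemma~\ref{lem:bgrank2}; either way $\tTT_i\tTT_j\tTT_i=\tTT_j\tTT_i\tTT_j$. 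For $m_{ij}=4$, up to interchanging $i$ and $j$ one has $j=\btau j$, $i\ne\btau i$, and both $i$ and $\btau i$ joined to $j$ — precisely the configuration of $\bs_0\bs_1$ in type $A_{2r+1}$, of $\bs_{n-2}\bs_{n-1}$ in type $D_n$, and of $\bs_2\bs_3$ in type $E_6$ — and Lemma~\ref{lem:bgij5} gives $\tTT_i\tTT_j\tTT_i\tTT_j=\tTT_j\tTT_i\tTT_j\tTT_i$. The matching identities on the $\tk_\alpha$ are immediate from $\tTT_i(\tk_\alpha)=(b_{\bs_i\alpha}/b_\alpha)\,\tk_{\bs_i\alpha}$. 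Having verified all the defining relations of $\brW$, the theorem follows.

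Since the computational core — the rank‐two braid identities in $\tMHg$, reduced via $\imath$subquivers (Lemma~\ref{lem: reduction1}) to explicit Hall‐algebra manipulations — has already been carried out in \S\ref{sec:braid}, the one point that still needs care, and which I regard as the main obstacle for the theorem proper, is the exhaustiveness of the case list in the previous paragraph: one must be sure that every pair $i,j\in\ci$ realizes exactly one of the four configurations and that $m_{ij}\le4$ always. This I would check directly from the Dynkin diagrams in \S\ref{subsec:iQG}, the key structural fact being that the two members of any nontrivial $\btau$-orbit lie in distinct arms (or mirror‐image positions), so that a vertex adjacent to one of $j,\btau j$ is adjacent to the other only in the branch/center situations, which are exactly those producing $m_{ij}=4$.
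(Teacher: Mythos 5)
Your proposal is correct and takes essentially the same route as the paper: the paper's proof of Theorem~\ref{thm:bgtui} consists precisely of the reduction to the rank-two identities of Lemmas~\ref{lem:BGij=0}--\ref{lem:bgij5}, exactly as you organize it. The only difference is that you spell out the well-definedness of $\tTT_i$ for an arbitrary $i\in\ci$ and the exhaustiveness of the list of rank-two configurations with $m_{ij}\le 4$, points the paper leaves implicit.
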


\begin{proof}
It follows from Lemmas \ref{lem:BGij=0}--\ref{lem:bgij5}.
\end{proof}

For any acyclic $\imath$quiver, we shall continue to use the same definition of $W_\btau$ in \eqref{eq:Wtau} in Kac-Moody setting. We define $\brW$ to be the subgroup of $\text{Br}(W)$ which consists of elements commuting with $\btau$. We expect Theorem~\ref{thm:bgtui} to be valid in general, assuming the validity of \cite[Conjecture 7.9]{LW19} (now a theorem in \cite{LW20}), which states that $\widetilde{\psi}: \tUi \longrightarrow \tMHg$ is an injective homomorphism; compare with Theorem~\ref{generic U MRH} and \eqref{eq:defT}.

\begin{conjecture}
  \label{conj:braidKM}
Let $(Q,\btau)$ be an acyclic $\imath$quiver. Then the following holds.

(1) The commutative diagram \eqref{eq:defT} induces an automorphism $\TT_i \in \Aut (\tUi)$.

(2) There exists a homomorphism
$\brW \rightarrow \aut( \tUi)$, $t_i\mapsto \tTT_i$, for all $i\in \ci$.
\end{conjecture}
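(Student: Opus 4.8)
The plan is to carry over the finite-type arguments of Sections~\ref{sec:symmetryUi}--\ref{sec:braid} verbatim, the one genuinely new input being the injectivity of $\widetilde{\psi}_Q\colon \tUi\to\tMHg$ for acyclic $\imath$quivers, i.e. \cite[Conjecture~7.9]{LW19}, which we assume throughout.

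For part (1), fix $i\in\ci$, reorient $Q$ so that $i$ becomes a sink, and set $Q'=\bs_i Q$. By Theorem~\ref{thm:Gamma} and its lift to generic $\imath$Hall algebras, the reflection functor $F_i^+$ gives an algebra isomorphism $\Gamma_i\colon\tMHg\xrightarrow{\sim}\tMHgi$; since $i$ is a source of $Q'$, the source reflection functor $F_i^-$ produces a two-sided inverse. I would first check that $\Gamma_i$ maps $\mathrm{Im}(\widetilde{\psi}_Q)$ onto $\mathrm{Im}(\widetilde{\psi}_{Q'})$. This is a finite verification on the generators $[S_j]$ and $[\E_i]^{\pm1}$: Proposition~\ref{prop:reflection} gives $\Gamma_i([\E_\alpha])=[\E'_{\bs_i\alpha}]$, which lies in the Laurent subalgebra and hence in $\mathrm{Im}(\widetilde{\psi}_{Q'})$; it gives $\Gamma_i([S_i])$, and $\Gamma_i([S_{\btau i}])$ when $\btau i\neq i$, as an explicit product of an $[\E']$ and a simple module class; and for $j\notin\{i,\btau i\}$ it gives $\Gamma_i([S_j])=[F_i^+(S_j)]$, which by Lemma~\ref{lem:reflecting dimen} is the class of the indecomposable $\K Q'$-module of dimension vector $\bs_i\alpha_j$. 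As $\bs_i\alpha_j$ is a real root, this class belongs to the composition subalgebra generated by the $[S'_k]$, hence to $\mathrm{Im}(\widetilde{\psi}_{Q'})$; applying the same reasoning to $F_i^-$ gives the reverse inclusion. Granting \cite[Conjecture~7.9]{LW19}, we may then define $\tTT_i:=\widetilde{\psi}_{Q'}^{-1}\circ\Gamma_i\circ\widetilde{\psi}_Q$ as in \eqref{eq:defT}; it is an algebra automorphism of $\tUi$ because $\Gamma_i$ is an isomorphism and the two occurrences of $\widetilde{\psi}$ are injective with matching images, with inverse built from $F_i^-$. This also yields the explicit formulas for $\tTT_i$ on the generators as in Lemmas~\ref{lemma:braid group of split involution}--\ref{lem:braid group of E}.

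For part (2), I would first note that the reduction Lemmas~\ref{lem: reduction1} and \ref{lem:reduction3} hold verbatim for acyclic $\imath$quivers, being formal facts about $\imath$subquiver algebras sitting inside $\iLa$ with compatible module categories and Hall products. Consequently a braid relation between $\tTT_i$ and $\tTT_j$ need only be verified inside the $\imath$quantum group attached to the rank-$2$ $\imath$subquiver spanned by the $\btau$-orbits of $i$ and $j$, where the order $m_{ij}$ of $\bs_i\bs_j$ in $W_\btau$ is read off the restricted Cartan datum. When $m_{ij}=\infty$ there is nothing to check; when $m_{ij}=2$ this is Lemma~\ref{lem:BGij=0}; and when $m_{ij}\in\{3,4\}$ and that rank-$2$ $\imath$subquiver is of ADE type — which is automatic in the simply-laced case — the generic $\imath$Hall algebra computations of Lemmas~\ref{lem:bgrank2}--\ref{lem:bgij5} apply word for word, since they only involve the finitely many indecomposables of the rank-$2$ $\imath$subquiver together with the isomorphism $\widetilde{\psi}$ established in part (1). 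The conclusion then follows exactly as in Theorem~\ref{thm:bgtui}.

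The main obstacle is twofold. First, one cannot even define $\tTT_i$ as an operator on $\tUi$ (as opposed to on $\tMHg$) without \cite[Conjecture~7.9]{LW19}; this is the essential gap and the reason the statement is only conjectural. Second, beyond the simply-laced case the rank-$2$ $\imath$subquivers may be of wild, non-ADE type, and the braid identities with $m_{ij}\in\{3,4,6\}$ would then have to be re-proved by Hall algebra computations in rank-$2$ module categories with infinitely many indecomposables — presumably via a generic/PBW description of the rank-$2$ $\imath$Hall algebra — which is the substantive new work required. For simply-laced acyclic $\imath$quivers only the first obstacle remains, consistent with the remark in the text that the present arguments already cover most affine ADE types.
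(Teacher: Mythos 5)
The statement you are asked to prove is stated in the paper as Conjecture~\ref{conj:braidKM}; the paper gives no proof of it, and says only that it ``expects Theorem~\ref{thm:bgtui} to be valid in general, assuming the validity of [LW19, Conjecture 7.9]'' (injectivity of $\widetilde{\psi}_Q:\tUi\to\tMHg$ for acyclic $\imath$quivers), and that the finite-type arguments already work for various simply-laced acyclic cases. Your proposal is therefore not comparable to a proof in the paper, because there is none; but it matches the paper's intended strategy exactly, and you correctly identify the same essential dependency (the injectivity conjecture from [LW19]) as well as the additional obstruction in the non-simply-laced rank-$2$ case. You are right not to claim a complete proof.

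One substantive point you gloss over: even granting injectivity of $\widetilde{\psi}_Q$ and $\widetilde{\psi}_{Q'}$, your image-matching step for $\Gamma_i$ is only a ``finite verification on generators'' in the Dynkin case. For a general acyclic quiver with $|c_{ij}|\geq 2$, the module $F_i^+(S_j)$ is the indecomposable of dimension vector $\alpha_j+|c_{ij}|\alpha_i$, and the assertion that its class lies in the subalgebra generated by the $[S'_k]$ and $[\E'_k]^{\pm1}$ is the $\imath$Hall analogue of the nontrivial theorem that exceptional modules over a hereditary algebra lie in the composition algebra; this is not automatic from ``$\bs_i\alpha_j$ is a real root'' and would need to be established (or one must restrict $\tUi$ to a composition subalgebra from the outset, as is done for $\U^+$ in the classical setting). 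With that caveat, your assessment of what is known, what carries over, and what is genuinely open is accurate and consistent with the paper's own remarks.
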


\section{$\imath$Quantum groups at distinguished parameters}
  \label{sec:distinguished}

In this short section, we explain why the distinguished parameter $\bvsd$ allows us to descend earlier results on $\tMH$ and $\tUi$ to $\rMH_\bvsd$ and $\Ui_\bvsd$. We then clarify the relation between the braid group action on  $\Ui_\bvsd$ and an earlier version due to Kolb-Pellegrini \cite{KP11}.

\subsection{Distinguished parameters}
 \label{subsec:disting}

Set $\bvsd =(\vs_{\diamond,i})_{i\in \I}$, where
\begin{equation}
  \label{eq:bvsd}
\vs_{\diamond,i}=-v^{-2} \;  \text{ if } i= \btau i,
\qquad
\vs_{\diamond,i}=1\;  \text{ if } i\neq \btau i.
\end{equation}
 We call the parameters $\bvsd$ 
 {\em distinguished}, for the corresponding $\imath$quantum groups $\tUi$ and $\Ui$.

More explicitly, in the split case (i.e., when $\btau=\Id$), the parameter $\bvsd$ is equal to
\begin{equation}
  \label{eq:special split}
\bvsd=(-v^{-2}, -v^{-2}, \dots, -v^{-2}).
\end{equation}
For type $A_{2r+1}$ with labels as in (\ref{diagram of A}) and nontrivial $\btau$,  the parameter $\bvsd$ is equal to
\begin{equation}
  \label{eq:specialA}
\bvsd =(\vs_0, \vs_1, \dots, \vs_r)=(-v^{-2}, 1,\dots,1)
\end{equation}
For type $D_n$ with labels as in  (\ref{diagram of D}) and nontrivial $\btau$, the parameter $\bvsd$ is equal to
\begin{equation}
  \label{eq:specialD}
  \bvsd =(\vs_1,\vs_2,\dots,\vs_{n-2},\vs_{n-1},\vs_n)=(-v^{-2}, -v^{-2}, \dots, -v^{-2}, 1, 1).
\end{equation}
For type $E_6$ with labels as in  (\ref{diagram of E}) and nontrivial $\btau$, the parameter $\bvsd$ is equal to
\begin{equation}
  \label{eq:specialE}
  \bvsd =(\vs_1,\vs_2,\vs_3,\vs_4,\vs_{5},\vs_{6})=(1,1,-v^{-2}, -v^{-2}, 1,1).
\end{equation}
\subsection{Reduced $\imath$Hall algebras}
 \label{subsec:Ui-disting}

We shall use the index $\bvsd$ to indicate the algebras under consideration are associated to the distinguished parameters $\bvsd$. Let $\La^\imath$ be the $\imath$quiver algebra associated to a Dynkin $\imath$quiver $(Q, \btau)$. Recall the reduced $\imath$Hall algebra $\rMH$ with a general parameter $\bvs$; cf. \eqref{eqn: reduce1}. If follows that the reduced $\imath$Hall algebra $\rMHd$ with the distinguished parameter $\bvsd$  is  the quotient algebra of $\tMH$ by the ideal generated by
\begin{align}
\label{ideal of MRH}
[\E_i]-1 \; (\forall i\in \I \text{ with } \btau i=i), \text{ and }[\E_i]*[\E_{\btau i}]-1\; (\forall i\in \I \text{ with }\btau i\neq i).
\end{align}

\begin{proposition}
   \label{prop:derived invariant of QSP}
For any sink $i\in Q_0$, the isomorphism $\Gamma_i$ induces an isomorphism of algebras
$\bar{\Gamma}_i: \rMHd \xrightarrow{\sim} \rMHi$.
\end{proposition}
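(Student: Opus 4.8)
The plan is to show that the isomorphism $\Gamma_i: \tMH \xrightarrow{\sim} \tMHi$ of Theorem~\ref{thm:Gamma} respects the defining ideals of the reduced $\imath$Hall algebras at the distinguished parameter $\bvsd$, and hence descends to the quotients. Concretely, let $\cj \subseteq \tMH$ be the two-sided ideal generated by the elements in \eqref{ideal of MRH} (i.e. $[\E_i]-1$ for $\btau i=i$ and $[\E_i]*[\E_{\btau i}]-1$ for $\btau i\neq i$), and let $\cj'\subseteq \tMHi$ be the analogous ideal generated by the primed elements $[\E'_i]-1$ and $[\E'_i]*[\E'_{\btau i}]-1$. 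Then $\rMHd = \tMH/\cj$ and $\rMHi_{\bvsd} = \tMHi/\cj'$ by definition. It suffices to prove that $\Gamma_i(\cj) = \cj'$; since $\Gamma_i$ is an isomorphism and $\Gamma_i^{-1}$ is of the same type (it is essentially $\Gamma_i^-$ for the source $i$ of $Q' = \bs_i Q$, up to the symmetric statement), it is enough to check the containment $\Gamma_i(\cj)\subseteq \cj'$, i.e. that each generator of $\cj$ maps into $\cj'$.

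First I would compute $\Gamma_i$ on the relevant generalized simple classes. The key input is formula \eqref{eqn:reflection 4}: $\Gamma_i([\E_\alpha]) = [\E'_{\bs_i\alpha}]$ for all $\alpha\in K_0(\mod(\K Q))$. Now the elements $[\E_j]$ and $[\E_j]*[\E_{\btau j}]$ appearing in \eqref{ideal of MRH} are (up to the twisting) exactly the classes $[\E_{\alpha_j}]$ and $[\E_{\alpha_j + \alpha_{\btau j}}]$ for $\alpha_j$ the $j$-th simple root; one checks in the Laurent polynomial subalgebra $\tTL$ (generated by the $[\E_i]$) that $[\E_j]*[\E_{\btau j}] = [\E_{\alpha_j+\alpha_{\btau j}}]$ exactly, since $\langle \res(\E_j),\res(\E_{\btau j})\rangle_Q = c_{j,\btau j}/2\cdot(\dots)$ — more precisely, because $c_{j,\btau j}=0$ for Dynkin $\imath$quivers the Euler form contribution vanishes and $\E_j, \E_{\btau j}$ have no extensions, so the product is just $[\E_j \oplus \E_{\btau j}] = [\E_{\alpha_j+\alpha_{\btau j}}]$. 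Applying \eqref{eqn:reflection 4},
\[
\Gamma_i([\E_j]) = [\E'_{\bs_i\alpha_j}], \qquad \Gamma_i([\E_j]*[\E_{\btau j}]) = [\E'_{\bs_i(\alpha_j+\alpha_{\btau j})}].
\]
The remaining task is purely combinatorial: show that the collection $\{\alpha_j : \btau j = j\} \cup \{\alpha_j+\alpha_{\btau j} : \btau j\neq j\}$ is permuted by $\bs_i$ up to the identifications that make the two reduced ideals correspond. For $j\notin\{i,\btau i\}$, one has $\bs_i\alpha_j = \alpha_j + (\text{multiple of }\alpha_i,\alpha_{\btau i})$, so $[\E'_{\bs_i\alpha_j}]$ equals $[\E'_j]$ times a power of $[\E'_i]$ (and $[\E'_{\btau i}]$); modulo $\cj'$ these extra factors become $1$, so $\Gamma_i([\E_j]-1)\equiv [\E'_j]-1 \equiv 0 \pmod{\cj'}$. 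For $j\in\{i,\btau i\}$ one uses $\bs_i\alpha_i = -\alpha_i - (\text{mult. of }\alpha_{\btau i})$ and $\bs_i(\alpha_i+\alpha_{\btau i}) = -(\alpha_i+\alpha_{\btau i})$ (this last since $s_i s_{\btau i}$ sends $\alpha_i+\alpha_{\btau i}$ to its negative, as $c_{i,\btau i}=0$); then $[\E'_{\bs_i\alpha_i}] = [\E'_{\alpha_i}]^{-1}\cdot(\dots)$, and modulo $\cj'$ this is the inverse of something that is $\equiv 1$, hence again $\equiv 1$. So each generator of $\cj$ lands in $\cj'$.

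The main obstacle, and the step requiring the most care, is verifying that the \emph{centrality} of the relevant elements is preserved and that the ideal $\cj$ is genuinely generated by central elements whose images generate a central ideal — this is what guarantees that the quotient map is well-defined as an \emph{algebra} homomorphism and not merely a linear one. By \cite[Lemma~6.1]{LW19} and its Hall-algebra incarnation, $[\E_i]$ (for $\btau i=i$) and $[\E_i]*[\E_{\btau i}]$ (for $\btau i\neq i$) are central in $\tMH$, and likewise the primed versions in $\tMHi$; since $\Gamma_i$ is an algebra isomorphism it carries central elements to central elements, so the above computation shows $\Gamma_i$ maps the central generators of $\cj$ to central elements congruent to $1$ modulo $\cj'$, hence $\Gamma_i(\cj)\subseteq\cj'$. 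Applying the same reasoning to $\Gamma_i^{-1}$ (using the dual reflection functor $F_i^-$ at the source $i$ of $Q'$, for which the analogues of \eqref{eqn:reflection 4} and the centrality statements hold) gives $\Gamma_i^{-1}(\cj')\subseteq\cj$, whence $\Gamma_i(\cj)=\cj'$. Therefore $\Gamma_i$ descends to an isomorphism $\bar\Gamma_i: \rMHd \xrightarrow{\sim} \rMHi_{\bvsd}$, as claimed. I would emphasize in the write-up that the choice $\bvsd$ is precisely what makes this work: for a general parameter $\bvs$ the scalars $\vs_i$ get rescaled by the $b_i$-factors of \eqref{def:b}–\eqref{def:b2} under the reflection, and only at $\bvsd$ (where these factors are absorbed) is the ideal literally preserved — this is the content behind the phrase ``naturally (for this distinguished parameter only)'' in the introduction.
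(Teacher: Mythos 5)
Your proposal is correct and is essentially the paper's proof: the paper disposes of this proposition in one line ("a direct computation shows that $\Gamma_i$ preserves the ideal generated by $[\E_j]-1$ if $\btau j=j$ and $[\E_j]*[\E_{\btau j}]-1$ if $\btau j\neq j$"), and your write-up is exactly that computation, resting on \eqref{eqn:reflection 4} together with the observation that $\bs_i$ sends each $\btau$-invariant class $\alpha_j$ (resp.\ $\alpha_j+\alpha_{\btau j}$) to itself plus a $\btau$-invariant combination of $\alpha_i,\alpha_{\btau i}$, whose $[\E']$-factors are $\equiv 1$ modulo the target ideal. Two cosmetic points: the identity $[\E_j]*[\E_{\btau j}]=[\E_{\alpha_j+\alpha_{\btau j}}]$ holds because $\tTL$ is a Laurent polynomial algebra (the twist $\sqq^{\langle\res\E_j,\res\E_{\btau j}\rangle_Q}=q$ cancels against $|\Hom(\E_j,\E_{\btau j})|$, it does not vanish), and the centrality discussion is unnecessary since the quotient by any two-sided ideal is automatically an algebra.
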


\begin{proof}
A direct computation shows that $\Gamma_i: \tMH\stackrel{\sim}{\longrightarrow}  \tMHi$ in Proposition~\ref{prop:reflection} preserves the ideal generated by
$[\E_j]-1$ if $\btau j=j$, and $[\E_j]*[\E_{\btau j}]-1$ if $\btau j\neq j$. The assertion follows.
\end{proof}

Recall from Theorem \ref{thm:Ui=iHall} that the isomorphism $\psi_{Q}: \Ui_{\bvsd |v={\sqq}} \rightarrow \rMHd$ sends
\begin{align*}
B_j \mapsto \frac{-1}{q-1}[S_{j}],\text{ if } j\in\ci;
&\qquad\qquad
B_{j} \mapsto \frac{\sqq}{q-1}[S_{j}],\text{ if }j\notin \ci;
 \notag \\
k_j \mapsto [\E_j],\quad \forall  j \in \I.
\notag
\end{align*}
Similarly, there exists an isomorphism of algebras $\psi_{Q'} : \Ui_{\bvsd |v={\sqq}} \longrightarrow \rMHi$ (where the $[S_{j}], [\E_j]$ above are replaced by $[S'_{j}], [\E'_j]$).

The automorphism $\tTT_i \in \Aut (\tUi)$ in \eqref{eq:defT} factors through an automorphism $\TT_i \in \Aut(\Ui_\bvsd)$.
Then the diagram \eqref{eq:Up=T}  induces a commutative diagram below.

\begin{proposition}
   \label{prop:braidDist}
Let $(Q, \btau)$ be a Dynkin $\imath$quiver; see \eqref{diagram of A}, \eqref{diagram of D}, and \eqref{diagram of E} in cases when $\btau\neq \Id$. Then $\tTT_i: \tUi_{\bvsd} \rightarrow \tUi_{\bvsd}$ in \eqref{eq:defT} induces an isomorphism
$\TT_i: \Ui_{\bvsd} \rightarrow \Ui_{\bvsd}$, for each $i\in\ci$.
Moreover, for any sink $i$ in $Q_0$, we have the following commutative diagram of isomorphisms:
\[
\xymatrix{ \Ui_{\bvsd |v={\sqq}}\ar[r]^{\TT_i}  \ar[d]^{\psi_Q} & \Ui_{\bvsd |v={\sqq}}\ar[d]^{\psi_{Q'}} \\
\rMHd \ar[r]^{\bar{\Gamma}_i} & \rMHi }
\]
\end{proposition}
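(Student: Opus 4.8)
The statement is Proposition~\ref{prop:braidDist}, which asserts that the automorphism $\tTT_i$ of $\tUi$ descends to an automorphism $\TT_i$ of $\Ui_{\bvsd}$ and that the resulting square commutes. The plan is to deduce everything from the two ingredients already in place: Proposition~\ref{prop:QSP12}(1), which identifies $\Ui_{\bvsd}$ with the quotient of $\tUi$ by the ideal $\mathfrak{J}$ generated by $\tk_i-\vs_{\diamond,i}$ (for $i=\btau i$) and $\tk_i\tk_{\btau i}-\vs_{\diamond,i}\vs_{\diamond,\btau i}$ (for $i\neq\btau i$); and Proposition~\ref{prop:derived invariant of QSP}, which says that $\Gamma_i$ descends to $\bar\Gamma_i:\rMHd\xrightarrow{\sim}\rMHi$. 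So the whole content reduces to checking that $\tTT_i$ preserves $\mathfrak{J}$.

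\textbf{Step 1: $\tTT_i$ preserves the defining ideal.} First I would verify $\tTT_i(\mathfrak{J})\subseteq\mathfrak{J}$, i.e. that $\tTT_i$ sends each generator of $\mathfrak{J}$ into $\mathfrak{J}$. By the formula for the action of $\tTT_i$ on the toral part, $\tTT_i(\tk_\alpha)=\tfrac{b_{\bs_i\alpha}}{b_\alpha}\tk_{\bs_i\alpha}$ for $\alpha\in\Z^\I$. Applying this to $\alpha=\alpha_j$ (or to $\alpha_j+\alpha_{\btau j}$ in the non-split case), one computes $\tTT_i(\tk_j)$ explicitly in terms of $\tk_{\bs_i\alpha_j}$, expands $\bs_i\alpha_j$ in the simple roots using $c_{ij},c_{\btau i,j}\in\{0,-1\}$, and checks that the relation $\tk_j-\vs_{\diamond,j}$ (resp. $\tk_j\tk_{\btau j}-\vs_{\diamond,j}\vs_{\diamond,\btau j}$) is mapped, modulo $\mathfrak{J}$, again to a relation of the same shape. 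The key numerical point is that the constants $b_i=-v^2$ (if $\btau i=i$), $b_i=1$ (if $\btau i\neq i$) from \eqref{def:b} are \emph{exactly} the scalars $\vs_{\diamond,i}$ in \eqref{eq:bvsd}, so $b_\alpha$ records precisely the value of the central element $\tk_\alpha$ in $\Ui_{\bvsd}$; hence $\tTT_i$ multiplies each central generator by $b_{\bs_i\alpha}/b_\alpha$ and simultaneously replaces its "target value" by the same factor, so the relation is preserved. Since $\tTT_i$ is an automorphism (Theorem~\ref{thm:bgtui}), the same argument applied to $\tTT_i^{-1}$ gives $\mathfrak{J}=\tTT_i(\mathfrak{J})$, so $\tTT_i$ descends to an automorphism $\TT_i$ of $\tUi/\mathfrak{J}\cong\Ui_{\bvsd}$. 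This also explains structurally why $\bvsd$ is the \emph{unique} parameter for which this works: any other $\bvs$ is not preserved by the rescalings $b$.

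\textbf{Step 2: compatibility of the square.} With $\TT_i$ in hand, the commutative square is obtained by passing \eqref{eq:Up=T} to quotients. On the left/right vertical edges, $\widetilde\psi_Q$ and $\widetilde\psi_{Q'}$ descend to $\psi_Q:\Ui_{\bvsd|v=\sqq}\to\rMHd$ and $\psi_{Q'}:\Ui_{\bvsd|v=\sqq}\to\rMHi$ by Theorem~\ref{thm:Ui=iHall}, because $\widetilde\psi_Q$ sends the ideal $\mathfrak{J}$ onto the ideal generated by \eqref{ideal of MRH} (this is exactly the statement that $\psi$ exists in Theorem~\ref{thm:Ui=iHall}, once one checks $\widetilde\psi_Q(\tk_i)=-q^{-1}[\E_i]$ matches $\vs_{\diamond,i}=-v^{-2}$ at $v=\sqq$, and similarly in the non-split case where $\widetilde\psi_Q(\tk_i)=[\E_i]$ matches $\vs_{\diamond,i}=1$). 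On the bottom edge, $\Gamma_i$ descends to $\bar\Gamma_i$ by Proposition~\ref{prop:derived invariant of QSP}. The outer square \eqref{eq:Up=T} commutes by definition of $\tTT_i$, and all four maps descend compatibly to the quotients, so the induced square commutes; since the quotient maps are surjective, commutativity of the induced square follows from that of \eqref{eq:Up=T}. Finally, $\TT_i$ is an isomorphism because $\tTT_i$ is, or equivalently because $\bar\Gamma_i$, $\psi_Q$, $\psi_{Q'}$ are all isomorphisms and the square commutes, so $\TT_i=\psi_{Q'}^{-1}\circ\bar\Gamma_i\circ\psi_Q$ at $v=\sqq$ (and the generic statement then follows as in the passage from Theorem~\ref{thm:Ui=iHall} to Theorem~\ref{generic U MRH}).

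\textbf{Main obstacle.} The only real work is Step~1, and within it the bookkeeping of the non-split cases, where $\bs_i=s_is_{\btau i}$ and one must track how $\bs_i$ acts on $\alpha_j$ when $j$ is adjacent to both $i$ and $\btau i$ — this is the situation behind the "new" formulas in Lemmas~\ref{lem:braid group of A}, \ref{lem:braid group of D}, \ref{lem:braid group of E}. One should double-check in those cases that $\tTT_i$ applied to the generator $\tk_j\tk_{\btau j}-1$ (distinguished value) still lands in $\mathfrak{J}$; since $\bs_i$ permutes the $\btau$-orbit structure compatibly and $b_\alpha$ depends only on the $\btau$-orbit multiplicities, this is a finite check. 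Everything else is formal diagram-chasing through quotients, so I expect no genuine difficulty beyond this.
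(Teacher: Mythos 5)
Your proposal is correct and follows essentially the paper's (very terse) argument: verify that $\tTT_i$ preserves the reduction ideal of $\tUi_{\bvsd}$ using $\tTT_i(\tk_\alpha)=\frac{b_{\bs_i\alpha}}{b_\alpha}\tk_{\bs_i\alpha}$ together with the ideal-preservation of $\Gamma_i$ from Proposition~\ref{prop:derived invariant of QSP}, then pass the defining square \eqref{eq:Up=T} to quotients. One small correction to your "key numerical point": comparing \eqref{def:b} with \eqref{eq:bvsd} gives $b_i=\vs_{\diamond,i}^{-1}$ (e.g.\ $b_i=-v^2$ versus $\vs_{\diamond,i}=-v^{-2}$ when $\btau i=i$), not $b_i=\vs_{\diamond,i}$; this does not affect the argument, since modulo the ideal $\tk_{\bs_i\alpha}\equiv b_{\bs_i\alpha}^{-1}$, so the prefactor $b_{\bs_i\alpha}/b_\alpha$ still cancels exactly and $\tTT_i(\tk_\alpha)\equiv b_\alpha^{-1}$, the required target value.
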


\begin{proof}
The first assertion follows by the same arguments as for Proposition \ref{prop:derived invariant of QSP}.
The second assertion follows by a direct computation.
\end{proof}

Note that $\TT_i: \Ui_{\bvsd} \longrightarrow \Ui_{\bvsd}$ satisfies that $\TT_i(k_\alpha)=k_{\bs_i\alpha}$ for any $\alpha\in\Z^\I$.

\subsection{Braid group action revisited}

Now we consider the braid group symmetries on $\Ui_{\bvsd}$.

\begin{corollary}
  \label{cor:braidUi1}
Let $(Q,\btau)$ be a Dynkin $\imath$quiver.
There exists a homomorphism
$\brW \rightarrow \aut( \Ui_{\bvsd})$ such that $\bs_i\mapsto \TT_i$ for all $i\in \ci $.
\end{corollary}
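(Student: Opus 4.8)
The statement to be proved is Corollary~\ref{cor:braidUi1}: the assignment $t_i\mapsto \TT_i$ extends to a group homomorphism $\brW\to\aut(\Ui_{\bvsd})$. The plan is to deduce this directly from Theorem~\ref{thm:bgtui} by passing to the quotient. First I would recall that by Proposition~\ref{prop:braidDist}, for each $i\in\ci$ the automorphism $\tTT_i\in\aut(\tUi)$ constructed in \eqref{eq:defT} descends to a well-defined automorphism $\TT_i\in\aut(\Ui_{\bvsd})$; concretely, $\tTT_i$ preserves the ideal $J_{\bvsd}$ of $\tUi$ generated by the central elements $\tk_i-\vs_{\diamond,i}$ (for $i=\btau i$) and $\tk_i\tk_{\btau i}-\vs_{\diamond,i}\vs_{\diamond,\btau i}$ (for $i\neq\btau i$) in \eqref{eq:parameters}, because $\tTT_i(\tk_\alpha)=\frac{b_{\bs_i\alpha}}{b_\alpha}\tk_{\bs_i\alpha}$ and a direct check (already contained in the proof of Proposition~\ref{prop:derived invariant of QSP} transported through the isomorphism $\widetilde\psi$) shows the defining relators of $J_{\bvsd}$ are sent to scalar multiples of relators of $J_{\bvsd}$ — here the distinguished choice $\bvsd$ is exactly what makes the scalars come out to $1$, so that $J_{\bvsd}$ is literally stable. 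Hence each $\TT_i$ is a genuine automorphism of $\Ui_{\bvsd}$.

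The second and essentially only remaining step is to check the braid relations. By Theorem~\ref{thm:bgtui} we already know that $\tTT_i$, $i\in\ci$, satisfy the braid relations of $\brW$ inside $\aut(\tUi)$: for $c_{ij}=0$ we have $\tTT_i\tTT_j=\tTT_j\tTT_i$ (Lemma~\ref{lem:BGij=0}), and for the various rank-two Dynkin subdiagrams we have $\tTT_i\tTT_j\tTT_i=\tTT_j\tTT_i\tTT_j$ or $\tTT_i\tTT_j\tTT_i\tTT_j=\tTT_j\tTT_i\tTT_j\tTT_i$ (Lemmas~\ref{lem:bgrank2}--\ref{lem:bgij5}). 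Let $\pi:\tUi\twoheadrightarrow \tUi/J_{\bvsd}\cong\Ui_{\bvsd}$ (Proposition~\ref{prop:QSP12}) be the quotient map. Since $\tTT_i(J_{\bvsd})=J_{\bvsd}$, we have $\TT_i\circ\pi=\pi\circ\tTT_i$ for each $i$; therefore for any word $w$ in the generators, $(\TT_{i_1}\cdots\TT_{i_k})\circ\pi=\pi\circ(\tTT_{i_1}\cdots\tTT_{i_k})$. Given any braid relation $\tTT_{i_1}\cdots\tTT_{i_m}=\tTT_{j_1}\cdots\tTT_{j_m}$ on $\tUi$, composing with $\pi$ on the right and using surjectivity of $\pi$ yields $\TT_{i_1}\cdots\TT_{i_m}=\TT_{j_1}\cdots\TT_{j_m}$ on $\Ui_{\bvsd}$. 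As these are precisely the defining relations of $\brW$ among the $t_i$ (see \eqref{eq:braidCox}--\eqref{eq:BrBr} and Lemma~\ref{lem:iWeyl}), the assignment $t_i\mapsto\TT_i$ respects all relations, hence extends uniquely to a group homomorphism $\brW\to\aut(\Ui_{\bvsd})$.

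I do not expect a genuine obstacle here: the corollary is a formal consequence of Theorem~\ref{thm:bgtui} plus the stability of the ideal $J_{\bvsd}$ under each $\tTT_i$, and the latter has already been recorded in Propositions~\ref{prop:derived invariant of QSP} and \ref{prop:braidDist}. The only point requiring a line of care is the surjectivity argument that lets one cancel $\pi$ — i.e., that an identity of endomorphisms of $\Ui_{\bvsd}$ follows from the corresponding identity of endomorphisms of $\tUi$ once one knows both sides are compatible with $\pi$ and $\pi$ is epi; this is immediate. One could alternatively phrase the whole argument as: the homomorphism $\brW\to\aut(\tUi)$ of Theorem~\ref{thm:bgtui} lands in the subgroup of automorphisms preserving $J_{\bvsd}$, which maps to $\aut(\tUi/J_{\bvsd})=\aut(\Ui_{\bvsd})$, and the composite sends $t_i\mapsto\TT_i$. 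Thus the proof is simply: \emph{apply Theorem~\ref{thm:bgtui} and pass to the quotient using Proposition~\ref{prop:braidDist}.}
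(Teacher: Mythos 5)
Your proposal is correct and is essentially the paper's own argument: the proof there is the one-line "Follows from Theorem~\ref{thm:bgtui} and Proposition~\ref{prop:braidDist}," and you have simply spelled out the routine verification that the braid relations descend along the surjection $\pi:\tUi\twoheadrightarrow\Ui_{\bvsd}$ once each $\tTT_i$ is known to preserve the ideal $J_{\bvsd}$. No gaps.
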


\begin{proof}
Follows from Theorem \ref{thm:bgtui} and Proposition~\ref{prop:braidDist}.
\end{proof}

\begin{remark}
  \label{rem:KP11}
A braid group action of $\brW$ on $\Q(\sqrt{v})\otimes_{\Q(v)} \Ui_{\bvsd}$ was defined earlier in \cite{KP11} (by computer computations); their operators are denoted below by $\TT_i'$, for $i\in \ci$. We will not recall the explicit formulas for $\TT_i'$ here.  By a direct computation, we show that there exists an automorphism $ \phi$ on $\Q(\sqrt{v})\otimes_{\Q(v)} \Ui_{\bvsd}$ determined by
\begin{align*}
\phi(B_j) &= \left\{
\begin{array}{ll}
v^{1/2} B_j, & \text{ for } j\in \ci, j\neq \btau j,
\\
-v^{-1/2}B_j, & \text{ for } j \not\in \ci,
\\
B_j, & \text{ for }j=\btau j;
\end{array}
\right.
&&
\phi(k_i) =k_i, \quad \forall i\in \ci.
\end{align*}
Another direct computation shows that our operator $\TT_i$ is related to theirs by $\TT_i=\phi \TT_i'\phi^{-1}$.
\end{remark}

\begin{remark}
While the braid relations at the level of $\tUi$ imply the braid relations at the level of $\Ui_{\bvsd}$, the argument can not be reversed. The action of $\tTT_i$ on $\tUi$ moves central elements around, while fixing the scalars (and hence the chosen parameters $\bvs$).
\end{remark}

\begin{remark}
For any acyclic quiver $Q$, let $Q^{\dbl} =Q\sqcup Q^\diamond$,  where $Q^\diamond$ is an identical copy of a quiver $Q$. Consider the $\imath$quiver $(Q^{\dbl},{\rm swap})$ as in \cite[\S8]{LW19} and its $\imath$quiver algebra $\Lambda$. Then $\cs\cd\widetilde{\ch}(\Lambda)$ is isomorphic to the semi-derived Hall algebra defined in \cite{Gor13} and also to the (non-reduced) Bridgeland's Hall algebra \cite{Br13} (which provides a realization of the Drinfeld double quantum group $\tU$). In this setting, a reduced version of the isomorphism (given by reflection functors) in Proposition~ \ref{prop:reflection} (see Proposition~ \ref{prop:derived invariant of QSP}) has been obtained in \cite[Theorem 9.27]{Gor13}. 
When combined with the results in \cite{SV99, XY01}, this provides a reflection functor realization of the braid group action on a quantum group $\U$.
From the viewpoint of this paper, it is expected that the reflection functors gives rise to a braid group action on the Drinfeld double quantum group $\tU$, which descends to the one on the reduced Drinfeld double $\U$. 
\end{remark}

The braid group symmetries on $\Ui$ for a general parameter $\bvs$ will be treated in Section~\ref{sec:PBW} below.

\section{PBW bases for $\imath$quantum groups}
  \label{sec:PBW}

Various PBW bases of quantum groups defined by Lusztig \cite{Lus90a} can be realized using Hall algebras; see \cite{Rin96}. In this section, for Dynkin $\imath$quivers we shall use reflection functors to construct PBW bases for the $\imath$Hall algebras and then construct PBW bases for $\imath$quantum groups. We also write down formulas for the braid group operators on $\Ui$ associated to a general parameter $\bvs$.

\subsection{$\imath$-Admissible sequences}

Let $(Q, \btau)$ be a Dynkin $\imath$quiver. For a sink $\ell\in Q_0$, define
\[
\bs_{\ell}(Q, \btau)=(Q',\btau'),
\]
where $(Q',\btau')$ is the $\imath$quiver defined in \S\ref{subsec:APR}. 
Recall from \eqref{eq:ci} that $\ci$ is a subset of $\I$ which consists of fixed representatives for $\btau$-orbits in $\I$. Note by definition that the $\btau$-orbits coincide with $\btau'$-orbits in $\I$.
Up to relabeling, we can set $\ci=\{1,\dots,r\}\subseteq\I=\{1,\dots,n\}$, where $r$ denotes the number of $\btau$-orbits in $\I$.

Recall from \eqref{def:simple reflection} that $\bs_{i}$ is defined by $\bs_i=s_{i}$ if $\btau i=i$ and $\bs_i=s_is_{\btau i}$ if $\btau i\neq i$.
\begin{definition}
  \label{def:admissible}
A sequence $i_1,\dots,i_t$ in $\ci$ is called {\em $\imath$-admissible} if $i_r$ is a sink in $\bs_{i_{r-1}}\cdots \bs_{i_1}(Q)$, for each $1\leq r\leq t$.
\end{definition}

In particular, if $\btau=\Id$, $\imath$-admissible sequence coincides with the $(+)$-admissible sequence; see, e.g., \cite[\S11.5]{DDPW}.

Let $\Phi$ be the root system of $\fg$, and let $\Phi^+$ be the set of positive roots with simple roots $\alpha_1,\dots,\alpha_n$. Clearly, $\btau$ induces an involution of $\Phi$ (and also of $\Phi^+$) by mapping $\sum_{i\in\I} a_i\alpha_i$ to $\sum_{i\in \I}a_i\alpha_{\btau i}$ for $1\le i\le n$.

Recall from \S\ref{sub:generic} that $M_q(\beta)$ is the indecomposable $\K Q$-module such that $\dimv M_q(\beta)=\beta$ for each $\beta\in\Phi^+$. A sequence or an ordering $\beta_1,\dots,\beta_N$ of the positive roots in $\Phi^+$ is called to be {\em $Q$-admissible} if $\Ext^1_{\K Q}(M_q(\beta_r),M_q(\beta_t))\neq 0$ implies $r>t$ (or $\Hom_{\K Q}(M_q(\beta_r),M_q(\beta_t))\neq0$ only if $r\leq t$). Note that this definition does not depend on the base field $\K=\F_q$. A $Q$-admissible sequence exists, see, e.g., \cite[Corollary 3.34]{DDPW}.

For any $\imath$-admissible sequence $i_1,\dots,i_t$, we define
\begin{align}
\label{def:betaj}
\beta_{j}:=\bs_{i_1}\cdots \bs_{i_{j-1}}(\alpha_{i_j}),
\end{align}
for any $1\le j\le t$.
Note that $\beta_{1}=\alpha_{i_1}$, $\btau(\beta_{1})=\alpha_{\btau i_1}$. 
The following lemma is immediate from the definition of $W_\btau$ in \eqref{eq:Wtau} and definition of $\beta_j$ in \eqref{def:betaj}.

\begin{lemma}
We have $\btau(\beta_j)=\bs_{i_1}\cdots \bs_{i_{j-1}}(\alpha_{\btau i_j})$ for any $1\le j\le t$. In particular, we have $\beta_{j}=\btau(\beta_{j})$ if $\btau i_j=i_j$.
\end{lemma}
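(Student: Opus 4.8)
The statement to prove is the identity $\btau(\beta_j)=\bs_{i_1}\cdots\bs_{i_{j-1}}(\alpha_{\btau i_j})$ for $1\le j\le t$, together with its corollary $\beta_j=\btau(\beta_j)$ when $\btau i_j=i_j$. The plan is to combine two elementary facts: first, that $\btau$ (regarded as a linear automorphism of the root lattice $\Z^\I$ via $\alpha_i\mapsto\alpha_{\btau i}$) normalizes the reflections $\bs_i$, and second, the definition \eqref{def:betaj} of $\beta_j$.

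First I would record the key commutation relation: for every $i\in\ci$ one has $\btau\circ\bs_i=\bs_i\circ\btau$ as operators on $\Z^\I$. Indeed, by Lemma~\ref{lem:iWeyl} (or directly: $\bs_i\in W$ and $\btau$ commutes with every element of $W_\btau$ by the very definition \eqref{eq:Wtau}, and $\bs_i\in W_\btau$ since $\bs_i$ commutes with $\btau$ when $\btau i=i$ because $c_{i,\btau i}=0$ is automatic in the split case, and $\bs_i=s_is_{\btau i}$ is manifestly $\btau$-stable when $\btau i\ne i$). One checks this on simple roots: if $\btau i=i$ then $\bs_i=s_i$ and $\btau s_i(\alpha_k)=\btau(\alpha_k-c_{ik}\alpha_i)=\alpha_{\btau k}-c_{ik}\alpha_i$, while $s_i\btau(\alpha_k)=\alpha_{\btau k}-c_{i,\btau k}\alpha_i$, and these agree since $c_{ik}=c_{\btau i,\btau k}=c_{i,\btau k}$. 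The case $\btau i\ne i$ is entirely similar using $\bs_i=s_is_{\btau i}$. Hence $\btau\bs_i\btau^{-1}=\bs_i$ for all $i\in\ci$, and since $\btau^2=\Id$ this gives $\btau\bs_i=\bs_i\btau$.

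Next I would simply compute, using \eqref{def:betaj} and the commutation relation iteratively:
\begin{align*}
\btau(\beta_j)
&=\btau\bigl(\bs_{i_1}\cdots\bs_{i_{j-1}}(\alpha_{i_j})\bigr)
=\bs_{i_1}\cdots\bs_{i_{j-1}}\bigl(\btau(\alpha_{i_j})\bigr)
=\bs_{i_1}\cdots\bs_{i_{j-1}}(\alpha_{\btau i_j}),
\end{align*}
where the middle equality pushes $\btau$ past each factor $\bs_{i_k}$ in turn. This is exactly the asserted formula. For the final clause, if $\btau i_j=i_j$ then $\alpha_{\btau i_j}=\alpha_{i_j}$, so the right-hand side equals $\bs_{i_1}\cdots\bs_{i_{j-1}}(\alpha_{i_j})=\beta_j$, giving $\btau(\beta_j)=\beta_j$.

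There is no real obstacle here; the only thing requiring a little care is justifying the commutation $\btau\bs_i=\bs_i\btau$ cleanly—either by invoking $\bs_i\in W_\btau$ and the definition \eqref{eq:Wtau}, or by the short direct computation on simple roots sketched above using $c_{ij}=c_{\btau i,\btau j}$ and $c_{i,\btau i}=0$. Once that is in place the lemma follows by a one-line induction, so the write-up will be short.
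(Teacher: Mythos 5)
Your proof is correct and is exactly the argument the paper has in mind: the paper simply declares the lemma "immediate from the definition of $W_\btau$ and of $\beta_j$," and your write-up spells this out by verifying $\btau\bs_i=\bs_i\btau$ (i.e., $\bs_i\in W_\btau$, using $c_{ij}=c_{\btau i,\btau j}$ and $c_{i,\btau i}=0$) and then commuting $\btau$ past the product $\bs_{i_1}\cdots\bs_{i_{j-1}}$.
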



Note the longest element $w_0$ in $W$ lies in $W_\btau$ by definition \eqref{eq:Wtau}. 
The following is an $\imath$quiver generalization of the existence of $(+)$-admissible sequence for Dynkin quivers (cf. \cite[\S1.4]{DDPW}).

\begin{theorem}
  \label{thm:i-seq}
Let $(Q,\btau)$ be a Dynkin $\imath$quiver. Then there is an $\imath$-admissible sequence of length $N_\imath$, $\{i_1,\dots,i_{\Ni}\}$, such that
\[
\beta_{1}, \btau(\beta_1), \beta_{2}, \btau(\beta_2), \dots,\beta_{\Ni },\btau(\beta_{\Ni})
\]
is a $Q$-admissible sequence of the roots in $\Phi^+$. (By convention the redundant $\btau(\beta_j)$ is omitted here and below whenever $\btau(\beta_j)=\beta_j$.) Moreover, $\bs_{i_1}\cdots \bs_{i_{\Ni}}$ is a reduced expression of $w_0$ in $W_\btau$ (and it becomes a reduced expression of $w_0 \in W$ if each $\bs_{i_k}$ is replaced by products of simple reflections as in \eqref{def:simple reflection}). In particular, $\Ni$ is the length of $w_0$ in $W_\btau$.
\end{theorem}
The $\imath$-admissible sequence in Theorem~\ref{thm:i-seq} is called {\em complete}.

\begin{proof}
Let $\gamma_1,\dots,\gamma_N$ be a $Q$-admissible ordering of the roots in $\Phi^+$. We define inductively a sequence $\beta_1, \beta_2, \ldots \in \Phi^+$, which is a subsequence of $\{\gamma_i \mid 1\le i \le N\}$, as follows. Set $\beta_1=\gamma_1$. Given $i\ge 2$, assume $\beta_1, \ldots, \beta_{i-1}$ have been fixed. We define $\beta_{i}$ to be $\gamma_k$ for a mimimal $k$ which does not belong to $\{\beta_{j},\btau(\beta_{ j})\mid 1\le j\le i-1 \}$. This process will terminate, and we obtain a longest sequence of length, say, $N_\imath$, $\beta_1,\dots,\beta_{\Ni}$.  (We do not assume that $N_\imath$ is the length of $w_0$ in $W_\btau$ to start with.)

We claim that $\beta_{1},\btau(\beta_1), ,\dots,\beta_{\Ni },\btau(\beta_{\Ni})$ (with redundancy removed) is a $Q$-admissible ordering of the roots in $\Phi^+$.

Indeed, by construction, $\Hom_{\K Q}(M_q(\beta_i),M_q(\beta_j))\neq0$ only if $i\leq j$. Then
$$
\Hom_{\K Q}(M_q(\btau \beta_i),M_q(\btau \beta_j))\neq0 \; \text{
only if } i\leq j.
$$
Assume that $\Hom_{\K Q}(M_q(\beta_i),M_q(\btau \beta_j))\neq0$. We denote $\beta_i=\gamma_{t_i}$, $\btau(\beta_i)=\gamma_{t_l}$, $\beta_j=\gamma_{t_j}$ and $\btau(\beta_j)=\gamma_{t_{k}}$. Then $t_i\le t_l$, $t_j\le t_k$, $t_i\leq t_k$. Since $\btau$ is an automorphism of $\mod(\K Q)$, $\Hom_{\K Q}(M_q(\btau \beta_i),M_q(\beta_j))\neq0$, which implies that $t_l\le t_j$.
So $t_i\leq t_j$, and then $i\leq j$.
Similarly, one can check that $\Hom_{\K Q}(M_q(\btau\beta_i),M_q(\beta_j))\neq0$ only if $i\leq j$. The claim has been proved.

From now on, we shall take $\{\gamma_1,\dots,\gamma_N\}$ to be the $Q$-admissible sequence constructed from $\{\beta_{1},\btau(\beta_1), \dots,\beta_{\Ni },\btau(\beta_{\Ni})\}$ by deleting $\btau(\beta_j)$ whenever $\btau(\beta_j)=\beta_j$. It follows from \cite[Lemma 11.28 \& \S1.4]{DDPW} that there exists a $(+)$-admissible sequence $i_1,\dots,i_N$ such that
\begin{equation}
  \label{eq:gj}
  \gamma_j=s_{i_1}\dots s_{i_{j-1}}(\alpha_{i_j}),
\end{equation}
for all $1\leq j\leq N$. There exists a unique $1\le t_j \le N$ such that
\begin{equation}
  \label{eq:tj}
\beta_j=\gamma_{t_j}
\end{equation}
for any $1\le j\le \Ni$. Note that $\btau(\beta_j)=\gamma_{t_j+1}$ if $\btau(\beta_j)\neq \beta_j$.

We shall prove that $\{i_{t_1}, i_{t_2}, \dots, i_{t_{\Ni}}\}$ forms the desired $\imath$-admissible sequence, i.e., for $1\le k \leq N_\imath$,
\begin{equation}
  \label{eq:i-adm}
\beta_{k}=\bs_{i_{t_1}} \bs_{i_{t_2}} \ldots \bs_{i_{t_{k-1}}}(\alpha_{i_{t_k}}).
\end{equation}
 More precisely, we shall prove that, for $1\le k \leq N_\imath$,
\begin{enumerate}
\item[($1_k$)]
  $\bs_{i_{t_1}} \bs_{i_{t_2}} \ldots \bs_{i_{t_{k-1}}}=s_{i_1} s_{i_2} \ldots s_{i_{t_k-1}}$;
\item[($2_k$)]
if $\btau (\beta_k)\neq \beta_k$, then $\btau(\beta_{k})=\bs_{i_{t_1}} \bs_{i_{t_2}} \ldots \bs_{i_{t_{k-1}}}(\alpha_{i_{t_k+1}})$ and $i_{t_k+1} = \btau i_{t_k}$.
\end{enumerate}
Note that

$(\star)$ \quad Equation~\eqref{eq:i-adm} follows directly by $(1_k)$ and \eqref{eq:gj}--\eqref{eq:tj}.


Let us prove $(1_k)$--$(2_k)$ by induction on $1\le k \leq N_\imath$.
First, note that $\beta_1=\gamma_1=\alpha_{i_1}$. If $\btau(\beta_1)\neq \beta_1$, then $\alpha_{\btau i_1}=\btau(\beta_1)=\gamma_2=s_{i_1}(\alpha_{i_2})$, that is, $s_{i_1}(\alpha_{i_2})$ is a simple root. This could only happen when $s_{i_1}(\alpha_{i_2}) = \alpha_{i_2}$, and so $i_2 =\btau i_1 \neq i_1$. Note that $t_1=1$. So we have verified $(2_k)$ with $k=1$; and $(1_k)$ with $k=1$ is vacuous.

Assume the validity of $(1_k)$--$(2_k)$ for $1\le k<\Ni$. we shall prove the statements $(1_{k+1})$--$(2_{k+1})$ by separating into 2 cases.

Case (1): \underline{$\btau(\beta_{k})=\beta_{k}$}. Then $t_{k+1}=t_k+1$ by \eqref{eq:tj}. By applying \eqref{eq:i-adm} (which is valid by $(\star)$ thanks to the inductive assumption $(1_k)$) to  $\btau(\beta_{k})=\beta_{k}$, we obtain $\btau i_{t_k} =i_{t_k}$, and so $\bs_{i_{t_k}}=s_{i_{t_k}}$. Hence it follows by the inductive assumption $(1_k)$ that
\[
\bs_{i_{t_1}} \bs_{i_{t_2}} \ldots \bs_{i_{t_{k-1}}} \cdot \bs_{i_{t_k}} =s_{i_1} s_{i_2} \ldots s_{i_{t_k-1}} \cdot s_{i_{t_k}}=s_{i_1} s_{i_2} \ldots s_{i_{t_{k+1}-1}},
\]
whence $(1_{k+1})$.

Case (2): \underline{$\btau(\beta_{k}) \neq \beta_{k}$}. Then by using \eqref{eq:tj} and reading off part of the $Q$-admissible sequence: $\beta_{k} =\gamma_{t_k}, \btau(\beta_{k}) =\gamma_{t_k+1}, \beta_{k+1} =\gamma_{t_{k+1}} =\gamma_{t_k+2}, \ldots$, we obtain
\begin{equation}
  \label{eq:tt}
t_{k+1}=t_k+2.
\end{equation}
Note \eqref{eq:i-adm} is valid by $(\star)$ thanks to the inductive assumption $(1_k)$.
Using \eqref{eq:tj}--\eqref{eq:gj}--\eqref{eq:i-adm} and $(1_k)$, we have
\begin{align*}
s_{i_1} s_{i_2} \ldots s_{i_{t_k}} (\alpha_{i_{t_k+1}})
&= \gamma_{t_k+1} =\btau(\beta_{k})
\\
&= \btau ( \bs_{i_{t_1}} \bs_{i_{t_2}} \ldots \bs_{i_{t_{k-1}}}(\alpha_{i_{t_k}}) )
\\
&= \bs_{i_{t_1}} \bs_{i_{t_2}} \ldots \bs_{i_{t_{k-1}}}(\alpha_{\btau i_{t_k}}) )
\\
&= s_{i_1} s_{i_2} \ldots s_{i_{t_k-1}} (\alpha_{\btau i_{t_k}}).
\end{align*}
Hence $s_{i_{t_k}} (\alpha_{i_{t_k+1}}) =\alpha_{\btau i_{t_k}}$, which implies that $i_{t_k+1} = \btau i_{t_k} \neq i_{t_k}$ by an argument similar to Case $k=1$ above.

Thus we have $\bs_{i_{t_k}}=s_{i_{t_k}}s_{i_{t_k+1}}$. It follows  by the inductive assumption $(1_k)$ and \eqref{eq:tt} that
\[
\bs_{i_{t_1}} \bs_{i_{t_2}} \ldots \bs_{i_{t_{k-1}}} \cdot \bs_{i_{t_k}}
= s_{i_1} s_{i_2} \ldots s_{i_{t_k-1}} \cdot s_{i_{t_k}}s_{i_{t_k+1}}
= s_{i_1} s_{i_2} \ldots s_{i_{t_{k+1}-1}},
\]
whence $(1_{k+1})$.

Assume $\btau(\beta_{k+1})\neq \beta_{k+1}$. We shall establish $(2_{k+1})$.
Thanks to $(1_{k+1})$ and then $(\star)$ (with $k$ replaced by $k+1$), we have $\beta_{k+1}=\bs_{i_{t_1}} \bs_{i_{t_2}} \ldots \bs_{i_{t_{k}}}(\alpha_{i_{t_{k+1}}})$, and thus
\[
 \btau(\beta_{k+1})=\bs_{i_{t_1}} \bs_{i_{t_2}} \ldots \bs_{i_{t_k}}(\alpha_{\btau i_{t_{k+1}}}).
\]
On the other hand, by \eqref{eq:gj} and $(1_{k+1})$ we have
\[
\btau(\beta_{k+1})=\gamma_{t_{k+1}+1}
= s_{i_1} s_{i_2} \ldots s_{i_{t_{k+1}-1}} s_{i_{t_{k+1}}} (\alpha_{i_{t_{k+1}+1}})=\bs_{i_{t_1}} \bs_{i_{t_2}} \ldots  \bs_{i_{t_k}} \cdot s_{i_{t_{k+1}}}(\alpha_{i_{t_{k+1}+1}}).
\]
A comparison of the two formulas for $\btau(\beta_{k+1})$ gives us $\alpha_{\btau i_{t_{k+1}}}=s_{i_{t_{k+1}}}(\alpha_{i_{t_{k+1}+1}})$; this implies that ${i_{t_{k+1}+1}} = \btau i_{t_{k+1}} \neq  i_{t_{k+1}}$ by an argument similar to Case $k=1$ above, whence $(2_{k+1})$.

This finishes the inductive proof of $(1_k)$--$(2_k)$, and hence \eqref{eq:i-adm} for all $k$; that is,
 $i_{t_1},i_{t_2},\dots,i_{t_{\Ni}}$ forms the desired $\imath$-admissible sequence.

We obtain by Equation $(2_{\Ni})$ that $\bs_{i_{t_{\Ni}}} =s_{i_{t_{\Ni}}} s_{i_{t_{\Ni}+1}}$ if $\btau(\beta_{\Ni}) \neq \beta_{\Ni}$ and $\bs_{i_{t_{\Ni}}} =s_{i_{t_{\Ni}}}$ otherwise. It then follows by $(1_{\Ni})$--$(2_{\Ni})$ that
\[
\bs_{i_{t_1}} \bs_{i_{t_2}} \ldots \bs_{i_{t_{\Ni}}}=s_{i_1} s_{i_2} \ldots s_{i_{N}}.
\]
Since $s_{i_1} s_{i_2} \ldots s_{i_N}$ is a reduced expression of $w_0 \in W$ (\cite[Lemma 11.28]{DDPW}), we conclude that $\bs_{i_{t_1}} \bs_{i_{t_2}} \ldots \bs_{i_{t_{\Ni}}}$ is a reduced expression of $w_0 \in W_\btau$. In particular, $N_\imath$ is indeed the length of $w_0$ in $W_\btau$.
\end{proof}

\begin{remark}
Assume $\btau=\id$. For any $Q$-admissible ordering of the roots $\beta_{1},\dots,\beta_{N}$ in $\Phi^+$, there is an $\imath$-admissible sequence $i_1,\dots,i_N$ such that $\beta_j=\bs_{i_1}\cdots \bs_{i_{j-1}}(\alpha_{i_j})$
for all $1\le j\le N$; cf., e.g., \cite[Lemma 11.28]{DDPW}.
\end{remark}

\begin{example}
Let $Q=(\xymatrix{ 1\ar[r] &2 & 3\ar[l] })$, with $\btau\neq \Id$.
Let $\ci=\{1,2\}$. Then $\{i_1=2,i_2=1,i_3=2,i_4=1\}$ is a complete $\imath$-admissbile
sequence of $Q$. Indeed, $\beta_{1}=\btau(\beta_{1})=\alpha_2$; $\beta_{2}=\bs_2(\alpha_1)=\alpha_1+\alpha_2$, $\btau(\beta_{2})=\bs_2(\alpha_3)=\alpha_2+\alpha_3$; $\beta_3=\btau(\beta_3)=\bs_2\bs_1(\alpha_2)=\alpha_1+\alpha_2+\alpha_3$; $\beta_4=\bs_2\bs_1\bs_2(\alpha_1)=\alpha_3$, $\btau(\beta_4)=\bs_2\bs_1\bs_2(\alpha_3)=\alpha_1$.
In particular, $\bs_2\bs_1\bs_2\bs_1=s_2s_1s_3s_2s_1s_3$ is the longest element $w_0$ in $S_4$ (or the longest element in $W_\btau =W(B_2)$).
\end{example}

\subsection{Changes of parameters}

Below we write $\Ui=\Ui_{\bvs}$ to indicate its dependence on a parameter $\bvs$.
It is well known that the $\Q(v)$-algebras $\Ui_{\bvs}$ (up to some field extension) are isomorphic for different choices of  parameters $\bvs$  \cite{Let02}; see Lemma~\ref{lem:base change} below. We shall use the index ${\bvs_{\diamond}}$ to indicate the relevant algebras, e.g., $\Ui_{{\bvsd}}$  in distinguished parameters $\bvsd=(\vs_{\diamond,i})_{i\in \I}$ in \eqref{eq:special split}--\eqref{eq:specialE}.

Consider a field extension of $\Q(v)$
\begin{align}
\label{def:ai}
{\F}= \Q(v)(a_i\mid i\in I),
\qquad
\text{ where }a_i=\sqrt{\frac{\vs_{\diamond,i}}{\vs_i}} \quad (i\in \I).
\end{align}
Denote by
\begin{align}
\label{def:basechange}
{}_{\F}\Ui_{\bvs} =\F \otimes_{\Q(v)} \Ui_{\bvs}
\end{align}
 the $\F$-algebra obtained by a base change. By a direct computation a rescaling automorphism on ${}_{\F}\U$ induces an isomorphism in the lemma below.

\begin{lemma}
\label{lem:base change}
There exists an isomorphism of ${\mathbb F}$-algebras
\begin{align*}
\phi_{\bf u}: {}_{\F}\Ui_{\bvs_{\diamond}} & \longrightarrow {}_{\F}\Ui
\\
B_i \mapsto a_iB_i,\qquad k_i & \mapsto k_i, \quad (\forall i\in \I).
\end{align*}
\end{lemma}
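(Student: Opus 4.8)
\textbf{Proof plan for Lemma~\ref{lem:base change}.} The strategy is to exhibit $\phi_{\bf u}$ as the restriction of a rescaling automorphism of the full quantum group ${}_{\F}\U$ and check that it carries the generators of ${}_{\F}\Ui_{\bvsd}$ to those of ${}_{\F}\Ui$. First I would recall that for any tuple $(b_i)_{i\in\I}\in(\F^\times)^\I$ there is an algebra automorphism $\sigma_{\bf b}$ of ${}_{\F}\U$ determined by $\sigma_{\bf b}(E_i)=b_iE_i$, $\sigma_{\bf b}(F_i)=b_i^{-1}F_i$, $\sigma_{\bf b}(K_i)=K_i$, $\sigma_{\bf b}(K_i^{-1})=K_i^{-1}$; this respects the defining relations \eqref{eq:KK}--\eqref{eq:serre2} (with $\tK_i,\tK_i'$ replaced by $K_i,K_i^{-1}$) because the commutator $[E_i,F_j]$ relation scales by $b_ib_j^{-1}\delta_{ij}=\delta_{ij}$, the $K$-conjugation relations are untouched, and each Serre relation is homogeneous in the $E$'s (resp.\ $F$'s). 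I would then choose the scalars $b_i$ so that $\sigma_{\bf b}$ sends the $\imath$-generators of ${}_{\F}\Ui_{\bvsd}$ to those of ${}_{\F}\Ui$.

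Concretely, recall $B_i=F_i+\vs_i E_{\btau i}K_i^{-1}$ in $\Ui_{\bvs}$ and $B_i^{\diamond}=F_i+\vs_{\diamond,i}E_{\btau i}K_i^{-1}$ in $\Ui_{\bvsd}$, while $k_i=K_iK_{\btau i}^{-1}$ in both. Applying $\sigma_{\bf b}$ to $B_i^{\diamond}$ gives $b_i^{-1}F_i+\vs_{\diamond,i}b_{\btau i}E_{\btau i}K_i^{-1}=b_i^{-1}\bigl(F_i+\vs_{\diamond,i}b_ib_{\btau i}E_{\btau i}K_i^{-1}\bigr)$. For this to be a scalar multiple of $B_i=F_i+\vs_iE_{\btau i}K_i^{-1}$ I need $\vs_{\diamond,i}b_ib_{\btau i}=\vs_i$ for all $i$; and then $\sigma_{\bf b}(B_i^{\diamond})=b_i^{-1}B_i$. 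The choice $b_i=a_i^{-1}=\sqrt{\vs_i/\vs_{\diamond,i}}$ works: since $\vs_i=\vs_{\btau i}$ and $\vs_{\diamond,i}=\vs_{\diamond,\btau i}$ (both hold by the constraint $c_{i,\btau i}=0$ forcing $\vs_i=\vs_{\btau i}$, and by the explicit form of $\bvsd$), we get $b_ib_{\btau i}=\vs_i/\vs_{\diamond,i}$, hence $\vs_{\diamond,i}b_ib_{\btau i}=\vs_i$ as required. Then $\sigma_{\bf b}(B_i^{\diamond})=b_i^{-1}B_i=a_iB_i$ and $\sigma_{\bf b}(k_i)=k_i$. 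Thus $\sigma_{\bf b}$ maps the subalgebra ${}_{\F}\Ui_{\bvsd}$ onto the subalgebra ${}_{\F}\Ui_{\bvs}$, and $\phi_{\bf u}:=\sigma_{\bf b}|_{{}_{\F}\Ui_{\bvsd}}$ is the desired map; it is an isomorphism because $\sigma_{\bf b}$ is an automorphism of ${}_{\F}\U$ with inverse $\sigma_{{\bf b}^{-1}}$, and $\sigma_{{\bf b}^{-1}}$ restricts to the inverse sending $B_i\mapsto a_i^{-1}B_i^{\diamond}$, $k_i\mapsto k_i$.

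The only point requiring a little care — and the one I would treat as the main (minor) obstacle — is checking that $\phi_{\bf u}$ is \emph{well defined} as a homomorphism out of $\Ui_{\bvsd}$, i.e.\ that the assignment $B_i\mapsto a_iB_i$, $k_i\mapsto k_i$ respects a presentation of $\Ui_{\bvsd}$. This is automatic once one knows $\sigma_{\bf b}$ is a genuine automorphism of ${}_{\F}\U$ restricting to a map between the two coideal subalgebras, as above; alternatively one can invoke Proposition~\ref{prop:QSP12}(1), which presents $\Ui_{\bvs}$ (resp.\ $\Ui_{\bvsd}$) as a central quotient of $\tUi$ by \eqref{eq:parameters}, and note that the corresponding rescaling $\tsigma_{\bf b}$ on $\tUi$ (fixing $\tk_i$, hence the central elements $\tk_i$ and $\tk_i\tk_{\btau i}$, and scaling $B_i\mapsto b_i^{-1}B_i$) carries the ideal for parameter $\bvsd$ to the ideal for parameter $\bvs$ precisely because $\vs_{\diamond,i}b_ib_{\btau i}=\vs_i$. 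Either way the verification is a one-line scalar check and the lemma follows.
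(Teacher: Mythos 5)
Your proposal is correct and is essentially the paper's own argument: the paper proves this lemma by simply observing that "a rescaling automorphism on ${}_{\F}\U$ induces an isomorphism," and your computation (choosing $b_i=a_i^{-1}$ so that $\vs_{\diamond,i}b_ib_{\btau i}=\vs_i$, using $\vs_i=\vs_{\btau i}$ and $\vs_{\diamond,i}=\vs_{\diamond,\btau i}$) is exactly the "direct computation" being invoked. The only cosmetic quibble is that $\vs_i=\vs_{\btau i}$ is an assumption imposed on admissible parameters rather than a consequence of $c_{i,\btau i}=0$, but this does not affect the argument.
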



Recall from \eqref{eqn:specializing3} that $\rMHg$ denotes the reduced generic $\imath$Hall algebra (associated to a general parameter $\bvs$).
Denote by $\rMHg_{\bvs_{\diamond}}$ the reduced generic $\imath$Hall algebra in the distinguished parameter $\bvs_{\diamond}$. Recall from \eqref{eq:Phi2} that $\Phi^\imath= \Phi^0\cup \Phi^+$, where $\Phi^0=\{\upgamma_i\mid i\in\I\}$.

\begin{lemma}
  \label{lem: base change i-hall}
There exists an isomorphism of ${\mathbb F}$-algebras
\begin{align}
\phi_{\bf h}: {}_{\F}\rMHg_{\bvs_{\diamond}} & \longrightarrow {}_{\F}\rMHg
\notag
\\
\fu_{\lambda}&\mapsto  \prod_{i\in\I}a_i^{\lambda(\upgamma_i) +\lambda(\upgamma_{\btau i})+ \sum_{\beta\in\Phi^+}\lambda(\beta) d_{i}(\beta) }   \cdot \fu_{\lambda}, \quad \forall \lambda\in\widetilde{\fp}^\imath,
  \label{eq:rescale-u}
\end{align}
where we denote $\beta=\sum_{i\in\I} d_{i}(\beta) \alpha_i$.
In particular, $\phi_{\bf h}(\fu_{\beta}) = \prod_{i\in\I}a_i^{ d_{i}(\beta)} \cdot \fu_{\beta}$. 
\end{lemma}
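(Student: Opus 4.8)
\textbf{Proof strategy for Lemma~\ref{lem: base change i-hall}.}
The plan is to verify directly that the formula \eqref{eq:rescale-u} defines an algebra isomorphism by transporting the isomorphism $\phi_{\bf u}$ of Lemma~\ref{lem:base change} through the Hall algebra realizations $\psi$ of Theorem~\ref{generic U MRH}. First I would observe that the structure constants $\boldsymbol{\varphi}^\lambda_{\mu,\nu}(\bv)$ of $\tMHg$ (and hence of $\rMHg$) do not involve the parameters $\vs_i$ at all; the parameters enter only through the relations \eqref{eqn:specializing3} defining the quotient, i.e., through the identifications $\fu_{\upgamma_i}\mapsto -v^2\vs_i$ (for $\btau i = i$) and $\fu_{\upgamma_i}\fu_{\btau\upgamma_i}\mapsto \vs_i^2$ (for $\btau i\neq i$). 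Consequently the map in \eqref{eq:rescale-u} will be a bijective $\F$-linear map between the underlying modules, and the only thing to check is multiplicativity together with compatibility with these reduction relations.

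The key computational step is to check that the exponent of $a_i$ attached to $\fu_\lambda$, namely $e_i(\lambda) := \lambda(\upgamma_i)+\lambda(\upgamma_{\btau i})+\sum_{\beta\in\Phi^+}\lambda(\beta)\,d_i(\beta)$, is \emph{additive} in the following sense: whenever $\boldsymbol{\varphi}^\lambda_{\mu,\nu}(\bv)\neq 0$ one has $e_i(\lambda)=e_i(\mu)+e_i(\nu)$ for every $i\in\I$. This is precisely a weight/grading statement: the modified Ringel–Hall algebra $\tMHg$ carries a $\Z^\I$-grading (by the dimension vector $\widehat{\res(M)}$ of the restriction to $\K Q$, together with the $\Phi^0$-component recorded by the $[\E_i]$-factors), and the product respects this grading because short exact sequences are additive on dimension vectors. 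The quantity $e_i(\lambda)$ is a fixed $\Z$-linear functional of that grading (the coefficient of $\alpha_i$ in the $\Phi^+$-part plus the $\upgamma_i+\upgamma_{\btau i}$ contribution of the $\Phi^0$-part), so additivity is automatic. Hence $\phi_{\bf h}(\fu_\mu * \fu_\nu) = \sum_\lambda \boldsymbol{\varphi}^\lambda_{\mu,\nu}(\bv)\,\phi_{\bf h}(\fu_\lambda) = \phi_{\bf h}(\fu_\mu)*\phi_{\bf h}(\fu_\nu)$, so $\phi_{\bf h}$ is an algebra homomorphism on $\tMHg$ (base-changed to $\F$); being a diagonal rescaling of a basis, it is invertible.

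It remains to check that $\phi_{\bf h}$ descends to the reduced algebras, i.e., sends the ideal defining ${}_\F\rMHg_{\bvsd}$ onto the ideal defining ${}_\F\rMHg$. For $\btau i = i$, the generator $\fu_{\upgamma_i}+v^2\vs_{\diamond,i}=\fu_{\upgamma_i}-1$ has image $a_i^2\,\fu_{\upgamma_i}-1 = \frac{\vs_{\diamond,i}}{\vs_i}\fu_{\upgamma_i}-1$, which is $\frac{\vs_{\diamond,i}}{\vs_i}(\fu_{\upgamma_i}-\frac{\vs_i}{\vs_{\diamond,i}}) = -\frac{1}{v^2\vs_i}(\fu_{\upgamma_i}+v^2\vs_i)$ (using $\vs_{\diamond,i}=-v^{-2}$), a unit multiple of the defining generator of ${}_\F\rMHg$; for $\btau i\neq i$ one gets similarly $\phi_{\bf h}(\fu_{\upgamma_i}\fu_{\btau\upgamma_i}-1) = a_ia_{\btau i}\,\fu_{\upgamma_i}\fu_{\btau\upgamma_i}-1$, and since $\vs_i=\vs_{\btau i}$, $\vs_{\diamond,i}=\vs_{\diamond,\btau i}=1$ we have $a_ia_{\btau i}=\vs_i^{-1}$, giving a unit multiple of $\fu_{\upgamma_i}\fu_{\btau\upgamma_i}-\vs_i^2$. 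Thus $\phi_{\bf h}$ induces the claimed isomorphism ${}_\F\rMHg_{\bvsd}\xrightarrow{\sim}{}_\F\rMHg$, and the displayed special case $\phi_{\bf h}(\fu_\beta)=\prod_i a_i^{d_i(\beta)}\fu_\beta$ is immediate since $\delta_\beta(\upgamma_j)=0$ for all $j$. The only mild subtlety — and the step I would be most careful about — is bookkeeping the $\Phi^0$-exponents so that $e_i$ is genuinely linear on the whole grading group including the (invertible) $\E$-part; once the grading is set up correctly this is routine, and the compatibility with $\phi_{\bf u}$ under $\psi$ (which makes the notation $\phi_{\bf h}$ natural) follows by comparing \eqref{eq:psiB} with Lemma~\ref{lem:base change}.
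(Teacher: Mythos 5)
Your approach is correct and supplies exactly the argument the paper leaves out (the paper's own proof is a one-line reference to \cite[Proposition~8.7]{LW19}). The decisive point --- that $e_i(\lambda)=\lambda(\upgamma_i)+\lambda(\upgamma_{\btau i})+\sum_{\beta\in\Phi^+}\lambda(\beta)d_i(\beta)$ is additive whenever $\boldsymbol{\varphi}^\lambda_{\mu,\nu}\neq 0$ --- is right, and can be phrased even more concretely: $e_i(\lambda)$ is precisely the coefficient of $\widehat{S_i}$ in the class of $M_q(\lambda)$ in $K_0(\mod(\iLa))$, since $\widehat{\E_j}=\widehat{S_j}+\widehat{S_{\btau j}}$ and $\widehat{M_q(\beta)}=\sum_i d_i(\beta)\widehat{S_i}$, and the Hall product, the ideal $I$, and the localization at $\cs$ all respect the $K_0$-grading. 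So the diagonal rescaling is an algebra automorphism of ${}_\F\tMHg$, and the only remaining task is, as you say, matching the two reduction ideals.

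There is one computational slip in that last step. For $\btau i\neq i$ you have $\phi_{\bf h}(\fu_{\upgamma_i})=a_ia_{\btau i}\fu_{\upgamma_i}$ and likewise for $\fu_{\btau\upgamma_i}$, hence $\phi_{\bf h}(\fu_{\upgamma_i}*\fu_{\btau\upgamma_i})=(a_ia_{\btau i})^2\,\fu_{\upgamma_i}*\fu_{\btau\upgamma_i}=\vs_i^{-2}\,\fu_{\upgamma_i}*\fu_{\btau\upgamma_i}$, not $a_ia_{\btau i}\,\fu_{\upgamma_i}*\fu_{\btau\upgamma_i}$ as written. With your coefficient $a_ia_{\btau i}=\vs_i^{-1}$ the image of $\fu_{\upgamma_i}*\fu_{\btau\upgamma_i}-1$ would be a unit times $\fu_{\upgamma_i}*\fu_{\btau\upgamma_i}-\vs_i$, which is \emph{not} the defining generator; with the corrected coefficient $\vs_i^{-2}$ one gets $\vs_i^{-2}\bigl(\fu_{\upgamma_i}*\fu_{\btau\upgamma_i}-\vs_i^2\bigr)$, and the ideals do correspond. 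Since the generators in \eqref{eqn:specializing3} are central, the two-sided ideals are carried onto one another and the induced map on the quotients is the claimed isomorphism. Everything else --- the observation that the structure constants are parameter-independent, the split-case check, and the specialization $\phi_{\bf h}(\fu_\beta)=\prod_i a_i^{d_i(\beta)}\fu_\beta$ --- is fine.
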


\begin{proof}
The proof is entirely similar to the proof of \cite[Proposition 8.7]{LW19}, and will be skipped.
\end{proof}

The isomorphism $\phi_{\bf h}$ in Lemma \ref{lem: base change i-hall} is designed to be compatible with $\phi_{\bf u}$ in Lemma \ref{lem:base change}, and so we have the following commutative diagram
\begin{equation}
  \label{eq:dag1}
\xymatrix{ {}_{\F}\Ui_{\bvs_{\diamond}}\ar[rr]^{\phi_{\bf u}} \ar[d]^{\psi_{\bvs_\diamond}}
&& {}_{\F}\Ui  \ar[d]^{\psi_{\bvs}} &\ar@{}[d]^{} \\
{}_{\F}\rMHg_{\bvs_{\diamond}} \ar[rr]^{\phi_{\bf h}}& &{}_{\F}\rMHg & }
\end{equation}
where $\psi_\bvsd$ and $\psi_{\bvs}$ are the isomorphisms in \eqref{eq:psi2}.

\subsection{Braid group action on $\Ui$}

As before the notation $\Ui$ stands for the $\imath$quantum group with general parameters $\bvs$. We now define a braid group action $\TT_i$ on ${}_\F\Ui$  from the $\TT_{\diamond,i}$ on $\Ui_{\bvsd}$ (note $\TT_{\diamond,i}$ was simply denoted by $\TT_i$ in Corollary~\ref{cor:braidUi1}) via a conjugation by the isomorphism $\phi_{\bf u}$:
\begin{equation}
  \label{eq:TTTT}
\TT_i = \phi_{\bf u} \TT_{\diamond,i} \phi_{\bf u}^{-1}.
\end{equation}

\begin{theorem}
  \label{thm:braidUigeneral}
  Let $(Q, \btau)$ be a Dynkin $\imath$quiver.
Then there is an automorphism $\TT_i$ on ${}_\F\Ui$, for each $i\in \ci$. Moreover there exists a homomorphism
$\brW \rightarrow \aut({}_\F\Ui)$ such that $t_i\mapsto \TT_i$ in \eqref{eq:TTTT}, for all $i\in \ci$.
\end{theorem}
The actions of $\TT_i$ on generators of $\Ui$ can be made explicit; see Lemmas~\ref{lem:braidsplit}--\ref{lem:braidA} below.

\subsubsection{Split case}

\begin{lemma}
  \label{lem:braidsplit}
Let $\Ui$ be a $\imath$quantum group with $\btau =\Id$. Then there exists a unique algebra automorphism $\TT_i$ of ${}_\F\Ui$ such that
\[
\TT_i(B_j)=\left\{
\begin{array}{ll}
{} \frac{1}{\sqrt{-v^2\vs_i}}(B_jB_i-vB_iB_j), & \text{ if }c_{ij}=-1, \\
B_j,  &\text{ if }j=i \text{ or }c_{ij}=0.
\end{array}
\right.
\]
Moreover, there exists a homomorphism ${\rm Br}(W)\rightarrow \aut(\Ui)$ such that $t_i\mapsto \TT_i$, for all $i\in \I$.
\end{lemma}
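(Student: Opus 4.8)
\textbf{Proof plan for Lemma~\ref{lem:braidsplit}.}

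The plan is to obtain the formula and the braid relations for the $\TT_i$ on ${}_{\F}\Ui$ by transporting the corresponding facts at the distinguished parameter $\bvsd$ through the rescaling isomorphism $\phi_{\bf u}$ of Lemma~\ref{lem:base change}. Concretely, $\TT_i$ is \emph{defined} by \eqref{eq:TTTT}, $\TT_i = \phi_{\bf u}\,\TT_{\diamond,i}\,\phi_{\bf u}^{-1}$, where $\TT_{\diamond,i}$ is the automorphism of $\Ui_{\bvsd}$ from Corollary~\ref{cor:braidUi1} (for $\btau=\Id$ the $\bvsd$-formula for $\TT_{\diamond,i}$ on generators is the specialization of Lemma~\ref{lemma:braid group of split involution} to the central-reduced algebra, namely $\TT_{\diamond,i}(B_j)=B_jB_i-vB_iB_j$ if $c_{ij}=-1$, $\TT_{\diamond,i}(B_i)=(-v^2\tk_i)^{-1}B_i\leadsto (-v^2\vs_{\diamond,i})^{-1}B_i = B_i$ after setting $k_i=1$ via $\bvsd$, hold on — more precisely $\TT_{\diamond,i}(B_i)=(-v^2 k_i)^{-1}B_i$ and on $\Ui_{\bvsd}$ one has $k_i$ acting as before so this needs the precise $\bvsd$-reduction, which I will extract from Proposition~\ref{prop:braidDist} and the $\psi_Q$ formulas). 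First I would record the explicit action of $\TT_{\diamond,i}$ on the generators $B_j, k_j$ of $\Ui_{\bvsd}$, reading it off from Lemma~\ref{lemma:braid group of split involution} together with the quotient map $\tUi \twoheadrightarrow \Ui_{\bvsd}$ of Proposition~\ref{prop:QSP12} which sends $\tk_i \mapsto \vs_{\diamond,i} = -v^{-2}$; this turns $(-v^2\tk_i)^{-1}B_i$ into $(-v^2\cdot(-v^{-2}))^{-1}B_i = B_i$, confirming $\TT_{\diamond,i}(B_i)=B_i$, and $\TT_{\diamond,i}(k_j)=k_{s_i\cdot j}$.

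Next I would compute $\TT_i = \phi_{\bf u}\,\TT_{\diamond,i}\,\phi_{\bf u}^{-1}$ on generators using $\phi_{\bf u}(B_j)=a_jB_j$, $\phi_{\bf u}(k_j)=k_j$ with $a_j=\sqrt{\vs_{\diamond,j}/\vs_j}=\sqrt{-v^{-2}/\vs_j}$ from \eqref{def:ai}. For $j$ with $c_{ij}=0$ and for $j=i$ the conjugation is trivial on $B_j$ since $\TT_{\diamond,i}$ fixes $B_j$ (up to the scalar-free shift) and $\phi_{\bf u}$ is multiplicative in $a_j$; so $\TT_i(B_j)=B_j$. For $c_{ij}=-1$ one gets
\[
\TT_i(B_j)=\phi_{\bf u}\,\TT_{\diamond,i}\big(a_j^{-1}B_j\big)=a_j^{-1}\phi_{\bf u}\big(B_jB_i-vB_iB_j\big)=a_j^{-1}a_ja_i\big(B_jB_i-vB_iB_j\big)=a_i\big(B_jB_i-vB_iB_j\big),
\]
and $a_i=\sqrt{-v^{-2}/\vs_i}=\dfrac{1}{\sqrt{-v^2\vs_i}}$ (choosing the branch consistently with \eqref{def:ai}), which is exactly the claimed formula. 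Uniqueness of $\TT_i$ as an algebra automorphism follows because the $B_j$ (together with the $k_j$, which go to $k_{s_i\cdot j}$) generate ${}_{\F}\Ui$ and the images are prescribed; that the prescription actually defines an algebra homomorphism is automatic here because $\TT_i$ is by construction a conjugate of the algebra automorphism $\TT_{\diamond,i}$ — this is the whole point of routing through $\phi_{\bf u}$ and is what makes a presentation-based verification unnecessary.

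Finally, the braid relations ${\rm Br}(W)\to\aut(\Ui)$, $t_i\mapsto\TT_i$: since $\TT_i=\phi_{\bf u}\TT_{\diamond,i}\phi_{\bf u}^{-1}$ with a \emph{fixed} isomorphism $\phi_{\bf u}$ independent of $i$, any word identity $\TT_{\diamond,i_1}\cdots\TT_{\diamond,i_m}=\TT_{\diamond,j_1}\cdots\TT_{\diamond,j_n}$ conjugates to the corresponding identity for the $\TT_i$; hence the braid relations for $\{\TT_i\}$ on ${}_{\F}\Ui$ follow immediately from the braid relations for $\{\TT_{\diamond,i}\}$ on $\Ui_{\bvsd}$, which are Corollary~\ref{cor:braidUi1}. (Since $\btau=\Id$, $W_\btau=W$ and $\brW={\rm Br}(W)$ by \eqref{eq:BrBr}.) The main obstacle — really the only subtlety — is bookkeeping of the square-root scalars: one must fix a branch of $a_i=\sqrt{\vs_{\diamond,i}/\vs_i}$ in the extension field $\F$ and check that the rescaling exponents in $\phi_{\bf u}$ combine to produce precisely $\tfrac{1}{\sqrt{-v^2\vs_i}}$ and no stray factors on the $c_{ij}=0$ and $j=i$ cases; this is the content of the short computation above and is routine once the conventions of \eqref{def:ai}--\eqref{def:basechange} are pinned down. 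Everything else is formal transport of structure along $\phi_{\bf u}$.
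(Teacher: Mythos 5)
Your proposal is correct and follows essentially the same route as the paper: define $\TT_i=\phi_{\bf u}\TT_{\diamond,i}\phi_{\bf u}^{-1}$, read off $\TT_{\diamond,i}$ from Lemma~\ref{lemma:braid group of split involution} via the central reduction $\tk_i\mapsto\vs_{\diamond,i}=-v^{-2}$ of Proposition~\ref{prop:QSP12}, carry out the rescaling computation with the $a_i$, and transport the braid relations of Corollary~\ref{cor:braidUi1} by conjugation. The only difference is that you write out explicitly the "direct computation" the paper leaves to the reader, and your bookkeeping of the scalars $a_i=\sqrt{-v^{-2}/\vs_i}=1/\sqrt{-v^2\vs_i}$ is accurate.
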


\begin{proof}
First consider the case with the distinguished parameter $\bvsd =(\vs_{\diamond,i})_{i\in \I}$; in this case the base change is not needed as $\F=\Q(v)$.
By Corollary~\ref{cor:braidUi1}, there is an automorphism, denoted here by $\TT_{\diamond,i}$ to emphasize $\bvsd$, in $\Aut(\Ui_{\bvsd})$ which descends from $\tTT_i \in \Aut(\tUi)$. Via the relation between $\tUi_{\bvsd}$ and $\Ui_{\bvsd}$ in Proposition~\ref{prop:QSP12}, the formulas in this lemma (where we set $\sqrt{-v^2\vs_{\diamond,i}}=1$ for all $i$ thanks to \eqref{eq:bvsd}) follow directly from the formulas in Lemma~\ref{lemma:braid group of split involution}.

The formulas for $\TT_i$ on $\Ui$ for a general parameter $\bvs$ follows from $\TT_{\diamond,i}$ and \eqref{eq:TTTT} by a direct computation. The assertion on braid group relations follows by the conjugation \eqref{eq:TTTT} and Corollary~\ref{cor:braidUi1}(2).
\end{proof}

\subsubsection{Quasi-split cases}

For $\alpha =\sum_{i\in \I} m_i \alpha_i \in \Z^{\I}$, it is convenient to denote
\begin{equation}
  \label{eq:ka}
k_\alpha = \prod_{i\in \I} k_{\alpha_i}^{m_i} \in \Ui,
\end{equation}
where we set
\[
k_{\alpha_i}=\begin{cases}
1 & \text{ if } i=\btau i, \\
k_i & \text{ if } i \neq \btau i, i\in \I_\btau, \\
k_i^{-1} & \text{ if } i \neq \btau i, i \not \in \I_\btau.
\end{cases}
\]

\begin{lemma}
  \label{lem:braidA}
Let $\Delta$ be a Dynkin graph of type $A_{2r+1}$ as in (\ref{diagram of A}), and $\btau$ be its nontrivial involution. 
Then there exists a unique algebra automorphism $\TT_i$ ($0\leq i\leq r$) of ${}_\F\Ui$  such that $\TT_i(k_\alpha) =k_{\bs_i \alpha}$ for $\alpha \in \Z^{\I_\btau}$,  and for $1\leq i\leq r$,
\begin{align*}
\TT_i(B_j) &=\left\{ \begin{array}{ll}
\frac{1}{\sqrt{\vs_i}}B_jB_i -\frac{v}{\sqrt{\vs_i}}B_iB_j,  & \text{ if }c_{ij}=-1 \text{ and } c_{\btau i,j}=0,\\
\frac{1}{\sqrt{\vs_{\btau i}}}B_{\btau i}B_j-\frac{1}{v\sqrt{\vs_{\btau i}}}B_jB_{\btau i},  & \text{ if } c_{ij}=0 \text{ and }c_{\btau i,j}=-1 ,\\
-{\frac{1}{v\vs_i}[[B_j,B_i]_v,B_{\btau i}]_v}+ B_jk_{i}, & \text{ if }i=\pm 1, j=0, \\
-k_{i}^{-1}B_{\btau i},  & \text{ if }j=i,\\
-v^2k_i B_i,  &\text{ if } j=\btau i,\\
B_j,  & \text{otherwise};
 \end{array}\right.
  \\ %
\TT_0(B_j) &=\left\{ \begin{array}{ll} 
\frac{1}{\sqrt{-v^2\vs_0}}(B_jB_0 -vB_0B_j) & \text{ if } j=\pm1,\\
B_j & \text{ otherwise.}  \end{array}\right.
\end{align*}
Moreover there exists a homomorphism
${\rm Br}(B_{r+1})\rightarrow \aut ( \bU^\imath)$ such that $t_i\mapsto \TT_i$, for all $i\in \ci $.
\end{lemma}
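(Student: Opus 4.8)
The plan is to deduce this lemma from the braid group action at the distinguished parameter $\bvsd$ (Corollary~\ref{cor:braidUi1}) together with the base change isomorphism $\phi_{\bf u}$ of Lemma~\ref{lem:base change}, following the pattern of the proof of Lemma~\ref{lem:braidsplit}. First I would treat the distinguished parameter: for $\Delta$ of type $A_{2r+1}$ with $\btau\neq\Id$, \eqref{eq:specialA} gives $\bvsd=(\vs_{\diamond,0},\dots,\vs_{\diamond,r})=(-v^{-2},1,\dots,1)$, so $\F=\Q(v)$ and no field extension is needed. By Corollary~\ref{cor:braidUi1} the automorphism $\tTT_i$ of $\tUi$ from Theorem~\ref{thm:bgtui} descends to an automorphism $\TT_{\diamond,i}$ of $\Ui_{\bvsd}$, and the $\TT_{\diamond,i}$ satisfy the braid relations of ${\rm Br}(B_{r+1})$. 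Passing from $\tUi$ to $\Ui_{\bvsd}$ via Proposition~\ref{prop:QSP12} (so that $\tk_i\mapsto\vs_{\diamond,i}^{-1}\tk_i$, and $\tk_i\mapsto k_i$, $\tk_{\btau i}\mapsto k_i^{-1}$ for $i\in\ci$), the formulas of Lemma~\ref{lem:braid group of A} for $\tTT_i$ on the generators of $\tUi$ translate directly into formulas for $\TT_{\diamond,i}$ on the generators $B_j,k_j$ of $\Ui_{\bvsd}$; in particular $\TT_{\diamond,i}(k_\alpha)=k_{\bs_i\alpha}$, as recorded after Proposition~\ref{prop:braidDist}.

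Next I would pass to a general parameter $\bvs$ by conjugation. Set $a_i=\sqrt{\vs_{\diamond,i}/\vs_i}$ and $\F=\Q(v)(a_i\mid i\in\I)$ as in \eqref{def:ai}, and define $\TT_i=\phi_{\bf u}\,\TT_{\diamond,i}\,\phi_{\bf u}^{-1}$ as in \eqref{eq:TTTT}, where $\phi_{\bf u}\colon{}_\F\Ui_{\bvsd}\xrightarrow{\ \sim\ }{}_\F\Ui$ is the rescaling isomorphism $B_i\mapsto a_iB_i$, $k_i\mapsto k_i$ of Lemma~\ref{lem:base change}. Here $\vs_{\diamond,i}=1$ for $i\neq0$ and $\vs_{\diamond,0}=-v^{-2}$, so $a_i=1/\sqrt{\vs_i}$ for $i\neq0$ and $a_0=1/\sqrt{-v^2\vs_0}$; substituting these scalars into the formulas from the previous step and propagating them through the $v$-commutators reproduces exactly the formulas displayed in the lemma. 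For example, when $c_{ij}=-1$ and $c_{\btau i,j}=0$, applying $\phi_{\bf u}^{-1}$ to $B_j$, then $\TT_{\diamond,i}$, then $\phi_{\bf u}$ gives $\TT_i(B_j)=a_i(B_jB_i-vB_iB_j)=\tfrac{1}{\sqrt{\vs_i}}(B_jB_i-vB_iB_j)$ since the two $a_j$-factors cancel; the cubic term together with the $B_jk_i$ term and the $k_i^{\pm1}B_{\btau i}$, $B_i$ cases are handled in the same way.

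Finally, since conjugation by the fixed isomorphism $\phi_{\bf u}$ carries the ${\rm Br}(B_{r+1})$-relations among the $\TT_{\diamond,i}$ to the same relations among the $\TT_i$, I obtain the asserted homomorphism ${\rm Br}(B_{r+1})\to\aut({}_\F\Ui)$, $t_i\mapsto\TT_i$; uniqueness of each $\TT_i$ satisfying the stated formulas is immediate because the $B_j$ ($j\in\I$) and $k_j$ ($j\in\ci$) generate $\Ui$. There is no conceptual obstacle here — the braid relations are inherited for free from Corollary~\ref{cor:braidUi1}. The only real work, and the main place to be careful, is the bookkeeping in the middle step: converting the $\tUi$-formulas of Lemma~\ref{lem:braid group of A} to $\Ui_{\bvsd}$ via Proposition~\ref{prop:QSP12} and then conjugating by $\phi_{\bf u}$, keeping precise track of the square-root scalars $a_i$ and of the rescaling $\tk_i\mapsto\vs_i^{-1}\tk_i$ so that, e.g., the mixed term $-\tfrac{1}{v\vs_i}[[B_j,B_i]_v,B_{\btau i}]_v+B_jk_i$ emerges with compatible coefficients on its two summands. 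This is routine but should be checked case by case along the list in the lemma.
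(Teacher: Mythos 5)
Your proposal is correct and follows exactly the paper's route: the paper's proof of this lemma simply says ``the same argument as for Lemma~\ref{lem:braidsplit} works here,'' namely first obtaining $\TT_{\diamond,i}$ at the distinguished parameter from Corollary~\ref{cor:braidUi1} and the formulas of Lemma~\ref{lem:braid group of A} via Proposition~\ref{prop:QSP12}, then conjugating by $\phi_{\bf u}$ as in \eqref{eq:TTTT}. Your scalar bookkeeping (e.g.\ $a_i=1/\sqrt{\vs_i}$ for $i\neq 0$, $a_0=1/\sqrt{-v^2\vs_0}$, and the cancellation of $a_j$-factors in each case) checks out.
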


\begin{proof}
The same argument as for Lemma~\ref{lem:braidsplit} works here.
%
\end{proof}

Formulas for the actions of $\TT_i$ can be similarly written down in quasi-split type $D_n$ and $E_6$; we skip the details (which can be found in the arXiv v1 of the paper).

\subsection{PBW bases for $\Ui$}

Recall from \S\ref{sub:generic} the generic $\imath$Hall algebra $\rMHg$. Let $\T(Q, \btau)$ be the quantum torus subalgebra of $\rMHg$ generated by $\fu_\nu$, $\nu\in\fp^0$ (or equivalently, by $\fu_{\upgamma_i}$, $\upgamma_i\in\Phi^0$). By definition, $\T(Q, \btau)$ is isomorphic to the $\Q(v)$-group algebra of the root lattice $\Z^\I$.

\begin{lemma}
$\rMHg$ is free as a right (respectively, left) $\T(Q, \btau)$-module, with a basis given by
$\fu_\lambda$, $\lambda\in\fp$.
\end{lemma}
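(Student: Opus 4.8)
The statement is the analogue of the classical fact that a (twisted) Ringel--Hall algebra is free over its quantum torus. The key input is the PBW-type structure of the generic $\imath$Hall algebra developed in \cite[\S9]{LW19}: the elements $\fu_\lambda$, for $\lambda\in\tilde{\fp}^\imath$, form a $\Q(\bv)$-basis of $\tMHg$, and the reduced version $\rMHg$ is the quotient by the ideal generated by the central elements in \eqref{eqn:specializing3}. I would first recall that $\fp=\fp(Q)$ is the set of functions $\lambda\colon \Phi^+\to\N$, while $\tilde{\fp}^\imath$ adds the $\Z$-valued coordinates on $\Phi^0=\{\upgamma_i\mid i\in\I\}$; thus there is a natural bijection $\tilde{\fp}^\imath \cong \fp \times \Z^{\I}$, where the second factor records the exponents $(\lambda(\upgamma_i))_{i\in\I}$. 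Under the isomorphism $\T(Q,\btau)\cong \Q(\bv)[\Z^\I]$, the monomials $\fu_\nu$, $\nu\in\fp^0$, correspond exactly to this second factor. The claim is then that $\rMHg$, as a module over $\T(Q,\btau)$, is free with basis $\{\fu_\lambda\mid\lambda\in\fp\}$.

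\textbf{Main steps.} First I would observe that in $\tMHg$ one has, by the multiplication formula $\fu_\mu*\fu_\nu=\sum_\lambda \boldsymbol{\varphi}_{\mu,\nu}^\lambda(\bv)\fu_\lambda$ together with the explicit combinatorics of \cite[Lemma 9.6]{LW19}, a factorization: for $\lambda\in\tilde{\fp}^\imath$ with positive-root part $\lambda^+\in\fp$ and torus part $\lambda^0\in\Z^\I$, the element $\fu_\lambda$ equals $\fu_{\lambda^+}*\fu_{\lambda^0}$ up to a unit in $\Q(\bv)$ (this is essentially the definition of $M_q(\lambda)$ recalled right after \eqref{eq:tPi}, since $[\E_{\sum\lambda(\upgamma_i)\alpha_i}]$ is a product of the torus generators). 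Hence the $\T$-submodule generated by $\{\fu_{\lambda^+}\mid \lambda^+\in\fp\}$ is all of $\tMHg$, and since the $\fu_\lambda$ are a basis, this submodule is free with that basis. Passing to the quotient $\rMHg$: the defining ideal is generated by $\fu_{\upgamma_i}+\bv^2\vs_i$ (for $\btau i=i$) and $\fu_{\upgamma_i}*\fu_{\btau\upgamma_i}-\vs_i^2$ (for $\btau i\neq i$), which are elements purely inside $\T(Q,\btau)$. Therefore the quotient map $\rMHg$-side is $\rMHg \cong \tMHg\otimes_{\T(Q,\btau)}\bar\T$, where $\bar\T$ is $\T(Q,\btau)$ modulo the same relations; since $\tMHg$ is free over $\T(Q,\btau)$ with basis $\{\fu_{\lambda^+}\}$, base change preserves freeness, giving $\rMHg$ free over $\bar\T$ with basis $\{\fu_{\lambda^+}\mid\lambda^+\in\fp\}$. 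Finally I would identify $\bar\T$ with $\T(Q,\btau)$ itself: the relations $\fu_{\upgamma_i}*\fu_{\btau\upgamma_i}=\vs_i^2$ merely invert the torus generators and $\fu_{\upgamma_i}=-\bv^2\vs_i$ (for split nodes) are already accounted for in that $\fu_{\upgamma_i}$ is a unit — wait, more precisely, I should check that the quotient $\T(Q,\btau)\to\bar\T$ is still a Laurent-polynomial-type algebra on the remaining free generators indexed by $\ci$, i.e., $\bar\T=\T_{\mathrm{red}}$; but the lemma as stated is about freeness over $\T(Q,\btau)$ acting through this quotient, so this identification is exactly the content. The left-module statement follows by the same argument using a factorization $\fu_\lambda \doteq \fu_{\lambda^0}*\fu_{\lambda^+}$, or by applying the anti-automorphism of $\rMHg$.

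\textbf{Main obstacle.} The genuinely non-formal point is establishing the factorization $\fu_\lambda \doteq \fu_{\lambda^+}*\fu_{\lambda^0}$ with an \emph{invertible} scalar and, simultaneously, that the transition matrix between $\{\fu_{\lambda^+}*\fu_{\lambda^0}\mid \lambda^+\in\fp,\lambda^0\in\Z^\I\}$ and $\{\fu_\lambda\mid\lambda\in\tilde{\fp}^\imath\}$ is unitriangular (with respect to a suitable filtration by, say, total dimension vector), so that the former set is also a $\Q(\bv)$-basis. This is where one must invoke the precise structure of the multiplication in \cite[\S9.3]{LW19} — in particular that multiplying by a torus element $\fu_{\lambda^0}$ only rescales and shifts, never mixes in genuinely new summands — rather than just abstract nonsense. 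I expect this to be a short computation given the results already cited (indeed the lemma's proof is likely one or two lines referencing \cite[\S9]{LW19}), but it is the step that actually uses the hypothesis that $(Q,\btau)$ is set up so that $\T(Q,\btau)$ embeds as a quantum torus and the $\fu_\nu$, $\nu\in\fp^0$, are a basis of it.
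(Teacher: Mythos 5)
Your proposal is correct and follows essentially the same route as the paper, whose proof is just a citation of the freeness of the (generic) $\imath$Hall algebra over its quantum torus from \cite{LW19} together with the observation that the reduction ideal \eqref{eqn:specializing3} lies inside the torus. The factorization you worry about in your last paragraph is in fact exact by construction, since $M_q(\lambda)$ is \emph{defined} as the product $[\oplus_{\alpha\in\Phi^+}\lambda(\alpha)M_q(\alpha)]*[\E_{\sum_i\lambda(\upgamma_i)\alpha_i}]$, so $\fu_{\lambda^+}*\fu_{\lambda^0}=\fu_\lambda$ on the nose and no unitriangularity argument is needed.
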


\begin{proof}
Follows by \cite[Proposition~4.9, Theorem~9.8]{LW19} and the definition of the reduced $\imath$Hall algebra.
\end{proof}

\begin{lemma}
[\text{\cite[Theorem 5.8]{LW19}}]
\label{lem:generic PBW-Hall basis}
Given any ordering $\gamma_1,\dots,\gamma_N$ of the roots in $\Phi^+$, the set
\[
\{\fu_{\gamma_1}^{{*} \lambda_1}*\cdots*\fu_{\gamma_N}^{*\lambda_N} \mid   \lambda_1, \ldots, \lambda_N \in \N \}
\]
is a $\Q(v)$-basis of $\rMHg$ as a right $\T(Q, \btau)$-module.
\end{lemma}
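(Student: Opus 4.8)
The result is the generic incarnation of \cite[Theorem~5.8]{LW19}, and the plan is to recover it by a triangularity argument over the quantum torus. I would first identify a tuple $(\lambda_1,\dots,\lambda_N)\in\N^N$ with the function $\lambda\in\fp$ given by $\lambda(\gamma_k)=\lambda_k$, so that the ordered monomials $\fu_{\gamma_1}^{*\lambda_1}*\cdots*\fu_{\gamma_N}^{*\lambda_N}$ and the right $\T(Q,\btau)$-basis $\{\fu_\mu\mid\mu\in\fp\}$ furnished by the previous lemma become two families in $\rMHg$ indexed by the same set $\fp$. It then suffices to show that the $\T(Q,\btau)$-linear matrix expressing the ordered monomials in terms of $\{\fu_\mu\}$ is invertible; for this I would exhibit a partial order $\preceq$ on $\fp$ with respect to which the matrix is triangular with diagonal entries that are units of $\T(Q,\btau)$ (in fact scalar powers of $\bv$).

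For the order I would combine the dimension-vector order with the degeneration order: writing $|\mu|=\sum_{\alpha\in\Phi^+}\mu(\alpha)\alpha\in\N^\I$ and letting $M_q(\mu)=\bigoplus_\alpha\mu(\alpha)M_q(\alpha)$ be the corresponding $\K Q$-module, I would set $\mu\prec\lambda$ whenever $|\lambda|-|\mu|\in\N^\I\setminus\{0\}$, or $|\mu|=|\lambda|$, $\mu\neq\lambda$ and $M_q(\lambda)$ lies in the orbit closure of $M_q(\mu)$. The core computation is to expand $\fu_{\gamma_1}^{*\lambda_1}*\cdots*\fu_{\gamma_N}^{*\lambda_N}$ by means of the Ringel--Hall multiplication of $\ch(\iLa)$ and carry it over to $\tMHg$ and thence to $\rMHg$. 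Since each $\gamma_k\in\Phi^+$ is a real root, $M_q(\gamma_k)$ is rigid, so $\fu_{\gamma_k}^{*\lambda_k}$ equals a nonzero power of $\bv$ times $\fu_{\lambda_k\delta_{\gamma_k}}$; expanding the remaining product and collecting terms, the contribution obtained by splitting every extension is a scalar power of $\bv$ times $\fu_\lambda$, while every other contribution comes either from a nonsplit iterated extension at the same total dimension vector, so that (its associated graded being $M_q(\lambda)$) it sits at some $M_q(\mu)$ of which $M_q(\lambda)$ is a degeneration, or from the $\E$-factors discussed below, so that $|\lambda|-|\mu|\in\N^\I\setminus\{0\}$; in either case $\mu\prec\lambda$. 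This should give
\[
\fu_{\gamma_1}^{*\lambda_1}*\cdots*\fu_{\gamma_N}^{*\lambda_N}=c_\lambda\,\fu_\lambda+\sum_{\mu\prec\lambda}\fu_\mu\,t_\mu,\qquad c_\lambda\in\Q(\bv)^\times,\ t_\mu\in\T(Q,\btau),
\]
and since $\preceq$ has no infinite descending chain below a fixed element, the triangularity and hence the lemma follow.

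The step I expect to be the main obstacle is the bookkeeping of the $\E$-contributions, precisely the feature that distinguishes the $\imath$Hall algebra from the hereditary Ringel--Hall algebra $\ch(\K Q)$. Unlike in $\ch(\K Q)$, a product of $\K Q$-module classes in $\tMHg$ need not be supported on $\K Q$-module classes: when computing $[M]\diamond[N]$ for $\K Q$-modules viewed inside $\mod(\iLa)$ one encounters genuine $\iLa$-module self-extensions, and after the modified Ringel--Hall relations these shed factors of the form $\E_\eta$ $(\eta\in\N^\I)$, which are of finite projective dimension by \eqref{eqn:projective resolution of E}, hence invertible in $\utMH$, and therefore -- together with their scalar coefficients -- belong to $\T(Q,\btau)$. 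What must be checked carefully, using the $\Z^\I$-grading of $\tMHg$ in which $\fu_\nu$ has degree $\sum_{\alpha\in\Phi^+}\nu(\alpha)\alpha+\sum_{i\in\I}\nu(\upgamma_i)\alpha_i$, is that any term carrying a nontrivial $\E_\eta$-factor has total degree $|\lambda|-\eta$, hence sits at some $\fu_\mu$ with $\mu\prec\lambda$. This degree accounting is exactly what forces the diagonal entries $c_\lambda$ to be genuine units of $\T(Q,\btau)$ and the remaining support to be strictly lower, completing the triangularisation.
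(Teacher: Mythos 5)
First, note that the paper does not actually prove this lemma here: it is quoted verbatim from \cite[Theorem~5.8]{LW19}, so there is no in-paper argument to compare yours against. Your reconstruction is the standard triangularity argument over the quantum torus and it is the right overall strategy; the identification of the $\E$-contributions as the new feature relative to the hereditary Hall algebra is also exactly the right place to focus. However, two of your intermediate claims are incorrect as stated and need repair.

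First, rigidity of $M_q(\gamma_k)$ as a $\K Q$-module does not give rigidity as a $\iLa$-module, so $\fu_{\gamma_k}^{*\lambda_k}$ is \emph{not} a power of $\bv$ times $\fu_{\lambda_k\delta_{\gamma_k}}$. Already $[S_i]*[S_i]$ (for $\btau i=i$) contains a term proportional to $[\E_i]$, coming from the non-split self-extension $0\to S_i\to\E_i\to S_i\to 0$ in $\mod(\iLa)$; more generally every indecomposable $\K Q$-module acquires self-extensions over $\iLa$. These extra terms do land in strictly lower strata of your order (they carry nontrivial $\E$-factors), so the triangular shape survives, but the step as written is false and the leading coefficient $c_\lambda$ must instead be extracted by observing that the fully split summand $[\bigoplus_k\lambda_k M_q(\gamma_k)]$ occurs in the iterated product with coefficient a power of $\bv$ and that no term carrying an $\E$-factor can ever return to contribute to $\fu_\lambda$. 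Second, the $\Z^\I$-grading you propose, with $\fu_{\upgamma_i}$ in degree $\alpha_i$, is not an algebra grading of $\tMHg$: in the identity $[S_i]*[S_i]=a[S_i\oplus S_i]+b[\E_i]$ the two right-hand terms would sit in degrees $2\alpha_i$ and $\alpha_i$. The multiplicative grading is the one by $K_0(\mod(\iLa))$, in which $[\E_i]$ has degree $\widehat{S_i}+\widehat{S_{\btau i}}=\alpha_i+\alpha_{\btau i}$; with that grading a term $\fu_\mu*[\E_\eta]$ occurring in the expansion satisfies $|\mu|=|\lambda|-\eta-\btau(\eta)$, which is what actually forces $|\mu|<|\lambda|$ when $\eta\neq 0$. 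Finally, for the terms with $|\mu|=|\lambda|$ you should say a word about why a non-split $\iLa$-extension of $\K Q$-modules that is itself (isomorphic to) a $\K Q$-module is a non-split $\K Q$-extension, so that the degeneration-order comparison with $M_q(\lambda)$ applies; this is true because the $\varepsilon$'s act by zero on any pullback module, but it is not automatic from the hereditary theory.
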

This basis is called a {\em PBW basis }for the generic Hall algebra of the $\imath$quiver $(Q, \btau)$.

Let $i_1,\dots,i_{\Ni}$ be a complete $\imath$-admissible sequence. Let $\Ui:=\Ui_{\vs}$ be an $\imath$quantum group. Recalling $a_i$ from \eqref{def:ai}, for any $\beta_j=\sum_{i\in\I} d_{ji}\alpha_i\in\Phi^+$, we define the {\em $q$-root vectors}
\begin{align}
\label{def: B beta}
B_{\beta_{j}}:=a_{i_j} \prod_{i\in\I}a_i^{-d_{ji}} \cdot \TT_{i_1}^{-1}\cdots \TT_{i_{j-1}}^{-1}(B_{i_j}),\quad
B_{\btau \beta_{j}}:=a_{i_j} \prod_{i\in\I}a_i^{-d_{ji}} \cdot \TT_{i_1}^{-1}\cdots \TT_{i_{j-1}}^{-1} (B_{\btau i_j})
\end{align}
in ${}_{\F}\Ui$ for $1\le j\le \Ni$. Note that $B_{\beta_j}=B_{\btau \beta_j}$ if and only $\btau (i_j) =i_j$.

Let $\psi:\Ui\rightarrow\rMHg$ be the isomorphism obtained in Theorem~\ref{generic U MRH}. Recall $\ci$ is given in \eqref{eqn:representative}.

\begin{lemma}
\label{lem: pbw lemmma}
(1) We have  $B_{\beta_j}\in\Ui$, for all $j$.

(2) The isomorphism $\psi: \Ui\rightarrow  \rMHg$ in \eqref{eq:psi} sends
\begin{align*}
&\psi(B_{\beta_{j}})=\left\{ \begin{array}{cc}\frac{-1}{v^2-1}\fu_{\beta_j},\text{ if }i_j\in\ci, \\
\frac{\bv}{v^2-1}\fu_{\beta_j},
\text{ otherwise},
\end{array}\right.
\end{align*}
for $1\leq j \leq N$.
\end{lemma}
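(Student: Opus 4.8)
The statement to be proved is Lemma~\ref{lem: pbw lemmma}: that the $q$-root vectors $B_{\beta_j}$ defined in \eqref{def: B beta} actually lie in $\Ui$ (not merely in the field extension ${}_\F\Ui$), and that under the isomorphism $\psi$ of Theorem~\ref{generic U MRH} they are sent to the scalar multiples of $\fu_{\beta_j}$ displayed. The guiding idea is to transport everything to the distinguished parameter $\bvsd$, where the reflection functors act by the clean formulas of Proposition~\ref{prop:reflection} and Theorem~\ref{thm:bgtui}, and then push back down to a general parameter $\bvs$ via the rescaling isomorphisms $\phi_{\bf u}$ and $\phi_{\bf h}$ of Lemmas~\ref{lem:base change}--\ref{lem: base change i-hall}, using the compatibility square \eqref{eq:dag1}.

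First I would work entirely in the distinguished-parameter setting. For a complete $\imath$-admissible sequence $i_1,\dots,i_{\Ni}$ (Theorem~\ref{prop:i-seq}), set $Q^{(0)}=Q$ and $Q^{(t)}=\bs_{i_t}Q^{(t-1)}$, so that $i_t$ is a sink of $Q^{(t-1)}$. Applying $\widetilde{\psi}_{Q^{(0)}}$ and iterating the commuting square \eqref{eq:defT}, the element $\TT_{i_1}^{-1}\cdots\TT_{i_{j-1}}^{-1}(B_{i_j})$ corresponds under $\widetilde\psi_{Q^{(j-1)}}$ (up to the known scalar $\tfrac{-1}{v^2-1}$ or $\tfrac{\bv}{v^2-1}$ depending on whether $i_j\in\ci$) to $\Gamma_{i_1}^{-1}\cdots\Gamma_{i_{j-1}}^{-1}(\fu_{\alpha_{i_j}})$ computed in $\tMHg$. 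By Lemma~\ref{lem:reflecting dimen} and the ``$\Fac T$'' formula \eqref{eqn:reflection 1}--\eqref{eqn:reflection 4}, each $\Gamma_{i_t}$ on the relevant generators acts simply by $\alpha\mapsto\bs_{i_t}\alpha$ on the dimension-vector labels — this is exactly the content of the definition \eqref{def:betaj} of $\beta_j$: the module stays indecomposable in $\mod(\K Q^{(j-1)})$ with class $\alpha_{i_j}$ reflected back to $\beta_j\in\Phi^+$. Hence $\Gamma_{i_1}^{-1}\cdots\Gamma_{i_{j-1}}^{-1}(\fu_{\alpha_{i_j}})=\fu_{\beta_j}$ in $\tMHg$, so $\widetilde\psi_Q$ sends $\TT_{\diamond,i_1}^{-1}\cdots\TT_{\diamond,i_{j-1}}^{-1}(B_{i_j})$ to the asserted scalar multiple of $\fu_{\beta_j}$; in particular this element lies in $\widetilde{\U}^\imath_{\bvsd}$, and under central reduction in $\Ui_{\bvsd}$. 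The point where care is needed is that $\Gamma_{i_t}$ is an isomorphism $\tMHg\to\widetilde\ch(\bs_{i_t}Q,\btau)$ between algebras attached to \emph{different} quivers; one must keep track of which $\widetilde\psi_{Q^{(t)}}$ is used at each stage, and invoke the orientation-independence remark following \eqref{eq:Up=T2} (or directly Lemma~\ref{lem:reduction3}) to legitimately compose. I would also verify the edge case $\btau i_j=i_j$, where $\beta_j=\btau\beta_j$ and the two formulas in \eqref{def: B beta} coincide, consistently with $B_{\beta_j}=B_{\btau\beta_j}$.

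Next I would descend to a general parameter $\bvs$. Here the scalar prefactor $a_{i_j}\prod_{i\in\I}a_i^{-d_{ji}}$ in \eqref{def: B beta} is designed precisely to cancel the rescaling discrepancy: by Lemma~\ref{lem: base change i-hall}, $\phi_{\bf h}(\fu_{\beta_j})=\prod_{i\in\I}a_i^{d_i(\beta_j)}\fu_{\beta_j}=\prod_i a_i^{d_{ji}}\fu_{\beta_j}$, while on the $\imath$quantum group side $\phi_{\bf u}(B_{i_j})=a_{i_j}B_{i_j}$ and $\TT_i=\phi_{\bf u}\TT_{\diamond,i}\phi_{\bf u}^{-1}$ by \eqref{eq:TTTT}. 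Applying $\phi_{\bf u}$ to the distinguished-parameter identity and chasing the commutative square \eqref{eq:dag1} shows that $\psi_\bvs(B_{\beta_j})$ equals the stated scalar times $\fu_{\beta_j}$, \emph{and} — since the combination $a_{i_j}\prod_i a_i^{-d_{ji}}\cdot\phi_{\bf u}(\cdot)$ produces a well-defined element of $\Ui$ (the $a_i$'s cancel, leaving a $\Q(v)$-linear combination of products of the $B_j$'s and $k_j$'s) — the vector $B_{\beta_j}$ lies in $\Ui$ itself, proving (1). Part (2) is then immediate from part (1) together with the identification just made, because $\psi$ restricted to $\Ui$ is the isomorphism onto $\rMHg$ of \eqref{eq:psi}, and the scalars $\tfrac{-1}{v^2-1}$ resp. $\tfrac{\bv}{v^2-1}$ are exactly those appearing in \eqref{eq:psiB}.

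\textbf{Main obstacle.} The routine computations (reflections acting on dimension vectors, bookkeeping of $\sqq$-twists in the multiplication \eqref{eqn:twsited multiplication}, and the scalar cancellations) are straightforward given the earlier results. The genuinely delicate step is the rigorous justification that $\Gamma_{i_1}^{-1}\cdots\Gamma_{i_{j-1}}^{-1}(\fu_{\alpha_{i_j}})=\fu_{\beta_j}$ while correctly threading through the changing quiver orientations $Q^{(0)},Q^{(1)},\dots$ and the corresponding isomorphisms $\widetilde\psi_{Q^{(t)}}$; this requires showing the indecomposable $\K Q^{(j-1)}$-module of class $\alpha_{i_j}$ genuinely survives all intermediate reflections (i.e.\ never equals $S_{i_t}$ or $S_{\btau i_t}$ at the $t$-th stage, which is guaranteed because $\beta_j\in\Phi^+$ and $\bs_{i_t}$ permutes $\Phi^+\setminus\{\alpha_{i_t},\alpha_{\btau i_t}\}$), and then invoking \eqref{eqn:reflection 1} at each stage. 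Once the reduction-via-subquivers lemmas (Lemmas~\ref{lem: reduction1}, \ref{lem:reduction3}) are marshalled to make the composition of $\Gamma$'s across different quivers meaningful, the rest follows by direct substitution.
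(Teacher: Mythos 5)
Your proposal is correct and follows essentially the same route as the paper's proof: first establish $\psi_{\bvsd}(B_{\diamond,\beta_j})=\tfrac{-1}{v^2-1}\fu_{\beta_j}$ at the distinguished parameter by observing (via Lemma~\ref{lem:reflecting dimen} and a dimension-vector comparison) that $F_{i_{j}-1}^+\cdots F_{i_1}^+(M_q(\beta_j))\cong S_{i_j}$ so that the composed reflections carry $\fu_{\beta_j}$ to $\fu_{\alpha_{i_j}}$, then transport to general $\bvs$ via $\phi_{\bf u}$, $\phi_{\bf h}$ and the square \eqref{eq:dag1}, with the prefactor $a_{i_j}\prod_i a_i^{-d_{ji}}$ in \eqref{def: B beta} cancelling the rescaling exactly as you describe. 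Your extra attention to the survival of the indecomposable through intermediate reflections and to the $\btau i_j\notin\ci$ scalar is consistent with, and slightly more explicit than, the paper's treatment.
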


\begin{proof}
In this proof we add the index $\bvsd$ (or simply $\diamond$) to indicate that the case with the distinguished parameter $\bvsd$ is under consideration, e.g., $\psi_{\bvsd}: \Ui_{{\bvsd}}\longrightarrow \rMHg_{{\bvsd}}$, $\TT_{\diamond,i}$, $\bar{\Gamma}_{\diamond,i}$, $B_{\diamond,\beta}$. By convention, no index $\bvs$ is used in the case with a general parameter $\bvs$.

Recall that $i_1,\dots,i_{\Ni}$ is a complete $\imath$-admissible sequence.  Denote by $(Q^j,\btau^j)$ the $\imath$quiver of the $\imath$quiver algebra $\iLa_j:= \bs_{i_{j-1}}\cdots \bs_{i_1}(\iLa)$, for $1\le j \le N_\imath+1$, and by $\psi_{\bvsd}^j: \Ui_{\bvsd} \stackrel{\sim}\longrightarrow \ch_{red}(Q^j,\btau^j)$ the isomorphism defined similarly to $\psi_{\bvsd}$. It follows from Corollary~ \ref{lem:reflecting dimen} and the comparisons of the dimension vectors  that
\[
F_{i_j-1}^+\cdots F_{i_1}^+(M_q(\beta_j))\cong S_{i_j}
\]
where $S_{i_j}$ is the simple $\iLa_j$-module with dimension vector $\alpha_{i_j}$.  So by \eqref{eqn:reflection 1}, for any $j$,
\[
\bar{\Gamma}_{\diamond, i_j-1}\cdots\bar{\Gamma}_{\diamond, i_1} (\fu_{\beta_j})=\fu_{\alpha_{i_j}}.
\]

Recall that $\psi^j_{\bvsd}(B_{i_j})=\frac{-1}{v^2-1}\fu_{\alpha_{i_j}}$, and that
$\psi_{\bvsd}^j\circ\TT_{\diamond, i_j}=\bar{\Gamma}_{\diamond, i_j}\circ\psi_{\bvsd}^{j}$ by the diagram \eqref{eq:Up=T}. So we have
\begin{align*}
B_{i_j}&=\frac{-1}{v^2-1}(\psi_{\bvsd}^j)^{-1}(\fu_{\alpha_{i_j}})
=\frac{-1}{v^2-1}(\psi_{\bvsd}^j)^{-1}\bar{\Gamma}_{\diamond, i_j-1}\cdots\bar{\Gamma}_{\diamond, i_1} (\fu_{\beta_j})\\
&=\frac{-1}{v^2-1}\TT_{\diamond,i_j-1}\cdots \TT_{\diamond,i_1}(\psi_{\bvsd}^{-1}(\fu_{\beta_j})).
\end{align*}
Therefore, we have $\psi_{\bvsd}(B_{\diamond,\beta_{j}})=\frac{-1}{v^2-1}\fu_{\beta_j}$.

Recall $\phi_{\bf u}: {}_{\F}\Ui_{{\bvsd}}  \longrightarrow {}_{\F}\Ui$ in Lemma \ref{lem:base change}. Recall from \eqref{eq:TTTT} that $\TT_{i}=\phi_{\bf u} \TT_{\diamond,i}\phi_{\bf u}^{-1}$.
Then
\begin{align}
\label{eqn:phi on Beta}
\phi_{\bf u}(B_{\diamond,\beta_j})= \TT_{i_1}^{-1}\cdots \TT_{i_{j-1}}^{-1}\phi_{\bf u}(B_{i_j})= a_{i_j}\TT_{i_1}^{-1}\cdots \TT_{i_{j-1}}^{-1}(B_{i_j}).
 \end{align}

From the commutative diagram \eqref{eq:dag1} and Lemma~\ref{lem: base change i-hall} on $\phi_{\bf h}$, it follows that
\begin{align*}
\psi \phi_{\bf u}(B_{\diamond,\beta_j})=&\phi_{\bf h}\psi_{{\bvsd}}(B_{\diamond,\beta_j})=\frac{-1}{v^2-1}\phi_{\bf h}(\fu_{\beta_j})
=\frac{-1}{v^2-1} \prod_{i\in\I}a_i^{d_{ji}}\cdot \fu_{\beta_j}.
\end{align*}
By combining this with \eqref{def: B beta}--\eqref{eqn:phi on Beta}, we obtain that
$\psi(B_{\beta_j})= \frac{-1}{v^2-1}\fu_{\beta_j}$.
Clearly, we have $\fu_{\beta_j}\in \rMHg$ and thus $B_{\beta_j}\in \Ui$, by using the isomorphism $\psi: \Ui \stackrel{\sim}\longrightarrow\rMHg$.
\end{proof}

We give a PBW basis for the $\imath$quantum group $\Ui$ (with arbitrary parameter $\bvs$) below.

\begin{theorem}  [PBW bases]
  \label{thm:PBWUi}
For any ordering $\gamma_1,\ldots,\gamma_N$ of the roots in $\Phi^+$, the set
\[
\{ B_{\gamma_1}^{\lambda_1}\cdots B_{\gamma_N}^{\lambda_N}\mid \lambda\in \fp \}
\]
provides a basis of $\Ui$ as a right $\U^{\imath 0}$-module, where $\lambda_i=\lambda(\gamma_i)$ for $1\le i\le N$.
\end{theorem}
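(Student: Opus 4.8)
The strategy is to transport the statement through the isomorphism $\psi: \Ui \xrightarrow{\sim} \rMHg$ of Theorem~\ref{generic U MRH} and reduce it to the PBW basis statement for the generic $\imath$Hall algebra, which is already available. First I would recall from Lemma~\ref{lem: pbw lemmma}(2) that $\psi$ sends each $q$-root vector $B_{\gamma}$ (for $\gamma \in \Phi^+$) to a nonzero scalar multiple of $\fu_{\gamma}$: precisely $\psi(B_{\gamma_j}) = \frac{-1}{v^2-1}\fu_{\gamma_j}$ if $i_j \in \ci$ and $\psi(B_{\gamma_j}) = \frac{\bv}{v^2-1}\fu_{\gamma_j}$ otherwise. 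Consequently $\psi$ maps the ordered monomial $B_{\gamma_1}^{\lambda_1}\cdots B_{\gamma_N}^{\lambda_N}$ to a nonzero scalar multiple of $\fu_{\gamma_1}^{*\lambda_1}*\cdots*\fu_{\gamma_N}^{*\lambda_N}$. By Lemma~\ref{lem:generic PBW-Hall basis}, the latter family (as $\lambda$ ranges over $\fp$) is a $\Q(v)$-basis of $\rMHg$ as a right $\T(Q,\btau)$-module; since $\psi$ is an algebra isomorphism and scaling by nonzero scalars does not affect the basis property, the family $\{B_{\gamma_1}^{\lambda_1}\cdots B_{\gamma_N}^{\lambda_N} \mid \lambda \in \fp\}$ is a basis of $\Ui$ as a right module over $\psi^{-1}(\T(Q,\btau))$.

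The remaining point is to identify $\psi^{-1}(\T(Q,\btau))$ with $\U^{\imath 0}$. Here I would use that under $\psi$ (Theorem~\ref{generic U MRH}), the generators $k_i$ of $\U^{\imath 0}$ for $i \in \ci$ are sent to $\frac{\fu_{\upgamma_i}}{\vs_i}$, which are (up to nonzero scalars) exactly the generators $\fu_{\upgamma_i}$ of the quantum torus $\T(Q,\btau)$; the latter torus is, by definition, generated by $\fu_{\nu}$ for $\nu \in \fp^0$, equivalently by the $\fu_{\upgamma_i}$, $\upgamma_i \in \Phi^0$. Thus $\psi$ restricts to an isomorphism $\U^{\imath 0} \xrightarrow{\sim} \T(Q,\btau)$, and the right $\U^{\imath 0}$-module structure on $\Ui$ corresponds under $\psi$ to the right $\T(Q,\btau)$-module structure on $\rMHg$. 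Transporting the basis back through $\psi^{-1}$ then yields exactly the asserted PBW basis of $\Ui$ over $\U^{\imath 0}$.

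One technical detail to handle carefully: the ordered product $B_{\gamma_1}^{\lambda_1}\cdots B_{\gamma_N}^{\lambda_N}$ maps under $\psi$ to $\fu_{\gamma_1}^{*\lambda_1}*\cdots*\fu_{\gamma_N}^{*\lambda_N}$ times a scalar $c_\lambda = \prod_j (\text{scalar})^{\lambda_j}$ depending on $\lambda$ but always nonzero in $\Q(v)$; one must note that multiplying each basis element of a free module basis by an individual nonzero scalar preserves the basis property (this is immediate). Also one should confirm that $\fp$ here is the same indexing set $\mathfrak{P} = \mathfrak{P}(Q)$ of functions $\Phi^+ \to \N$ appearing in Lemma~\ref{lem:generic PBW-Hall basis}, so that the two index sets match verbatim; this follows from the notation fixed in \S\ref{sub:generic}. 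The main obstacle, such as it is, is purely bookkeeping: making sure the right-module structures are matched on the correct side (right $\T(Q,\btau)$-module vs. right $\U^{\imath 0}$-module) and that the ordered-monomial convention used in defining $B_{\gamma_1}^{\lambda_1}\cdots B_{\gamma_N}^{\lambda_N}$ is the one compatible with the ordered products $\fu_{\gamma_1}^{*\lambda_1}*\cdots*\fu_{\gamma_N}^{*\lambda_N}$ in $\rMHg$. No genuinely new computation is needed beyond invoking Lemma~\ref{lem: pbw lemmma}, Lemma~\ref{lem:generic PBW-Hall basis}, and Theorem~\ref{generic U MRH}.
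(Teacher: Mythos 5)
Your proposal is correct and follows essentially the same route as the paper: the paper's proof likewise applies $\psi$ to the ordered monomials, uses Lemma~\ref{lem: pbw lemmma} to get $\psi(B_{\gamma_1}^{\lambda_1}\cdots B_{\gamma_N}^{\lambda_N})=d_\lambda\, \fu_{\gamma_1}^{*\lambda_1}*\cdots*\fu_{\gamma_N}^{*\lambda_N}$ with $d_\lambda\neq 0$, and then invokes Lemma~\ref{lem:generic PBW-Hall basis}. The only difference is that you spell out the bookkeeping (identification of $\psi^{-1}(\T(Q,\btau))$ with $\U^{\imath 0}$ and the matching of index sets) that the paper leaves implicit.
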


\begin{proof}
From Lemma \ref{lem: pbw lemmma}, for any $\lambda\in\fp$, we have
$$\psi (B_{\gamma_1}^{\lambda_1}\cdots B_{\gamma_N}^{\lambda_N})=d_\lambda \fu_{\gamma_1}^{*\lambda_1}\cdots \fu_{\gamma_N}^{*\lambda_N},$$
for some nonzero scalar $d_\lambda \in \Q(v)$. Then the desired result follows from Lemma \ref{lem:generic PBW-Hall basis}.
\end{proof}
Similar PBW bases for $\tUi$ can be defined, and then alternative PBW bases for $\Ui$ can be obtained from those for $\tUi$ by a central reduction.

\begin{remark}
For the $\imath$quiver of diagonal type, $Q^{\dbl}=Q\bigsqcup Q'$, the above basis reproduces a PBW basis for $\U$ as a module over $\U^0$ by \cite[Proposition~8.6]{LW19} (which is a reformulation of the main theorem of Bridgeland \cite{Br13}); compare with \cite{Rin96}.
\end{remark}


\end{document}